  \crefname{theorem}{Theorem}{Theorems}
  \crefname{thm}{Theorem}{Theorems}
  \crefname{thm*}{Theorem*}{Theorems}
  \crefname{lemma}{Lemma}{Lemmas}
  \crefname{lem}{Lemma}{Lemmas}
  \crefname{remark}{Remark}{Remarks}
  \crefname{prop}{Proposition}{Propositions}
\crefname{notation}{Notation}{Notations}
\crefname{claim}{Claim}{Claims}
  \crefname{defn}{Definition}{Definitions}
  \crefname{corollary}{Corollary}{Corollaries}
  \crefname{section}{Section}{Sections}
  \crefname{figure}{Figure}{Figures}
    \crefname{assumption}{Assumption}{Assumptions}
\newtheorem{thm}{Theorem}[section]
\newtheorem{thm*}{Theorem*}[section]
\newtheorem{lemma}[thm]{Lemma}
\newtheorem{corollary}[thm]{Corollary}
\newtheorem{prop}[thm]{Proposition}
\newtheorem{defn}[thm]{Definition}
\numberwithin{equation}{section}
\theoremstyle{definition}
\newtheorem{remark}[thm]{Remark}
\newtheorem{example}[thm]{Example}
\def \di {\mathsf{}}
\def\cY{\mathcal{Y}}
\def\cX{\mathcal{X}}
\def \h {\mathfrak{h}}
\def\cT{\mathcal{T}}
\def\cS{\mathcal{S}}
\def\cP{\mathcal{P}}
\def\cM{\mathcal{M}}
\def\cH{\mathcal{H}}
\def\cA{\mathcal{A}}
\def \ve {\varepsilon}
\def \ms {\mathsf}
\def \sF {\mathsf{F}}
\def \se {\mathsf{e}}
\def\P{\mathbb{P}}
\def\E{\mathbb{E}}
\def\C{\mathbb{C}}
\def\R{\mathbb{R}}
\def\Z{\mathbb{Z}}
\def\D{\mathbb{D}}
\def\H{\mathbb{H}}
\def  \p- {p\textunderscore}
\def \bs {\boldsymbol}
\def\eps{\varepsilon}
\def \d {{\# \delta}}
\def\Euc{\textsf{euc}}
\DeclareMathOperator{\vol}{d\mu}
\DeclareMathOperator{\Ptemp}{\mathbb{P}_{Temp}}
\DeclareMathOperator{\Pwils}{\mathbb{P}_{Wils}}
\DeclareMathOperator{\Ewils}{\mathbb{E}_{Wils}}
\DeclareMathOperator{\Etemp}{\mathbb{E}_{Temp}}
\DeclareMathOperator{\Pwwils}{\tilde{\mathbb{P}}_{Wils}}
\DeclareMathOperator{\Ztemp}{Z_{Temp}}
\DeclareMathOperator{\Zwils}{Z_{Wils}}
\DeclareMathOperator{\Zwwils}{ \tilde{Z}_{Wils}}
\renewcommand{\i}{\text{\textnormal{int}}}
\newcommand{\note}[1]{{\color{red}{[note: #1]}}}
\newcommand{\abs}[1]{ \lvert #1 \rvert}
\newcommand{\blue}[1]{{\color{blue}{#1}}}
\def\crsf{\vec{\mathcal{T}}}
\newtheorem*{lemma*}{Lemma}
\def \ext {\textsf{ext}}
\begin{document}
\title{Dimers on Riemann surfaces I:\\
 Temperleyan forests}

\author{Nathana\"el Berestycki\thanks{University of Vienna. Email: nathanael.berestycki@univie.ac.at} \and Benoit Laslier\thanks{Université Paris Cité and Sorbonne Université, CNRS, Laboratoire de Probabilités, Statistique et Modélisation. Email: laslier@lpsm.paris} \and Gourab Ray\thanks{University of Victoria.   Email:gourabray@uvic.ca}}

\maketitle

\abstract{This is the first article in a series of two papers in which we study the Temperleyan dimer model on an arbitrary bounded Riemann surface of finite topological type. The end goal of both papers is to prove the convergence of height fluctuations to a universal and conformally invariant scaling limit. In this part we show that the dimer model on the Temperleyan superposition of a graph embedded on the surface and its dual is well posed, provided that we remove an appropriate number of punctures. We further show that the resulting dimer configuration is in bijection with an object which we call Temperleyan forest, whose law is characterised in terms of a certain topological condition. Finally we discuss the relation between height differences and Temperleyan forest and prove that the convergence of the latter (which is the subject of the second paper in this series) implies the existence of a conformally invariant scaling limit of height fluctuations.}

\tableofcontents


\section{Introduction}

\subsection{Background and main result}

This paper is the first part of a series of two papers in which we show universality and conformal invariance of the Temperleyan dimer model on Riemann surfaces.

\medskip Given a finite graph $G$ (with an even number of vertices) and nonnegative weights $w = (w_e)_{e \in E}$ on the edges, a dimer configuration is a perfect matching of the vertices along the edges of $G$. The dimer model is the random perfect matching obtained by sampling a dimer configuration $\bf m$ on $G$ with probability proportional to the product of the weights of the edges in the matching $\bf m$: 
 \begin{equation}\label{Gibbs}
\P({\bf m}) = \frac{\prod_{e \in {\bf m}}  w_e  }{Z},
\end{equation}
where $Z$ is the partition function. If all the weights are equal to 1, $\P$ is simply a uniformly picked perfect matching.  Following Thurston, a particularly convenient way to encode the dimer model on a \emph{planar} bipartite graph is through a notion of height function. Informally, it is a real valued function defined on the faces of the graph, which is uniquely determined by the dimer configuration. This turns the dimer model into a model of discrete random surfaces. A central question in the study of the dimer model is to describe the fluctuations around the mean of the height function. It turns out that the height function of the dimer model is extremely sensitive to boundary conditions (see for example the quote of Kasteleyn at the beginning of  \cite{CohnKenyonPropp}). In the so-called \textbf{liquid phase}, the Kenyon--Okounkov conjecture predicts that the fluctuations of the height function  around its mean are asymptotically given by a \textbf{Gaussian free field} in a certain nontrivial \emph{conformal gauge} (see \cite{KenyonOkounkov2} section 2.3 for details). Despite remarkable progress on this conjecture over the last decade, this remains an outstanding problem in all but a handful of cases: see \cite{BLR16,dubedat_torsion,KenyonGFF,petrov,marianna} for examples of models where this is already shown, and see \cite{BerestyckiPowell,GFFShe} for an introduction to the Gaussian free field. We emphasise that, given the extreme sensitivity of the model to its boundary conditions, the occurrence of a universal limit such as the Gaussian free field is to some extent very surprising.

\medskip The goal of this paper is to describe the fluctuations of the height function for certain graphs embedded on Riemann surfaces of finite topological type, i.e., with finitely many handles and holes. Our assumptions on these graphs ensure that the dimer model is liquid on the entire Riemann surface {(as opposed to having frozen or gaseous regions, say as in the  Aztec diamond)} and the conformal gauge is simply the identity; we call the resulting dimer model \textbf{Temperleyan}, and we say that the sequence of graphs $G^\d$ is a Temperleyan discretisation of the surface. Even though this assumption simplifies the problem, conformal invariance in the scaling limit is by no means obvious; for instance, in the simply connected setting and for the square lattice, this is in fact precisely the content of Kenyon's landmark paper \cite{Kenyon_ci}. 

Before we give a simplified statement of the main theorem, we point out that in the context of a Riemann surface $M$, the height function becomes a height one-form $h^\d$: that is, the gradient $\nabla h^\d$ is well defined but this gradient does not derive itself from a single-valued function on the surface. Instead it can be viewed as the gradient of a (single-valued) function defined on the universal cover of the surface. For concreteness we will denote by $h^\d$ the associated function on the universal cover of the surface. Readers who prefer it can however equivalently think of $h^\d$ as the random one-form on $M$ associated to the dimer configuration. (See \cref{sec:ht,sec:statement} for precise definitions.) The function $h^\d$ is the central object of this sequence of papers, and we still refer to it as the height function.

Our main theorem, stated informally below, confirms that the dimer model admits a universal scaling limit on Riemann surfaces. This scaling limit is furthermore conformally invariant. 

\begin{thm}\label{T:main_intro}
	Let $M$ denote a bounded Riemann surface, possibly with a boundary, which is not the sphere or a simply connected domain. Let $G^\d$ be a sequence of Temperleyan discretisations of $M$ satisfying the Invariance principle and a crossing estimate for random walk described in \cref{sec:setup}. Let $h^\d$ denote the height function of the dimer model on $G^\d$. For all smooth compactly supported test functions $f$ on the universal cover of $M$,
	$$
	\int (h^\d - \E(h^\d)) f 
	$$
	converges as $\delta \to 0$ in law and the limit is conformally invariant  and independent of the sequence $G^\d$.
\end{thm}

The precise assumptions on the surface $M$ will be given in \cref{sec:surface_embedding}, the meaning of  Temperleyan discretisations and our precise assumptions on the sequence $G^\d$ can be found in \cref{sec:setup}. {Finally, note that the smooth test function $f$ is taken on the universal cover $\tilde M$ of $M$, so that the integration takes place on $\tilde M$ (equipped e.g. with Lebesgue measure).} 
See \cref{sec:ht,sec:statement} for details. We refer to \cref{thm:main_precise} for the complete version of the theorem.

Although we cannot find what the limiting object is, as an artefact of our proof, we can show that the limit is non trivial. Indeed, we prove later in \cref{lem:exp_tail} that the limiting field restricted to a small enough neighbourhood of $M$ away from the punctures can be coupled to the restriction of a Gaussian free field in a slightly bigger neighbourhood, with positive probability; in particular this field is nontrivial. We conjecture that the limiting field is a \textbf{compactified Gaussian free field} with appropriate parameters depending only on the surface $M$ and the positions of the punctures $(x_1, \ldots, x_k)$ on $M$.
\subsection{Relation with previous work}

One of the first rigourous works on the dimer model on Riemann surfaces is the inspiring paper by Boutillier and de Tili\`ere \cite{BdT_torus}, who were motivated by nonrigourous physics predictions based on the Coulomb gas formalism (see e.g. \cite{Coulomb_gas} and the references in \cite{BdT_torus}). They described the topological part of the fluctuations in the dimer model on the honeycomb lattice on the torus. This was complemented some years later by the fundamental paper of Dub\'edat \cite{dubedat_torsion}, in which the full law of the limiting height fluctuations was described for isoradial graphs on the flat torus, and identified as a so-called compactified Gaussian free field. Dub\'edat also conjectured a greater form of universality beyond the isoradial setting; such a universality statement was subsequently proved by Dub\'edat and Gheissari \cite{DubedatGheissari} but only for the topological part of fluctuations on the torus. Note that in particular our Theorem \ref{T:main_intro} settles this conjecture of Dub\'edat. 

\medskip 
We point out that all techniques in the paper mentioned above, based on Kasteleyn theory (in contrast with our approach), are very specific to the case of the torus and cannot easily be adapted to general Riemann surfaces. Cimasoni \cite{cimasoni} shows that for general surfaces of genus $g$ (and no boundary), the partition function of the dimer model becomes a weighted (in fact, signed) sum of $2^{2g}$ determinants of Kasteleyn matrices in which the orientation has been reversed along some
of the $2g$ cycles forming a basis of the first homology group of the surface. The signs in front of the determinants themselves form a nontrivial algebraic topological invariant of the surface which Cimasoni, remarkably, was able to identify in terms of the so-called Arf invariant. (In the case of the torus and the honeycomb lattice, this goes back to the work of Boutillier and de Tili\`ere \cite{BdT_torus}.) This illustrates the difficulty in generalising the Kasteleyn approach to general surfaces. {See also \cite{ahn2021lozenge,cimasoni2007dimers,cimasoni2008dimers,costa2002dimers} for related results.
}

\medskip The above difficulty illustrates the well known fact that the dimer models becomes ``less integrable'' as the genus of the surface increases, or at any rate integrability becomes harder to exploit. {In this paper, we will therefore follow the general strategy of our previous work \cite{BLR16}, which relies instead on Temperley's bijection between the dimer model and uniform spanning trees (in fact, see the overview of the proof in \cref{sec:organization} for a discussion of this strategy and the relation between the two papers).}
{The increased robustness of this method}
 accounts in large part for the fact that we are able to tackle generic Riemann surfaces (in contrast with methods based on Kasteleyn theory or interacting particle systems).  This is not to say that the difficulties caused by the increased complexity of the surface vanish entirely, but they manifest themselves in a different manner, which boils down to the following. As we will describe in more details in \cref{sec:background}, a Temperleyan discretisation of the surface is the graph resulting from the superposition of a graph $\Gamma$ embedded on the surface and its dual. The resulting graph is bipartite, with its black vertices being given by the vertices and faces of $\Gamma$, and the white vertices being given by the edges of $\Gamma$. However, a straightforward calculation using Euler's formula shows that this does not admit a dimer covering unless we remove $2g + b -2 = |\chi|$ many white vertices from the graph (where $g$ is the genus and $b$ the number of boundary components, and $\chi$ is the Euler characteristic); these can be thought of as \textbf{punctures} in the surface, or monomers. Note that the number of punctures increases with the complexity of the surface, and when the surface is a torus or an annulus no such puncture needs to be removed. {Roughly speaking, we will see that the locally tree-like random subgraphs  appearing when we generalise Temperley's bijection, which we will call \textbf{Temperleyan self-dual pair}, have one topological constraint for each puncture. In the case of a torus or an annulus where there is no puncture, we will see that the Temperleyan self-dual pair boils down to the simpler notion of cycle rooted spanning forest studied by Kassel and Kenyon in \cite{KK12}.} 
 

\medskip The necessity to remove a certain number of punctures from the surface in order to formulate the Temperleyan dimer model has a geometric flavour which is reminiscent of \textbf{Liouville conformal field theory}, in which correlation functions are only well defined with the appropriate number of singularities (known in that context as insertions). The required number of insertions is specified by the so-called Seiberg bounds, see e.g.  \cite{DKRV} on the sphere and \cite{GKRV} for Riemann surfaces (see also \cite{BerestyckiPowell} for an introduction). As is the case here, the number of punctures is entirely specified by the topology of the surface, but in a way that is in some sense opposite to ours: indeed the required number of insertions is an increasing function of $\chi$ in Liouville CFT, whereas it is decreasing here.

This difference can be understood informally as follows. To first order, Liouville CFT may be viewed as yielding a random metric which is approximately hyperbolic (this can for instance be made rigourous in the semiclassical limit where the parameter $\gamma \to 0$, see \cite{LacoinRhodesVargas}). However, {on the Riemann sphere, for which the theory is of particular interest, and on other surfaces of non-negative Euler characteristic}, it is impossible to equip the surface with a metric of constant negative {scalar} curvature without first adding conical singularities at a certain number of points, as follows from the Gauss--Bonnet theorem. By contrast, the Temperleyan dimer model is trying to equip the surface with a \textbf{flat metric}, a fact which will be particularly apparent when we relate the height function to the winding of the Temperleyan forest in \cref{sec:winding_ht} (note that the winding is computed on the universal cover, where we ignore the effects of curvature, see \Cref{SS:universalcover}). Yet, on a hyperbolic surface (i.e., when $\chi<0$) no flat metric exists, again by the Gauss--Bonnet theorem. In summary, Liouville CFT may be viewed as trying to build a hyperbolic metric on a surface which may be parabolic or elliptic, whereas the Temperleyan dimer model tries to impose a flat metric on a surface which may be hyperbolic. In both cases the difficulty is resolved by puncturing the surface at an appropriate number of singularities. (The above parallel between Liouville CFT and dimer model may perhaps also be compared with the two couplings between SLE and Gaussian free fields given respectively by the so-called forward and reverse couplings; the forward coupling describes Liouville conformal field theory whereas the reverse coupling describes imaginary geometry \cite{IG1,IG4}).

\subsection{Overview of the proof and organisation of the paper}\label{sec:organization}

{As mentioned above, }our general approach to study height fluctuations on Riemann surfaces is motivated and inspired by our earlier work \cite{BLR16} connecting the dimer model to \textbf{imaginary geometry} in the simply connected setting. 
Let us recall the main idea of that work briefly in the original simply connected setting, before explaining how this relates with and differs from the case of Riemann surfaces.

\begin{itemize}

\item The starting point of \cite{BLR16} is Temperley's bijection, which relates the dimer model on a Temperleyan discretisation of a simply connected domain to a pair of dual uniform spanning trees with respectively wired and free boundary conditions. Furthermore, the height function of the dimer model 
can be determined relatively simply from the associated pair of spanning trees: roughly speaking, the height difference between two points is simply the total winding (sum of turning angles) of the unique path connecting these points in either tree. 

\item The second observation is that, as follows from the landmark paper of Lawler, Schramm and Werner \cite{LSW}, paths between points in the wired uniform spanning tree (UST) have a scaling limit which may be described in terms of Schramm--Loewner Evolutions with parameter $\kappa = 2$ or SLE$_2$ for short. 

\item Finally, in the continuum, there is a coupling known as {imaginary geometry} between SLE$_\kappa$ curves and an appropriate multiple of the Gaussian free field. In this coupling, the values of the GFF may informally be identified as the winding of the associated SLE curves (note that this requires careful interpretation, since the Gaussian free field is not defined pointwise and SLE$_\kappa$ paths are not smooth). Nevertheless, this coupling can be thought of as a continuous form of Temperley's bijection in the case $\kappa =2$.

\end{itemize}

The strategy of \cite{BLR16} is to exploit Temperley's bijection on the one hand, and imaginary geometry on the other hand, to show that the winding of paths in a UST is indeed given by the appropriate multiple of the Gaussian free field asymptotically (essentially, one has to show that we can exchange the order of taking limits, or that a diagram commutes). This strategy is very robust; in particular it does not appeal to the more solvable aspects of the dimer model. Indeed, the convergence of loop-erased random walks (which describe the branches of a wired UST at the discrete level) to SLE$_2$ is known in great generality.

\medskip Roughly speaking, we wish to implement a similar approach in the case of Riemann surfaces. However, it should be clear that each part will be substantially different in this new setup. 

\begin{enumerate}

\item To begin with, we observe that Temperley's bijection is local and so may be applied on a Riemann surface to output a pair of random subgraphs that are dual to one another and locally tree-like, and which we call \textbf{Temperleyan self-dual pair}. While in the simply connected case, these are just dual spanning trees, the {combinatorial structure} (i.e. state space) of the Temperleyan self-dual pair on a Riemann surface is a priori not entirely clear, and its law even less so. Our first result will be a characterisation of the Temperleyan self-dual pair. We also describe its law explicitly, which has a simple and concise expression. It turns out that one of the two components, which we call 
the \textbf{Temperleyan cycle rooted spanning forest} or Temperleyan CRSF for short, determines the other (and hence the entire dimer configuration) almost uniquely (up to some choice of orienting cycles), and also has a tractable law. Essentially, a Temperleyan CRSF is a random spanning subgraph in which every cycle is non-contractible, and satisfying a certain further topological condition. The introduction and study of Temperleyan CRSFs take up a substantial part in both the articles in this series.

\item A second problem is that the connection between height function and winding is not as straightforward as in the simply connected case; in particular, in addition to the already mentioned fact that only the gradient of the height function is well defined (leading to a multivalued height function), there may not be a path connecting two given points in the Temperleyan CRSF, even when they are neighbours. We thus need to develop a systematic method for computing the height function given the Temperleyan CRSF.


\item The next difficulty is to obtain a scaling limit result for our Temperleyan CRSF. Let us immediately note here that there are specific difficulties with {even }defining variants of SLE on surfaces since the simply connected uniformisation theorem lies at the heart of the standard definition of SLE.
\item 
{Finally} if we were to follow the blueprint of \cite{BLR16} one would also need to establish a version of imaginary geometry for Riemann surfaces, an endeavour which seems out of reach with current techniques. Fortunately, we are in fact able to entirely bypass this step because 
it turns out that the (discrete) {``error'' }estimates needed anyway to exchange limits are strong enough to imply convergence regardless of any information in the continuum. Clearly, the downside of this approach that we cannot hope to describe the law of the limit in this manner. 


\end{enumerate}

\medskip The topological and geometric structures of the surface $M$ raise substantial specific difficulties at each of the steps of this roadmap. In this first paper we develop the relevant discrete/combinatorial arguments corresponding to the first two items above { and present the conclusion according to the last item}. The second paper in this sequence, \cite{BLR_Riemann2}, is devoted to the {third bullet point, i.e the }proof of convergence of the Temperleyan CRSF in the scaling limit. This is the most involved step which requires several new ideas, but can be read essentially independently. 
{
At this point, it is also useful to discuss a bit more precisely the role that the different structures associated to a Riemann surface will play in our paper. In order to even discuss convergence on the surface $M$ one needs in principle to specify a Riemannian metric on $M$, and it is natural to require that this metric be consistent with the conformal structure of $M$.
However, ultimately, we want to prove convergence to a conformally invariant object so 
in principle only the conformal class of this metric matters (here we mean that two metric tensors are equivalent if we obtain one from the other by multiplying by a smooth function $e^\varphi$.)
  

It turns out however that, after lifting to the universal cover (which in most of the interesting cases for this paper will be the unit disc, see below for more details), it will be more convenient for us to equip this cover with
the standard Euclidean metric instead of the \emph{a priori} more natural hyperbolic metric. 

This impacts the way we formulate some of our assumptions in \cref{sec:setup}. Unfortunately, some of our technical estimates involve indirectly the Euclidean metric on the unit disc: for instance, through our assumptions that the graph $G$ has a bounded density and each edge has a bounded winding (\cref{boundeddensity,embedding} in \cref{sec:setup}). Nevertheless, changing the metric to another one in the same class only changes the constants involved in these assumptions, and so does not impact the main results. {We explicitly prove this fact in \Cref{lem:conf_inv_assumption}}. 
\bigskip

\subsection*{Organisation of the paper}
\begin{itemize}
	
	\item In \cref{sec:background}, we recall some basic facts about dimers on surfaces and in \cref{sec:winding}, we recall the notions of windings of curves which were developed in our previous article \cite{BLR16}. \cref{sec:surface_embedding,SS:universalcover} contain small reminders about Riemann surfaces. We introduce precisely our discrete setup in \cref{sec:setup}. As one of the main differences with the classical case is that the height function now becomes multivalued we recall in \cref{sec:ht} the language of height one-forms on graphs which is a classical way to handle such multivalued functions. In \cref{sec:ht}, we introduce another way to deal with multivalued functions by lifting them to single-valued functions on the universal cover. We also illustrate how an application of the Hodge decomposition theorem allows us to decompose the (multivalued) height function on the surface into a single-valued function (scalar component) and a canonical representative of the ``multivalued (or topological) part", the so-called instanton component. 
	
	\item In \cref{sec:Temp}, we introduce the generalisation of Temperley's bijection to Riemann surfaces{, essentially covering point 1 above}. \cref{SS:Tempforest} contains the bijection itself.
 We also provide in \cref{SS:reducCRSF} a simple criterion for a CRSF to be Temerleyan which is the basis of the second paper in the series \cite{BLR_Riemann2}.
	
	\item In \cref{sec:winding_ht}, we carefully develop the relation between
	the height differences and the winding of Temperleyan forests, i.e point 2 of the overview. This is in the spirit of the work of Kenyon--Propp--Wilson \cite{KPWtemperley}, who treated the (planar) simply connected case. In that case of course the bijection is with a uniform spanning tree.
	Compared to that work there are two additional points that we need to handle. The first one is that the edges of the graph cannot be assumed to be straight lines as in \cite{KPWtemperley}. The second and more significant one is that there are additional terms coming from the fact that the forest is not connected: given points $x$ and $y$ on the universal cover, the height difference $h(x) - h(y)$ (which is unambiguously defined) is essentially given by the intrinsic winding of any path connecting $x$ to $y$, \emph{plus} additional discontinuities every time the path jumps between components. The resulting key formula is stated in \cref{lem:number_crossing} (see also \cref{lem:winding_arbitrary_path}).

	\item {In \cref{sec:local_coupling_grand} we extend our local and global coupling results from \cite{BLR16} to the framework of Riemann surfaces. This coupling is a key ingredient of our approach, which allows us to show that given a finite number of points $(z_i)_{1\le i \le k}$ on the surface (or, rather, their lifts to the universal cover), the respective geometries of the CRSF in neighbourhoods of these $k$ points are almost independent. This is crucial because certain types of ``error terms'' in our approach vanish in expectation but are large in absolute value.}

	\item In \cref{sec:ht_convergence}, we conclude by proving our main convergence result: \cref{thm:main_precise}. \cref{sec:apriori,sec:full_spine} contain some technical a priori estimates on winding of the CRSF branches on the universal cover.
	Finally in \cref{sec:winding_convergence}, we finish the proof of \cref{thm:main_precise}. 
	
	\item Finally, \cref{app:spine} contains some geometric facts about \emph{spines} in the CRSF (that is, the lifts to the universal cover of a cycle of the CRSF). It is given as a appendix because the proofs relies on slightly more involved elements of the classical theory of Riemann surfaces than the rest of the paper and we wanted to avoid overloading the already lengthy setup section.
	
\end{itemize}

\subsection*{Acknowledgements}

The authors are grateful to Antoine Dahlqvist and Julien Dub\'edat  for a number of useful discussions. NB's is supported by FWF grant P33083, ``Scaling limits in random conformal geometry''. GR's research is supported by NSERC 50311-57400. BL's research is supported by ANR-18-CE40-0033 “Dimers”.
The paper was finished while NB was in residence at the Mathematical Sciences Research Institute in Berkeley, California, during
the Spring 2022 semester on \emph{Analysis and Geometry of Random Spaces}, which was supported by the National Science Foundation under Grant No. DMS-1928930. Finally, we thank the anonymous referee for valuable feedback on an earlier version of the manuscript.

\section{Background and setup}\label{sec:background}

\subsection{Riemann surfaces and embedding}\label{sec:surface_embedding}
In this article, we work with a {Riemann surface $M$ (a connected, one dimensional complex manifold)} satisfying the following properties.
\begin{itemize}
\item $M$ is of finite topological type, meaning that
  the fundamental group $\pi_1(M)$ is finitely generated. In other
  words, we assume that the surface has finitely many ``handles'' and
  ``holes''.
\item  $M$ can be compactified by
  specifying a \textbf{boundary} $\partial M$. We denote by $\overline
  M$  the compactified Riemann surface with the boundary. More
  precisely, every point is either in the interior and hence has a
  local chart homeomorphic to $\C$ or is on the boundary and has a
  local chart homeomorphic to the closed upper half plane $\bar \H$. Also there are finitely many such charts which cover the boundary. Note that this condition implies that $M$ has no punctures. (However for future reference, we note here that we will later introduce punctures on $M$ which will align with the removed vertices of Temperleyan graphs. The resulting punctured manifold will be denoted by $M'$.)
\end{itemize}

We say $M$ is \textbf{nice} if $M $ satisfies the above properties. 
\medskip

{ Given such a Riemann surface $M$, it is possible to equip it with a Riemannian metric $g$ (with associated distance function denoted by $d_M$) which extends continuously to the boundary, and turns $M$ into a smooth Riemannian surface. This Riemannian metric can be constructed by considering the hyperbolic metric on the universal cover of the surface if it has no boundary, see \cite[Section 2.4]{Jost} in combination with the uniformisation theorem (\cite[Theorem 4.4.1]{Jost}, also recalled and discussed below).
If the surface has boundary components, we may apply the same result to the surface obtained by gluing it to itself along the boundary -- also called the double.

Note that the choice of the metric above exclude surfaces such as the hyperbolic plane. This will simplify certain topological issues later when we deal with the Schramm topology.

}

\medskip \paragraph{Classification of surfaces.}

Riemann surfaces can be classified into the following classes depending on their conformal type (see e.g. \cite{donaldson2011riemann} for an account of the classical theory):

\begin{itemize}

\item Elliptic: this class consists only of the Riemann sphere, i.e., $M \equiv \hat{  \mathbb{C}}$

\item Parabolic: this class includes the torus, i.e., $ M \equiv \mathbb T =  \C / (\Z + \tau \Z)$ where $\Im(\tau >0)$, the cylinder $M \equiv \mathbb{C} \setminus \{0\}$, and the complex plane itself (or the Riemann sphere minus a point), $M \equiv \C$.

\item Hyperbolic: this class contains everything else. This includes examples such as the two-torus, the annulus, as well as proper simply connected domains in the complex plane, etc.
\end{itemize}
The proofs in this paper (and its companion \cite{BLR_Riemann2}) are concerned with the hyperbolic case (subject to the above conditions) as well as the case of the torus. So from now on we always assume that $M$ is such a manifold. We note that the case of simply connected {proper} domain in $\C$ is covered in our previous work \cite{BLR16}. 


\subsection{Universal cover}
\label{SS:universalcover}

{ The universal cover $\tilde M$ of the Riemann surface $M$ will play an important role in our analysis.
Recall that any Riemann surface admits a (regular) covering map by a covering space which is simply connected. This covering space may furthermore be endowed with a conformal structure, with respect to which the covering map is then analytic. Being simply connected, this covering space is (by the uniformisation theorem, see Theorem 4.4.1 and Theorem 2.4.3 in \cite{Jost}) conformally equivalent to a Riemann surface $\tilde M$, which is either the unit disc $\D \subset \C$ (hyperbolic case), the whole plane (parabolic case), or the Riemann sphere $\hat \C$ (elliptic case). Altogether we obtain a map\footnote{For concreteness one can fix a point $x_0$ in $M$ and consider $p$ such that $p(0) = x_0$. In the hyperbolic case fix an arbitrary choice of rotation as well.} $p:\tilde M \to M$ which is both analytic and a regular covering map of $M$. By definition of a covering map, $p$ is a local homeomorphism: for every $z \in M$, there exists a neighbourhood $N$ containing $z$ so that $p$ is injective in every component of $p^{-1}(N)$.}
        

{
 Recall further 
 that 
 in each of the three cases (elliptic, parabolic, hyperbolic), $\tilde M$
 can be endowed with a metric (compatible with the complex structure) of constant Gaussian curvature equal to $1,0$ and $-1$ respectively. 
 Furthermore, there is a subgroup $F$ of conformal automorphisms on $\tilde M$ acting  properly discontinuously and freely -- i.e., without fixed points --  (see, e.g., \cite[Section 2.4]{Jost} for definitions),
 such that $p$ descends to a conformal equivalence between $\tilde M/F$ and $M$. Put it more simply, $F$ is a discrete subgroup of the corresponding set of M\"obius transformations describing the conformal automorphisms of $\tilde M$, and $M$ is conformally equivalent to  $\tilde M /F$ (i.e., to either $\hat \C / F$, $\C/F$ or  $\D/F$).




In case of the torus, this discrete subgroup is isomorphic to $\Z^2$ and the generators specify translations in the two directions of the torus. In the hyperbolic case, this class of subgroups is much bigger and are known as \textbf{Fuchsian groups} (see e.g. \cite{donaldson2011riemann} for a general account). This particular representation of a hyperbolic Riemann surface, sometimes called a Fuschian model, will be particularly convenient because it allows us to describe the scaling limits of Temperleryan or cycle rooted spanning forests \emph{in the universal cover}, rather than on the surface itself. This will allow us to import directly a number of the ideas and result from \cite{BLR16} on the simply connected case. 

{Concretely, one of the main advantages of the Fuchsian model of a hyperbolic Riemann surface -- and more generally of a lift to the universal cover which inherits the Euclidean metric -- is that, \emph{equipped with this Euclidean metric}, we have a flat embedding in which the sum of angles around a point is $2\pi$, the total winding of a simple loop is $2\pi$ (see also Lemma \ref{lem:int_to_top} below). The Fuchsian structure itself plays a more technical role, see for instance \cref{app:spine}. We rely on this for a canonical description of the height function in terms of the extended Temperley bijection.

To conclude, moving to the universal cover allows us in a sense to trade a natural law on a graph possessing a nontrivial topology, for a law with complicated periodicity-like constraints but on a planar graph.
}
}

\subsection{Intrinsic and topological winding}\label{sec:winding}

The goal of this section is to recall several notions of windings of
 curves drawn in the plane, which we use in this paper. We refer to \cite{BLR16} for a more detailed exposition. A self-avoiding (or simple) curve
 in $\C$ is an injective continuous map $\gamma:[0,T] \to \C$ for some $T \in
[0,\infty]$.

\medskip The \textbf{topological winding} of such a curve $\gamma$ around a point $p \notin \gamma[0,T]$ is defined as follows. We first
write
\begin{equation}
 \gamma(t)  - p = r(t)e^{i \theta(t)},
\end{equation}
where the function $\theta: [0,\infty) \to \R$ is taken to
be continuous, which means that it is unique modulo a global additive constant multiple of $2\pi$. We define the
winding for an interval of time $[s,t]$, denoted
$W(\gamma[s,t],p)$, to be
$$
W(\gamma[s,t],p) = \theta(t) - \theta(s)
$$ (note that this is uniquely defined).
Notice that if the curve has a derivative at an endpoint of $\gamma$, we can take $p$ to
be this endpoint by
defining
$$W(\gamma[0,t],\gamma(0)) := \theta(t)- \lim_{s \to 0} \theta(s)$$
and
similarly
$$W(\gamma[s,T],\gamma(T)) := \lim_{t
\to T} \theta(t)-\theta(s).
$$

With this definition, winding
is
additive: for any $0 \le s \le t \le T$
\begin{equation}
 W(\gamma[0,t],p) = W(\gamma[0,s],p) + W(\gamma[s,t],p).
\end{equation}

The notion of \textbf{intrinsic winding} we describe
now, also discussed in \cite{BLR16}, is perhaps a more natural definition of windings of curves. This notion is
the continuous analogue of the discrete winding of non backtracking paths in $\Z^2$
which can be defined just by the number of anticlockwise turns minus the number
of clockwise turns. Notice that we do not need to specify a reference point with
respect to which we calculate the winding, hence our name ``intrinsic" for this
notion.


We call  a curve smooth if the map $\gamma$ is smooth (continuously
differentiable).
Suppose $\gamma$ is smooth and for all $ t, \,\gamma'(t) \neq 0$. We write
$\gamma'(t) = r_\i(t)e^{i \theta_\i(t)}$ where again
$\theta_\i : [0,\infty) \to \R$ is taken to be continuous. Then
define the intrinsic winding in the interval $[s,t]$ to be
\begin{equation}\label{E:windingsmooth}
W_{\i}(\gamma[s,t]) :
= \theta_\i(t)- \theta_\i(s).
\end{equation}
The total intrinsic winding is again defined to be
$\lim_{t \to T} W_{\i}(\gamma,[0,t])$ provided this limit exists. Note that this
definition does not depend on the parametrisation of $\gamma$ (except for the assumption of
non-zero derivative). The following topological lemma
from \cite{BLR16} connects the intrinsic and topological windings for smooth curves.


\begin{lemma}[Lemma 2.1 in \cite{BLR16}]\label{lem:int_to_top}
 Let $\gamma[0,1]$ be a smooth {self-avoiding} curve in $\C$ then,
$$
 W_{\i}(\gamma[0,1]) = W(\gamma[0,1],\gamma(1)) + W(\gamma[0,1],\gamma(0)).
$$
\end{lemma}
Note that the right hand side in the above expression makes sense as soon as the endpoints of $\gamma$ are smooth. From now on we therefore extend the definition of intrinsic winding to all such curves using the equality of \cref{lem:int_to_top}. Since the topological winding is clearly a continuous function on the curve away from the point $p$, this extension is consistent with any regularisation procedure.
We also recall the following deformation lemma from \cite{BLR16} (see Remark 2.5).
\begin{lemma}\label{lem:winding_change_conformal}
Let $D$ be a domain and $\gamma \subset \bar D$ a simple smooth curve (or piecewise smooth with smooth endpoints). Let $\psi$ be a conformal map on $D$ and let $\arg_{\psi'(D)}$ be any realisation of argument on $\psi'(D)$. Then
\begin{equation*}
W_{\i}( \psi( \gamma ) )-  W_{\i}(\gamma) = \arg_{\psi'(D)}(\psi'(\gamma(1) ) ) - \arg_{\psi'(D)}( \psi'(\gamma(0) ) ).
\end{equation*}
\end{lemma}

\subsection{Discretization setup}\label{sec:setup}
{We will require the graphs we are working with to be embedded on $M$ in a way that preserves its topology and geometry, we make this precise now.

Let $M$ be a Riemann surface with $b$ holes and $g$ handles satisfying the assumptions of \Cref{sec:surface_embedding}. We say that a graph $\Gamma$ is \textbf{faithfully} embedded in $M$ if it satisfies the following (see \cref{fig:tem_example}):
\begin{itemize}
	\item the embedding is proper, i.e. edges do not cross; 
	\item $\Gamma$ is connected;
 \item Every face of $\Gamma$ not intersecting $\partial M$ has the topology of a disc.
	\item $\Gamma$ has a marked vertex (called a boundary vertex, the set of which is denoted by $\partial \Gamma$) for each component of $\partial M$;
	
\end{itemize}
Note that with this convention, we can think of each boundary vertex as being `delocalised' along the whole boundary component and indeed later we will consider cycle rooted spanning forests (CRSFs) with a wired condition on each boundary component. This `delocalised' boundary vertex is best illustrated to be sitting inside the hole, see \Cref{fig:tem_example}.

Let $\Gamma^{\dagger}$ denote the dual graph of $\Gamma$, defined as follows. Every face of $\Gamma$ contains a single vertex of  $\Gamma^\dagger$ and two vertices of $\Gamma^\dagger$ are connected by an edge if their corresponding faces share an edge of $\Gamma$.  We also assume the following for $\Gamma^{\dagger}$:
\begin{itemize}
\item The embedding is proper, i.e., the edges do not cross.
\item $\Gamma^{\dagger}$ is connected. 
\item Every face of $\Gamma^\dagger$ corresponding to a non-boundary vertex of $\Gamma$ is a topological disc.
\item The vertices of $\Gamma^{\dagger}$ corresponding to the faces of $\Gamma$ which contain a given boundary vertex form a simple cycle homotopic to the corresponding boundary component of $\partial M$. We call this the \textbf{boundary cycle}, and we denote this by $\partial \Gamma^\dagger$.
\end{itemize}
}

\begin{figure}[h]
	\centering
	\includegraphics[width = \textwidth]{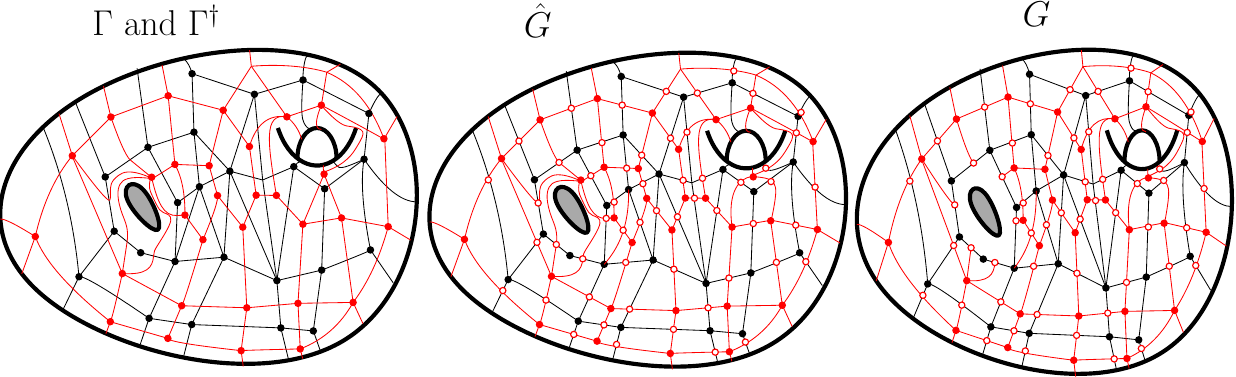}
	\caption{The graphs $\Gamma, \Gamma^\dagger, \hat G$, and $G$. The graph $\Gamma$ is pictured in red and $\Gamma^\dagger$ in black.}\label{fig:tem_example}
\end{figure}

We assign an \textbf{oriented} weight $w(e)= w_\Gamma(e)\ge 0$ to every oriented edge of $\Gamma$. This defines a continuous time Markov chain on $\Gamma$ where for every edge $(x, y)$ in $\Gamma$, the chain jumps from $x$ to $y$ at rate $w( x,y)$, and with a slight abuse of terminology we will from now on refer to this chain as the \emph{random walk} on $\Gamma$. Note that we defined it in continuous time for simplicity but we will in fact always be interested only by the geometry of its paths so its time parametrisation is mostly irrelevant. With a slight abuse of notations, we will often identify the trajectory of the random walk (or other processes such as loop-erased paths) with the continuous path obtained from the set of edges used by the walk, either seen as a path or as a subset of $M$. Similarly, with a slight abuse of notation, we will use direct image notations to denote sub-paths of the random walk, i.e if $s < t$ are two times, we will denote by $X[s, t]$ the path followed by the random walk between times $s$ and $t$. Finally we will always stop the random walk whenever it reaches a boundary vertex so we assume that all oriented edges starting at such a vertex have weight $0$. We emphasize that there are no weights on the dual graph $\Gamma^\dagger$ and indeed we will never consider the random walk on the dual graph.

\begin{remark}To explain the asymmetry between $\Gamma$ and $\Gamma^\dagger$, we point out that $\Gamma$ can be thought of as a graph that is wired at the boundary of $\partial M$ whereas $\Gamma^\dagger$ has free boundary conditions.
\end{remark}

In order to apply locally the bijection between dimers and (a variant of) spanning trees, we need to define the superposition of $\Gamma$ and $\Gamma^\dagger$. We introduce a new set of vertices $W$ at each point where some edge $e$ and dual edge $e^\dagger$ intersect. Define the graph $\hat G$ whose vertices are $ V = V(\Gamma) \cup V(\Gamma^\dagger) \cup W$ and whose edges are obtained by joining every vertex in $W$  which is on the edge $e$ with the endpoints of $e$ and $e^\dagger$ (in other words, each pair of edges $e$ and $e^\dagger$ corresponds to four edges in $\hat G$). We call $\hat G$ the \textbf{superposition graph}. Then we let $G$ denote the graph where we remove from $\hat G$ the boundary vertices $\partial \Gamma$ and its corresponding half edges. See \cref{fig:tem_example} for an illustration. Clearly, $G$ is a bipartite graph where we take  $W$ to be the set of white vertices and the rest to be black. Also every non-outer face of $G$ is bounded by $4$ edges (i.e. every non-outer face is a quadrangle). We can naturally define \textbf{un}oriented weights on $G$ from the ones on $\Gamma$. If $w$ is the white vertex in the middle of the edge $(x,y)$ of $\Gamma$, we set $w_G( x,w) = w_\Gamma( x, y)$ and $w_G( y,w) = w_\Gamma( y,x)$ while $w_G(e) =1$ if $e$ is part of an edge of $\Gamma^\dagger$.
{It will be useful later for the definition of the height (see \cref{sec:tree_embedded}) to also choose in each face $f$ of $\hat G$ a \textbf{diagonal} $d(f)$ which is just a curve connecting the primal and dual vertices adjacent to $f$ without exiting $f$, together with a \textbf{midpoint} $m(f)$ which is just a point in the interior of the diagonal. If the primal  vertex adjacent to $f$ is a boundary point choose the diagonal as a path connecting the boundary and the dual vertex. It will be clear from \cref{sec:tree_embedded} that the choice of these diagonals and midpoints only affects some of the arbitrary conventions needed to define the height function and that any statement on the centered height $h - \E( h)$ is independent of this choice.}

{We will prove in \cref{sec:Temp} that if the manifold $M$ is not a torus or an annulus, one must introduce \textbf{punctures} in the graph $G$ for it to even admit dimer coverings. More precisely we will remove finitely many white vertices from $G$ and call a  removed vertex and the edges associated with it a \textbf{puncture} (equivalently, the puncture may be thought of as forcing the vertex to be a monomer instead of in a dimer edge). Note that each puncture creates an octagon  $G$ consisting of four white vertices, two black vertices from $\Gamma$ and two black vertices from $\Gamma^\dagger$ (see \cref{edge_removal}). We will call the resulting graphs respectively $G',\Gamma',(\Gamma^\dagger)'$ and we call the manifold obtained by removing the white vertices $M'$. We will see later in \cref{sec:Temp} that the number of punctures we remove must be $ {\sf k} = |\chi|$ where $\chi$ is the Euler characteristic of $M$. When this is the case we will call $G'$ a \textbf{Temperleyan graph} on $M$ or \textbf{Temperleyan discretisation} of $M$.}


\begin{figure}[h]
	\centering
	\includegraphics[scale = 0.5]{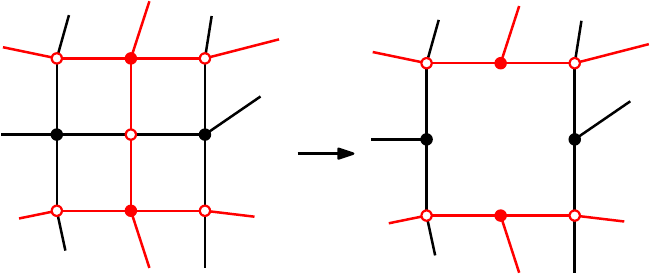}
	\caption{Removing a white vertex (puncture) to create the dimer graph $G'$.}
	\label{edge_removal}
\end{figure}


When considering scaling limits, we will of course consider a sequence of graphs $\Gamma^\d, (\Gamma^\dagger)^\d$, {$ G^\d$} from the setup above. We now introduce our assumptions on such a sequence. 


Let $p: \tilde M \to M$ be a map which is both analytic and a regular covering of $M$  by its universal cover $\tilde M$, which is either the unit disc $\mathbb D \subset \C$ or the whole plane $\C$ (recall the discussion in  \Cref{SS:universalcover} for the existence of this map.)
 In the end the choice of this covering map does not affect the following assumptions, see \cref{lem:conf_inv_assumption} for a precise discussion about this covering map and also about the covering map for the punctured manifold.

{Recall from \Cref{sec:surface_embedding} that we equipped $M$ with a distance function $d_M$ derived from a Riemannian metric, which extends to the closure $\bar M$; we still denote this extended distance function by $d_M$.}


\begin{enumerate}[{(}i{)}]
	\item \label{boundeddensity} \textbf{(Bounded density)} There exists a constant
	$C$ independent of $\delta$ such that for any $x \in M$, the number
	of vertices of $\Gamma^\d$ in the ball $\{z \in M: d_M(x,z) <\delta\}$ is smaller
	than $C$.
	\item \label{embedding} \textbf{(Good embedding)} The edges and diagonals of the graph are embedded
	as smooth curves and for every compact set  $K \subset \tilde M$, the intrinsic winding of every edge or diagonal in the lift $\tilde \Gamma^\d$ intersecting $K$ is bounded by a constant $C=C_K$ depending only on $K$. (Note that this allows edges to wind quite a bit near holes.)
	
	\item \label{InvP} \textbf{(Invariance principle)} As $\delta
	\to 0$, the continuous time random walk $\{\tilde X_t\}_{t \ge 0}$ on
	$\tilde \Gamma^\d$
	started from a nearest vertex to $0$ satisfies:
	$$
	{( \tilde X_{t/\delta^2})_{t \ge 0}} \xrightarrow[\delta \to 0 ]{(d) }
	(B_{\phi(t)})_{t \ge 0},
	$$
	where $(B_t, t \ge 0)$ is a two dimensional standard Brownian motion in $\tilde M$ (killed when it leaves $\tilde M$, if $\tilde M = \D$) started from
	$0$, and $\phi$ is a nondecreasing, continuous, possibly random function
	satisfying $\phi(0) = 0 $ and $\phi(\infty) = \infty$. The above convergence
	holds in law in Skorokhod topology.
	
	We remark that the above condition is equivalent to asserting that simple random walk from some fixed vertex converges to Brownian motion on the Riemann surface itself up to time parametrisation (see e.g. \cite{hsu2002}).

	\begin{figure}[h]
		\centering
		\includegraphics[scale = 0.5]{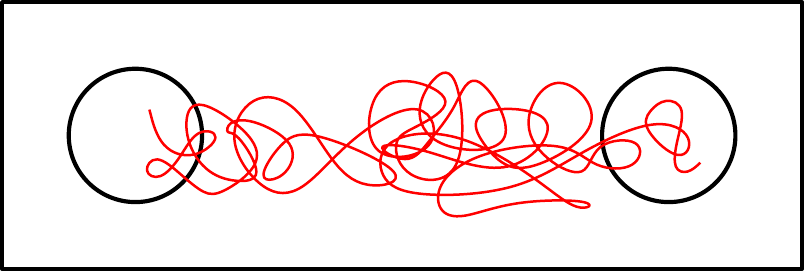}
		\caption{An illustration of the crossing condition.}\label{fig:crossing}
	\end{figure}

	\item \label{crossingestimate} \textbf{(Uniform crossing estimate).}
	Let $R$
	be the horizontal rectangle $[0,.3]\times
	[0,.1]$ and $R'$ be the vertical
	rectangle $[0,.1]\times [0,.3]$.
	Let $B_1 :=
	B((.05,.05),0.025)$ be the
	\emph{starting ball} and $B_2:=
	B((.25,.05),.025) $ be the \emph{target ball}
	(see \cref{fig:crossing}).

	The uniform crossing condition is the following. There exist universal constants $\delta_0,\alpha_0>0$ such that for every compact set $K  \subset  M$, there exists a $\delta_K$ such that for all $\delta \in (0,\delta_K)$  the following is true. Let $\tilde K = p^{-1}(K)$ be the lifts of $K$. Let $R''$ be a set of the form $cR+z$, where $c \ge \delta/\delta_0$ and $z \in \R^2$ (i.e. a scaling and translate of $R$), and is such that $R'' \subset \tilde K$.

	
	Let $B_1'' = cB_1+z$ and  $B_2'' = cB_2+z$. For all $v \in \tilde \Gamma^\d \cap B_1''$,
	\begin{equation}
	\P_{v}(\tilde X \text{ hits }B''_2 \text{ before exiting } R'')
	>\alpha_0.\label{eq:cross_left_right}
	\end{equation}
	{We emphasise that this crossing condition is defined in the Euclidean metric in the disc, and not the (perhaps more standard) hyperbolic metric when dealing with the universal cover of a hyperbolic surface.

	In what follows, sometimes for a compact set $S \subset \tilde M$, we will write $\delta_S$ to mean $\delta_{p(S)}$ as defined above.}
	%
	
\item \textbf{(punctures)} \label{punctures} We remove $\sf k = |\chi|$ many points from (the interior of) $M$ and call the resulting surface $M'$, where $\chi$ is the Euler characteristic of $M$. We assume that each of the ($\sf k $ many) monomers (white vertices removed from $\hat G$ to obtain $G'$) converge to a unique puncture as $\delta \to 0$ which is a given point in the interior of the Riemann surface. { We also assume without loss of generality that 
for all $\delta$, the punctures are sufficiently far (say at graph distance at least some large fixed constant) from the boundary.}

{ Finally, we also suppose that the following holds. Let $u_i,v_i$ be the endpoints of the edge of $\Gamma$ corresponding to the white monomer removed for $1 \le i \le \sf k$. Then there exist paths $\gamma_{u_i}, \gamma_{v_i}$ (viewed as a ordered collection of adjacent edges $((x_1,x_2), (x_2,x_3),\ldots, (x_{r-1},x_{r}))$ or as a continuous path depending on context) satisfying the following. The paths start with $x_1 = u_i$ or $x_1=v_i$ respectively for $\gamma_{u_i}, \gamma_{v_i}$, and we assume
\begin{equation}
M \setminus \bigcup_{1 \le i \le \sf k} (\gamma_{u_i} \cup \gamma_{v_i } \cup (u_i,v_i))\label{eq:necessary_cond}\end{equation}
is topologically a disjoint union of annuli.


\begin{figure}[h]
		\centering
		\includegraphics[scale = 0.5]{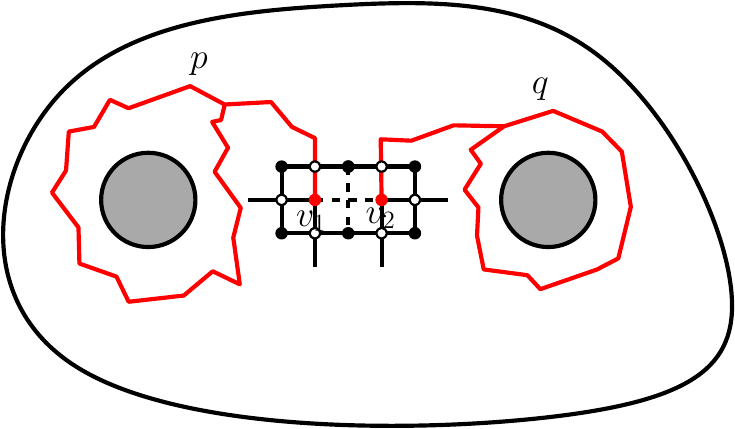}\hspace{.5cm}
		\includegraphics[scale = 0.5]{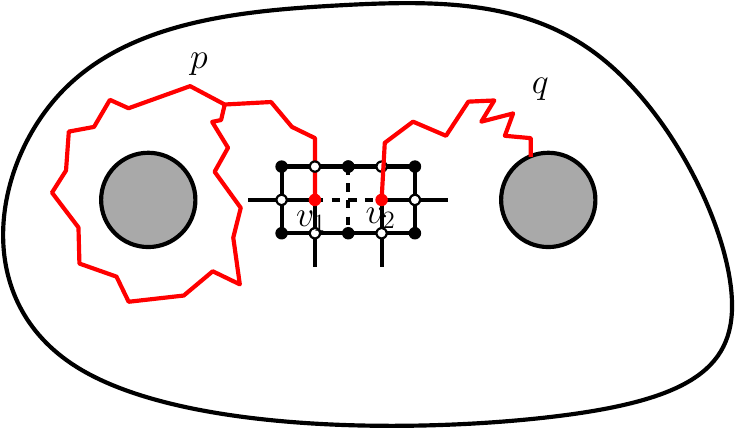}
		\caption{ {An illustration of the assumption in \Cref{punctures}. The paths $\gamma_{u_1}, \gamma_{v_1}$ consists of $p$ and $q$, which decompose the surface into a number of disconnected annuli: three in the first case, two in the second.
			The dotted edges are the ones removed from $G$ to get $G'$.}} \label{F:pants}
\end{figure}
}


	
\end{enumerate}

{\begin{remark}
Typically the existence of the paths $\gamma_{u_i}$ and $\gamma_{v_i}$ for $\delta $ small enough is guaranteed by the crossing condition and the \emph{pants decomposition} of $M$. In words, the paths decompose the surface into disjoint annuli, see Figure \ref{F:pants}. However, it is possible to construct pathological examples (for large $\delta$) where there is no paths such that \eqref{eq:necessary_cond} holds, so this needs to be made as a separate assumption. In fact, we will see that (v) is equivalent to the graph $G$ admitting a dimer cover (see Figure \ref{fig:tem_example}), so we could equivalently assume the latter instead -- this would be more elegant but less concrete.
\end{remark}	
}

{\begin{lemma}[Conformal invariance of assumptions]\label{lem:conf_inv_assumption}
The assumptions (\ref{embedding}), (\ref{InvP}), (\ref{crossingestimate}) on the sequence $(\Gamma^\d)_{\delta>0}$ are invariant with respect to the choice of the covering map $p$, in the sense that if $p'$ is any other analytic and regular covering of $M$ then these assumptions are satisfied for some (possibly different) choices of $\delta_0,\alpha_0,(\delta_K)_{K \subset M}, (C_K)_{K \subset \tilde M}$.

Furthermore, if $(\Gamma^\d)_{\delta}$ satisfies the assumptions (\ref{boundeddensity})-(\ref{punctures}) for a choice of covering map $p$ and constants $C,\delta_0,\alpha_0, (\delta_K)_{K \subset M}, (C_K)_{K \subset \tilde M}$ then these assumptions are invariant with respect to conformal transformations in the sense that if $\psi: (M,x_1,x_2,\ldots, x_{\sf k}) \to (N,x'_1,x_2',\ldots, x'_{\sf k})$ is a conformal bijection mapping $M$ to $N$ and $x_i$ to $x'_i$ for $1 \le i \le \sf k$, then
$(\psi(\Gamma^\d))_{\delta>0}$ also satisfies assumptions (\ref{boundeddensity})-(\ref{punctures}) for the choice of the covering map $p\circ \psi$ and constants $C,\delta_0,\alpha_0, (\delta_{\psi^{-1}(K)})_{K \subset M'}$, and $(C_K)_{K \subset \tilde M}$.
\end{lemma}
\begin{proof}
We start with the second of these two assertions, for which items (\ref{embedding})-(\ref{punctures}) are trivial. For (\ref{boundeddensity}), if $M$ has no boundary then $d_M (x,y) = d_N (\psi(x), \psi(y))$ (essentially because M\"obius maps are isometries with respect to the hyperbolic metric on the unit disc). Thus (i) holds. 
If $M$ has a boundary, then note that the conformal bijection $\psi$ between $M$ and $N$ extends to a conformal bijection $\bar \psi$ between the doubles of $ M, N$; and we are back to the previous case.

Let us now prove the first assertion. Note that by the uniformisation theorem, there exists a conformal bijection (M\"obius map) $\phi: \tilde M \to \tilde M$ such that $p\circ \phi = p'$. Note $\phi$ is a M\"obius map from the unit disc to itself in the hyperbolic case and from the complex plane to itself in the torus case. Assumption (\ref{embedding}) is trivially preserved in the torus case as M\"obius maps are translations and rotations. For the hyperbolic case, we can employ the change in winding under conformal map formula (\Cref{lem:winding_change_conformal}), and note that derivatives of M\"obius maps are bounded on compact subsets of the open unit disc.  In assumption (\ref{InvP}), we only require convergence of the random walk up to time change which is preserved under M\"obius maps.  Assumption (\ref{crossingestimate}) is easily seen to be preserved in the  torus case as a M\"obius map maps a rectangle to another rotated and translated rectangle, which can be crossed by concatenating bounded number of vertical and horizontal rectangles of smaller scales. In the hyperbolic case, a rectangle is mapped to a domain bounded by four circular arcs, and the starting and target discs are mapped to discs inside this domain. Furthermore, compact sets are mapped to compact sets. Thus concatenating domains of this type, it is easy to see that uniform crossing estimate holds for $M$, perhaps for different positive constants $\delta_0, \alpha_0, (\delta_K)_{K \subset M}$.
\end{proof}
}

	{\begin{remark}\label{rmk:start_point}
		The Invariance principle assumption (\cref{InvP}) actually implies something stronger in combination with the other assumptions: for any point $x$ in $\tilde M$, the random walk started from a vertex $x^\d$ nearest to $x$ converges to a Brownian motion started from $x$ up to a time change as above. This is a consequence of the fact that random walk from 0 comes close to $x$ with uniformly positive probability using the crossing estimate and the strong Markov property of Brownian motion.
	\end{remark}
	}
	
In case $\partial M \neq \emptyset$, recall that the set of boundary cycles $(\partial \Gamma^\dagger)^\d$ corresponds to the connected components of $\partial M$. One consequence of the Invariance principle assumption (\cref{InvP}) is that each boundary cycle converges in the Hausdorff metric (induced by $d_{ M}$) to the associated component of $\partial M$.

Sometimes we drop the superscript $\delta$ from $\Gamma^\d, (\Gamma^\dagger)^\d$ for clarity, when there is no possibility of a confusion.


\begin{remark}\label{rem:lift} 

 In the hyperbolic case, note also that while we have stated the assumptions~(\ref{embedding}), (\ref{InvP}) and (\ref{crossingestimate}) on the universal cover of $M$, {these assumptions are also valid for the sequence $(\Gamma^\d)_{\delta>0}$ on the universal cover of the punctured surface $M'$ as well if we endow it with the metric it inherits from $M$ via inclusion.} This can be checked using the fact that there is a map from the universal cover of $M'$ to $\tilde M \setminus p^{-1} (\{x_1, \ldots, x_{\sf k}\})$ (where $x_1, \ldots, x_{\sf k}$ is the set of punctures) which is locally a conformal bijection. { Also note that (\ref{boundeddensity}) is also satisfied in $M'$ trivially by the choice of the metric.} 
 	
\end{remark}


\subsection{Height function and forms}\label{sec:ht}
  A \textbf{flow} $\omega$ is a real valued antisymmetric function on the oriented edges $\vec{E}$ of $G$, i.e., for every oriented edge $(u,v)$, $\omega(u,v) = -\omega (v,u)$.  The total flow out of a vertex $v$ is defined to be $\sum_{w \sim v} \omega(v,w)$. Similarly, the total flow into a vertex $v$ is defined to be $\sum_{w \sim v}  \omega (w,v)$. A flow $f$ is a \textbf{closed 1-form} if the sum over any oriented contractible cycle is 0: i.e.,
for any oriented cycle $(v_0,v_1,\ldots, v_n=v_0)$ in $G$ so that the embedding of $\cup_{i=0}^{n-1}(v_i,v_{i+1})$ in $M$ forms a contractible loop,
$$
\sum_{i=0}^{n-1} \omega(v_i,v_{i+1}) =0.
$$
It is clear that if $M$ is simply connected, then there exists a function $f$ on the vertices of $G$ (uniquely defined up to a global constant) such that $f(v)-f(u) = \omega(u,v)$ for all $u \sim v$.

We now associate to any dimer configuration $\bf m$ on $G$ a closed 1-form on $\vec{E}$. Let $\bf m$ be a dimer configuration on $G$, and let $\vec e = (w,b)$ be an oriented edge, where $w$ is a white vertex and $b$ a black vertex.
We define the flow $ \omega_{\bf m}$ by setting $\omega_{\bf m} (\vec e) =1_{\{ e \in \bf m\} }$. Also, $\omega_{\bf m}$ is defined in an antisymmetric way: $\omega_{\bf m}( (b,w)) = - \omega_{\bf m} ((w,b))$.
Note that the total flow out of a white vertex is 1 and that out of a black vertex is -1.

To any flow $\omega$ on oriented edges, one can associated a dual flow $\omega^\dagger$ defined on the oriented edges of the dual graph $G^\dagger$, where if $e^\dagger$ crosses the edge $e = (w,b)$ with $w$ on its right and $b$ on its left, then we set $\omega^\dagger (e^\dagger) = \omega(e)$.  Note also that if $\omega$ is divergence free (i.e., the flow out of every vertex is $0$), then $\omega^\dagger$ is a closed 1-form on $\vec{E^\dagger}$.

Consider any \textbf{reference flow} $\omega_0$ which
has total flow out of white vertex equal to $1$ and total flow out of a black vertex equal to $-1$. Then
$\omega = \omega_{\bf m} - \omega_0$ defines a divergence free flow on $\vec{E}$.
%
We call $\omega^\dagger$ the \textbf{height 1-form} corresponding to $\bf m$ with reference flow $\omega_0$.

When $G$ is embedded on {a simply connected domain (so} that, in particular, no cycle in $G^\dagger$ is non-contractible), every closed 1-form $\omega$ on $\vec{E^\dagger}$ becomes exact: i.e., there exists a function on the faces $F(G)$ of $G$,
$h: F(G) \mapsto \R$, so that for any two adjacent faces $f,f'$,
$$h(f' ) - h(f) = \omega(f,f').$$ Observe further that this function is defined only up to a global constant. The function $h$ is then called the \textbf{height function} of the dimer $\bf m$, admitting an abuse of terminology.

We recall the following simple but useful observation.
A \textbf{path} in $G$ (or $G^\dagger$) is a sequence of vertices $(v_0,\ldots, v_n)$ (or faces $(f_0,\ldots, f_n)$) of $G$ so that $v_i$ is adjacent to $v_{i+1}$ (or $f_i$ is adjacent to $f_{i+1}$ in $G^\dagger$) for all $0 \le i \le n-1$.
\begin{lemma}[Unique path lifting property]
Let $\gamma = (f_0,f_1,\ldots, f_n)$ be a path (not necessarily simple) in $G^\dagger$. Let $\tilde f_0$ be a lift (i.e. one pre-image) of $f_0$ to $\tilde M$. Then there exists a unique path $\tilde \gamma = (\tilde f_0,\tilde f_1,\ldots, \tilde f_n)$ in $\tilde M$ so that $\tilde f_i$ is the lift of $f_i$. Further, $\sum_{i=0}^{n-1} \omega(\tilde f_i, \tilde f_{i+1}) = h(\tilde f_n ) - h(\tilde f_0)$.
\end{lemma}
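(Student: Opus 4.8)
The plan is to obtain this as a combination of the unique path-lifting property of covering spaces with an elementary telescoping identity, so I expect the proof to be short. Write $p : \tilde M \to M$ for the covering map and set $\tilde G := p^{-1}(G)$ and $\tilde G^\dagger := p^{-1}(G^\dagger)$, regarded as $1$-complexes embedded in $\tilde M$; since $p$ is a covering map and $G^\dagger$ is topologically embedded in $M$ (so that an evenly covered neighbourhood in $M$ of a point of $G^\dagger$ meets $G^\dagger$ in an evenly covered neighbourhood, using that $p$ acts discretely), the restriction $p : \tilde G^\dagger \to G^\dagger$ is itself a covering map of $1$-complexes. I would also first record how to read $h$ on the lift: in the present setting $h$ is already a genuine function on $F(G)$, so for a face $\tilde f$ of $\tilde G$ we simply set $h(\tilde f) := h(p(\tilde f))$; (more generally, even without the assumption that no dual cycle is noncontractible, such an $h$ exists on $\tilde M$, because $p^{*}\omega$ is a closed $1$-form on the simply connected surface $\tilde M$ and is therefore exact). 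In either case $h$ satisfies $h(\tilde f') - h(\tilde f) = \omega(\tilde f, \tilde f')$ for adjacent faces of $\tilde G$, where on the left $\omega$ denotes its pullback, i.e. its value on the image dual edge in $G^\dagger$.

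For existence and uniqueness of $\tilde\gamma$, I would realise the combinatorial path $\gamma = (f_0, \dots, f_n)$ as a continuous path $\sigma : [0,1] \to G^\dagger \subseteq M$ that begins in the vertex $f_0$ of $G^\dagger$, crosses the interior of one dual edge between $f_i$ and $f_{i+1}$ for each $i$, and ends in the vertex $f_n$. Applying the path-lifting property for the covering $p : \tilde G^\dagger \to G^\dagger$ together with the prescribed lift $\tilde f_0$ of the initial vertex, there is a unique continuous lift $\tilde\sigma$ of $\sigma$ starting in $\tilde f_0$; it passes in turn through faces $\tilde f_0, \tilde f_1, \dots, \tilde f_n$ of $\tilde G$, and $p(\tilde f_i) = f_i$ because $p\circ\tilde\sigma = \sigma$. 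This produces the path $\tilde\gamma = (\tilde f_0, \dots, \tilde f_n)$. Conversely, any sequence $(\tilde f_0, \tilde f_1', \dots, \tilde f_n')$ of faces of $\tilde G$ lifting $\gamma$ and starting at $\tilde f_0$ can be realised by a continuous lift of the same $\sigma$, which by uniqueness must be $\tilde\sigma$; hence the lifted face-sequence is the one produced above. (Equivalently one may argue combinatorially by induction: given $\tilde f_i$, the dual edge crossed between $f_i$ and $f_{i+1}$ has exactly one lift incident to $\tilde f_i$ — since $p$ restricts to a covering of $\tilde G^\dagger$ over $G^\dagger$ — and $\tilde f_{i+1}$ is defined as the face on its far side.)

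Finally the stated identity is just a telescoping sum. For each $i$, the oriented lifted dual edge from $\tilde f_i$ to $\tilde f_{i+1}$ maps under $p$ to the oriented dual edge from $f_i$ to $f_{i+1}$, so $\omega(\tilde f_i, \tilde f_{i+1}) = \omega(f_i, f_{i+1}) = h(f_{i+1}) - h(f_i) = h(\tilde f_{i+1}) - h(\tilde f_i)$ by the defining property of $h$ and the normalisation above. Summing over $i = 0, \dots, n-1$ yields $\sum_{i=0}^{n-1}\omega(\tilde f_i,\tilde f_{i+1}) = h(\tilde f_n) - h(\tilde f_0)$.

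I do not anticipate a real obstacle here; this is a routine unwinding of covering-space theory. The only point that needs a moment's care is making sure the right-hand side of the identity is well-defined — i.e. that a height function exists on the universal cover — which is exactly what simple-connectedness of $\tilde M$ (together with closedness of $\omega$ on $M$) provides; once that is in place, the argument reduces to path lifting followed by a telescoping sum.
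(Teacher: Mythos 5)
The paper does not include a proof of this lemma — it is introduced with the phrase ``We recall the following simple but useful observation'' and left unproved. Your argument supplies exactly the standard proof one would expect: existence and uniqueness of the lift by the path-lifting property of covering spaces (applied to $p\colon \tilde G^\dagger \to G^\dagger$, which you correctly note is itself a covering because $G^\dagger$ is topologically embedded and $p$ acts discretely), and the height identity by telescoping the defining relation $h(f')-h(f)=\omega(f,f')$ along the lift. Your aside clarifying that $h$ on the lift can either be taken as $h\circ p$ under the no-noncontractible-dual-cycle hypothesis or, more generally, obtained by integrating the pulled-back closed form on the simply connected $\tilde M$, is a useful precision — the paper's phrasing is a bit ambiguous on which reading of $h(\tilde f)$ is intended, and your treatment covers both. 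The argument is correct and there is nothing to compare against on the paper's side.
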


\medskip We now turn to the definition of height function in the more complicated case when $M$ is no longer assumed to be simply connected. In that case, when we sum the values of the height 1-form (defined above)
 along any non-contractible cycle, we may get a nonzero value. One can use the \textbf{Hodge decomposition theorem}, to isolate out the part of the height 1-form which is encoded by the topology of the underlying surface. The Hodge decomposition theorem works in great generality, but in the present context, it takes the following simple form. For any function $f$ on the vertices of $G$ we define $df$ to be the closed 1-form defined on $\vec{E}$ as
$$
df(u,v) = f(v)-f(u).
$$
A  \textbf{harmonic 1-form} $\h$ is a closed 1-form which is divergence free, so that $\sum_{v\sim u} \h(u,v) =0$.
\begin{thm}[Hodge decomposition \cite{aru2015,bott,mercat07}]\label{thm:hodge}
For any closed 1-form $\omega$ on $G$ (or $G^\dagger$),  there exist a function $f$ on the vertices of $G$ and a harmonic 1-form $\h$ defined on $\vec{E}$ such that
$$
\omega = df +\h,
$$
and $f$ is unique up to an additive global constant, and $\h$ is unique. Furthermore, $\h$ is completely determined by summing $\omega$ over a finite set of oriented non-contractible cycles which forms the basis of the first homology group of $M$.
\end{thm}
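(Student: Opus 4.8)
The plan is to recognise this as a statement of finite-dimensional linear algebra, via discrete Hodge theory on the cellular cochain complex of the embedded graph. Write $C^0$ for the space of real functions on the vertices of $G$, $C^1$ for the space of flows (antisymmetric functions on $\vec E$), and $C^2$ for the space of functions on the interior faces of $G$. Because the embedding is nice (all interior faces are topological discs, boundary components realised by simple cycles — the conditions deferred to \cref{SS:tempgraph,sec:setup}), the maps $d\colon C^0\to C^1$ from the statement and the map $\delta\colon C^1\to C^2$ sending a flow to the function recording its sum around the boundary of each face assemble into a cochain complex computing $H^\bullet(M;\R)$. In this language a closed $1$-form is exactly an element of $\ker\delta$, since the boundary of a face is a contractible cycle and, conversely, every contractible cycle is homologous to a sum of face boundaries. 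Equip $C^1$ with the standard inner product $\langle\alpha,\beta\rangle=\sum_e\alpha(e)\beta(e)$ (sum over edges with a fixed but arbitrary orientation), and let $d^\ast\colon C^1\to C^0$ be the adjoint of $d$. A one-line rearrangement by vertex identifies $d^\ast\omega$ at a vertex $v$ with (minus) the total flow out of $v$, so $\ker d^\ast$ is exactly the space of divergence-free flows, and hence the harmonic $1$-forms are precisely the elements of $\cH:=\ker\delta\cap\ker d^\ast$.

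For existence I would argue by orthogonal projection: given a closed $1$-form $\omega$, let $f\in C^0$ minimise $g\mapsto\|\omega-dg\|^2$ (a minimiser exists as this is a quadratic function on a finite-dimensional space) and set $\h:=\omega-df$. The first-order optimality condition $\langle\omega-df,\,dg\rangle=0$ for all $g$ says precisely $d^\ast\h=0$, so $\h$ is divergence-free; and $\h$ is closed because $\omega$ is and every exact form is closed (i.e. $\delta\circ d=0$, the boundary of a face being a closed cycle). Thus $\omega=df+\h$ with $\h$ harmonic. For uniqueness, if $df_1+\h_1=df_2+\h_2$ with both $\h_i$ harmonic, then $d(f_1-f_2)=\h_2-\h_1$; pairing with $d(f_1-f_2)$ and using $\langle d\phi,\h\rangle=\langle\phi,d^\ast\h\rangle=0$ for harmonic $\h$ gives $\|d(f_1-f_2)\|^2=0$, so $\h_1=\h_2$ and $d(f_1-f_2)=0$; since $G$ is connected this forces $f_1-f_2$ to be a global constant.

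For the last assertion — that $\h$ is determined by the periods of $\omega$ over a homology basis — I would first record the dimension count. The orthogonal decomposition $C^1=\operatorname{im}d\oplus\ker d^\ast$ together with $\operatorname{im}d\subseteq\ker\delta$ gives $\ker\delta=\operatorname{im}d\oplus\cH$, hence $\dim\cH=\dim(\ker\delta/\operatorname{im}d)=\dim H^1(G;\R)=\dim H^1(M;\R)=\dim H_1(M;\R)=b_1(M)$, using the nice embedding for the third equality and universal coefficients for the last. Now fix oriented cycles $\gamma_1,\dots,\gamma_{b_1(M)}$ in $G$ whose classes form a basis of $H_1(M;\R)$ (again possible because the nice embedding makes $H_1(G;\R)\to H_1(M;\R)$ an isomorphism), and consider the period map $P\colon\cH\to\R^{b_1(M)}$, $P(\eta)=\big(\sum_{e\in\gamma_j}\eta(e)\big)_j$. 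The period pairing on $\ker\delta\times H_1(G;\R)$ descends to the nondegenerate pairing $H^1(G;\R)\times H_1(G;\R)\to\R$, and since $\cH$ is a linear complement of $\operatorname{im}d$ in $\ker\delta$, the restriction $P$ is injective, hence — by the dimension count — a bijection. Finally, $\omega-\h=df$ has vanishing period along every cycle, so the periods of $\h$ coincide with those of $\omega$, namely the explicitly computable sums $\sum_{e\in\gamma_j}\omega(e)$; injectivity of $P$ then pins down $\h$ uniquely.

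I expect the only genuinely non-routine ingredient to be the topological input: that the nice embedding of $G$ makes its cellular chain complex compute $H_\bullet(M;\R)$, so that $b_1(G)=b_1(M)$, a homology basis of $M$ is represented by cycles of $G$ forming a homology basis of $G$, and the period pairing is nondegenerate. This rests on the embedding hypotheses (interior faces homeomorphic to discs, boundary components simple cycles), which I would invoke from \cref{SS:tempgraph,sec:setup} rather than reprove; once granted, the rest is the standard finite-dimensional least-squares/Hodge argument. The entire argument applies verbatim with $G$ replaced by $G^\dagger$, whose embedding is again nice.
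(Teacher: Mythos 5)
Your proof is correct; note, however, that the paper does not give its own argument here — \cref{thm:hodge} is stated with citations (to a discrete Hodge theory reference and to Bott--Tu) and invoked as known, so there is no in-paper proof to compare against. Your route is the standard finite-dimensional discrete Hodge decomposition: identify closed $1$-forms with $\ker\delta$, harmonic ones with $\ker\delta\cap\ker d^{\ast}$, produce the splitting by orthogonal projection (least squares) onto $\operatorname{im} d$, and obtain uniqueness from $\operatorname{im} d\perp\ker d^{\ast}$ and connectedness of $G$. This is precisely the argument one expects the cited sources to contain, and every step checks out. You also correctly isolate the single non-routine input — that the faithful embedding makes the graph's $2$-complex a model for $M$ (interior faces are discs, outer faces deformation-retract away), so that $b_1$ of the complex equals $b_1(M)$, contractible cycles coincide with cycles in $\ker\delta$ that are boundaries, and the period map $\cH\to\R^{b_1(M)}$ is an isomorphism. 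Deferring that topological fact to the embedding hypotheses of \cref{SS:tempgraph,sec:setup} is exactly what the paper itself implicitly does. One small remark: in the closed case (the torus, $b=0$) the map $\delta$ is not surjective — its cokernel is $H^2(M;\R)\cong\R$ — so your dimension count $\dim\cH = b_1(M)$ silently uses $\operatorname{rank}\delta = F - \dim H^2$; this is automatic but worth flagging, since for $b\ge 1$ you have $H^2 = 0$ while for the torus you do not. The conclusion is unaffected either way.
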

%

In this paper, we will analyze $(f,\h)$ corresponding to the divergence free flow $\omega_{\bf m} - \omega_0$, where $\bf m$ is dimer configuration chosen from the law \eqref{Gibbs} subject to certain natural conditions and $\omega_0$ is a carefully chosen reference flow (in fact, we will consider the height function see \cref{sec:ht_convergence} for a precise statement). We will call $\h$ the \textbf{instanton component}. We remark that changing the reference flow changes $(f,\h)$ by a deterministic additive factor, and in particular does not affect the fluctuations of $(f, \h)$ around their mean. To be more precise, our main theorem (\cref{thm:main_precise}) will be stated in terms of the single-valued function associated to $\omega_{\bf m} - \omega_0$ on the universal cover of $M'$. See Remark \ref{rmk:instanton_jt_conv} for a statement concerning both the scalar and instanton parts of the height function.


Throughout the paper, rather than working with the scalar and instanton components of the height 1-form, it will be more convenient to lift the height 1-form $\omega$ to the universal cover of $M$. Since the latter is always simply connected, this allows us to work with actual functions without having to worry about the Hodge decomposition \cref{thm:hodge}. We will then check that the convergence of height function on the universal cover implies convergence of each of the components in the Hodge decomposition.

Our assumptions on the graph $G$ where the dimer model lives are such that
$\tilde G = p^{-1}(G)$ is a planar graph embedded on $\tilde M$.
Moreover, the height 1-form $\omega$ on the dual edges of $G$ lifts to a height 1-form $\tilde \omega$ on the dual edges of $\tilde G$. Since $\tilde M$ is simply connected, and since $\tilde \omega$ is a closed one-form on the dual edges of $\tilde G$ (this is a local property, so remains true when we lift to $\tilde G$), we can define a height function $h = h(\boldsymbol{m}, G)$ (up to a global constant) on the dual graph $\tilde G^\dagger$.
The instanton component $\h$ can be related to the height function $h$ on the universal cover by summing up the value of $\tilde \omega$ along any path in the dual graph of $\tilde G$ corresponding to a non-contractible loop in the dual edges of $G$. This is easier to explain on an example.

\begin{example}
If $M $ is the flat torus $\mathbb T := \C / (\Z + \tau \Z)$ for some complex number $\tau $ with $\Im (\tau) >0$, then the universal cover is the complex plane $\C$. The universal cover can be thought of as many copies of the fundamental domain (a parallelogram determined by $1$ and $\tau$).

Fix $v_0$ in the fundamental domain. Then by periodicity of $dh$, the height function $h$ on $\tilde G$ evaluated at a point $v = v_0 + m +\tau n$ (where $m,n \in \Z$) is given by
$$
h(v) = h(v_0  + m+in) = h(v_0)+\ms{a} m+\ms{b} n,
$$
for some $\ms{a}, \ms{b} \in \R$ which do not depend on either $v_0, m,n$. Let us describe what $\ms a, \ms b$ are. Consider
the two loops on the torus described by $L_1 := (t+1/2\tau: t \in [0,1])$ and $L_2:= (1/2 + t\tau: t \in
[0,1])$ in the fundamental domain (i.e., $L_1$ and $L_2$ are the two non-contractible loops in
 the torus which form the basis of the homology group). Then $\ms a$ is the sum of the values of $\omega $ along any loop in $\vec{E^\dagger}$ which is homotopic to $L_1$, whereas $\ms b$ is the sum of the values of $\omega$ along any loop which is homotopic to $L_2$. Clearly, the choice of
these curves in $G^\dagger$ do not matter since the height 1-form is closed. Furthermore, in the Hodge decomposition of \cref{thm:hodge}, the harmonic 1-form $\h$ is uniquely determined by the numbers $\ms a$ and $\ms b$.
\end{example}

\section{ Temperley's bijection on Riemann surfaces}\label{sec:Temp}

\subsection{Notion of Temperleyan cycle rooted spanning forest; bijection}
\label{SS:Tempforest}

The goal of this section is to extend the classical Temperley bijection to graphs embedded in surfaces of higher genera. The bijection is between dimer covers of certain natural class of graphs introduced in \Cref{sec:setup} and certain objects which we call \emph{Temperleyan forests}.
The main result is formulated in \Cref{prop:temp_bij}.

Before defining the objects in question, let us briefly recall the classical Temperley bijection on simply connected surfaces \cite{CohnKenyonPropp}. The bijection map is local: given a dimer cover, one `extends' (in the direction from black to white) the dimer to an oriented edge as depicted in \Cref{F:Temp_bij}. This yields two oriented trees which are dual to each other. In the opposite direction, given a pair of trees which are dual to each other, one can naturally orient them towards a pre-assigned `root vertex'; the dimer cover then consists of the first half of each oriented edge in the two dual trees. {The primary issue that needs to be dealt with in higher genus is that the relevant objects are no longer pairs of trees that are dual to each other, but pairs $(t, t^\dagger)$ of \textbf{arborescences} (i.e., collection of oriented edges such that there is a single outgoing edge out of every vertex, except for boundary vertices) on $\Gamma$ and $\Gamma^\dagger$ respectively, dual to each other, and where each component contains a non-contractible oriented cycle. Thinking only about the arborescence $t$ on the primal graph, which we aim to ultimately understand via a variant of Wilson's algorithm, the requirement that its planar dual $t^\dagger$ can be endowed with the orientation of an arborescence imposes
nontrivial topological constraints on $t$ which will be elucidated below.}


Let $\Gamma$ be a graph faithfully embedded on a surface with a certain specified set of boundary vertices. Assume every edge $e$ of $\Gamma$ comes with a specified weight $w(e)>0$. Before introducing the notion of Temperleyan forest, we start with the simpler notion of cycle rooted Spanning Forest.
 
\begin{defn}\label{def:CRSF}
	A \textbf{wired oriented cycle rooted spanning forest} (which we abbreviate: wired oriented CRSF) of $\Gamma$ with the specified boundary is an oriented subgraph $t$ of $\Gamma$ where
	\begin{itemize}
		\item Every non-boundary vertex of $\Gamma$ has exactly one outgoing edge in $t$. Every boundary vertex has no outgoing edge. (As a result, any cycle of $t$ must be consistently orientated).

		\item Every cycle of $t$ is non-contractible.
		
	\end{itemize}
	To each CRSF $t$, we may associate the weight $\prod_{e \in t} w(e)$. We will consider the probability measure on CRSFs such that the probability of $t$ is by definition proportional to its weight.
\end{defn}

This is equivalent to the notion of essential CRSF on a graph with wired boundary introduced by Kassel and Kenyon \cite{KK12}.
Ignoring the orientation of $t$ gives an unoriented graph, its connected components will simply be called the \textbf{connected components} of $t$ without any additional precision. Note that if $t$ is a wired oriented CRSF, every connected component of $t$ contains at most one cycle: more precisely, every boundary component must have zero cycles, while every non-boundary component contains exactly one cycle.


We will refer to the set of all non-contractible cycles of a wired oriented CRSF to mean the set of unique cycles corresponding to each (non-boundary) component of the wired oriented CRSF.

\medskip
{
Let us come back to the setup of {\cref{sec:setup}} and recall that we had the graph $\Gamma$, its dual $\Gamma^{\dagger}$ and the superposition graph $G$ all embedded nicely in a Riemann surfaces with $g$ handles and $b$ holes. Compared to the planar setting, there is an immediate topological difficulty, which is that this graph $G$ in general does not admit a dimer cover. Indeed by Euler's formula, if $G$ has $v$ vertices, $e$ edges and $f$ many contractible faces, then $v-e+f = \chi := 2-2g-b$ (where $\chi$ is the Euler's characteristic). On the other hand if $G$ admits a dimer cover, one must have $e = v+f$ since $e$ is the number of white vertices which must match $v+f$ (the number of vertices and contractible faces of $\Gamma^{\dagger}$).
Thus we need to remove $\sf k = |\chi|$ many edges from the superposition graph for it to have any chance of having a dimer cover (see \cref{edge_removal}). These removed edges can be thought of as creating \textbf{punctures} in the surface. Call the graph with these punctures $G'$. (Recall that, as per \cref{punctures}, we assume throughout that the punctures are macroscopically far apart in $M$, and are away from the boundary as well, and converge as $\delta \to 0$ to a set of pairwise distinct points in $M$).
Note that if $M$ is a torus or an annulus then $\chi = 0$ so no punctures need to be removed.
See also Ciucu--Krattenthaler \cite{ciucu} and Dub\'edat \cite{dubedat_torsion}  for other situations where punctured dimers arise.
}

\medskip 
To every wired oriented CRSF $t$ of $\Gamma'$ with boundary $\partial \Gamma$, one we can associate a natural dual \textbf{free cycle rooted spanning forest} $t^\dagger$ (abbreviated free CRSF) of $(\Gamma^\dagger)'$ as follows. The  vertices of $t^\dagger$ are given by the vertices of $(\Gamma^\dagger)'$ (i.e., it spans $(\Gamma^\dagger)'$) and an edge $e^\dagger$ is present in $t^\dagger$ if and only if its dual $e$ is absent in $t$. Note that \emph{a priori} $t^\dagger$ does not come with an orientation and that its cycles can overlap (or equivalently a component might contain several cycles), see \cref{F:temperleyan} for an example. {In fact, it is not hard to see that in cases similar to \cref{F:temperleyan}, it is not possible to consistently orient the free CRSF, so there is no chance to apply any version of Temperley's bijection. This motivates the introduction of the following definition.}

\begin{defn}\label{D:temp}
	We say that the wired oriented CRSF $t$ is \textbf{Temperleyan} if each connected component of $t^\dagger$ contains exactly one cycle.
\end{defn}
An example of a wired oriented CRSF $t$ that is \emph{not} Temperleyan is provided in Figure \ref{F:temperleyan}.

\begin{figure}
	\begin{center}
		\includegraphics{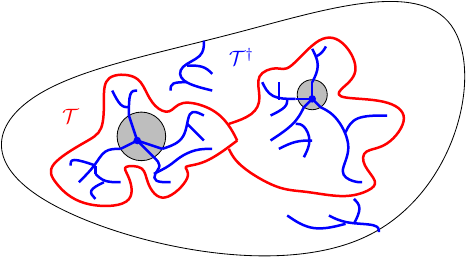}
		\caption{A non-Temperleyan CRSF (in blue). The surface $M$ is the ``pair of pants": a domain of the plane with two holes (in grey in the picture). In this example, $t$ does not contain any cycle, and hence any connected component flows to the boundary of $M$. Its dual $t^\dagger$ must contain a component with two cycles which overlap. The cycles go around each of the two holes, and must be connected, as otherwise there would have to be a path in $t$ separating them; however this is impossible as such a path would have to connect two distinct boundary points. So $t$ is \emph{not} Temperleyan. Note that $ \chi = 2-2g - b = -1 $ here. See Lemma \ref{lem:annulus} for a more general argument. }
		\label{F:temperleyan}
	\end{center}
\end{figure}

Temperleyan forests also come with a natural law, which is simply the law of the CRSF conditioned on the event that each component of the dual contains exactly one cycle. We will soon formulate a more explicit criterion for a CRSF $t$ to be Temperleyan (see \cref{SS:reducCRSF} and in particular \cref{L:top_condition}). For now, observe that if $t$ is a Temperleyan wired oriented CRSF and $t^\dagger$ is its dual, we can assign an orientation to each cycle in each component of $t^\dagger$ arbitrarily from one of the two possible choices. Then we orient all other edges of $t^\dagger$ towards the cycle of that component. 
{We call $(t, t^\dagger)$ a Temperleyan pair or \textbf{Temperleyan self-dual pair} if $t$ is a Temperleyan CRSF and $t^\dagger$ is its dual with a choice of orientation as above. We warn the reader that the notation slightly obscures the choice of orientation needed to go from $t$ to $t^\dagger$ in order to highlight the duality. Hopefully this should not introduce any confusion since we will always want to think of our CRSF as being oriented.} 

\medskip We now state the Temperley bijection for general surfaces. Recall that we assign \emph{oriented} weights $w_e$ to each edge $e$ in $\Gamma'$, no weight (or unit weight) to edges of $\Gamma^\dagger$ and that this turns $G'$ into a weighted \emph{un}oriented graph. Indeed, if $e = (x,y)$ is an oriented edge of $\Gamma'$, let $w$ denote the white vertex in the middle of $e$. Then we assign to the unoriented edge $\{x,w\}$ of $G'$ the weight $w_{(x,y)}$ and to the edge $\{w,y\}$ of $G'$ the weight $w_{(y,x)}$.

We define the measure $\Ptemp$ as the measure on Temperleyan pairs with weights inherited from the weights of $\Gamma$ and we will call $(\cT, \cT^\dagger)$ a generic associated random variable, i.e
\begin{equation}
\Ptemp((\cT, \cT^\dagger) =(t,t^\dagger)) = \frac{1}{\Ztemp} 1_{\{(t, t^\dagger) \text{ Temperleyan pair} \}}\prod_{e \in t} w(e),\label{eq:oriented CRSF}
\end{equation}
where $\Ztemp$ is the partition function. 


\begin{thm}[Temperley bijection on general surfaces]\label{prop:temp_bij}
	Let $M,\Gamma',(\Gamma^\dagger)',G'$ be as in \cref{sec:setup}. Then there exists a bijection $\psi$ between the set of Temperleyan pairs and the set of dimer configurations on $G'$. Furthermore if $(\cT, \cT^\dagger)$ has the law \eqref{eq:oriented CRSF} then $\textbf{m} = \psi((\cT, \cT^\dagger))$ has law \eqref{Gibbs} with unoriented weights on $G'$ described above.
\end{thm}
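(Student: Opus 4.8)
The plan is to prove the bijection in the standard Temperley/KPW style: the map $\psi$ is defined by a purely local rule, and one checks that it is well-defined, injective, and surjective, and then that it transports the weights correctly. First I would define $\psi$ locally: given a Temperleyan pair $(t,t^\dagger)\in\crsf$, every vertex of $\Gamma'$ (except boundary/puncture vertices) has a unique outgoing edge in $t$, and every vertex of $(\Gamma^\dagger)'$ has a unique outgoing edge in $t^\dagger$ (this is exactly the content of \cref{D:temp} together with the chosen orientation of the dual cycles, as explained before \cref{prop:temp_bij}). Each such outgoing edge $e=(x,y)$ of $\Gamma'$ passes through a unique white vertex $w\in W$; place a dimer on the half-edge $\{x,w\}$ of $G'$. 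Likewise for the dual forest: each outgoing dual edge $e^\dagger=(f,f')$ passes through its white midpoint $w$, and we place a dimer on $\{f,w\}$. One then has to check that this assignment is a perfect matching of $G'$: every black vertex (a vertex of $\Gamma'$ or of $(\Gamma^\dagger)'$) is covered exactly once because it has exactly one outgoing edge in the respective oriented forest; and every white vertex $w\in W$, sitting at the crossing of a primal edge $e$ and its dual $e^\dagger$, is covered exactly once — this uses the duality relation $e\in t \iff e^\dagger\notin t^\dagger$, so that exactly one of $e,e^\dagger$ is present, and $w$ is then matched into the appropriate endpoint of whichever edge is present and oriented through it. (The small case analysis at a white vertex — four possible incident half-edges, and exactly one selected — is the heart of the local check and is identical to \cite{KPWtemperley} once the orientations are in place.)

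Next I would construct the inverse map. Given a dimer configuration $\bf m$ on $G'$, each white vertex $w$ is matched to exactly one of the four black vertices at the ends of its incident primal edge $e=(x,y)$ and dual edge $e^\dagger=(f,f')$. If $w$ is matched to $x$ or $y$, declare $e$ present in $\cT$, oriented out of the matched endpoint; if $w$ is matched to $f$ or $f'$, declare $e^\dagger$ present in $\cT^\dagger$, oriented out of the matched dual endpoint. This produces an oriented subgraph of $\Gamma'$ and an oriented subgraph of $(\Gamma^\dagger)'$ which are duals of each other by construction. The work is then to verify that $\cT$ so obtained is a wired oriented CRSF (each non-boundary primal vertex has out-degree one — because it is matched exactly once, to the white midpoint of exactly one incident edge — and every cycle is noncontractible), and that it is Temperleyan (the dual $\cT^\dagger$ has the same out-degree-one property by the symmetric argument, which forces each dual component to contain exactly one cycle, cf. \cref{D:temp}), with the dual orientation recovered being a legitimate one. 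The noncontractibility of cycles should follow from a counting/topological argument: an oriented subgraph of a surface graph with out-degree one at every non-boundary vertex has each component carrying at most one cycle, and a contractible cycle would enclose a disc containing vertices with nowhere for their out-edges to go (or, dually, would trap a dual vertex with no legal out-edge) — this is where Euler's formula and the earlier count $V(\Gamma^\dagger)+F(\Gamma^\dagger)=E(\Gamma^\dagger)$ (valid exactly because $2g+b=2$ after removing the punctures) get used. That $\psi$ and this inverse are mutually inverse is then immediate from unwinding the local definitions.

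For the weights: by definition the edge $\{x,w\}$ of $G'$ carries weight $w_{(x,y)}$ and $\{w,y\}$ carries $w_{(y,x)}$, while all half-edges adjacent to dual-graph black vertices carry weight $1$. Under $\psi$, the dimer on $\{x,w\}$ is present precisely when $(x,y)\in t$, so $\prod_{e'\in{\bf m}}w(e')=\prod_{e\in t}w(e)$, and hence $\P({\bf m})=Z^{-1}\prod_{e\in t}w(e)$ matches $\Ptemp((t,t^\dagger))=\Ztemp^{-1}\prod_{e\in t}w(e)$ term by term; since $\psi$ is a bijection, $Z=\Ztemp$ and the two laws correspond. The main obstacle, as usual with this circle of ideas, is not the local dimer-to-half-edge dictionary (which is \cite{KPWtemperley}) but the global/topological bookkeeping: verifying that the out-degree-one oriented subgraph produced from a dimer cover has no contractible cycles in \emph{either} the primal or the dual, and that these two conditions are equivalent to — and forced by — the Temperleyan property, so that $\psi$ really lands in $\crsf$ and nothing more. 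This is exactly the point where the excerpt has set up \cref{D:temp}, the dimerability count, and the asymmetric wired/free boundary conventions, and I would lean on those; I would expect this topological step to be the longest part of the argument, with \cref{L:dimerable} falling out as an immediate corollary (the set $\crsf$ is nonempty because wired oriented CRSFs exist — e.g.\ run Wilson's algorithm — and when $\chi=0$ every such CRSF is Temperleyan, so $\psi$ exhibits a dimer cover).
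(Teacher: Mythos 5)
Your proposal is correct and follows essentially the same route as the paper: the local Temperley/KPW half-edge rule, the perfect-matching check via out-degree one plus primal/dual edge duality, the inverse map obtained by extending matched half-edges, exclusion of contractible cycles by an Euler-formula count, and the observation that out-degree one in the dual forces exactly one cycle per dual component, i.e.\ the Temperleyan property. The only slight inaccuracy is that the contractible-cycle exclusion rests on the local Euler count $v-e+f=1$ for the disc bounded by the cycle (versus the count $v-e+f=0$ forced by out-degree one inside it), not on the global dimerability identity $V(\Gamma^\dagger)+F(\Gamma^\dagger)=E(\Gamma^\dagger)$; otherwise the argument, including the weight bookkeeping, matches the paper's proof.
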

{We emphasise that it is unclear at this point whether the measure $\Ztemp\Ptemp$ (and consequently the measure on dimers) is not trivially zero as we have only deduced that the construction of $G'$ is necessary, but have not deduced that it is sufficient. We prove this later in \cref{L:dimerable}.}
\begin{figure}
	\begin{center}
		\includegraphics[scale=.5]{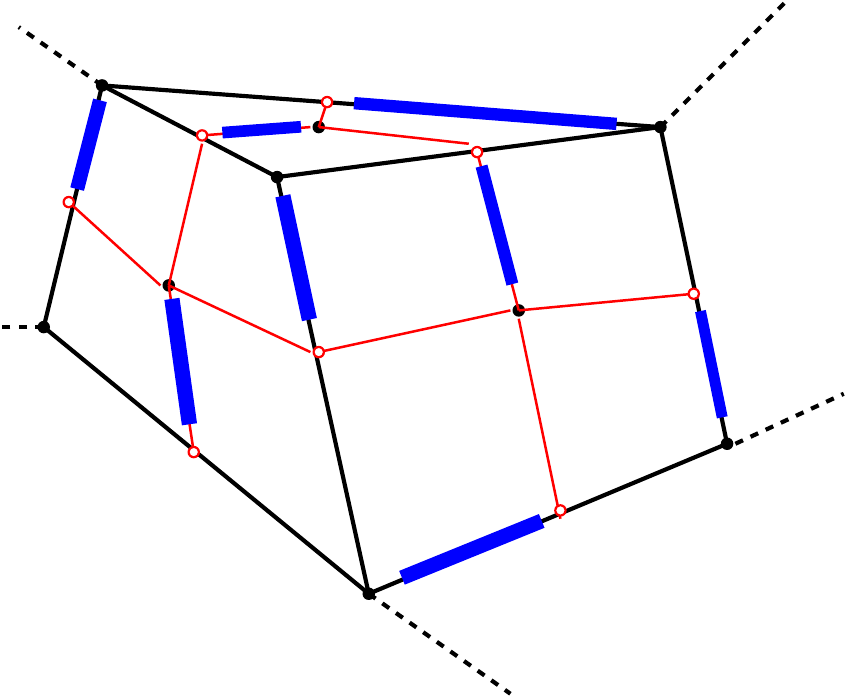} \hspace{.5cm}
		\includegraphics[scale=.5]{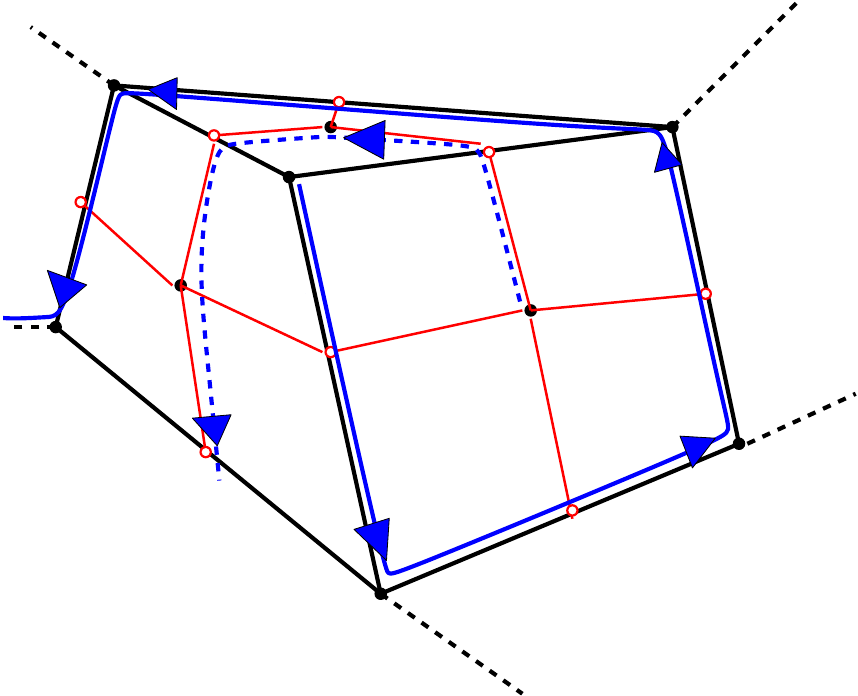}
		\caption{Illustration of the local transformation in Temperley's bijection. Left: a dimer configuration on a superposition graph. Right: a collection of oriented edges forming dual oriented CRSF.}
		\label{F:Temp_bij}
	\end{center}
\end{figure}

\begin{proof}[Proof of \cref{prop:temp_bij}]
We assume that the set of Temperleyan CRSF and the set of dimer covers are both nonempty (this assumption is validated in \cref{L:dimerable}).
	Given a Temperleyan pair $(t,t^\dagger)$, we obtain a configuration of edges $\textbf{m} = \psi((t, t^\dagger))$ as follows: for every oriented edge $\vec e \in t$ (resp. $\vec e \in t^\dagger$), we can write $\vec e = e_1 \cup e_2$ where $e_1, e_2$ are the first and second halves of $\vec e$ 
 and set $e_1 \in \textbf{m}$ (see \cref{F:Temp_bij}). The resulting $\textbf{m}$ is a matching on $G'$ because every (non boundary) vertex has a unique outgoing edge in either $t$ or $t^\dagger$.
	Furthermore, since $t \cup t^\dagger$ spans the black vertices of $G'$, the matching is a perfect matching.
	
	Also $\psi$ is injective: if $(t_1, t_1^\dagger)$ and $(t_2, t_2^\dagger)$ are distinct, then there must a black vertex $v$ on $G'$ (i.e., a vertex of $\Gamma'$ or $(\Gamma^\dagger)'$) such that the unique outgoing edge from $v$ in $t_1$ or $t_1^\dagger$ is different from the unique outgoing edge from $v$ in $t_2$ or $t_2^\dagger$.
	Hence $v$ will be matched to two distinct white vertices in $\psi((t_1, t_1^\dagger))$ and $\psi((t_2, t_2^\dagger))$.
	
	We now check $\psi$ is onto. Given a matching $\textbf{m}$ of $G'$, we can obtain a pair $(t,t^\dagger)$ by extending the matched edges: {for every non-boundary black vertex $b$ of $G'$, let $w$ be the white vertex matched with $b$ in $\textbf{m}$. By construction $(bw)$ is a part of a unique edge $e=(bb')$ (either of $\Gamma$ of $\Gamma^\dagger$) and we include the edge $(bb')$ (oriented from $b$ to $b'$) in $t$ or $t^\dagger$ as appropriate.}
 The fact that neither $t$ nor $t^\dagger$ contain contractible cycles follows from the same argument as in the standard, planar case: if say $t$ has a contractible cycle $C$, then since $\Gamma \setminus C$ has a dimer cover, an elementary counting argument then shows that $v - e + f = 0$ where $v, e, f$ are the number of vertices, edges and faces of the contractible component $\Gamma \setminus C$. On the other hand, Euler's formula implies $v-e+f = 1$ (again excluding the outer face). This shows that all cycles are non-contractible. Since every vertex $v$ of $t^\dagger$ has a unique outgoing edge, $t^\dagger$ must have exactly one cycle per component and so $t$ is Temperleyan.
	{It also easily follows from the definition of the weights of the Temperleyan tree and the dimer covers that the bijection is weight preserving.}
	This concludes the proof.
\end{proof}



\subsection{Criterion for a wired CRSF to be Temperleyan}
\label{SS:reducCRSF}
{In this Section, we prove the following proposition:
\begin{prop}\label{L:dimerable}
The graph $G'$ obtained in \cref{sec:setup,} has a dimer cover. In particular, the measure $\Ptemp$ defined in \cref{eq:oriented CRSF} is a probability measure.
\end{prop}As a by product of the proof of \cref{L:dimerable}, we derive a simple criterion for a CRSF to be Temperleyan (\cref{L:top_condition}).
}
We start with a lemma about the $\chi=0$ case.
\begin{lemma}\label{lem:annulus}
	Let $M$ be nice with $g$ handles and $b $ boundary components and $\Gamma,\Gamma^\dagger,G$ be embedded as above (i.e. without punctures). A wired Temperleyan oriented CRSF of $\Gamma$ exists if and only if $M$ has the topology of either a torus or an annulus. Furthermore in these cases, all oriented CRSF are Temperleyan.
\end{lemma}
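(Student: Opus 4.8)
The plan is to prove the two implications separately, and then derive the last sentence. For the ``only if'' direction, suppose $M$ is nice with $g$ handles and $b$ boundary components and that a wired Temperleyan oriented CRSF $\cT$ of $\Gamma$ exists. Since we are not removing any punctures here (i.e. $G' = G$), the count performed in \cref{SS:tempgraph} applies: the existence of a dimer configuration on $G$ forces the number of black and white vertices to agree, which (via Euler's formula \eqref{eq:Euler}) gives $2g + b = 2$. Concretely, $\cT$ together with its dual $\cT^\dagger$ produces, by the local Temperley transformation in the proof of \cref{prop:temp_bij}, a perfect matching of $G$; since the number of white vertices of $G$ equals $E(\Gamma^\dagger)$ and the number of black vertices equals $V(\Gamma^\dagger) + F(\Gamma^\dagger)$, a perfect matching can only exist if $V(\Gamma^\dagger) - E(\Gamma^\dagger) + F(\Gamma^\dagger) = 0$, i.e. $\chi = 2 - 2g - b = 0$. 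Among nice surfaces (which excludes the sphere and the plane) the only orientable surfaces with $\chi = 0$ are the torus ($g=1, b=0$) and the annulus ($g=0, b=2$).

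For the ``if'' direction, I would handle the torus and the annulus separately, as in the discussion preceding the lemma. If $M$ is a torus, then $\Gamma$ has no boundary, so every vertex of $\cT$ has an outgoing edge and hence $\cT$ must contain at least one cycle; that cycle is noncontractible (by the CRSF property / the Euler-formula argument used in the proof of \cref{prop:temp_bij} ruling out contractible cycles). Cutting $M$ along a simple noncontractible loop homotopic to this cycle yields a topological annulus, and on an annulus with both boundary circles wired the dual of any wired spanning forest is a single cycle separating the two wired components — so $\cT^\dagger$ has exactly one cycle per component and $\cT$ is Temperleyan. If $M$ is an annulus, then $\cT$ is a wired spanning forest with both boundary circles wired; $\cT$ has no cycles, so every component is a tree flowing to one of the two boundary circles. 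The complement of $\cT$ in the dual then has a single noncontractible cycle (separating the component of $\cT^\dagger$ adjacent to one boundary from that adjacent to the other), and each dual component contains exactly that one cycle, so $\cT$ is Temperleyan. This simultaneously proves the ``furthermore'' clause: in both cases \emph{every} wired oriented CRSF is Temperleyan, since the argument used only the topology of $M$ and not any special feature of $\cT$.

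Finally, the first assertion of the lemma follows by combining the two implications: a wired Temperleyan oriented CRSF exists iff $2g+b=2$ iff $M$ is a torus or an annulus, and in those cases the ``furthermore'' part shows all wired oriented CRSFs qualify. I expect the main obstacle to be the topological bookkeeping in the ``if'' direction — specifically, verifying carefully that on the cut-open annulus the dual graph of a wired spanning forest really does contain exactly one cycle per component (equivalently, that $k - k^\dagger$ computations of \cref{L:cycle_primal_dual} collapse correctly when $\chi = 0$), and making sure the cutting-along-a-cycle argument on the torus produces a genuine annulus compatible with the wired boundary condition on $\Gamma$ rather than something with extra identifications. The Euler-characteristic counting in the ``only if'' direction is essentially already done in \cref{SS:tempgraph} and should be quotable with minimal extra work.
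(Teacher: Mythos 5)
Your proposal follows the same structure as the paper's: the ``only if'' direction is the Euler-characteristic count of \cref{SS:tempgraph} routed through the extended Temperley bijection, and the ``if'' direction is a torus/annulus case analysis matching the discussion immediately preceding the lemma. The ``only if'' half is correct as written.

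There is, however, a concrete gap in your ``if'' direction. For the annulus you assert that $\cT$ ``has no cycles, so every component is a tree flowing to one of the two boundary circles.'' This is false in general: \cref{def:CRSF} explicitly permits noncontractible cycles, and on an annulus a noncontractible cycle is one winding around the hole. Wilson's algorithm produces exactly such cycles whenever the loop-erased walk closes a noncontractible loop before reaching $\partial\Gamma^\d$, so these CRSFs occur with positive probability and cannot be dismissed. Your torus argument inherits the same gap, since you cut $M$ along a \emph{single} primal cycle and then invoke the annulus case; but after one cut the restricted CRSF may still contain further noncontractible cycles (two disjoint homotopic cycles on a torus is a perfectly good CRSF configuration). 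The repair is to cut along \emph{all} $k$ primal cycles of $\cT$ at once. Since $\cT^\dagger$ cannot cross any edge of $\cT$, this decomposes $M$ into topological annuli in each of which $\cT$ restricts to a genuine wired spanning forest with no cycles, and $\cT^\dagger$ restricts to a free subgraph confined to that piece; the ``single separating dual cycle'' picture you describe then applies annulus-by-annulus, giving exactly one dual cycle per dual component globally. You did flag ``topological bookkeeping in the `if' direction'' as the likely obstacle, and this is precisely where the missing step lives.
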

\begin{proof}
Note that if $M$ has the topology of an annulus (with all the nice properties of \cref{sec:surface_embedding}), then finding a Temperleyan oriented CRSF is straightforward. Indeed, \emph{any} wired spanning forest in the annulus (where both boundaries are wired) is Temperleyan: the dual is a graph containing a single cycle separating the two components touching each boundary.
Also notice that for a torus, every oriented CRSF is Temperleyan \cite{DubedatGheissari}: essentially an oriented CRSF must contain a cycle (since there is no boundary on a torus) and cutting along this cycle gives us a (bounded) cylinder or equivalently an annulus.
	
 For the converse,  observe that the extended Tempereley's bijection (\cref{prop:temp_bij}) allows us to construct a dimer configuration from the Temperleyan CRSF and its dual (by endowing each cycle with arbitrary orientation and orienting every other edge towards the unique cycle of its component). Notice that although \cref{prop:temp_bij} is written for the punctured graph $G'$, since in the case of torus and annulus $G=G'$, the same proof goes through. 
Now recall that a dimer configuration exists without removing any punctures only if $\chi=0$ or equivalently $2g+b=2$ (as discussed in \cref{SS:Tempforest}). This equation has only two feasible solutions: $g=1,b=0$ (i.e. a torus) and $g=0,b=2$ (i.e. an annulus). This completes the proof of the `only if' part.

For the last assertion, note that every oriented CRSF in the annulus divides the annulus into several smaller disjoint annuli. The rest  follows easily from the arguments in the first paragraph above.
\end{proof}

{\begin{proof}[Proof of \cref{L:dimerable}]
	In light of \cref{lem:annulus}, we assume $M$ is neither a torus, nor an annulus. Using assumption \Cref{punctures}, 
	recall that there exist a collection of paths $\gamma_{u_i}, \gamma_{v_i}$ on $\Gamma'$ for $1 \le i \le \sf k$
	starting respectively from $u_i$ and $v_i$, such that 
%
$$
M \setminus \bigcup_{1 \le i \le \sf k} (\gamma_{u_i} \cup \gamma_{v_i } \cup (u_i,v_i))
$$
is topologically a disjoint union of annuli when we view the paths as continuous curves on $M$ and $(u_i, v_i)$ denotes the edge from $u_i$ to $v_i$. We start building a dimer configuration $\mathbf{m}$ on $G'$ as follows. First, for each path $\gamma $ in this collection, which by definition starts from either $u_i$ or $v_i$ for some $1\le i \le \sf k$, we orient it away from the puncture from which it emanates, i.e. away from $u_i$ or $v_i$. We then consider the collection of dimer edges naturally associated with $\gamma$ formed by the first halves of each of the oriented edges of $\gamma$, as in \cref{F:Temp_bij} and the proof of \cref{prop:temp_bij}. Then include all these edges in $\mathbf{m}$.

	
	The remaining disjoint annuli are nice in the sense of \cref{sec:surface_embedding}, so we can apply \Cref{lem:annulus} to obtain a Temperleyan forest on each of these annuli, and hence by \cref{prop:temp_bij} a dimer cover on the part of $G'$ corresponding to these annuli. Patching these together gives us a dimer configuration $\mathbf{m}$ on all of $G'$ as desired.
\end{proof}

\begin{remark}\label{rmk:puncture}
Note that this implies that the assumption in \Cref{punctures} is, as already mentioned earlier, necessary in order for $G'$ to admit a dimer cover. Indeed if there is a dimer cover then by applying the local mapping in \Cref{F:Temp_bij}, starting from the vertices $u_i$ and $v_i$ adjacent to the punctures, we can obtain paths as described in \Cref{punctures}. Each component of the complement of these paths also has a dimer cover, but no puncture. According to \Cref{lem:annulus}, this is only possible if   \eqref{eq:necessary_cond} is satisfied. 
\end{remark}
}	

We now deduce from the above an extremely convenient criterion for a wired oriented CRSF to be Temperleyan. Let us suppose $M$ is not the torus or an annulus, whence ${\sf k} = 2g +b - 2>0$ (we already know that every CRSF is Temperleyan otherwise). Define the branch starting from a  primal vertex $v$ of $\Gamma'$ to be the path obtained by going along the unique outgoing edge from each vertex (which necessarily ends when a loop is formed or a boundary is hit).
Recall that, at a puncture (i.e., a white monomer), there are exactly two vertices $u_{i},v_{i}$ of $\Gamma'$ on either side of the puncture. Let $\mathfrak B_{i1},\mathfrak B_{i2}$ be the branches in $\Gamma'$ of $u_{i},v_{i}$ for $1 \le i \le {\sf k}=2g+b-2$. We call the union of these branches, together with the removed edges $e_i$, the \textbf{skeleton} $\mathfrak{s}$ of $t$. That is,
$$ 
\mathfrak{s} = \bigcup_{i=1}^{\sf k}(\mathfrak B_{i1}\cup e_i \cup \mathfrak B_{i2}).
$$
Simply put, the skeleton of $t$ is the union of the branches emanating out of the punctures.

\begin{thm}\label{L:top_condition}
	Suppose $M$ is not a torus. A wired oriented CRSF is Temperleyan if and only if every component of $M\setminus \mathfrak{s}$ has the topology of an annulus.
\end{thm}

(Of course, in the case of an annulus, as already noted there are no punctures and so $\mathfrak{s} = \emptyset$, so the condition is automatically satisfied, as we already know.) {The criterion for a CRSF to be Temperleyan is thus just to say that the skeleton cuts the surface into disjoint annuli. \cref{F:pants} gives two examples on a surface $M$ (the `pair of pants') where $M\setminus \mathfrak{s}$ consists of topological annuli, where $\mathfrak s$ is the paths $p,q$ in that figure. (We get three annuli in the first example, and two in the second example).}

\begin{proof}
	
	Let $t$ be a Temperleyan CRSF and let $t^\dagger$ be its dual with a choice of orientation. Note that the vertices (in $G'$) of $\mathfrak{s}$ are all matched with each other in the dimer configuration associated to $(t, t^\dagger)$. Therefore in each component of $M \setminus \mathfrak s$, all vertices are also matched with each other. By \cref{lem:annulus}, this implies that these components are annuli.
	
	Conversely, note that by definition $t^\dagger$ cannot cross $\mathfrak s$,
	and so $t$ can be restricted to each component of $M \setminus \mathfrak s$ to form a wired CRSF.
	By the other implication in \cref{lem:annulus}, it follows that $t$ is Temperleyan in each such component and so is globally Temperleyan.
\end{proof}



\subsection{Wilson's algorithm to generate wired oriented CRSF}\label{sec:Wilson}

Recall the measure $\Ptemp$ from \cref{eq:oriented CRSF}, i.e. the law of $(\cT, \cT^\dagger)$ on Temperleyan pairs which is naturally associated with the dimer model. We will not study directly $\Ptemp$ but rather a slightly altered version which can be sampled through Wilson's algorithm and is defined as follows: we first sample a wired oriented CRSF of $\Gamma$ with law
$$
\Pwils (\cT = t) = \frac1{\Zwils} 1_{\{ t \text{ Temperleyan} \} }\prod_{e \in t} w(e);
$$
and given $\cT = t$, we pick $\cT^\dagger$ an oriented dual uniformly among all possibilities of orientation of the dual. 

Clearly the only difference between the two is due to the fact that any cycle of the dual $t^\dagger$ of a Temperleyan oriented CRSF $t$ can be oriented in two possible ways to determine a dual pair $(t, t^\dagger )$. We deduce the following relationship:

\begin{lemma}\label{lem:RN_dimer_CRSF}
  Let $(t, t^\dagger)$ be a Temperleyan pair such that $t^\dagger$ contains exactly $k^\dagger$ non-contractible cycles. Then the Radon--Nikodym derivative satisfies
  \begin{equation*}
    \frac{d \Ptemp}{d \Pwils} (t,t^\dagger) = \frac{\Zwils}{\Ztemp}2^{k^\dagger} 
  \end{equation*}
In particular, conditioned on having $k^\dagger$ non-contractible cycles for the dual forest, the law $\Ptemp$ and $\Pwils$ coincide. \end{lemma}

Let $\Gamma, \Gamma^\dagger$ be faithfully embedded on a nice Riemann surface $M$.
We now describe Wilson's algorithm to generate a wired (but not necessarily Temperleyan)
oriented CRSF on $\Gamma$. We
prescribe an ordering of the vertices $(v_0,v_1, \ldots )$ of $\Gamma$.
\begin{itemize}
\item We start from $v_0$ and perform a loop-erased random walk until
  a non-contractible cycle is created or a boundary vertex (i.e., a vertex in $\partial \Gamma$) is hit.
\item We start from the next vertex in the ordering which is not
  included in what we sampled so far and start a loop-erased random
  walk from it. We stop if we create a non-contractible cycle or hit
  the part of vertices we have sampled before.
\end{itemize}
There is a natural orientation of the subgraph created since from every non-boundary vertex there is exactly one outgoing edge through which the loop erased walk exits a vertex after visiting it. Let $\Pwwils$ be the law of the resulting wired oriented CRSF.

\begin{prop}\label{prop:unoriented_to_oriented}
We have
\begin{equation}
 \Pwwils(t) = \frac1{ \Zwwils} \prod_{e \in t} w(e).\label{eq:law_CRSF}
\end{equation}
In particular, $\Pwwils$ generates a wired oriented CRSF of $\Gamma$ as described by \cref{def:CRSF} with law given by \eqref{eq:law_CRSF}. Furthermore, conditionally on being Temperleyan, $\Pwwils$ coincides with the first marginal of $\Pwils$.
\end{prop}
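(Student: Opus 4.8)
\emph{Strategy.} The plan is to adapt Wilson's cycle-popping argument to this setting, the only change being that one pops only the \emph{contractible} cycles and halts as soon as a noncontractible one surfaces. To this end, attach to each non-boundary vertex $v$ of $\Gamma$ an independent infinite stack of arrows, each arrow pointing independently to a neighbour $y$ with probability $w_{(v,y)}/\pi(v)$, where $\pi(v):=\sum_{z\sim v}w_{(v,z)}$. The top arrows of all the stacks define a map $F$ with out-degree one at non-boundary vertices and zero at boundary vertices (i.e.\ at $\partial\Gamma$); since the out-degree is one along any cycle of $F$, two \emph{distinct} cycles of $F$ are automatically vertex-disjoint, and every component of $F$ that does not reach $\partial\Gamma$ contains exactly one cycle. \emph{Popping} a contractible cycle $C$ of $F$ means advancing by one the stacks of the vertices lying on $C$; because $C$ is vertex-disjoint from every other cycle of $F$, popping it changes no top arrow outside $C$, and hence alters neither the presence nor the topological type (contractible or not) of any other cycle.

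\emph{Confluence, termination, and the algorithm.} Thanks to the disjointness just noted, Wilson's diamond/confluence lemma applies verbatim: popping contractible cycles of the current top map in an arbitrary order, until none remains, leaves both the multiset of popped (layered) cycles and the resulting top map $F_\infty$ independent of the order chosen. Since $F_\infty$ contains no contractible cycle, it is by \cref{def:CRSF} a wired oriented CRSF. Almost-sure termination of this process follows exactly as in the classical case: from any configuration the loop-erased walk reaches an absorbing event --- hitting $\partial\Gamma$ or closing a noncontractible loop --- almost surely, using \cref{lem:uniform_avoidance} (and the Beurling estimate \cref{lem:Beurling} when $\partial M\neq\emptyset$), or simply because $\Gamma$ is finite. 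Finally, running the described Wilson's algorithm on these stacks --- always using the current top arrow, erasing loops, and keeping the first noncontractible loop --- is one admissible order in which to pop the contractible cycles of $F$, by the same bookkeeping as in Wilson's original proof; hence it outputs $F_\infty$.

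\emph{Law and conclusion.} Fix a wired oriented CRSF $t$. As in Wilson's proof, the event that the stacks are such that popping some fixed poppable multiset $W$ of layered contractible cycles leaves the top equal to $t$ has probability $\bigl(\prod_{a\in W}p(a)\bigr)\cdot\prod_{e\in t}\bigl(w(e)/\pi(v_e)\bigr)$, where $p(a)$ is the probability of the arrow $a$ and $v_e$ is the tail of the oriented edge $e$; summing over $W$ and using --- again exactly as in the classical argument, and again because cycles of any top map are vertex-disjoint --- that the collection of poppable $W$ revealing $t$ does not depend on $t$, one obtains
$$
\Pwwils(\cT=t)\ =\ \Bigl(\sum_{W}\ \prod_{a\in W}p(a)\Bigr)\cdot\frac{\prod_{e\in t}w(e)}{\prod_{v\notin\partial\Gamma}\pi(v)} .
$$
Both the bracketed prefactor and $\prod_{v\notin\partial\Gamma}\pi(v)$ are independent of $t$ (each non-boundary vertex contributes exactly one edge to $t$), so $\Pwwils(\cT=t)\propto\prod_{e\in t}w(e)$, which is \eqref{eq:law_CRSF}. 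For the last assertion, recall that the first marginal of $\Pwils$ is $t\mapsto \Zwils^{-1}\,1_{\{t\text{ Temperleyan}\}}\prod_{e\in t}w(e)$; conditioning the law \eqref{eq:law_CRSF} on the event $\{\cT\text{ is Temperleyan}\}$ reweights it to $t\mapsto 1_{\{t\text{ Temperleyan}\}}\prod_{e\in t}w(e)\big/\sum_{t'\text{ Temperleyan}}\prod_{e\in t'}w(e)$, which is precisely that marginal, with the identity $\Zwils=\Zwwils\cdot\Pwwils(\cT\text{ Temperleyan})$.

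\emph{Main difficulty.} The one delicate point is the confluence step --- verifying that Wilson's popping machinery (the diamond lemma and, above all, the $t$-independence of the set of poppable cycle configurations) survives when only contractible cycles are popped. What makes it go through with no real change is the elementary observation that distinct cycles of any out-degree-one top map are automatically vertex-disjoint, so popping a contractible cycle can neither interfere with another cycle nor change its topological type; everything else is a verbatim transcription of the simply connected case.
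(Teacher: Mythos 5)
Your proposal is correct, and it is in essence a self-contained reconstruction of the argument the paper delegates to a citation: the paper's own proof of this proposition is a one-line reference to Theorem~1 and Remark~2 of Kassel--Kenyon \cite{KK12}, whose proof is precisely the cycle-popping argument you spell out (specialized to the case of trivial holonomy). The observation you isolate as the crux --- that distinct cycles of an out-degree-one top map are automatically vertex-disjoint, so popping a contractible cycle neither disturbs any other cycle nor changes its topological type --- is indeed what lets Wilson's diamond/confluence machinery, the intrinsic characterization of poppable multisets, and the factorization of the law all carry over verbatim; the final conditioning step matching $\Pwwils(\,\cdot\mid\text{Temperleyan})$ with the first marginal of $\Pwils$ is immediate. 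One small wording caveat: ``halts as soon as a noncontractible one surfaces'' describes how a single LERW excursion ends in the algorithm, not the popping process as a whole, which, as you correctly say later, continues elsewhere until no contractible cycle remains.
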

\begin{proof}
This follows from Theorem 1 and Remark 2 of Kassel--Kenyon \cite{KK12}.
\end{proof}

{As a consequence of the result of this section, we obtain a relatively usable description of the law $\Ptemp$ of interest: start with $\Pwwils$, which can be sampled via Wilson's algorithm, and condition on the topological event described in \cref{L:top_condition} (in the second paper \cite{BLR_Riemann2} we denote this conditioning event by $\cA_{\cM}$). Finally bias the law by the Radon-Nikodym derivative coming from \cref{lem:RN_dimer_CRSF}. One of the main difficulties in  \cite{BLR_Riemann2} stems from the fact that $\cA_{\cM}$ is asymptotically degenerate (i.e., the probability of $\cA_{\cM}$ for a wired CRSF tends to zero as the ``mesh size'' $\delta$ tends to zero). The Radon-Nikodym derivative in \cref{lem:RN_dimer_CRSF} on the other hand is relatively benign and does not introduce much further complication.
}

\section{Winding and height function}\label{sec:winding_ht}

{In this section, we explain the connecting between winding of the Temperleyan forest and height function in our setup. In the process we develop a systematic way to relate these two notions in a very general setting which we believe is of interest even in  simply connected domains.

A first difficulty is that measuring angles and computing the total angle along say a simple loop is well known to be more complicated on a surface than on the plane. In the following, we will sidestep completely this issue by only establishing the connection for the lifts of the Temperleyan forest and the height function where we will be able to use the usual planar theory of angles and winding. This will also have the advantage that the dimer height become an actual function instead of a one form.

Using this approach, we are left with a situation analogous to the original generalised Temperley's bijection in \cite{KPWtemperley}: there is a fixed graph with a given planar embedding and we have to relate the dimer height to the winding of the associated ``tree'' at a deterministic level. (We also point out that a version on a torus already appeared in \cite{S16}.) In particular, the fact that in our current setting the graphs (and trees) arise as lifts and therefore have many symmetries, should not play a significant role (recall that \cite{KPWtemperley} is valid in great generality). 

There are however still significant technicalities to work out. First, we cannot assume that our embedding uses only straight lines (since in the hyperbolic case, the lift of an edge cannot be a straight line simultaneously in different copies of the fundamental domain). Second, our ``tree'' objects are actually forests so the winding between vertices belonging to different components must be properly defined. Third, we need to check that the objects defined on the lift can be reasonably mapped back to the surface and state any simplification of the general theory that would arise specifically in our setting.

Finally, the way the connection is phrased in \cite{KPWtemperley} is relatively unwieldy so it would be nice to make it  more ``user-friendly'', for example to make it symmetric with respect to the primal and dual forests. Recall that the winding field is naturally defined on either primal or dual vertices while the dimer height function is naturally defined on faces of the superposition graph, so we need some natural local rule to move between faces and vertices. Our solution for this technicality will be to extend both the primal and dual trees along the diagonal of each face, up to the midpoint of that face, a construction which results in what we call augmented trees. We will see that with this construction, the primal and dual trees play a completely symmetric role, as desired.

Our strategy in this section will be to proceed by successive generalisations from the definition of a winding field on a single deterministic embedded tree to dual pairs of spanning tree and the relation to  dimer height function in \cref{sec:tree_embedded}. \cref{sec:height_winding} covers the situation specific to our Riemann surface setting, with the main result of this section stated as \cref{lem:number_crossing}. Finally \cref{sec:instanton_convergence} treats the relation between the instanton component and the set of non-contractible loops.}

\subsection{Winding field of embedded trees and choice of reference flow}\label{sec:tree_embedded}
\medskip

\paragraph{{Embedded tree.}} We temporarily forget about dimers and Temperleyan forests, and focus on how to compute the \textbf{winding field} of a \emph{deterministic} tree.

{Consider a graph (finite or infinite) which is a  tree oriented towards either a leaf or one of its ends. Embed it on $\C$ with smooth edges (although having points of non-differentiability at the vertices, which we call \textbf{corners}, is allowed, so that the branches are only piecewise smooth). Call the embedding $\cT$, we want to think of it as a union of smooth curves in $\C$.} Recall that, using \cref{lem:int_to_top}, the intrinsic winding of any finite self avoiding path in this tree is well defined. 
From every point $x$ on $\cT$, we let $\gamma_x$ be the unique path from $x$ following the orientation (so it either stops at a leaf or at the end towards which the tree is oriented) and call it the branch of $x$.
Suppose for now that
\begin{equation}\label{tree_assn}
|W_\i(\gamma_x)| <\infty \text{ for all }x\in \cT.
\end{equation}
We can then define a winding field  $\{h_\cT(x): x \in \cT\}$ of the tree simply as $h_\cT(x) := W_\i(\gamma_x)$. 

The first generalisation step is to extend the definition of $h_\cT$ even if \eqref{tree_assn} is not satisfied which follows from the following elementary lemma.
If $x \notin \gamma_y$ and $y \notin \gamma_x$, notice that $\gamma_x$ and $\gamma_y$ eventually merge and $\gamma_y$ merges either to the right or to the left of $\gamma_x$ (since the tree is embedded in $\C$, this makes sense). Let $\gamma_{xy}$ be the unique path connecting $x$ and $y$ in $\cT$.

\begin{lemma}\label{lem:winding_field}
 In the above setup, if $\gamma_y$ merges with $\gamma_x$ to its right then
$$
h_\cT(x)  - h_\cT(y) = W_\i(\gamma_{xy}) + \pi.
$$
If $\gamma_y$ merges with $\gamma_x$ to its left, then
$$h_\cT(x)  - h_\cT(y) = W_\i(\gamma_{xy}) - \pi.$$
If $y \in \gamma_x$ and $y$ is not a corner (or $x \in \gamma(y)$ and $x$ is not a corner), then $$h_\cT(x )  - h_\cT(y)  = W_\i(\gamma_{xy}) .$$
\end{lemma}
\begin{proof}
Notice that the last assertion follows simply from additivity of intrinsic winding. Indeed, for example if $y \in \gamma_x$, there is no discontinuity of intrinsic winding at $y$ since $y$ is not a corner.

For the rest, take $m \in \gamma_x \cap \gamma_y$. Notice that by additivity of winding,
\begin{align*}
h_\cT(x) - h_\cT(y) & = W_{\i} (\gamma_{xm}) - W_{\i} (\gamma_{ym})\\
& = W_{\i} (\gamma_{xm}) + W_{\i} (\gamma_{my})\\
& = W_{\i} (\gamma_{xy}) +\ve \pi
\end{align*}
where $\ve = +1$ (or $\ve = -1$) if $\gamma_y$ merge with $\gamma_x$ to its right (or left). This is clear since we need to do a half turn to move from $\gamma_{xm}$ to $\gamma_{my}$ at $m$ and the turn is clockwise if $\gamma_y$ merges to the right of $\gamma_x$ and anticlockwise otherwise.
\end{proof}
We remark that the last assertion of the above lemma could be made to work even at corners by adding the appropriate angle, however in what follows, we avoid using winding field at corners so we don't need to introduce these additional difficulties. Finally the formulas for $h(x) - h(y)$ described in \cref{lem:winding_field} can be taken to be the definition of the winding field (or rather its gradient) for any tree embedded with piecewise smooth edges, even if \eqref{tree_assn} is not satisfied. This will be the typical situation for our setup.

\paragraph{{Dual pairs and dimers}.}
Take an infinite planar graph $\Gamma$, properly embedded in $\C$ with smooth edges and {suppose that the embedding is locally finite in the sense that every vertex and face has finite degree, every face is a topological disc, and any bounded set intersects only finitely many edges}. Let $\cT$ be a one ended spanning tree of $\Gamma$ ({we emphasise that we are still considering a spanning tree for the moment and not yet a forest}). Since $\Gamma$ is locally finite, $\cT$ admits a dual $\cT^\dagger$ which is also a one-ended spanning tree of the dual graph $\Gamma^\dagger$.
Let $G$ be the superposition graph (in $\C$), as introduced in \cref{sec:setup} (with the obvious modifications to take into account that our graph is infinite and without creating any puncture).
It will be useful to \textbf{augment the trees} $\cT$ and $\cT^\dagger$ as follows. {Recall that in \cref{sec:setup}, for each face $f$ of $G$, we fixed a diagonal $d(f)$ which is just a smooth simple curve connecting the primal and dual vertex staying inside the face and a \textbf{midpoint} $m(f)$ which was just a point in $d(f)$.}
For each face, add to $\cT$ (resp. $\cT^\dagger$) the portion of the diagonal $d(f)$ connecting the point $m(f)$ to the unique primal (dual) vertex touching $d(f)$. This way the primal and dual trees meet in each face $f$ at the (smooth) point $m(f)$ on the diagonal $d(f)$ of that face. With a small abuse of notation, we will still denote by $\cT$ and $\cT^\dagger$ these augmented trees.


Note that we can orient $\cT,\cT^\dagger$ towards their respective ends. This allows us to define two winding fields as above (one for $\cT$ and one for $\cT^\dagger$).\
Having oriented $\cT$ and $\cT^\dagger$ we can also apply the local operation of \cref{F:Temp_bij} as described in \cref{SS:Tempforest} to obtain a dimer covering of $G$. We will first need to define a suitable reference flow on $G$, which will then allow us to speak of the height function associated to the dimer configuration and then show the relation between this dimer height function and the two above winding fields.


\begin{figure}
\centering
\includegraphics[scale = 0.7]{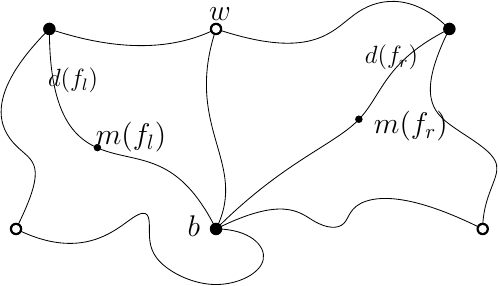}
\caption{The definition of the height function in terms of winding.}\label{fig:wind_ht}
\end{figure}

\begin{defn}[reference flow]\label{def:ref_flow}
Let $w,b$ be two adjacent white and black vertices and let $f_l, f_r$ be the faces to the left and right of the oriented edge $(bw)$.
Define
\begin{equation}
\omega_{ref} (wb) = \frac1{2\pi} \big(W_\i\big((m(f_r) , b) \cup (b,m(f_l))\big) + \pi \big),
\label{D:flowref}
\end{equation}
 where $(m(f_l) , b)$ is the portion of $d(f_l)$ joining $m(f_l)$ and $b$, and similarly $(m(f_r),b)$.
Define $\omega_{ref} (bw) =  - \omega_{ref} (wb)$ (see \cref{fig:wind_ht}).
\end{defn}
\begin{figure}[h]
\centering
\includegraphics[scale = 0.7]{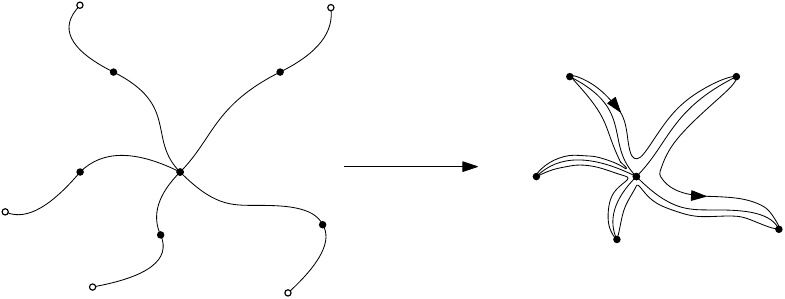}
\caption{Proof of \cref{lem:omega_ref}.}
\label{F:ref_flow}
\end{figure}
\begin{lemma}\label{lem:omega_ref}
$\omega_{ref}$ defined above is a valid reference flow. That is, the total mass sent out of any white vertex $w$, $\sum_{b \sim w} \omega_{ref}(wb)$, is equal to 1; and the total mass received to any black vertex $b$, $\sum_{w \sim b} \omega_{ref}(wb)$, is also 1.
\end{lemma}
\begin{proof}
Let $w$ be a white vertex. Notice that in $\sum_{b \sim w} \omega_{ref}(wb)$, the oriented diagonals form a clockwise loop whose total winding is
$- 2\pi$. Adding $+\pi$ for each of the surrounding black vertices, and dividing by $2\pi$, we see that the total flow out of $w$ is indeed 1 as desired.

For a black vertex $b$, the argument is better explained by considering a picture (see \cref{F:ref_flow}).  Fatten the ``star" formed by the half-diagonals incident to $b$ into a star shaped domain. Then notice that the total flow out of $b$ is  simply the limit of the total winding, divided by $2\pi$, of the boundary of this domain (again in the clockwise orientation this time), as the  domain thins into the ``star". Indeed, the $-\pi$ term in the definition of $\omega_{ref}$ in \eqref{D:flowref} counts the half-turn as we move from the left side to the right side of a half-diagonal. Since the total winding of such a curve is $-2\pi$, it follows that the total flow out of $b$ is $-1$, as desired.
\end{proof}

We are now ready to relate the three notions of height function defined by the pair of dual spanning trees $\cT, \cT^\dagger$. To do so, note that we can extend the definition of the winding field of both $\cT$ and $\cT^\dagger$ from Lemma \ref{lem:winding_field} to the augmented trees.

\begin{prop}\label{prop:winding_field=ht}
In the above setup, let $h_\cT$ and $h_{\cT^\dagger}$ be the winding field of $\cT$ and $ \cT^\dagger$ respectively. Let $h_{\dim}$ be the height function corresponding to the dimer configuration obtained from $(\cT,\cT^\dagger)$ with reference flow $\omega_{ref}$.  For any two faces $f, f'$,
$$
h_\cT (m(f')) - h_{\cT}(m(f)) = h_{\cT^\dagger} (m(f')) - h_{\cT^\dagger}(m(f))  = 2\pi \big(h_{\dim } (f') - h_{\dim} (f)\big) .
$$
\end{prop}

\begin{remark}
  The winding fields in $\cT$ and $\cT^\dagger$ now play a completely symmetric role.
\end{remark}

\begin{proof}
 For a face $f$, we define $\gamma_f$ (respectively $\gamma^\dagger_f$) to be the branch of the augmented tree $\cT$ (resp. $\cT^\dagger$) starting from $m(f)$. Fix faces $f$ and $f'$ and assume without loss of generality that $\gamma_{f'}$ is to the right of $\gamma_f$ {when they merge}.
 Note that this means that $\gamma^\dagger_{f'}$ is to the left of $\gamma^\dagger_{f}$. {Let $\gamma_{f f'}$ and $\gamma_{f'f}^\dagger$ denote the paths in $\cT$ and $\cT^\dagger$ connecting respectively $m(f)$ to $m(f')$} and $m(f')$ to $m(f)$. With the above convention on the relative position of $\gamma_f$ and $\gamma_{f'}$, the concatenation of $\gamma_{ff'}$ with $\gamma^\dagger_{f'f}$ has a to a simple clockwise loop {and there is no jump of the winding at $m(f)$ and $m(f')$ because by assumption the midpoint $m(f)$ is a smooth point of $d(f)$} so 
 $$
 W_{\i}(\gamma_{ff'}) + W_{\i}(\gamma^\dagger_{f'f})  = -2\pi
 $$
The {first equality} easily follows by applying the definition of winding field from \cref{lem:winding_field}.

For the last equality let $f_l,f_r$ be adjacent faces. Let the common (oriented) edge be $(bw)$ with $b$ being the black and $w$ the white vertex and let $f_r$ lie to its right. We assume {without loss of generality} that $b$ is a primal vertex. From the definition of $\omega_{ref}$ and recalling the sign convention of the flow defining the height function (\cref{sec:ht})
\begin{align*}
2\pi(h_{\dim } (f_r) - h_{\dim} (f_l)) & = \omega_{ref}(wb) - 2 \pi 1_{\text{$(bw)$ occupied by dimer}}  \\
&= W_\i((f_r , b) \cup (b,f_l)) + \pi - 2 \pi 1_{\text{$(bw)$ occupied by dimer}}.
\end{align*}
Note that that $\gamma_{f_r}$ and $\gamma_{f_l}$ merge at $b$. Also note from Temperley bijection that if $(bw)$ is occupied by a dimer, then $\gamma_b$ starts by using the (half) edge $bw$ which implies that $\gamma_{f_l}$ lies to the left of $\gamma_{f_r}$. Otherwise, $\gamma_{f_l}$ lies to the left of $\gamma_{f_r}$. We conclude using the definition of winding field from \cref{lem:winding_field} and the above equation.
\end{proof}

\paragraph{{From tree to forest}.}
{One can think of \Cref{prop:winding_field=ht} as giving a way to compute the (gradient of the) winding field by following the path between two vertices in the the tree. Before dealing with forests, we generalize the above by showing how to compute a height difference along an arbitrary path in $\Gamma$, which might use edges not in the tree.}

\begin{figure}[h]
\centering
\includegraphics[width = 0.8 \textwidth]{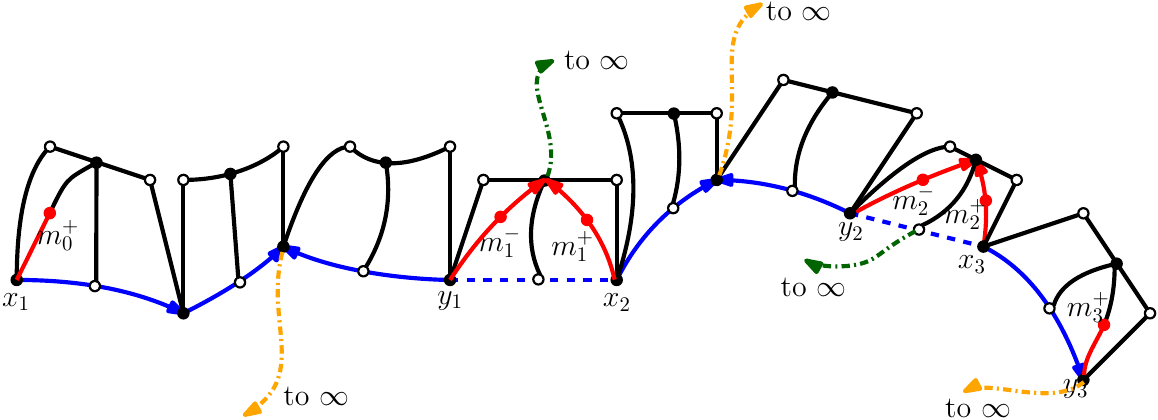}
\caption{{An example for the computation of the height along an arbitrary trajectory. The path $P$ in $\Gamma$ is the union of solid blue and dashed blue edges and here it is composed of $k=3$ partitions.} The solid blue denotes the segments in the tree $\cT$ with arrows giving the orientation of each edge. The union of solid red and solid blue edges is the modified path. The orange dotted paths denote the direction in which the primal tree goes off to $\infty$ and the green dotted path denotes the same for the dual tree. This determines the $\ve_i's$ and $\delta_i'$s. Here $\ve_1 = -1,\ve_2 = 1,\ve_3 = -1$ and $\delta_1=1, \delta_2=-1$.}\label{F:paths_modified}
\end{figure}

We suggest the reader to look at \Cref{F:paths_modified} while reading the following definitions.
Fix $f$, $f'$ two faces of $G$ and let $P$ be a self avoiding path starting at $m(f)$, going along $d(f)$ up to the vertex of $\Gamma$ adjacent to $f$, then moving along edges of $\Gamma$ up to $f'$ and finally following $d(f')$ up to $m(f')$.
 Partition this path into connected components belonging to $\cT$ (called segments from now on) separated by edges not belonging to $\cT$, allowing for the segments to be reduced to a single vertex (so any two segments are joined by a single edge). We call these segments and edges respectively $(P_1,\ldots, P_k)$ and $(e_1,\ldots,e_{k-1})$ and we let $(x_i,y_i)$ be the starting and ending points of $P_i$ and $z_i$ be the white intersection vertex in $e_i$.

If $x_i \neq y_i$, let $\ve_i = +1$ (resp. $-1$) if $\gamma_{y_i}$ lies to the right (resp. left) of $\gamma_{x_i}$ for $1 \le i \le k$. If $x_i = y_i$ (i.e. when $P_i$ is a single vertex), then let $\ve_i = +1$ (resp. $-1$) if $\gamma_{x_i}$ starts to the left (resp. right) of $P$.
{Note that by definition the edges $e_i$ are not in $\cT$ so $z_i$ are all in $\cT^\dagger$. We let $\delta_i = +1$ (resp. $-1$) if $\gamma_{z_i}^\dagger$ starts to the left (resp. right) of $P$.}


\begin{lemma}\label{lem:winding_arbitrary_path}
Under the above setup,
$$ h_{\cT}(m(f)) - h_{\cT}(m(f')) = W_{\i} (P) + \pi(\sum_{i=1}^{k} \ve_i  +\sum_{i=1}^{k-1} \delta_i )= 2\pi(h_{\dim} (f) - h_{\dim} (f') ).$$
\end{lemma}
\begin{proof}
Modify $P$ as follows. Observe that the oriented edges $e_i = (y_i,x_{i+1})$ has two faces of $G$ to their left. Call the one incident to $y_i$, $f_i^-$ and the one incident to $x_{i+1}$, $f_i^+$ and $m_i^-, m_i^+$ the respective midpoints of the diagonals of these faces. By convention, also call $x_1 = m_0^+$ and $y_k = m_k^-$. We also join $y_i$ to $x_{i+1}$ using the diagonal segments of $f_i^-$ and $f_i^+$. Finally we delete the edges $e_i$. This completes the modification of the path $P$ and note that this modification does not change the intrinsic winding, so we are still going to denote the modified path $P$ (see \cref{F:paths_modified}).

Note that after this modification, connected components connecting diagonal midpoints staying in $\cT$ alternate with components connecting diagonal midpoints staying in $\cT^\dagger$ so we can apply  \cref{lem:winding_field} and \cref{prop:winding_field=ht} for all such connected components. The terms $\eps_i$ and $\delta_i$ match the $\pm \pi$ terms in \cref{lem:winding_field} so we obtain the result.
\end{proof}

We now define the winding field given by a dual pair of spanning forests. {We are in the same setup as above for the graphs $\Gamma,\Gamma^\dagger$, except $\cT,\cT^\dagger$ are not necessarily spanning trees but spanning forests, and they are dual to each other.}
{
\begin{defn}\label{def:winding_intermediate}
Suppose we are in the above setup and $(\cT, \cT^\dagger)$ is a dual pair of spanning forests where each component is oriented towards one of its ends. The winding field of $(\cT,\cT^\dagger)$ is a function defined on all diagonal midpoints which satisfies the first equality from \cref{lem:winding_arbitrary_path}. It is well defined and unique up to a global shift in $\R$.
\end{defn}
}
{\begin{proof}[Proof that \Cref{def:winding_intermediate} is consistent]
Once we show consistency, uniqueness up to a global shift by $\R$ is clear.
     To that end, note that one can apply Temperley's bijection to $(\cT, \cT^\dagger)$ to obtain a dimer configuration whose height function is always well defined up to a global shift in $\R$ and which can be computed along any path. The second equality of \cref{lem:winding_arbitrary_path} shows that the definition is consistent.
\end{proof}
}

\begin{figure}
\centering
\includegraphics[scale = 0.5]{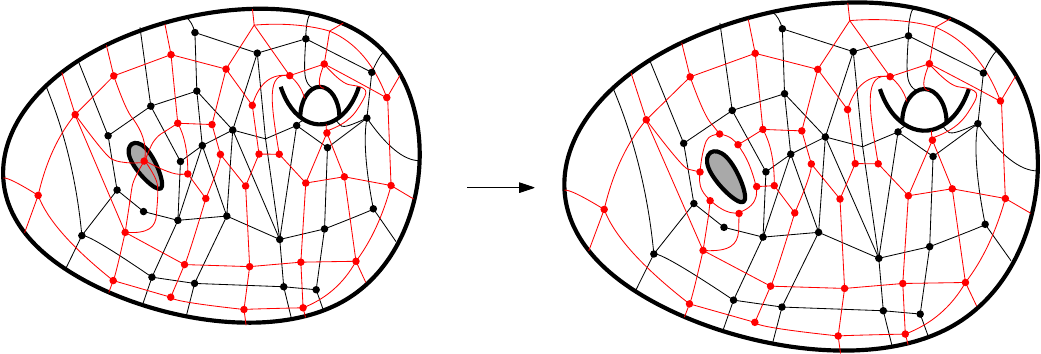}
\caption{Splitting of the boundary vertex of $\Gamma$ (red graph) into a cycle. The black graph is $\Gamma^\dagger$.}\label{fig:boundary_split}
\end{figure}
\begin{remark}\label{rmk:wind_generalize}
Note that there is no assumption about the number of ends of $(\cT,\cT^\dagger)$ in the above definition: it makes sense as long as an orientation is fixed towards a unique end in each component. {Further note that in the above proof, we only need the pair $(\cT, \cT^\dagger)$ to be associated to a dimer configuration by Temperley's bijection so \cref{def:winding_intermediate} also extends to the case where $\Gamma$ and $(\cT, \cT^\dagger)$ are obtained by lifting a graph and a Temperleyan pair $(t,t^\dagger)$ (satisfying the assumptions in \Cref{sec:setup}) from the manifold $M'$ to its cover. Note however that we need to be a bit careful with the boundary vertices of $\Gamma$ as they are somewhat special. In order not to deal with boundary components separately, we replace the boundary vertex by a cycle whose length is the same as the degree of the vertex, and with every vertex in the cycle having one neighbour in $\Gamma$ (see \Cref{fig:boundary_split}). It is not too hard to see that the neighbours can be chosen so that the embedding of $\Gamma$ still satisfies the first three items of the definition of faithful embedding (see \Cref{sec:setup}).   Let us call these the boundary cycles of $\Gamma$ (the boundary cycles defined in \Cref{sec:setup} belonged to $\Gamma^\dagger$, so this should not cause confusion). We now replace the boundary vertex in $t$ by this cycle in a natural way. In this setup, every component of $t,t^\dagger$ has a unique non contractible cycle, with the boundary component of $t$ corresponding to the boundary cycles we just defined. It is clear from the unique path lifting property that an oriented loop in $t, t^\dagger$ corresponds to a collection of bi-infinite simple paths in the lift, each one obtained by going along the loop infinitely many times in clockwise and anti-clockwise direction. Hence, after the above surgery of the boundary vertex, $t$, $t^\dagger$ lift to a forest $\cT,\cT^\dagger$ with every component having exactly two ends. Fixing an arbitrary orientation of the boundary cycles, we can also orient the lifts towards one of its ends. We can then readily apply \cref{def:winding_intermediate}.}
\end{remark}

\subsection{Winding of CRSF and height function}\label{sec:height_winding}

{In this section, we give further details on the connections between winding and height functions which are specific to our setting, i.e graphs and spanning forests obtained as lifts. More precisely our main goal will be to provide for every pair of faces $f, f'$ a canonical path $\gamma_{f f'}$ that makes the formula from \cref{lem:winding_arbitrary_path} as simple as possible, see \cref{lem:number_crossing} for the final statement.}

At this point, we need to spare a few words related to the removal of white vertices from the graph $G$ to obtain a graph with a dimer cover. This operation can be interpreted as inserting certain discrete version of magnetic operators on the free field (e.g. in the sense of \cite{dubedat_torsion}). If we want to interpret the height function as winding, the height function would be additively multivalued where it picks up an additional $\pm 2\pi$ winding when it goes around a removed white vertex. Recall that we introduced a puncture corresponding to each face obtained by removal of a white vertex (cf. \cref{edge_removal}) and that the new manifold is called $M'$. According to the above discussion, it is natural to lift $G'$ to the universal cover $\tilde M'$ of $M'$ and not the cover of $M$ (for the rest of this section, when we speak of lift or cover, we will always refer to $\tilde M'$) and call it $\tilde G'$. Note that punctures are mapped to the boundary of the disc and
 every non-outer face of $G'$ is mapped to a quadrangle.

\begin{figure}
\begin{center}
\includegraphics[width= .9\textwidth]{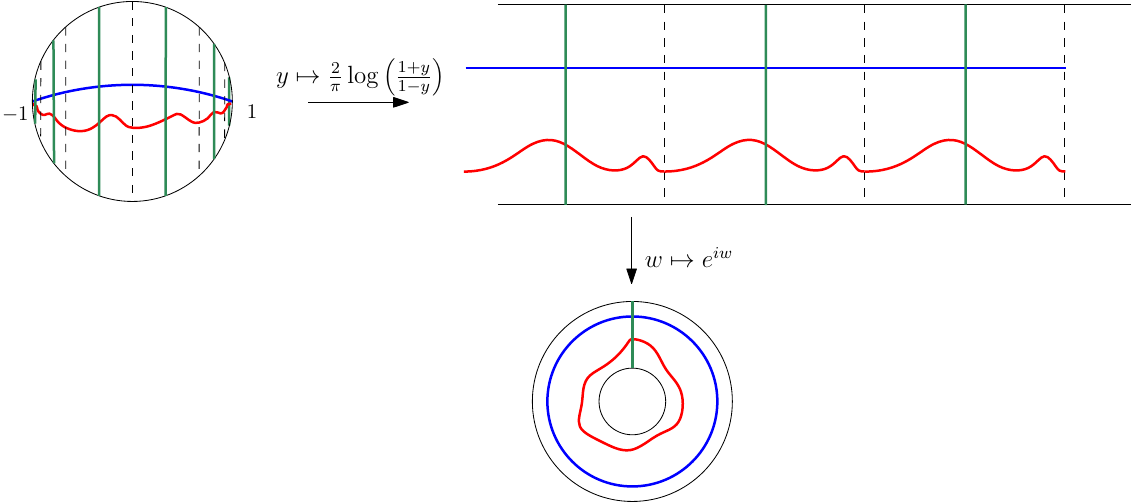}
\caption{The conformal maps between an annulus and its universal cover. The dotted lines separate different copies of the fundamental domain. The blue and red curves show two loops in the surface and the associated spines in the cover.}\label{fig:cover_annulus}
\end{center}
\end{figure}

Fix a Temperleyan CRSF $t$ of $\Gamma'$ and its oriented dual $t^\dagger$, both augmented by joining the midpoints of diagonals as explained in \cref{sec:tree_embedded}.  Each component of $t$ or $t^\dagger$ contains either a single cycle (which is oriented) or contains a boundary component. Using the surgery in \cref{rmk:wind_generalize}, each boundary component of $t$ also has a unique non-contractible cycle. We call each component of the lift of a cycle a \textbf{spine}. Note that by definition, each spine is a bi-infinite path oriented towards one of its ends.

We now state a useful lemma about the geometry of spines.

\begin{lemma}\label{lem:spine:boundary}
If $M$ is hyperbolic, then any spine $S$ is either a simple path in $\D$ connecting two points in $\partial \D$ or a simple loop containing a unique point in $\partial \D$.

{If $M$ is the torus, any spine $S$ is a bi-infinite periodic path and all spines and dual spines have the same asymptotic direction.}
\end{lemma}
The proof of the first part of \cref{lem:spine:boundary} uses tools from the theory of Riemann surfaces, so we postpone it to \cref{app:spine} in order to not disrupt the rest of the argument too much. 

\smallskip

By \cref{lem:spine:boundary}, we see that $(\cT, \cT^\dagger)$ form a dual pair of spanning forest of $\tilde \Gamma'$ and its dual so \cref{sec:tree_embedded} applies to them. We now turn to the definition of the terms involved in our main statement, \cref{lem:number_crossing}.

Given two spines $S$ and $S'$, we have a well defined region $\Omega_{S, S'}$ between them which is bounded by $S$, $S'$ and (in the hyperbolic case) two portions of $\partial \D$ (if $S$ and $S'$ are loops, then these portions of $\partial \D$ are understood as only prime ends associated with the same point). We say that a spine or dual spine $S''$ separates $S$ from $S'$ if it connects these two portions of $\partial \D$. Since the graph $\tilde G'$ only has accumulation points on $\partial \D$, it is easy to see that two spines can only be separated by finitely many others. Furthermore, two adjacent dual spines must be separated by a primal spine.
\begin{figure}[h]
\centering
\includegraphics[scale = 0.5]{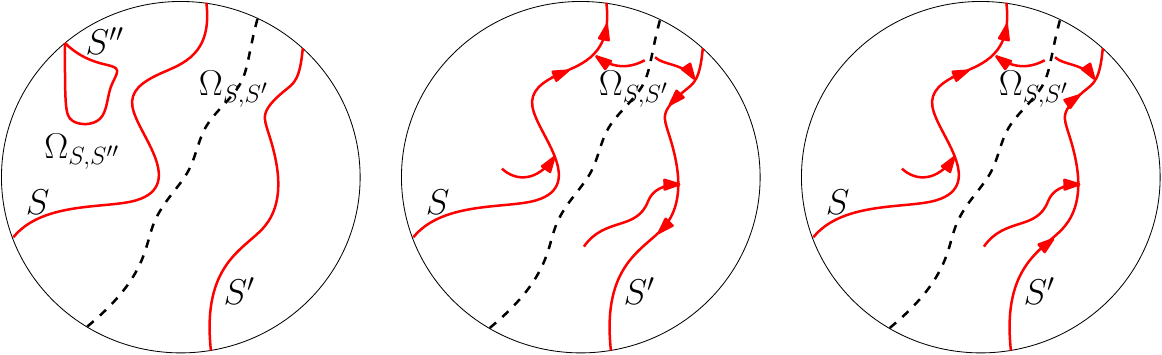}
\caption{An illustration of spines in the hyperbolic case. The spines $S$ and $S'$ are separated by a dual spine drawn in dashed blue.
This spine also separates $S''$ and $S'$ (left). The second and third figures from the left illustrate the choice of $\ve_S$ in \cref{lem:number_crossing}: in the second, $\ve_S = 1 = -\ve_{S'}$. In the third  figure (note that the orientation of $S'$ is reversed), $\ve_S = \ve_{S'} = 1$.}
\label{F:spine}
\end{figure}

 Now pick two faces $f,f'$ of $\tilde G'$ and let $S,S'$ be the spines of the components of $\cT$ containing $m(f), m(f')$. Let $\Omega_{S,S'}$ be the component between them as above. Draw a simple curve connecting $S$ and $S'$ in $\Omega_{S,S'}$, and oriented from $S$ to $S'$. For each primal spine $\sigma \subset \Omega_{S,S'}$ (hence not including $S$ and $S'$), let $\ve_\sigma$ be the algebraic number of times $\sigma$ is crossed by this curve (with the convention of counting +1 if $\sigma$ is crossed from its left to its right by the curve and -1 otherwise). If $\sigma$ is not crossed, define $\ve_\sigma$ to be 0. Define $\delta_\sigma$ similarly for primal spines. Notice that
\begin{equation}
\sum_{\sigma\subset \Omega_{S,S'}}\ve_\sigma + \delta_\sigma
\label{topterm}
\end{equation}
 is a topological term which does not depend on the choice of the curve (where the sum is over all primal and dual spines contained in $\Omega_{S, S'})$. Also, its only dependence on $f$ and $f'$ is through the spines $S$ and $S'$.

For any face $f$, recall that $\gamma_f$ is the infinite oriented path in $\cT$ started from $m(f)$ and going off to infinity along the unique outgoing oriented edges. Let $\zeta$ be the limit point of $\gamma_f$ (which exists due to \cref{lem:spine:boundary}).  Let $\zeta'$ be the limiting endpoint of $S'$ which lies in the same connected component of $\partial \tilde M \cap \Omega_{S,S'}$ as $\zeta$. Note that we are not defining $\zeta'$ to be the limit point of $\gamma_{f'}$ as it could be the case that $\zeta$ and $\zeta'$ might lie in different boundary components of $\partial \tilde M \cap \Omega_{S,S'}$ (depending on the orientation of $S'$).
In case $S,S'$ are loops through the same boundary point, we want $\zeta $ and $\zeta'$ to be in the same prime end of $\Omega_{S,S'}$.
\begin{itemize}
\item If the limit point of $\gamma_{f'}$ does not lies in the same component of $\partial \tilde M \cap \Omega_{S,S'}$ as $\zeta$ and $f'$ is in $\Omega_{S, S'}$, then we set $\ve_{S'} =  -1$,
\item in all other case, define $\ve_{S'} = +1$.
\end{itemize}
Finally define $\eps_S = + 1$.

Let $(\zeta,\zeta')$ be the arc joining $\zeta$ and $\zeta'$ (which could be a single point if $\zeta  = \zeta'$) in $\partial \tilde M$ in the hyperbolic case. Let $h_{\dim}$ be the dimer configuration corresponding to the pair $(\cT,\cT^\dagger)$ with reference flow given by \cref{def:ref_flow}. We can now state the following final result relating the dimer height function to the winding of trees:

\begin{thm}\label{lem:number_crossing}
In the hyperbolic case, let $\gamma := \gamma_{ff'}$ be the curve formed by concatenating $\gamma_f$, $(\zeta,\zeta')$ and the path in $\cT$ joining $\zeta'$ to $m(f')$. Orient $\gamma$ from $m(f)$ to $m(f')$. We have the following deterministic relation:
\begin{equation}\label{windingrel_hyp}
h_\cT( m(f')) -h_\cT( m(f)) = 2 \pi(h_{\dim}(f') - h_{\dim}(f) ) = W(\gamma, m(f)) + W(\gamma, m(f'))+ \pi\sum_{\sigma \in \bar \Omega_{S, S'}} (\ve_\sigma+\delta_\sigma).
\end{equation}
Here the sum $\sum_{\sigma \in \bar \Omega_{S, S'}} \ve_\sigma  + \delta_\sigma$ is as in \eqref{topterm} but also includes $S$ and $S'$.

{In the case of the torus, we have
\begin{multline}\label{windingrel_torus}
h_\cT( m(f')) - h_\cT( m(f)) = -2 \pi(h_{\dim}(f') - h_{\dim}(f) ) =  \\W(\gamma_f, m(f)) - W(\gamma_{f'} , m(f)) +  W(\gamma_{f}, m(f') )  - W(\gamma_{f'}, m(f'))+ \pi\sum_{\sigma \in \bar \Omega_{S, S'}} (\ve_\sigma+\delta_\sigma).
\end{multline}}
\end{thm}


\begin{proof}

Let us consider the hyperbolic case first. Take two vertices $x,x'$ in $S,S'$ respectively. Observe that we can find a path joining $x,x'$ which goes along the primal components, and moves from one component to the other by ``jumping" over dual spines (i.e. going along edges whose dual belong to a dual spine). Also one can make sure that the path is minimal, in the sense that every component between those of $x,x'$ (i.e. components which separate $S,S'$) is visited at most once. Call this path $(x,x')$. Now letting $x \to \zeta$ and $x' \to \zeta'$, we can also ensure that the portion of $\Omega_{S,S'}$ bounded by $(x,x'), (\zeta,\zeta')$ and the spines $S,S'$ contains none of $f$ or $f'$. Note that for this path, the choice of the $\ve_{S}, \ve_{S'}$ exactly matches with the choice defined for \cref{lem:winding_arbitrary_path}. We refer the readers to the second figure of \cref{F:spine} for the case when the limit point of $\gamma_{f'}$ does not lie in the same component of $\partial \tilde M \cap \Omega_{S,S'}$ and $f'$ lies in $\Omega_{S,S'}$, in which case $\ve_{S'} = -1$.

Let $\tilde \gamma$ be the path obtained by concatenating $(m(f), x), (x,x'), (x', m(f')) $. From \cref{lem:winding_arbitrary_path}, it is clear that
\begin{equation*}
2 \pi( h_{\dim}(f') - h_{\dim}(f) ) = W_\i(\tilde \gamma) + \pi \sum_{S} (\ve_S+\delta_S ).
\end{equation*}
 Indeed, notice that since the path is minimal, the $\ve_S, \delta_S$ terms are defined so as to match with the definition of $\ve_i,\delta_i$ in \cref{lem:winding_arbitrary_path}. Furthermore, $W_\i(\tilde \gamma) = W_\i( \gamma)$ because of the above choice of $x,x'$. We finish the proof using \cref{lem:int_to_top} {and the fact that the topological winding of $\tilde\gamma$ and $\gamma$ are equal.}.

For the torus case, the same proof applies by noticing that $W((x,x'), m(f)) + W((x,x'), m(f'))$ converges
to 0 as $x$ and $x'$ go to infinity along $S,S'$ in the same asymptotic direction {(the signs in \cref{windingrel_torus} come from the fact that one goes along $\gamma_{f'}$ in the opposite direction)}.

Finally, the fact that the term $\pi \sum_{S} (\ve_S+\delta_S )$ converges follows simply from the fact that the number of non-contractible components is a.s. finite in the limiting CRSF (and hence so is the number of spines which separate $S,S'$ into different components).
\end{proof}

\subsection{Mapping back to the manifold}\label{sec:instanton_convergence}

In this subsection, we briefly explain how the notion of height function constructed above on the universal cover corresponds to a height one-form on the surface coming from the dimer cover.  We also describe how the instanton component is a function of only the non-contractible loops of the Temperleyan CRSF. 


{ Suppose $\gamma$ is a path in $M'$ connecting two faces $f,f'$ and let $h(f,f')$ denote the height difference in $M'$ calculated along $\gamma$. Now imagine two copies of the lift of $\gamma$ to the universal cover $\tilde M'$ and observe that using our formula (\Cref{windingrel_hyp} or \eqref{windingrel_torus}) for computing the height difference along the lifts in the universal cover, it is not clear how these quantities are related for different lifts and how these formulas map back to the surface $M'$. In our next lemma, we show that this difference among different copies can be explicitly calculated and is deterministic, so it does not contribute to the fluctuations of the height function, which is the quantity of interest in this article.
 
 Recall from \Cref{SS:universalcover,sec:setup} that the covering map $p$ maps $\tilde M'/F$ to $\tilde M'$ where $F$ is a subgroup of $\Z^2$ in the torus case, and is a Fuchsian group (a discrete subgroup of the M\"obius group) in the hyperbolic case. Thus one can map one lift of a path in $ G'$ to another copy simply by using an element from $F$,
which is either a translation in the torus case, or a M\"obius map in the hyperbolic case. In the following lemma we show how this mapping affects the winding. We encourage the reader to recall the formulas in \Cref{sec:winding} before reading the lemma.


\begin{lemma}\label{lem:different_lifts}
Suppose we are in the hyperbolic case, so $\tilde M' = \D$. For any M\"obius map $\phi$ from $\D$ to itself, mapping $\tilde G'$ to itself, for any two diagonal midpoints $m, m'$, 
\[
h_{\cT}( \phi(m') ) - h_\cT( \phi(m) ) = h_\cT( m') - h_\cT(m)+ \arg_{\phi'(\D)}(\phi'(m(f) ) - \arg_{\phi'(\D)}( \phi'(m(f') ).
\]
In particular, $h_{\cT} - \arg( \phi')$ can be mapped back as a one-form on the dual of $G'$, away from vertices corresponding to punctures. In the parabolic case the same identity holds with no argument terms. 
\end{lemma}
\begin{proof}
    We focus on the hyperbolic case since the case of the torus is trivial. If $\phi$ is a conformal map mapping $\tilde G'$ to itself, then it must also map $\cT$ to itself and in particular $\gamma_{\phi(f) \phi(f')} = \phi( \gamma_{ff'} )$ in the notations of \cref{lem:number_crossing}. Clearly the topological terms $\epsilon_{\sigma}, \delta_{\sigma}$ from \cref{lem:number_crossing} are invariant under $\phi$:
    \[
\sum_{\sigma \in \Omega_{S, S'} } (\epsilon_\sigma + \delta_\sigma) = \sum_{\sigma \in \Omega_{\phi(S), \phi(S')}}  (\epsilon_\sigma + \delta_\sigma).
    \]
    For the term $W(\gamma, m(f) ) + W(\gamma, m(f'))$, it transforms as an intrinsic winding under conformal maps so by \cref{lem:winding_change_conformal}
    \begin{multline}
W( \phi(\gamma), \phi(m(f)) ) + W( \phi(\gamma), \phi(m(f')) ) = W( \gamma, m(f) ) + W( \gamma, m(f') ) \\
+ \arg_{\phi'(\D)}(\phi'(m(f) ) - \arg_{\phi'(\D)}( \phi'(m(f') ),
    \end{multline}
  which concludes the proof.  
\end{proof}
}
\begin{remark}\label{rem:fluctuation_lift}
A consequence of \Cref{lem:different_lifts} is the following. Fix any two faces $f,g$ of $G'$ and any path $\gamma$ in the dual of $G'$ starting at $f$ and ending at $g$. Let $h(f,g)$ be the sum of the height one form calculated along $\gamma$. Suppose $\tilde \gamma$ and $\tilde \gamma'$ be two copies of the lifts of $\gamma$ with endpoints $\tilde f,\tilde g$ and $\tilde f',\tilde g'$ respectively. Recalling the definition of height function $h_{\dim}$ from \Cref{sec:tree_embedded,sec:height_winding}, observe that 
$$
h_{\dim}(\tilde f) - h_{\dim}(\tilde g) - \E(h_{\dim}(\tilde f) - h_{\dim}(\tilde g) )  = h_{\dim}(\tilde  f') - h_{\dim}(\tilde g') - \E(h_{\dim}(\tilde  f') - h_{\dim}(\tilde g') ).
$$
since the $\arg$ terms cancel.
In other words, the dimer height fluctuation does not depend on the choice of the copy of $\gamma$ used in the cover, and hence can be mapped back into $M'$ without any ambiguity.
\end{remark}
    
{
Let us now consider the instanton component. Intuitively, it should be completely determined by the homology class of the non-contractible loops of the Temperleyan forests (and in particular it should be determined by the spines). To that end, take a rooted loop $\lambda$, and consider the height difference  along a particular choice of the lift of the loop, starting and ending at copies of the root, call this $h(\lambda)$.
\Cref{lem:different_lifts} tells us that we can compute $h(\lambda)$ unambiguously on any choice of the lift but we should in fact be able to say more. In particular, computing $h(\lambda)$ using \cref{windingrel_hyp,windingrel_torus} a priori require more precise geometric information involving windings. The following lemma shows that in fact the computation of $h(\lambda)$ does not involve any winding term and that the instanton component is at least a ``reasonable'' function of the non-contractible loops.
}



Take an ordered finite set of continuous simple loops which forms the basis of the first homology group of $M'$, all endowed with a fixed orientation. Let $\mathsf H $ be the finite set of numbers which denote the height change along these loops. It is well-known (see \cref{thm:hodge}) that $\mathfrak h$ is completely determined by $\mathsf H$ (at the discrete level).\  

{\begin{lemma}\label{lem:instanton_global}

Let $(\mathfrak C, \mathfrak C^\dagger)$ be the set of oriented non-contractible loops of the primal and dual Temperlayen CRSF.
Let $\mathfrak h$ be the instanton component of the height 1-form of the dimer configuration corresponding to this CRSF pair given by the extended Temperley bijection. Then $\mathfrak h $ is a function of $(\mathfrak C, \mathfrak C^\dagger)$ only.

\end{lemma}
}
\begin{proof}
{
We focus first on the hyperbolic case. Let $\lambda$ be a loop from the basis of the homology group considered above, without loss of generality we can assume that $\lambda$ starts in a diagonal midpoint. Choose an arbitrary lift of $\lambda$ and call its starting and ending points $m$ and $m'$ and the corresponding faces $f$ and $f'$.

Let $\phi$ be the Mobius map sending $m$ to $m'$ and mapping $\tilde G'$ to itself. By \cref{lem:number_crossing} and \cref{lem:int_to_top}, $h_\cT(m') - h_\cT(m) = W_{\text{int}}(\gamma) + \pi \sum_{\sigma \in \bar \Omega_{S,S'}} (\epsilon_\sigma + \delta_\sigma)$ with $\gamma$ obtained in this case by concatenating $\gamma_f$, the arc $(\zeta, \zeta')$ and the time reverse of $\gamma_{f'}$ (we assume for a brief moment that $\gamma$ is smooth close to $\zeta$ and $\zeta'$ and therefore piece-wise smooth everywhere). By additivity of the intrinsic winding we have $W_{\text{int}}(\gamma) = W_{\text{int}}(\gamma_f) + W_{\text{int}}((\zeta, \zeta')) - W_{\text{int}}(\gamma_{f'})$ but since $\gamma_{f'} = \phi(\gamma_f)$, by \cref{lem:winding_change_conformal}, $W_{\text{int}}(\gamma_f) - W_{\text{int}}(\gamma_{f'}) = -\arg(\phi'(\zeta)) + \arg(\phi'(m))$. Clearly, the above argument extends by taking an arbitrary regularisation of $\gamma$ if it is not smooth near $\zeta$ and $\zeta'$ (since the right hand side is independent of the regularisation procedure). Thus we have proved that
\begin{equation}
h_\cT(m') - h_\cT(m) = \arg(\phi'(m)) -\arg(\phi'(\zeta)) + W_{\text{int}}((\zeta, \zeta')) + \pi \sum_{\sigma \in \bar \Omega} (\epsilon_\sigma + \delta_\sigma). \label{eq:instanton_lift}
\end{equation}
The function $\phi$ only depends on $\lambda$ and the choice of lift $m$ for its starting point. Also clearly $\zeta, \zeta', \bar \Omega_{S,S'}$ and the $\epsilon_\sigma, \delta_\sigma$ only depend on the set of spines which concludes the proof of the hyperbolic case.


The proof for the parabolic case is identical and in fact even simpler since $\gamma_f$ and $\gamma_{f'}$ are translate of each other and there is no term for the winding of $(\zeta, \zeta')$ in \cref{lem:number_crossing}.
}\end{proof}

{We record now the following corollary regarding the weak limit of $\mathsf H$ as the mesh size goes to $0$. For this corollary, we need to rely on results from the second article in the series \cite{BLR_Riemann2} (and is not a deterministic result like the rest of this section) which will be reproduced later in the paper in \cref{thm:CRSF_universal}. We write a superscript $\delta$ to account for the dependence in $\delta$ as in \cref{sec:setup}.
\begin{corollary}
    $\mathsf H^\d$ described above converges in law as $\delta \to 0$ to some $\mathsf H$ which is measurable with respect to the limit of $(\mathfrak C^\d, (\mathfrak C^\dagger)^\d)$.
\end{corollary}
\begin{proof}
The term $\arg(\phi'(m))-\arg(\phi'(\zeta)) + W_{\text{int}}((\zeta, \zeta'))$ is clearly a continuous function of $(\mathfrak C, \mathfrak C^\dagger)$. Since $(\mathfrak C^\d, (\mathfrak C^\dagger)^\d)$ converges to an almost surely disjoint set of loops using \cref{thm:CRSF_universal}, $\sum_{\sigma \in \bar \Omega} (\epsilon_\sigma + \delta_\sigma)$ in the right hand side of \eqref{eq:instanton_lift} also converges in law, thereby completing the proof.
\end{proof}
}

\section{Local coupling}
\label{sec:local_coupling_grand}
In this section we prove a local discrete coupling result which extends ideas of \cite{BLR16} to the setup of Riemann surfaces. Roughly speaking, the goal of such a result is to show that the local geometry in a small neighbourhood of a Temperleyan CRSF is given by that of a uniform spanning tree in the surface or, alternatively (and more usefully), in some reference planar domain. Moreover, locally around a finite number of given points, the configurations can be coupled to independent such USTs.

{Recall that the actual Temperleyan CRSF's cannot be completely sampled using the standard Wilson's algorithm. However, due to \cref{L:top_condition}, given the skeleton $\mathfrak{s}$ emanating from either side of the punctures, the rest of the Temperleyan CRSF can be sampled from Wilson's algorithm. The main result in this section is   \cref{lem:exp_tail}.






The argument follows the same line of arguments as in \cite{BLR16} so let us first recall the strategy there. This consists of two main steps. Consider $k$ points $v_1,\ldots, v_k$ in $(\Gamma')^\d$. In the first step, we choose cutsets at a small but macroscopic distance around each of the $k$ points, such that the cutsets separate the points from each other and from the rest of the graph. We reveal \emph{all} the branches emanating from these cutsets. This leaves $k$ unexplored subgraphs $\Gamma_i^\d$, one around each point. In this step the key point is to make sure that the $\Gamma_i^\d$ are macroscopic (e.g., contain a ball of a radius roughly of the same order of magnitude as the distance to the cutsets). Clearly,  the conditional law of the tree in each $\Gamma_i^\d$ is that of a wired UST. Moreover, these wired USTs are also independent conditionally given the cutset exploration. (Of course, unconditionally there is still some dependence). The second step is then to say that in each $\Gamma_i^\d$, one can couple a wired UST with a full plane one, which shows, among other things, that the unconditional distribution is close to being independent.

To adapt this strategy to Riemann surfaces, if the $\Gamma_i^\d$ are sufficiently small so that it is simply connected, then the conditional law of the CRSF in each $\Gamma_i^\d$ will also be that of a wired UST, so that the second step can be used directly. For the first step however (cutset exploration), we will need to redo parts of the proof to take into account the possible loops in the CRSF.

We first state few useful lemmas from the second article (\cite[Lemmas 2.5 and 2.6]{BLR_Riemann2}) 
\begin{lemma}[Beurling type estimate]\label{lem:Beurling}
For all $r,\ve>0$ there exists $\eta>0$ such that for any $\delta <\delta(\eta)$ and for any vertex $v \in \Gamma^\d$ such that $\eta/2 < d_M(v,\partial M) <\eta$, the probability that a simple random walk exits $B_M(v,r ) := \{z\in M: d_M(z,v) <r\}$ before hitting $\partial \Gamma^\d$ is at most $\ve$.
\end{lemma}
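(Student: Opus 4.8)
The plan is to reduce the statement to a genuine Beurling-type estimate for Brownian motion on the compactified surface $\bar M$, using the invariance principle of Assumption \ref{InvP} together with the uniform crossing estimate \ref{crossingestimate} and boundary convergence \ref{boundary}. First, I would work in a fixed coordinate chart near the boundary. Since $M$ is nice, every boundary point has a local chart homeomorphic to $\bar\H$ in which the metric is $e^\rho|dz|^2$ with $\rho$ extending continuously (hence bounded above and below) on a neighbourhood of that boundary arc; by compactness of $\partial M$ finitely many such charts suffice. Fix such a chart $\varphi$ and pull everything back to $\bar\H$. In these coordinates, the statement to prove becomes: a vertex $v$ at Euclidean distance of order $\eta$ from $\R = \partial\H$, the random walk exits a ball of (metric) radius $r$ — which contains a Euclidean ball of radius $cr$ for some $c>0$ depending only on $\sup\rho$ over the chart — before hitting the boundary with probability at most $\eps$. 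Note that hitting $\partial\Gamma^\d$ in the discrete picture corresponds, via \ref{boundary}, to coming within $o(1)$ of $\partial M$, i.e.\ within $o(1)$ of $\R$ in the chart.

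Next I would prove the Brownian analogue. For Brownian motion (equivalently the random walk in the scaling limit, by \ref{InvP} and \ref{rmk:start_point}) started at height $\eta$ above the real axis, the probability of exiting $B(v, cr)$ before hitting a horizontal strip of height $\eta$ around $\R$ is bounded by a classical one-dimensional gambler's-ruin / Beurling estimate: comparing the imaginary part of Brownian motion to a one-dimensional Brownian motion, the probability that $\Im B_t$ reaches height $\sim r$ before dropping to height $\sim\eta$ is $O(\eta/r)$. Thus choosing $\eta$ small relative to $r\eps$ gives the bound $\eps$ in the continuum. The content of the lemma is to transfer this to the discrete walk \emph{uniformly in $\delta$}, and this is where the uniform crossing estimate \ref{crossingestimate} enters: it guarantees, at every scale $c \ge \delta/\delta_0$ down to the lattice scale, a positive-probability crossing of rectangles, which is exactly the ingredient that upgrades the invariance-principle convergence (a priori only at macroscopic scales) to a statement that the walk genuinely hits the boundary and does not sneak past it along the lattice. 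Concretely, I would tile the region between heights $\eta/2$ and (say) $2\eta$ above $\R$ by $O(1)$ translated/scaled copies of the crossing rectangles $R, R'$ at scale comparable to $\eta$, so that by \ref{crossingestimate} and the Markov property the walk started at height $\sim\eta$ hits $\partial\Gamma^\d$ before leaving $B_M(v,r)$ with probability at least some $p_0(\eta/r)>0$; iterating the complementary event, the walk makes $\Omega(\log(r/\eta))$ independent attempts, each succeeding with probability bounded below, before it can exit $B_M(v,r)$, giving an exponential-in-$\log(r/\eta)$, hence polynomial-in-$\eta/r$, bound on the escape probability. Choosing $\eta$ small enough makes this $\le\eps$, and $\delta(\eta)$ is chosen small enough that all the scales involved exceed $\delta/\delta_0$ so that \ref{crossingestimate} applies.

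The main obstacle is the uniformity in $\delta$: the invariance principle alone only controls the walk's behaviour at scales bounded away from $\delta$, so one cannot directly invoke the continuum Beurling estimate at the scale $\eta$ if $\eta$ is allowed to be comparable to $\delta$ — but the lemma only asserts the bound for $\delta < \delta(\eta)$, so $\eta$ is macroscopic relative to the mesh, and the crossing estimate \ref{crossingestimate} provides the needed control at all intermediate scales from $\eta$ down to $\delta$. A secondary technical point is matching the discrete notion ``hits $\partial\Gamma^\d$'' with ``comes within $r$ in $d_M$ of $\partial M$'': here one uses Assumption \ref{boundary} (Hausdorff convergence of boundary cycles) to ensure that a walk which reaches within $o(1)$ of $\partial M$ has in fact hit $\partial\Gamma^\d$, so no genuine ``escape through a gap in the boundary'' can occur. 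With these pieces, assembling the proof is routine: fix $r, \eps$, choose $\eta$ from the continuum estimate and the crossing-iteration bound, then choose $\delta(\eta)$ so that \ref{crossingestimate} is valid at all relevant scales.
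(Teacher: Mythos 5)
Your proposal is correct, and it reaches the same key continuum ingredient as the paper (the one-dimensional gambler's-ruin estimate for Brownian motion in a boundary chart mapping to $\bar\H$, combined with continuity of the chart), but it then takes a different and heavier route for the discrete transfer. The paper's proof stops after the gambler's ruin step and simply invokes the invariance principle, Assumption~(iii): once $\eta$ is fixed, the event ``trace before hitting the boundary has small diameter'' is a continuity set for Brownian motion, so for $\delta < \delta(\eta)$ the walk inherits the estimate. This is legitimate because of the order of quantifiers you yourself point out --- $\delta(\eta)$ may depend on $\eta$ --- so no uniformity in $\delta$ is required; the invariance principle with $\eta$ held fixed is all that is needed. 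By contrast, you insert an entire rectangle-tiling/iteration argument via the uniform crossing estimate to ``upgrade'' the invariance principle at intermediate scales. That argument is not wrong, and it would in fact yield a stronger, quantitative, $\delta$-uniform bound of the form $(\eta/r)^c$ rather than merely a qualitative $\le\eps$; but for the lemma as stated it is unnecessary machinery, and it introduces a small technical point you should be careful about: the crossing estimate is formulated for rectangles inside lifts of compact subsets of the \emph{open} manifold $M$, so your tiling near the boundary must be arranged to stay interior, and the conclusion ``the walk hits $\partial\Gamma^\d$'' from such crossings relies on planarity plus the Hausdorff convergence of Assumption~(v), which you mention but do not spell out. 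In short: same continuum core, but the paper's discrete step is a direct appeal to the invariance principle, while yours re-proves a discrete Beurling bound from scratch via crossings.
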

\begin{lemma}\label{lem:uniform_avoidance}
Let $K_0 \subset K'_0 \subset M$ be open connected sets and let $K \subset K_0',K'$ be compact sets which are the closures of $K_0,K'_0$. Also assume $K$ contains a loop which is non-contractible in $M$. Then there exists a $\delta_{K,K'}>0$ and $\alpha_{K,K'}>0$ depending only on $K,K'$ such that for all $\delta<\delta_{K,K'}$ and all $v \in \Gamma^\d$ such that $v \in K$, simple random walk started from $v$ has probability at least $\alpha_{K,K'}$ of forming a non-contractible loop before exiting $K'$.
\end{lemma}

{Next, we recall a lemma from \cite{BLR_Riemann2} about the regularity of the skeleton branches away from the punctures. As mentioned above, the skeleton $\mathfrak{s}$ cannot be directly sampled using Wilson's algorithm, however the following (technical) result from \cite{BLR_Riemann2} guarantees that it will not be qualitatively different from a regular loop-erased random walk away from the punctures.
Let $\bar \lambda_\cM$ denote the law of $\mathfrak s$. Let $\bs \eta_r := \{\bs \eta_r^{(i)} = (\eta^{(i)}_{r,1},\eta^{(i)}_{r,2})\}_{1 \le i \le {\mathsf{k}}}$ denote the portion of the branches of the skeleton until they exit a ball of radius $r$ around the puncture, where $r$ is some fixed small quantity. For each such path $\eta^{(i)}_{r,j}$  (with $1 \le i \le \mathsf k$, $j=1,2$) run independent simple random walks from their tips, where each walk is conditioned to exit the ball of radius $r'>r$ before hitting $\eta^{(i)}_{r,j}$, for another small choice of $r'$. After hitting $r'$ the walks are allowed to run unconditionally until they either hit $\bs \eta_r$ or $(\partial \Gamma')^\d$ or their loop erasure creates a non-contractible loop. Let $\tilde \mu_{r,r'}$ denote the joint law of the loop erasures of the resulting pair of walks.
\begin{prop}[Proposition 6.5 of \cite{BLR_Riemann2}]\label{prop:lemma_abs_cont_special}
For any event $E$ which is measurable with respect to $\mathfrak s \setminus \bs \eta_r$,
\begin{equation}\label{eq:special_upper_independent}
	\underline{\lambda}_{\cM}( E) \leq C \left(\sqrt{ \sum_{j=1}^{2{\mathsf{k}}}\sup_{\bs\eta_{r/2^j}} \tilde \mu_{\mathcal{M}_{r/2^j,r/2^{j-1}}}(E)} \right)
	\end{equation}
	where the supremum is over all possible $\bs \eta_{r/2^j}$.
 \end{prop}
 Thanks to that proposition, we will be able to use Wilson's algorithm to prove estimates about all the branches in Temperleyan CRSF, including the skeleton.
}

\subsection{Cutset exploration} \label{sec:local_coupling}

 We now describe the construction more precisely. Fix $k$ points $v_1, \ldots, v_k$ in $(\Gamma')^\d$ and let $N_{v_i}$ be small enough neighbourhoods around each $v_i$ such that $\{N_{v_i}\}_{1 \le i \le k} $ do not intersect each other or the pre-specified boundary $\partial \Gamma^\d$ of $\Gamma^\d$ (if the boundary is non-empty)
 and for each $1 \le i\le k$, $p^{-1}(N_{v_i})$ is a disjoint union of sets in $\tilde M$ such that $p$ restricted to each component is a homeomorphism to its image. Also for each $i$, fix one pre-image $\tilde v_i$ of $v_i$ and let $\tilde N_{\tilde v_i}$ denote the component of $\tilde{v_i}$ in $p^{-1}(N_{v_i})$.
Let $R(\tilde v_i,\tilde N_{\tilde v_i})$ be the inradius seen from $\tilde v_i$ in $\tilde M$ with respect to the Euclidean metric, and call
 \begin{equation} \label{def_r}
 r = \min_{1\le i \le k} R( \tilde v_i; \tilde N_{\tilde v_i}).
 \end{equation}
 {(We point out that in our setup it is natural to define these quantities with respect to Euclidean geometry on $\tilde M$ and project to $M$, because our assumption and in particular the uniform crossing condition are stated with respect to this geometry; whereas the intrinsic geometry of $M$ does not really appear in our setup).}

We denote by $B_{\Euc}(\tilde v,r)$ the Euclidean ball of radius $r$ around $\tilde v  $ and for $r'>r$, let $A_\Euc(\tilde v , r,r') = B_{\Euc}(\tilde v,r') \setminus B_{\Euc}(\tilde v,r)$ be the Euclidean annulus of inradius $r$ and outer radius $r'$.
Let $H_i$ be a set of vertices in $p(A_\Euc(\tilde v_i,r/2,r)) \cap (\Gamma')^\d$ which disconnects $v_i$ from $\partial N_{v_i}$.  
{The \textbf{cutset exploration} is done in two steps
\begin{enumerate}[a.]
\item First sample the skeleton according to the correct law (i.e. according to CRSF branches with the global conditioning that the branches divide the manifold into annuli).
\item Then, reveal all the branches from $H_i, 1 \le i \le k$ using (unconditioned) Wilson's algorithm, resulting in a subgraph $\cT_{H_i}^\d$, $1 \le i \le k$.
\end{enumerate}
}
We say a vertex $ v_j$ has
\textbf{cutset isolation radius} $6^{-k}r$ at scale $r$ if
$$  p( B_{\Euc}(\tilde v_j,6^{-k}r) )
\text{ does not contain any vertex from $\cT_{H_i}^\d$}.$$
Let us define $J_{v_j}$ as the the minimum value such that $v_j$ has isolation radius $6^{-J_{v_j}}r$ and let $J = \max_j J_{v_j}$.

We want to show that $J$ has exponential tail (which results in polynomial tail of the isolation radius). To this end, we rely on the following bound on the distance between a loop-erased walk and a point, which is a version of Proposition 4.11 in \cite{BLR16}.
  \begin{lemma}
    \label{lem:coming_close}
    Let $\gamma$ be a loop-erased random walk starting from a vertex $v$ until it hits $(\partial \Gamma')^\d$ or a non-contractible loop is created. Let $u \neq v$ be another vertex and let $\tilde u$ be one of the images under $p^{-1}$ of $u$. Let $r$ be small enough such that $U:=\bar B_\Euc(\tilde u,r)$, is contained in the pre-image of $N_{u}$ which contains $\tilde u$.
Then there exist constants $\alpha,c>0$ (depending only on the initial assumptions of the graph) such that for all $\delta <\delta_U$ and all $n \in (0,\log_2(\frac{Cr\delta_0}{\delta} )-1)$ where $\delta_U,\delta_0$ are as in the crossing condition (assumption (\ref{crossingestimate})),

$$
\P(\gamma \cap  p(B_{\Euc}(\tilde u,2^{-n}r))  \neq \emptyset )< c \alpha^n .
$$
\end{lemma}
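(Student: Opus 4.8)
The plan is to adapt the proof of Proposition 4.11 in \cite{BLR16}, isolating the one new ingredient needed on a surface, namely the fact that the loop-erased walk can now be stopped prematurely by forming a noncontractible cycle. First I would pass to the lift: below, all balls and annuli are Euclidean ones in $\tilde M$ centred at $\tilde u$, and since $U = \bar B_\Euc(\tilde u,r)$ lies in a single sheet of $p^{-1}(N_u)$, the map $p$ is injective there, so inside $B_\Euc(\tilde u,r)$ the lifted walk $\tilde X$ is simply a planar walk obeying \cref{crossingestimate}. I would also observe that in all our applications $N_u$ can be taken disjoint from $\partial\Gamma^\d$, so we may assume $p(U)\cap\partial\Gamma^\d=\emptyset$, the general case reducing to this one via the Beurling-type estimate \cref{lem:Beurling}. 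The constraint $n<\log_2(Cr\delta_0/\delta)-1$ is precisely what guarantees that each dyadic scale $2^{-m}r$, $0\le m\le n$, is at least a fixed multiple of $\delta/\delta_0$, so that \cref{crossingestimate} is simultaneously applicable at all these scales.

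The key new observation is that any cycle of $\Gamma^\d$ which is entirely contained in $p(U)$ lifts, by injectivity of $p$ on that sheet, to a genuine closed loop in $\tilde M$ and is therefore contractible; hence when the walk $X$ completes a noncontractible cycle it must have used an edge outside $p(U)$. Combined with $p(U)\cap\partial\Gamma^\d=\emptyset$, this shows that the walk cannot be stopped while confined to $B_\Euc(\tilde u,r)$, so that during any excursion of $\tilde X$ into $B_\Euc(\tilde u,2^{-n}r)$ and back out to radius $r$ the walk behaves as an \emph{unstopped} planar walk. This is exactly the property that is automatic in the simply connected setting of \cite{BLR16} but that has to be established here.

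Granted this, I would run the multi-scale estimate of \cite[Prop.~4.11]{BLR16}. On the event $E_n:=\{\gamma\cap p(B_\Euc(\tilde u,2^{-n}r))\neq\emptyset\}$ the loop-erasure retains a vertex $w$ in the small ball; writing $\ell^w$ for the last visit of $\tilde X$ to $w$, the standard characterisation of surviving vertices forces $\tilde X[\ell^w,T]$ to avoid the loop-erased backbone $\mathrm{LE}(\tilde X[0,\ell^w])$ except at $w$. In an excursion of $\tilde X$ out of $B_\Euc(\tilde u,2^{-n}r)$ to radius $r$ -- which, by the second paragraph, happens without the walk being stopped -- the walk crosses each of the $n$ dyadic annuli between radii $2^{-n}r$ and $r$, and, using \cref{crossingestimate} exactly as in the proof of \cref{lem:uniform_avoidance} to build a small \emph{contractible} loop winding once around $\tilde u$ inside a given annulus, conditionally on the backbone and on the past of the walk the probability that the walk winds around $\tilde u$ in that annulus, and therefore hits the backbone (which contains an arc crossing that annulus), is at least $c>0$. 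Hence at each of the $n$ scales the walk avoids the backbone with conditional probability at most $1-c$; by the Markov property and the fact that the relevant pieces of $\tilde X[\ell^w,T]$ lie in disjoint annuli this gives $\P(E_n)\le (1-c)^n$, the claimed bound with $\alpha=1-c$. The detailed planar topology -- the characterisation of surviving vertices, and the fact that a loop winding around $\tilde u$ must cross any arc joining the two boundary circles of the annulus -- is identical to \cite{BLR16}.

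The step I expect to be the real obstacle is the one isolated in the second paragraph: making sure the outward excursion actually reaches all $n$ scales rather than being cut short. The confinement observation handles the direct version of this; the one remaining delicate point is that the walk could, instead of exiting $B_\Euc(\tilde u,r)$ after its last visit to $w$, close a noncontractible loop onto a strand of the loop-erasure that it had already drawn near $u$, coming back from ``the other side'' of the surface. I would dispose of this by noting that such a return to the vicinity of $u$ from a different sheet, without first winding at the intermediate scales, again has probability at most $(1-c)^n$ by the same crossing argument, so it can be absorbed into the bound. Everything else is either imported verbatim from \cite{BLR16} or is a routine consequence of the uniform crossing assumption.
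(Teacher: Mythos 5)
Your setup is sound (lifting to the cover, noting $p$ is injective on $U$, importing the multi-scale crossing machinery from \cite{BLR16}), but the proposal has a genuine gap precisely at the step you yourself flag as the obstacle. The assertion ``the walk cannot be stopped while confined to $B_\Euc(\tilde u,r)$'' is false. Your contractibility argument shows that a cycle of $\Gamma^\d$ \emph{entirely contained} in $p(U)$ is contractible, but when $X$ closes a loop by hitting a vertex of its earlier loop-erasure, that loop consists of the entire piece of loop-erasure between the hit vertex and the current position -- a path that may have travelled far outside $p(U)$ and returned on a different sheet. Such a loop can be noncontractible even though both its endpoint and the hitting point lie inside $p(U)$. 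Consequently the walk \emph{can} stop while positioned inside $B_\Euc(\tilde u,r)$, and your subsequent claim that ``the walk crosses each of the $n$ dyadic annuli between radii $2^{-n}r$ and $r$'' on the way out does not hold: if the walk stops at scale $m<n$ by hitting such a strand, the crossing argument only forces winding-avoidance in the $n-m$ innermost annuli, yielding $(1-c)^{n-m}$, not $(1-c)^n$.

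Your last paragraph recognises exactly this scenario but disposes of it by asserting that the return from another sheet ``again has probability at most $(1-c)^n$''; this is not justified and conflates two different events. The paper's proof supplies the missing idea: for each excursion it introduces the \emph{forbidden set} $X'_j$ (the portion of the current loop-erasure whose hitting would create a noncontractible loop), lets $m$ be the deepest scale reached by a copy of $X'_j$ inside $B_\Euc(\tilde u,r)$, and splits the event into two independent pieces. On the way \emph{in}, the walk must avoid full turns in the $m$ outermost annuli (else it hits $X'_j$ and is stopped before reaching scale $n$), cost $e^{-cm}$; on the way \emph{out}, from the deepest visit until it reaches scale $m+1$ (where a copy of $X'_j$ first becomes available to stop it), the walk must avoid full turns in the $n-m$ innermost annuli (else it erases the deep vertex), cost $e^{-c(n-m)}$. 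The product is $e^{-cn}$ \emph{uniformly in $m$}, and a union bound over the geometrically-tailed number of excursions finishes. It is this conditioning on $m$ and the split $e^{-cm}\cdot e^{-c(n-m)}$ -- recovering part of the exponent from the inward crossing and the remainder from the outward crossing -- that your proposal lacks and that cannot be replaced by the single ``$(1-c)^n$'' claim.
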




  \begin{proof}
    {This is almost identical to Lemma 4.11 in \cite{BLR16} except we now need to take into account the topology as well. To emphasise the differences with Lemma 4.11 in \cite{BLR16}, we recall the strategy there. The idea is that if the loop erased walk on the manifold comes inside $p(B_{\Euc}(\tilde u,2^{-n}r))$ then some lift of the random walk must necessarily come within $B_{\Euc}(\tilde u,2^{-n}r)$ -- call this region (exponential) scale $n$. Furthermore, after the last such visit, this (lift of the) random walk must cross $n$ many annuli without making a loop around $ \tilde u$ (which we called a ``full turn"). This has a probability bounded by $e^{-cn}$.

\begin{figure}[t]
\begin{center}
\includegraphics[width=.7\textwidth]{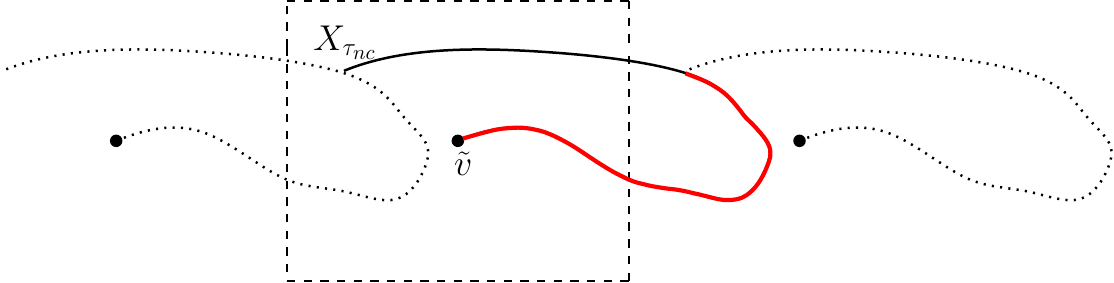}
\caption{A schematic representation (in solid line) of the lift of the loop erasure of the random walk on the torus until a non-contractible loop is formed. Call this path $\tilde \gamma$ and let $\gamma  = p(\tilde \gamma)$. The dashed  square denotes the fundamental domain and the dashed paths denote some other lifts of $\gamma$. In this case, $\tilde \gamma$ stops inside the fundamental domain.}
\label{fig:gamma_tilde}
\end{center}
\end{figure}

In the current situation however, the random walk might form a non-contractible loop before exiting $U$, and therefore its relevant lift described above does not necessarily have to cross $n$ many annuli (see e.g. \cref{fig:gamma_tilde}) after coming within Euclidean distance $2^{-n}r$ of $\tilde u$ before we stop it. Thus we cannot simply apply the argument of the previous paper.
We overcome this using the following idea which we first explain informally. The random walk on the manifold may visit $p(B_{\Euc}(\tilde u, r))$ multiple times. Consider one such time when it enters $p(B_{\Euc}(\tilde u, r))$ (not necessarily the first time) and suppose its relevant lift $\tilde X$ which also  enters $B_{\Euc}(\tilde u, r)$  at that time. Let $X'$ be the portion of the loop-erasure of $X$ on $M$ at that time such that, if the random walk hits $X'$ later on, then a non-contractible loop in $M$ is formed. 
Assume that $X'$ comes inside $p(B_\Euc(\tilde u, 2^{-m}r) )$. In order for the whole loop-erasure $\gamma$ to come inside $p(B_\Euc(\tilde u, 2^{-n}r))$ the following events must take place on one such visit.
First, the lift $\tilde X$ has to cross the first $m$ scales without performing a full turn (otherwise it would close a non-contractible loop on the manifold and stop); this has a probability bounded by $e^{-cm}$. Then $\tilde X$  needs to come within Euclidean distance $2^{-n}r$ of $\tilde u$, but we bound crudely the probability of this event by 1. Finally $\tilde X$ needs to come back out after the last visit to $B_\Euc(\tilde u, 2^{-n} r)$ in such a way that no full turn occurs between distances $2^{-n}r$ and $2^{-m}r$ (for the same reason as in the simply connected case). The probability of this event can be bounded by $e^{-c(n-m)}$. The intersection of these three events gives the right overall upper bound: $e^{-cm} e^{-c(n-m)} = e^{-cn}$.}

   Let us fill in the details. Let $C_i$ be the Euclidean circle of radius $r_i:= 2^{-i}r$ around $\tilde u$ for $i \ge 0$. We start a random walk $X$ from $v$ (to emphasise again: $X$ is on the manifold $M$). Let $\{\tau_k\}$ be a sequence of stopping times defined inductively as follows. Set $\tau_0 = 0$. Having defined $\tau_k$ to be a time when the random walk $X$ crosses or hits some circle $p(C_{i(k)})$, define $\tau_{k+1}$ to be the smallest time after $\tau_k$ when the random walk crosses or hits either $p(C_{i(k)-1})$ or $p(C_{i(k)+1})$; this defines $\tau_k$ by induction for every $k \ge 1$. If $i(k) = 0$, define $\tau_{k+1}$ to be the smallest time after $\tau_k$ when the random walk crosses or hits $p(C_{i(k)+1})$.
 Let $k_{\text{exit}}$ be the smallest integer such that in the interval $[\tau_{k_{\text{exit}}},\tau_{k_{\text{exit}}+1}]$ either a non-contractible loop is created by the loop-erasure of the random walk or the random walk hits the boundary $\partial \Gamma^\d$. Let $\kappa_0,\kappa_1,\ldots, \kappa_N$ be the set of indices $k<k_{\text{exit}}+1$ when $X_{\tau_k}$ has crossed or hit $p(C_0)$.
 {Also note that the portions $ X[\tau_{\kappa_0 + 1}, \tau_{\kappa_1} ] $, $X[ \tau_{\kappa_1 +1}, \tau_{\kappa_2}] , \ldots$
 are contained inside $p(C_0)$; while the portions $ X[\tau_{\kappa_0}, \tau_{\kappa_0 +1} ],  X [ \tau_{\kappa_1}, \tau_{\kappa_1 + 1}], \ldots$ are contained outside $p(C_1)$}.
     Our first claim is that $N$ has exponential tail, i.e.,  $\exists \alpha_1\in (0,1)$, such that for all $n \ge 1$,
     \begin{equation}
  \P(N >n) < \alpha_1^n. \label{E:N_exp}
\end{equation}
Indeed, using \cref{lem:uniform_avoidance}, every time the walk hits $p(C_0)$, there is a positive probability independent of the past that the walk creates a non-contractible loop before returning to $p(C_1)$. Therefore, $N$ has geometric tail. This proves \eqref{E:N_exp}.

Let $\cS$ be the set of points $\{X(\tau_k)\}_{k \ge 1}$. Note that  given $\cS$, the pieces of random walk $\{X[\tau_k,\tau_{k+1}]\}$ are independent of each other. {We call such pieces \textbf{elementary pieces} of the walk.} If $i(k) \neq 0$, $X[\tau_k,\tau_{k+1}]$ is (given $\cS$) a random walk starting from $X_{\tau_k}$ conditioned to exit the annulus $p(A_\Euc(\tilde u,r_{i(k)-1},r_{i(k)+1}))$ at $X_{\tau_{k+1}}$. If $i(k)=0$, $X[\tau_k,\tau_{k+1}]$ is a random walk which is conditioned to next hit $p(B_\Euc(\tilde u,r/2))$ at $X_{\tau_{k+1}}$. (Note that this property is lost if we solely condition on $\{X(\tau_k)\}_{k\le \kappa_N}$ since the conditioning on $N$ is complicated.) By Lemma 4.7 in \cite{BLR16}, conditionally on $\cS$, each random walk piece $X[\tau_k,\tau_{k+1}]$ has a uniformly positive probability to do a \textbf{full turn} in the annulus $p(A_\Euc(\tilde u,r_{i(k)-1},r_{i(k)}))$ for $\delta \le \delta_U$ and given range of $n$ (where $\delta_U$ comes from the uniform crossing assumption). 
Here we define a full turn to be the event that the random walk crosses every curve joining the inner and outer boundary in the specified annulus. {Indeed, although Lemma 4.7 in \cite{BLR16} gives the uniform positive probability estimate for crossing in the Euclidean plane, the estimate is valid here by considering the relevant lift of $X[\tau_k, \tau_{k+1}]$ which is inside $\bar B_\Euc(\tilde u,r)  = U $ and applying the uniform crossing estimate in the universal cover. Let us also point out that $\delta$ is chosen small enough so that the uniform crossing estimate is valid inside $U$.}

\medskip We now define certain low probability events $\mathsf E_1,\ldots, \mathsf E_n$ such that one of them must take place if the loop-erasure of $X$ is to enter $B_n:=p(B_{\Euc}(\tilde u,2^{-n}r))$. Let us condition on $\cS$. We say the event $\mathsf E_j$ occurs if:


\begin{enumerate}[{(}i{)}]
\item The portion $X[\tau_{\kappa_{j-1} +1},\tau_{\kappa_{j}}]$ intersects $B_n$. We then let $ \lambda_j$ be the smallest $k$ in $[\tau_{\kappa_{j-1} +1},\tau_{\kappa_{j}}]$
such that $X_{\tau_k}$ crosses (or hits) $p(C_n)$.

\item
Let $X_j'$ be the portion of the loop-erasure of $X[0,\tau_{\kappa_{j-1} +1}]$ such that if the walk $X[\tau_{\kappa_{j-1} +1},\tau_{\kappa_{j}}]$ intersects $X_j'$, a non-contractible loop would be created.
Let $m$ be the maximum index such that $X'$ intersects the circle $p(C_m)$. 

Then for the event $\mathsf E_j$ to hold we also require, in addition to the previous point, that: for any $\kappa_{j-1} +1 \le k\le \lambda_j$ if $i(k)<m$, then the walk $X[\tau_k,\tau_{k+1}]$ does not perform a full turn in $p(A_\Euc(\tilde u,r_{i(k)-1},r_{i(k)}))$. If no such $m$ exists (e.g., if $X_j'$ is empty), we do not require anything further.
\item Let $ \ell_j$ be the last $k$ before $\kappa_j$ such that $X_{\tau_k}$ crosses (or hits) $p(C_n)$. Let $\ell'_j$ be the first time after $\ell_j$ that the walk intersects $p(C_{m+1}).$ Then in addition to the previous two points, for $\mathsf E_j$ to hold we require that the walk $X[\tau_k,\tau_{k+1}]$ does not perform a full turn for any $\ell_j \le k \le \ell_j'$.
\end{enumerate}

From the discussion above, it is clear that we have the following lemma.
\begin{lemma}\label{lem:events}
We have $$\{\gamma \cap  B_n \neq \emptyset\}  \subseteq  \{N \le n, \cup_{j=1}^n \mathsf E_j \} \cup \{N >n\}.$$
\end{lemma}

Thus all we need to show is that the event on the right hand side of \cref{lem:events} has exponential tail bound. Now we claim that
$$
\P(\mathsf E_j | \cS) \le e^{-c'n} \text{ and so } \P( \mathsf E_j) \le e^{- c'n}.
$$

This is justified using the uniform positive probability of the walk $X$ performing a full turn, even conditionally given $\cS$.
Indeed, conditioned on $\cS$, we have that the event (ii) in the definition of $\mathsf E_j$ has probability at most $e^{-c'm}$, and conditioned on event in (ii), the event in (iii) has probability at most $e^{-c''(n-m)}$ since conditionally given $\cS$, the random walk portions $X[ \tau_k, \tau_{k+1}]$ are independent. {Thus the overall probability given $\cS$ is at most $e^{-cn}$ with $c = c' \wedge c''$. }
All in all, using \cref{E:N_exp}, \cref{lem:events} and a union bound, we obtain
\begin{equation*}
\P(\{\gamma \cap  B_n \neq \emptyset\}) \le ne^{-c'n} +e^{-c'n} \le ce^{-c'n}.
\end{equation*}
thereby concluding the proof.
\end{proof}

\begin{lemma}\label{lem:special_coming_close}
Let $\mathfrak s$ denote the skeleton.
Let $u $ be a vertex which is not one of the punctures and let $\tilde u$ be one of the images under $p^{-1}$ of $u$. Let $s$ be small enough such that $U:=\bar B_\Euc(\tilde u,s)$, is contained in the pre-image of $N_{u}$ which contains $\tilde u$.
Then there exist constants $\alpha,c>0$ (depending only on the initial assumptions of the graph) such that for all $\delta <\delta_U$ and all $n \in (0,\log_2(\frac{Cs\delta_0}{\delta} )-1)$ where $\delta_U,\delta_0$ are as in the crossing condition (assumption (\ref{crossingestimate})),

$$
\P(\mathfrak s \cap  p(B_{\Euc}(\tilde u,2^{-n}s))  \neq \emptyset )< c \alpha^n .
$$

\end{lemma}
\begin{proof}
This is a corollary of \Cref{prop:lemma_abs_cont_special} and \cref{lem:coming_close}. 
Consider the case ${\sf k} =1$ and recall the notations from from \cref{prop:lemma_abs_cont_special}. It is easy to see using the Markov property of the independent random walks used to sample $\tilde \mu_{r/2, r}$ that after hitting $r$, the result of \cref{lem:coming_close} kicks in, yielding the required exponential bound for $\tilde \mu_{r/2, r} (E)$. The same argument obviously applies to $\tilde \mu_{r/4, r/2} (E)$, yielding the desired statement.


  The case ${\sf k}>1$ is analogous. We skip the details here.
\end{proof}

We now state the result showing an exponential tail of $J$, which is a combination of \cref{lem:coming_close,lem:special_coming_close} and Schramm's lemma, and is identical to the proof of Lemma 4.20 in \cite{BLR16} (see also the proof of Theorem 4.21 in \cite{BLR16}}); as it is identical we skip the proof here.

\begin{lemma}\label{lem:cutset isolation}
There exist constants $c,c'>0$ such that the following holds. Let $\tilde D$ be a compact set containing $B(\tilde v_i,r)$ for $1 \le i \le k$ where $r$ is as in \eqref{def_r}.
Then for all $\delta \in (0,\delta_{\tilde D})$ and for all $ m \in (0,\log_6(\delta_0r/\delta) -1)$,
$$
\P(J>m) \le ce^{-c'm}.
$$
\end{lemma}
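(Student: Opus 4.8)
The plan is to follow verbatim the argument of \cite[Lemma 4.20]{BLR16}, combining the ``coming close'' estimate of \cref{lem:coming_close} with Schramm's finiteness lemma \cref{lem:Schramm_finiteness}. Since $k$ is fixed, a union bound over $j$ reduces the claim to $\P(J_{v_j}>m)\le ce^{-c'm}$ for a single $j$, and by the order-independence of Wilson's algorithm we may perform the cutset exploration so that the branches emanating from $H_j$ are revealed first. As $H_j$ lies in the annulus $p(A_\Euc(\tilde v_j,r/2,r))$ and disconnects $v_j$ from $\partial N_{v_j}$, any branch revealed afterwards (whether from another cutset $H_i$ or from a later vertex of $H_j$) that tries to reach the component of $N_{v_j}\setminus\cT_{H_j}^\d$ containing $v_j$ must first hit the already-revealed set; hence for $m\ge 1$ the ball $p(B_\Euc(\tilde v_j,6^{-m}r))$ can only be touched by the branches emanating from $H_j$, and $\{J_{v_j}>m\}$ is contained in the event that one of these branches enters $B_\Euc(\tilde v_j,6^{-m}r)$.

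Next I would reveal the branches from $H_j$ by the good algorithm $GA_{\tilde\Gamma^\d,\partial,H_j}(\tilde v_j,r')$ for a radius $r'\asymp r$ with $H_j\subset B_\Euc(\tilde v_j,r'/2)$, and invoke \cref{lem:Schramm_finiteness}: for every $\eps>0$ there is $j_0=j_0(\eps)$, which a quantitative inspection of Schramm's argument shows may be taken to be $O(\log(1/\eps))$, such that with probability at least $1-\eps$ every branch sampled from $\cQ_j(H_j)$ with $j>j_0$ has Euclidean diameter at most $\eps r'$; starting at distance at least $r/4$ from $\tilde v_j$, such a branch stays outside $B_\Euc(\tilde v_j,6^{-m}r)$ once $\eps$ is small and $m\ge 2$. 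It then remains to treat the finitely many ``macroscopic'' branches --- those of $\cT_{j_0}^\d$ and those from $\cQ_j(H_j)$ with $j\le j_0$, of total number $N(\eps)\le C6^{2j_0(\eps)}$, which is polynomial in $1/\eps$. Each of these is a loop-erased walk that is stopped (at the previously revealed set, at $\partial\Gamma^\d$, or at the completion of a noncontractible loop), hence stochastically dominated by an unstopped one started at a vertex at distance at least $r/4$ from $\tilde v_j$; since $6^{-m}=2^{-n}$ with $n=m\log_2 6$, \cref{lem:coming_close} bounds the probability that it enters $B_\Euc(\tilde v_j,6^{-m}r)$ by $c\alpha^{m\log_2 6}=c(\alpha')^m$ with $\alpha':=\alpha^{\log_2 6}<1$, valid for $\delta<\delta_{\tilde D}$ and for $m$ in the stated range (which is exactly the condition making $6^{-m}r\gtrsim\delta/\delta_0$, so that the hypotheses of both lemmas hold at this scale). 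A union bound then gives $\P(J_{v_j}>m)\le\eps+N(\eps)\,c(\alpha')^m$.

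The only step that requires care --- and the one I would flag as the main point --- is the final optimisation over $\eps$: one lets $\eps$ depend on $m$, taking $\eps=\eps(m)=e^{-\beta m}$ with $\beta>0$ chosen small enough that $N(\eps(m))\le Ce^{2\beta(\log 6)\kappa\,m}$ (where $\kappa$ is the implicit constant in $j_0(\eps)=O(\log(1/\eps))$) satisfies $2\beta(\log 6)\kappa<\log(1/\alpha')$, so that $N(\eps(m))(\alpha')^m\le Ce^{-c_1m}$ for some $c_1>0$ while $\eps(m)=e^{-\beta m}$. This yields $\P(J_{v_j}>m)\le C''e^{-c'm}$ with $c'=\min(\beta,c_1)>0$, and a last union bound over $j=1,\dots,k$ absorbs $k$ into the constant. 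No genuinely new difficulty appears beyond this bookkeeping: the one feature specific to Riemann surfaces --- that a branch may stop by closing a noncontractible loop before crossing all the intermediate annuli around $v_j$ --- has already been absorbed into the proof of \cref{lem:coming_close}, so from here the argument is literally that of \cite{BLR16}.
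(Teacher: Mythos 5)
Your overall plan is the right one and is in fact what the paper itself does: the authors explicitly defer to \cite[Lemma 4.20, Theorem 4.21]{BLR16}, and the argument there is exactly a combination of \cref{lem:coming_close} with \cref{lem:Schramm_finiteness}, optimised over a scale cut-off. Your parametrisation (take $\eps = \eps(m) = e^{-\beta m}$ and use $j_0(\eps) = O(\log(1/\eps))$) differs cosmetically from the one in \cite{BLR16}, which fixes $j_0 = \sqrt m$ and exploits the Gaussian-type error (their (4.6), which actually gives $j_0(\eps) = O(\log\log(1/\eps))$, so your claimed rate is conservative and suffices). Either bookkeeping closes the argument, so this difference is harmless.

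There is, however, one step where your phrasing points in the \emph{wrong} direction and would fail if taken literally: you assert that a branch of Wilson's algorithm, being a loop-erased walk \emph{stopped} at the previously revealed set, is ``stochastically dominated'' by the unstopped one from \cref{lem:coming_close}. Loop-erasure does not commute with stopping: terminating the underlying walk earlier (when it hits $\cT^\d_{j-1}$) gives it \emph{fewer} opportunities to complete a full turn and erase the portion of the trajectory near $\tilde v_j$. So the stopped branch has a \emph{larger} (not smaller) probability of touching $B_\Euc(\tilde v_j, 6^{-m}r)$ than the branch in \cref{lem:coming_close}; the inequality you invoke goes the opposite way. What actually works — and what \cite{BLR16} does — is to re-run the proof of \cref{lem:coming_close} with $\partial \Gamma^\d$ replaced by $\partial \Gamma^\d \cup \cT^\d_{j-1}$. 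The full-turn estimate (BLR16 Lemma 4.7) applies to the random-walk bridges between annulus crossings regardless of the ultimate stopping set, and on the complementary event the previously revealed macroscopic branches keep a distance $\gtrsim r$ from $\tilde v_j$, so the walk must still traverse $\approx m$ nested annuli before it can hit its stopping set, yielding the same $\alpha^{m}$ decay. You should also double-check the ``distance $\ge r/4$'' assertion for the starting vertices of $\cQ_j(H_j)$ at the very coarsest scales $j = 1, 2$: the good algorithm, run at radius $r'\asymp r$, can in principle place its chosen vertex in a cell that reaches inside $B_\Euc(\tilde v_j, r/4)$ unless one argues (using that $H_j$ lies in the annulus and the ``farthest from $v$'' rule) that this does not occur, or simply starts the good algorithm at a scale $j\ge j_1$ for a bounded $j_1$ and treats the $O(1)$ vertices at scales $j<j_1$ directly with \cref{lem:coming_close}.
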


Finally, we state a lemma which says that with exponentially high probability, a branch of the CRSF, after entering an exponential scale $t$ does not backtrack to a smaller scale. Such a lemma for SLE curves can be found in \cite{SLE} (see also Lemma 3.4 in \cite{BLR16} in the simply connected case).

\begin{lemma}
\label{lem:backtrack}
Fix $u,U, \delta_U$ as in \cref{lem:coming_close}.
Suppose $\gamma$ is the loop-erasure of a simple random walk in $\Gamma^\d$ started from vertex $u$ until it hits the boundary or creates a non-contractible loop. Suppose $\tilde \gamma$ is the lift of $\gamma$ started from $\tilde u$ (parametrised from $\tilde u$ to its endpoint). There exist constants $c,c'>0$ (depending only on the initial assumptions of the graph) such that for all $\delta <\delta_U$ and all $n \in (0,\log_2(\frac{Cr\delta_0}{\delta} )-1)$
$$
\P(\text{$\tilde \gamma$ enters  $B_{\Euc}(\tilde u, r 2^{-n})$ after exiting $B_{\Euc}(\tilde u, r 2^{-n/2})$}) \le ce^{-c'n}.
$$
\end{lemma}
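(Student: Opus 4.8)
The statement is the Riemann surface analogue of the no‑backtracking estimate for loop‑erased random walk (Lemma 3.4 of \cite{BLR16}, in the spirit of \cite{SLE}), and my plan is to transcribe that argument while reusing, verbatim, the two surface‑specific devices already introduced in the proof of \cref{lem:coming_close}: lifting the relevant portion of the walk to the universal cover so that the uniform crossing assumption \cref{crossingestimate} applies, and allowing for the possibility that the walk terminates early by forming a noncontractible loop. First I would set up dyadic circles $C_k$ of Euclidean radius $r_k := r 2^{-k}$ about $\tilde u$ for $n/2 \le k \le n$; all of these lie inside $U = \bar B_\Euc(\tilde u, r)$, and the hypothesis $n < \log_2(Cr\delta_0/\delta) - 1$ guarantees that even the innermost radius $r_n$ is of order at least $\delta/\delta_0$, so that the uniform crossing estimate is available at every scale $C_k$, exactly as in \cref{lem:coming_close}. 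Since consecutive radii satisfy $r_k - r_{k+1} = r_{k+1} \gg \delta$, a nearest‑neighbour step of $\Gamma^\d$ cannot cross two of these circles at once.

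The combinatorial heart of the argument is the following claim: on the event that $\tilde\gamma$ exits $B_\Euc(\tilde u, r2^{-n/2})$ and subsequently re‑enters $B_\Euc(\tilde u, r2^{-n})$, the underlying walk $\tilde X$ contains a ``return excursion'' — a segment that starts at distance $\ge r2^{-n/2}$ from $\tilde u$, ends at distance $< r2^{-n}$, and whose loop‑erasure survives as a sub‑path of $\tilde\gamma$ — and along this excursion the walk performs \emph{no} full turn around $\tilde u$ in any of the $\sim n/2$ dyadic annuli $A_\Euc(\tilde u, r_{k+1}, r_k)$, $n/2 \le k < n$. Indeed, a full turn in $A_\Euc(\tilde u, r_{k+1}, r_k)$, combined with the fact that $\tilde\gamma$ has already passed — and, being the loop‑erasure, remains — outside $C_k$ before the deep re‑entry, would enclose the subsequent visit to $B_\Euc(\tilde u, r_n)$ inside an erased loop, contradicting that this visit lies on $\tilde\gamma$. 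This is precisely the erasure mechanism exploited in \cref{lem:coming_close}, now read ``in reverse'', from the exit scale $n/2$ down to the re‑entry scale $n$; making it precise requires identifying the relevant conditionally‑independent elementary pieces via events analogous to the $\mathsf E_j$ there, and this is the step I expect to be the main obstacle.

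For the quantitative bound I would then follow \cref{lem:coming_close} essentially line by line. Introduce the successive crossing times $\tau_k$ of $\tilde X$ between consecutive circles $C_k$, set $\cS = \{\tilde X_{\tau_k}\}$; conditionally on $\cS$ the elementary pieces $\tilde X[\tau_k,\tau_{k+1}]$ are independent, each is contained in an annulus bounded by two of the $C_k$ and hence in $U$, and by Lemma 4.7 of \cite{BLR16} — applied to the lift of $\tilde X$ inside $U$ exactly as in \cref{lem:coming_close} — each has conditional probability at least a universal $c_0 > 0$ of performing a full turn in the relevant annulus. Since, on the event in question, of order $n$ distinct such pieces must avoid a full turn, the conditional probability is at most $(1-c_0)^{cn}$ for some $c>0$ (a constant factor in the exponent absorbing the bookkeeping of selecting the relevant crossings); and the walk may in addition terminate by forming a noncontractible loop before the return excursion is completed, which only helps and is controlled by the same geometric estimate on the number of returns past $C_{n/2}$ used to establish \eqref{E:N_exp} in the proof of \cref{lem:coming_close} (via \cref{lem:uniform_avoidance}). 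Averaging over $\cS$ and taking a union bound as there yields $\P(\cdots) \le c e^{-c' n}$, with $c,c'$ depending only on the crossing constants of \cref{sec:setup}. Since the remainder of the argument is a verbatim transcription of Lemma 3.4 of \cite{BLR16}, one may alternatively simply cite that result together with the adaptations already made in \cref{lem:coming_close}.
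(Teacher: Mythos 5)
Your proposal follows essentially the same route as the paper's proof: you lift to the universal cover, set up dyadic circles and crossing times exactly as in \cref{lem:coming_close}, condition on $\cS = \{\tilde X(\tau_k)\}$, invoke the uniformly positive conditional probability of a full turn per elementary piece (Lemma~4.7 of \cite{BLR16}), and observe that a full turn at an intermediate scale would cause the deep re-entry to be erased, forcing $\Theta(n)$ consecutive elementary pieces to each fail to turn. Two small remarks. First, you flag ``identifying the relevant conditionally-independent elementary pieces via events analogous to the $\mathsf E_j$'' as the likely obstacle, but the paper's proof sidesteps this entirely: it simply lets $J$ be the last index at which the lifted walk hits $C_n$ after having been at $C_{n/2}$, and $I$ the last index before $J$ at which it hits $C_{n/2}$, so that $[\tau_I,\tau_J]$ is a single excursion containing $\ge n/4$ elementary pieces, none of which may perform a full turn; no event decomposition à la $\mathsf E_j$ is needed, and for the same reason your appeal to the return-count estimate \eqref{E:N_exp} is superfluous here. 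Second, you should state explicitly (as the paper does) that the lift $\tilde\gamma$ of the loop-erasure of $X$ coincides with the loop-erasure of the lifted walk $\tilde X$ — since erasing contractible loops commutes with lifting — because it is this observation that lets one define the $\tau_k$ for $\tilde X$ rather than for $X$ (in contrast to \cref{lem:coming_close}, where the circles live on $M$) and then argue about $\tilde\gamma$ directly.
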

\begin{proof}

Let $\mathsf E$ be the event that $\tilde \gamma$ enters  $B_{\Euc}(\tilde u, r 2^{-n})$ after exiting $B_{\Euc}(\tilde u, r 2^{-n/2})$.  Let $r$ be as in \cref{lem:coming_close} and assume $n$ is even without loss of generality. The argument for this is very similar to \cref{lem:coming_close} and in fact simpler, so we content ourselves with a sketch. Let $r,C_i,B_i$ be as in the proof of \cref{lem:coming_close}. We look at the lift $\tilde X$ of the simple random walk $X$ started at $\tilde u$ and we stop when either $X$ hits the boundary or a non-contractible loop is created. Let $\tau_k$ be the set of stopping times defined as in the proof of \cref{lem:coming_close} but for the lift $ \tilde X$ instead of $X$ and let $\cS$ be the set $ \{\tilde X(\tau_k)\}_{k \ge 1}$. Observe that lift of the loop erasure of $X$ is the loop erasure of $\tilde X$ since erasing contractible loops commutes with lifting to the universal cover. If the loop erasure of $\tilde X$ has to backtrack to scale $n$, the random walk has to necessarily enter scale $n$ after leaving scale $n/2$. Let $J$ be the largest $j$ such that $\tilde X_{\tau_j}$ crosses or hits $C_n$ after leaving $C_{n/2}$ 
and let $I$ be the largest index $i$ smaller than $J$  when $\tilde X_{\tau_i}$ crosses or hits $C_{n/2}$. Conditioned on $\cS$, if $\mathsf E$ occurs, then $\tilde X$ enters $B_n$ at least once after leaving $B_{n/2}$, and none of the elementary pieces of the walk between $\tau_I$ and $\tau_J$ can perform a full turn. But again, conditioned on $\cS $ there is a uniformly positive probability to do a full turn for each elementary piece. Since there are at least $n/4$ such elementary pieces contained in $[\tau_I, \tau_J]$, we conclude the proof of the lemma applying the upper bound on the full-turn estimate on each elementary piece.
\end{proof}

\subsection{Full coupling} \label{sec:fullcoupling}

{The results of the previous section covered the first step in the proof of the coupling.\ As we mentioned above, the second step is identical to the simply connected case so we only recall the main statements.}

We first recall the result which we will need from \cite{BLR16}. We use the notations and assumptions already in force.
Let $\tilde D \subset \tilde D' \subset \tilde M'$ be two simply connected compact domains and fix $\tilde v \in \tilde D^\d$. Let $\cT^{\tilde D'}, \cT^{\tilde D}$ denote the wired UST respectively in  $(\tilde D')^\d$ and $\tilde{D}^\d$. Let $r_{\tilde v} $ denote $R(\tilde v, \tilde N_{\tilde v})$ as in \cref{def_r} (the largest Euclidean radius so that $p$ is injective).

\begin{lemma}[Theorem 4.21, \cite{BLR16}]
  \label{lem:isolation_rad}
  There exists $c,c'>0$ such that the following holds. Fix $\tilde v , \tilde D ,\tilde D'$ as above.
There exists a coupling between $\cT^{\tilde D}$ and $\cT^{\tilde D'}$ and a random variable $R'>0$ such that
$$
 \cT^{\tilde D'}\cap {B_\Euc(\tilde v,R')^\d} = \cT^{\tilde D}\cap {B_\Euc(\tilde v,R')^\d}.
$$
Furthermore, for all $\delta < \delta_{\tilde D'}$ and for all $m \in (0,\log_6(\delta_0r_{\tilde v}/\delta) -1)$, if we write $R' = 6^{-K_v}r_{\tilde v,\tilde D}$ where $r_{\tilde v, \tilde D}$ is the minimum of $r_{\tilde v}$ and the Euclidean distance between  $\tilde v$ and $ \partial \tilde D$, then
$$
\P(K_{v} \ge m) \le ce^{-c'm}.
$$
%
\end{lemma}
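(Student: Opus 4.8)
The plan is to recognise this as the purely planar statement isolated as Theorem~4.21 in \cite{BLR16}: here $\tilde D\subset\tilde D'$ are simply connected, so $\cT^{\tilde D}$ and $\cT^{\tilde D'}$ are genuine wired USTs and no cycles intervene, and the argument of that paper applies verbatim. First I would build both trees on one probability space by running Wilson's algorithm (Section~\ref{sec:Wilson}) off a single array of independent random-walk increments, the only difference being that a branch contributing to $\cT^{\tilde D}$ is absorbed at $\partial\tilde D$ whereas the corresponding branch of $\cT^{\tilde D'}$ keeps going until it reaches $\partial\tilde D'$. The elementary but decisive observation is then that two coupled branches issued from the same vertex \emph{literally coincide} until one of them is absorbed -- either at its own outer boundary, or at a piece of the previously revealed forest on which the two realisations have already separated.

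Next I would reveal the branches in the order dictated by the ``good algorithm'' of \cref{lem:Schramm_finiteness}: from $\tilde v$ first, and then from the vertices of $B_\Euc(\tilde v,r_{\tilde v,\tilde D})^\d$ met in a dyadic family of annuli around $\tilde v$ at scales $6^{-j}r_{\tilde v,\tilde D}$, retaining only $O(1)$ branches per cell at each scale so that a union bound survives. Define $K_v$ to be the least $m$ for which, in \emph{both} explorations, (i) no branch started at Euclidean distance $\ge 6^{-m/2}r_{\tilde v,\tilde D}$ of $\tilde v$ ever enters $B_\Euc(\tilde v,6^{-m}r_{\tilde v,\tilde D})^\d$, and (ii) every branch started closer than that never re-enters $B_\Euc(\tilde v,6^{-m}r_{\tilde v,\tilde D})^\d$ once it has left $B_\Euc(\tilde v,6^{-m/2}r_{\tilde v,\tilde D})^\d$; then set $R'=6^{-K_v}r_{\tilde v,\tilde D}$. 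Requirement (i) fails with probability at most $ce^{-c'm}$ per branch by the ``coming close'' estimate \cref{lem:coming_close}, and (ii) fails with probability at most $ce^{-c'm}$ per branch by the no-backtracking estimate \cref{lem:backtrack}; summing these over the polynomially many branches produced by the good algorithm -- exactly the combination underlying \cref{lem:cutset isolation} -- gives $\P(K_v\ge m)\le ce^{-c'm}$ in the stated ranges of $\delta$ and $m$.

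It then remains to verify that on $\{K_v\le m\}$ the two trees coincide on $B_\Euc(\tilde v,R')^\d$. Arguing by induction on the revealed branches (and, within a branch, on the dyadic scale), I would show that the forest revealed so far always agrees between the two explorations inside $B_\Euc(\tilde v,R')^\d$: each new coupled pair of branches stays identical at least until it has escaped the larger ball $B_\Euc(\tilde v,6^{-K_v/2}r_{\tilde v,\tilde D})^\d$, by which time, via \cref{lem:backtrack}, its entire trace inside the core ball $B_\Euc(\tilde v,R')^\d$ is already completed -- so any later divergence (whether the two copies are absorbed at $\partial\tilde D$ versus $\partial\tilde D'$, or at pieces of the older forest lying outside the core, where the realisations differ) is irrelevant to what happens inside $B_\Euc(\tilde v,R')^\d$; note also that the coupled walks cannot be forced apart at their respective outer boundaries before reaching scale $r_{\tilde v,\tilde D}\gg 6^{-K_v/2}r_{\tilde v,\tilde D}$. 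Once the revealed forests are seen to agree on $B_\Euc(\tilde v,R')^\d$, with $\tilde v$ lying in a simply connected unexplored component that contains this core ball in both explorations, the domain Markov property of the uniform spanning tree (the final clean-up step of Wilson's algorithm) lets one couple the two conditional laws to be equal on $B_\Euc(\tilde v,R')^\d$, completing the proof. The main obstacle is precisely this last bookkeeping -- simultaneously controlling the random, polynomially numerous branches that approach $\tilde v$ and propagating the agreement down the dyadic scales without a blow-up of exceptional probabilities -- which is where the organisation into the good algorithm and the exponential estimates \cref{lem:coming_close} and \cref{lem:backtrack} must be dovetailed, as carried out in detail in \cite{BLR16}.
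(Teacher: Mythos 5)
The paper gives no proof of this lemma: it is simply cited as Theorem 4.21 of \cite{BLR16}, which is a purely planar statement and therefore transfers to the nested simply connected domains $\tilde D\subset \tilde D'\subset \tilde M$ without modification. So what you are really doing is reconstructing the argument of \cite{BLR16} rather than reproducing something in this paper.

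Your sketch does correctly identify the main ingredients of that argument --- a coupling of the two Wilson explorations on a common probability space, the good-algorithm ordering of branches, the ``coming close'' estimate (\cref{lem:coming_close}) and the no-backtracking estimate (\cref{lem:backtrack}). However, there are two places where the reconstruction is materially lighter than what \cite{BLR16} actually does, and the second of these is a genuine gap.

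First, your one-shot coupling omits the iterated structure of \cite{BLR16}: there, a ``base coupling'' is attempted at scale $r_{\tilde v,\tilde D}$, and if it fails, one retries at a geometrically smaller scale with a \emph{fresh independent} copy of the UST in $\tilde D$. This retry mechanism is what gives an exponential tail for $K_v$ cleanly: each retry has failure probability bounded below $1$, each failed retry costs an exponential number of scales (via \cref{lem:coming_close}), and a geometric sum of exponentials is exponential. Your direct union bound can be rescued by the Gaussian/quadratic error refinement of Schramm's finiteness lemma (which is indeed the combination used to prove \cref{lem:cutset isolation}), but it is not the route of \cite{BLR16}.

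Second, and more seriously, conditions (i) and (ii) by themselves do \emph{not} imply that the two trees agree on $B_\Euc(\tilde v,R')^\d$. Suppose a branch of $\cT^{\tilde D}$ is the loop-erasure of $X[0,T_1]$ (absorption at $\partial\tilde D$, say), while the matching branch of $\cT^{\tilde D'}$ is the loop-erasure of $X[0,T_2]$ with $T_2>T_1$. Even if the ``excess walk'' $X[T_1,T_2]$ never enters the core ball $B_\Euc(\tilde v,6^{-m}r_{\tilde v,\tilde D})$, it can still hit the loop-erasure $\mathrm{LE}(X[0,T_1])$ at a vertex $p$ visited \emph{before} its passage through the core, in which case the chronological erasure removes the core part and replaces it; the two branches then disagree inside $B_\Euc(\tilde v,6^{-m}r_{\tilde v,\tilde D})$ even though both satisfy your no-backtracking condition (ii), which constrains the final loop-erased paths, not the underlying walks nor the excess segments $X[T_1,T_2]$. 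Ruling this out requires a separate ``coming close'' estimate on the excess walk, and more broadly requires the agreement region to be a macroscopic annulus rather than just the core ball --- which is exactly why the base coupling of \cite{BLR16} is built from carefully nested annuli ($7r$, $6r$, $5r$, \dots) with independent branches at the outermost scales and shared increments only at the inner ones. Your sketch acknowledges that the bookkeeping is delicate and defers to \cite{BLR16}, which is fair, but the conditions you propose do not close that loop by themselves.
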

We now put together the cutset exploration with the above one-point coupling from the simply connected case as follows. Recall the notations from \cref{sec:local_coupling}. Let $\cT_{H_i}^\d$ be the branches revealed in the cutset exploration around each $v_i$.
Let $\cT_H^\d = \cup_{i=1}^k \cT_{H_i}^\d$. Let $\Gamma_i^\d$ be the connected component of $(\Gamma')^\d\setminus \cT_H^\d$ containing $v_i$. Observe that
given $\cT^\d_H$, the law of $\cT^\d\cap \Gamma_i^\d$ are independent wired UST in $\Gamma_i^\d$ (with the natural boundary), by the generalised Wilson algorithm. Applying the coupling of \cref{lem:isolation_rad} in the lift of $\Gamma_i^\d$ to $\tilde M'$ and some fixed compact $\tilde D$ in $\tilde M'$ containing some lifts $\tilde v_1,\ldots, \tilde v_k$ of points $v_1,\ldots, v_k$, we obtain a coupling of the oriented CRSF in $\Gamma^\d$ and independent wired USTs  $(\cT^{\tilde D}_i)_{1\le i \le k}$ in $\tilde D$. Furthermore, the $r$ in \eqref{def_r} and $r_{\tilde v}$ in \cref{lem:isolation_rad} differs by a constant multiplicative factor which depends only on the choice of lifts of the vertices. Furthermore, we may (and will) choose the $(\cT^{\tilde D}_i)_{1\le i \le k}$ to be independent of $\cT_H^\d$. In fact, note that for each $1\le i \le k$, $\cT^{\tilde D}_i$ may be chosen independent of the restriction of $\cT^\d$ to $\cup_{j \neq i} p(B(\tilde v_j, r/2))$.

\begin{prop}
  \label{lem:exp_tail}
The above coupling has the following properties:
%
there exists random variables $R_1, \ldots, R_{k}$ such that
\begin{equation}\label{coincide}
\cT^\d
\cap p( B( \tilde v_i, R_i) )= p( \cT^{\tilde D}_i
\cap B( \tilde v_i, R_i) ).
\end{equation}
Furthermore if we write $R_i = 6^{- I_{v_i}} r_i$ where $r_i $ is the minimum of $r$ as in \eqref{def_r} and the distance between  $\tilde v_i$ and $\partial \tilde D$,
then for all
$\delta \le \delta_{\tilde D}$, 
and for all
$1\le i\le k$, for all $n  \in (0,\log_6(\delta_0r_i/\delta) -1)$,
\begin{equation}
  \label{eq:32}
  \P (I_{v_i} \ge n) \le ce^{-c'n}
\end{equation}
for some constants $c,c'>0$ (depending only on the initial assumptions on the graph). In particular, $\P(I_{v_i} \ge n) \le ce^{-c'n} \vee \delta^{c'}$.

The set of non-contractible loops of $\cT^\d$ is measurable with respect to $\cT_H^\d$.
In particular, $(\cT_i^{\tilde D})_{1 \le i \le k}$ are also independent of the set of oriented non-contractible loops in the oriented CRSF.
\end{prop}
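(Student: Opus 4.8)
\emph{Plan of proof.} The plan is to assemble the two ingredients already in place — the cutset exploration of \cref{sec:local_coupling} and the one–point coupling \cref{lem:isolation_rad} imported from \cite{BLR16} — exactly as in the simply connected case, taking care of two points specific to Riemann surfaces: that the regions left unexplored around the $v_i$ are simply connected (so that the second step applies verbatim), and that every noncontractible loop of $\cT^\d$ is already seen during the cutset phase.

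First I would run the generalised Wilson's algorithm with the vertices ordered so that those of $H_1,\dots,H_k$ come first, then all vertices lying outside $\bigcup_i\Gamma_i^\d$, and only at the end the vertices enclosed by the cutsets; let $\cT_H^\d$ denote everything revealed before this last stage. Then $\cT_H^\d\supseteq\bigcup_i\cT_{H_i}^\d$, the extra branches cannot cross the cutsets $H_i$ and hence stay outside $\bigcup_i p(B_\Euc(\tilde v_i,r/2))$ — so they do not affect the cutset isolation radius of \cref{lem:cutset isolation} — and by construction $\Gamma^\d\setminus\cT_H^\d=\bigcup_{i=1}^k\Gamma_i^\d$. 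Each $\Gamma_i^\d$ lies in the region enclosed by $H_i$, which lifts homeomorphically into $\tilde N_{\tilde v_i}$; since that region is a topological disc and $\cT_H^\d$ restricted to it is a forest all of whose components meet $H_i$, the component $\Gamma_i^\d$ is simply connected. Consequently, conditionally on $\cT_H^\d$ the restrictions $\cT^\d\cap\Gamma_i^\d$ are independent wired USTs by the generalised Wilson's algorithm, and — crucially — an LERW run inside a simply connected $\Gamma_i^\d$ can never close a noncontractible loop, so every noncontractible loop of $\cT^\d$ already lies in $\cT_H^\d$. This yields at once that the set of oriented noncontractible loops of $\cT^\d$ is $\sigma(\cT_H^\d)$–measurable.

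Next, for the coupling I would condition on $\cT_H^\d$, so that $\Gamma_i^\d$ becomes deterministic, lift it to $\tilde\Gamma_i^\d\subset\tilde N_{\tilde v_i}$, and apply \cref{lem:isolation_rad} with inner domain $\tilde\Gamma_i^\d$ and outer domain the fixed compact $\tilde D$ (taken to contain each $\tilde N_{\tilde v_i}$, so that $r_i=r$), independently over $i$. This produces wired USTs $\cT_i^{\tilde D}$ in $\tilde D$ agreeing with $\cT^\d\cap\Gamma_i^\d$ on a ball $B_\Euc(\tilde v_i,R_i)$ with $R_i=6^{-K_{v_i}}\min(r_{\tilde v_i},\dist(\tilde v_i,\partial\tilde\Gamma_i^\d))$ and $\P(K_{v_i}\ge m\mid\cT_H^\d)\le ce^{-c'm}$. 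Since the cutset isolation radius forces $\dist(\tilde v_i,\partial\tilde\Gamma_i^\d)\ge 6^{-J}r$ and since $B_\Euc(\tilde v_i,R_i)\subset\Gamma_i^\d$, the agreement in fact holds against $\cT^\d$ itself; writing $R_i=6^{-I_{v_i}}r_i$ this gives the deterministic inequality
\[
I_{v_i}\ \le\ J+K_{v_i}+C
\]
for a constant $C$. A union bound over $\{J\ge n/2\}$ and $\{K_{v_i}\ge n/2-C\}$, using the exponential tail of $J$ (\cref{lem:cutset isolation}) and the exponential tail of $K_{v_i}$ given $\cT_H^\d$ (\cref{lem:isolation_rad}, uniformly in the conditioning), then gives $\P(I_{v_i}\ge n)\le ce^{-c'n}$ on the stated range of $n$, and monotonicity in $n$ upgrades this to $\P(I_{v_i}\ge n)\le ce^{-c'n}\vee\delta^{c'}$ for all $n$.

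Finally I would deduce the independence assertions by the usual transfer argument: conditionally on $\cT_H^\d$ the $\cT_i^{\tilde D}$ can be chosen independent and each has conditional law the wired UST in $\tilde D$, which does not depend on $\cT_H^\d$, so $(\cT_i^{\tilde D})_{1\le i\le k}$ is mutually independent and independent of $\cT_H^\d$; conditioning additionally on $\{\cT^\d\cap\Gamma_j^\d:j\ne i\}$ — which given $\cT_H^\d$ is independent of $\cT^\d\cap\Gamma_i^\d$, hence of $\cT_i^{\tilde D}$ — gives independence of $\cT_i^{\tilde D}$ from the restriction of $\cT^\d$ to $\bigcup_{j\ne i}p(B_\Euc(\tilde v_j,r/2))$; and since the noncontractible loops of $\cT^\d$ are $\sigma(\cT_H^\d)$–measurable while $(\cT_i^{\tilde D})_i$ is independent of $\cT_H^\d$, the family $(\cT_i^{\tilde D})_i$ is independent of the set of oriented noncontractible loops. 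The only genuinely new step compared with \cite{BLR16} is the topological bookkeeping of the first paragraph, and that is the step I expect to require the most care: one must check that the regions left unexplored after the cutset phase are simply connected, that they still contain Euclidean balls of radius of order $6^{-J}r$, and that the deterministic statement of \cref{lem:isolation_rad} may be applied to the random inner domain $\tilde\Gamma_i^\d$ by conditioning.
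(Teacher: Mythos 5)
Your proof is correct and takes essentially the same route as the paper: split $I_{v_i}$ as roughly $J_{v_i}+K_{v_i}$, control the first term by \cref{lem:cutset isolation} and the second by \cref{lem:isolation_rad}, and observe that noncontractible loops cannot pass through the (simply connected, conditionally wired-UST) regions $\Gamma_i^\d$. One small remark: the paper keeps $\cT_H^\d$ as just the cutset branches and argues the independence via the fact that no noncontractible cycle of $\cT^\d$ visits $\Gamma_i^\d$ (a cycle through $\Gamma_i^\d$ would produce a boundary-to-boundary path in a wired UST, which is impossible), whereas you enlarge $\cT_H^\d$ to include all exterior branches before the coupling step so that the noncontractible loops are literally contained in $\cT_H^\d$ — this is a tidier way to justify the measurability assertion, and does not change anything in the tail estimate since, as you correctly check, the extra exterior branches cannot cross the cutsets $H_i$ and so leave both $\Gamma_i^\d$ and the cutset isolation radius unaffected.
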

Observe that when $I_{v_i}$ is very big or $r_i$ is very small, it is possible that the ball $B(v_i, R_i)$ is reduced to a point so the statement \eqref{coincide} is trivial (that is, \eqref{coincide} holds for a single vertex). This happens with a probability which is at most $\delta^{c'}$ for some $c'$.

\begin{proof}
This follows immediately from \cref{lem:cutset isolation} and \cref{lem:isolation_rad} once we observe that $I_{v_i}$ is within $O(1)$ of the sum of $J_{v_i}$ in \cref{lem:cutset isolation} and $K_{v_i}$ in \cref{lem:isolation_rad}, both of which have exponential tails.

The proof of the final assertion is a topological fact. Indeed, conditioned on $\cT_H^\d$, it is a deterministic fact that none of the oriented non-contractible loops pass through the portion of the CRSF in $\Gamma_i^\d$ (which is a wired UST). Indeed, otherwise we would have a path joining two points of the wired boundary of a wired UST which is impossible. Thus, the wired UST in each $\Gamma_i^\d$ is conditionally independent of the non-contractible loops and hence so are $\cT_i^{\tilde D}$s.
\end{proof}


\section{Convergence of height function and forms}\label{sec:ht_convergence}

In this section, we precisely state our main result (this is \cref{thm:main_precise}) and then prove it. Recalling the sketch from \cref{sec:organization}, we see that what remains to be done is to go from the convergence of the Temperleyan CRSF to the convergence of its winding field using the coupling of \cref{sec:local_coupling_grand}. The global idea is the same as in \cite{BLR16} but some of the estimates on winding of LERW have to be redone. The primary difficulty is that spines are made of multiple copies of LERW, and hence Wilson's algorithm cannot be used to estimate the winding of all the copies. These are dealt with in \cref{sec:apriori,sec:full_spine}. The  proof of the main result is finally concluded in \cref{sec:winding_convergence}.

\subsection{Precise statement of the result}\label{sec:statement}

From now on, we work with the manifold $M'$. Recall that it is obtained by removing $2g+b-2$ points from the interior of $M$ and that the white vertices removed from $G^\d$ to obtain a Temperleyan graph in \cref{L:top_condition} will converge to these points. Denote by $\tilde M'$ the universal cover of $M'$; in the sequel ``lift" refers to lift on $\tilde M'$.
 Let $h^\d_{\di}$ be the height function defined as in \cref{sec:winding_ht} sampled using \cref{Gibbs}. { Recall that $h^{\d}_{\di}$ is a function from the the dual of $ (\tilde G')^\d$ to $\R$ which is defined up to a global additive constant and $\bar h^{\d}_{\di} := h^{\d}_{\di}  - \E(h^{\d}_{\di})$ is independent of the choice of the reference flow (see \Cref{rem:fluctuation_lift}).} Recall the probability measures $\Pwils $ and $\Ptemp$ from \cref{sec:Wilson}.

First, we extend $h^\d$ to a function $h^\d_{\ext}: \tilde M' \to \R$ by defining $h^\d_{\ext}(x)$ to be the value of $h^\d$ on the face containing $x$. Recall the graphs $(\Gamma^\d)',(\Gamma^{\dagger,\d})'$ from \cref{sec:setup}. Let $\cT^\d$ be the Temperleyan forest sampled using $\Ptemp$. { Recall that we endowed $M$ with a Riemannian  metric $d_M$ as in \Cref{sec:background}, and we equip $M'$ with the same metric via the inclusion map. This induces a volume form $\vol$ in $\tilde M'$ }

\begin{thm}\label{thm:main_precise}
Let $\tilde f: \tilde M' \to \R$ be a smooth compactly supported function with $\int_{\tilde M'} \tilde f \vol = 0$. Suppose the assumptions in \cref{sec:setup} hold. Then
$$
\left(\int \tilde f(x) \bar h^\d_{\ext}(x)  \vol(x) , \cT^\d\right)
$$
converges jointly in law as $\delta \to 0$. The first coordinate also converges in the sense of all moments. Furthermore, the limit of the first coordinate
is measurable with respect to the limit $\cT $ of $\cT^\d$, is universal (in the sense that it does not depend on the graph sequence $(G')^\d$), and is conformally invariant.
\end{thm}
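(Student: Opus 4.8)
The plan is to transfer the convergence of the Temperleyan CRSF (Assumption \ref{assumption_precise}, item 1, via \cref{thm:specialbranches_Temp}) to the convergence of its winding field, which by \cref{lem:number_crossing} equals the dimer height function up to topological terms that themselves converge by \cref{lem:instanton_global,L:convergence_instanton}. Concretely, first I would fix a smooth compactly supported test function $\tilde f$ on $\tilde M'$ with vanishing integral and write $\int \tilde f(x)\bar h^\d_\ext(x)\vol(x)$ as a double integral $\int\int \tilde f(x)\tilde f(y)\,\tfrac12(\bar h^\d_\ext(x)-\bar h^\d_\ext(y))\,\vol(x)\vol(y)$ (using $\int\tilde f=0$ to symmetrise and kill the undetermined global additive constant in $h^\d$). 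The point of this rewriting is that $h^\d_\ext(x)-h^\d_\ext(y)$ is the genuine, unambiguous height difference between the faces closest to $x$ and $y$, and \cref{lem:number_crossing} expresses it as a sum of (i) the winding $W(\gamma_{f,f'},m(f))+W(\gamma_{f,f'},m(f'))$ of the concatenated spine-arc-spine path and (ii) the topological term $\pi\sum_{\sigma\in\bar\Omega_{S,S'}}(\ve_\sigma+\delta_\sigma)$. The topological term converges in law jointly with $\cT^\d$ by \cref{L:convergence_instanton}, so the work is entirely on the winding term.

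For the winding term, the strategy mirrors \cite{BLR16}. I would introduce the regularised winding $W^t$ obtained by stopping the branches when they first enter balls $B_\Euc(\tilde v,r_{\tilde v}e^{-t})$ around the approximating vertices $\tilde v$ of $x,y$: the regularised winding is a \emph{continuous} functional of the curves (in the Schramm/uniform-modulo-reparametrisation topology), so convergence of $\cT^\d$ (and of the special branches $\cP$, whose relevant portions enter the definition of $\gamma_{f,f'}$) immediately gives convergence in law of $(\int\int \tilde f\tilde f\, W^t_\ext, \cT^\d)$ for each fixed $t$. Then I would show the $L^p$ (in fact all-moments) error $\|\int\int\tilde f\tilde f(W_\ext-W^t_\ext)\|_p\to 0$ as $t\to\infty$ uniformly in $\delta$. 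This uses: the a priori winding estimates on CRSF branches developed in \cref{sec:apriori,sec:full_spine} (bounding $\E|\max_{\cY\subseteq\gamma[t_1,t_2]\cap B^c}|W(\cY,\tilde v)||^k$ by a constant, with the spine case handled via \cref{lem:spine:boundary} and the full-spine estimates), the analogous bounds for special branches supplied directly by item 3 of \cref{assumption_precise}, the local coupling \cref{lem:exp_tail} (so that near each of finitely many points the CRSF looks like an independent wired UST in a fixed planar domain, for which winding moment bounds are classical/known from \cite{BLR16}), plus \cref{lem:coming_close} and \cref{lem:backtrack} to control how close branches come to $\tilde v$ and to rule out backtracking — the $\delta$-uniformity coming from the exponential/superexponential tails in those lemmas and in \cref{thm:CRSF_universal}. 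Combining, a standard $3\ve$ argument (fix $t$ large so the tail is small uniformly in $\delta$, pass to the limit at level $t$, then remove the regularisation) gives joint convergence in law and convergence of all moments of $\int\tilde f\bar h^\d_\ext$ together with $\cT^\d$.

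Finally, measurability, universality and conformal invariance of the limit are read off from the construction: the limiting regularised windings are continuous functionals of the limiting curves, hence measurable with respect to the limit $\cT$; since $W^t\to W$ in $L^p$ uniformly, the limiting winding term is also $\sigma(\cT)$-measurable, and the topological term is $\sigma(\cT)$-measurable by \cref{L:convergence_instanton,lem:instanton_global}. Universality (independence of the sequence $(G')^\d$) follows because $\cT^\d$ has a universal limit (Assumption \ref{assumption_precise}, item 1 / \cref{thm:CRSF_universal}) and the continuous functionals applied to it do not see the discrete graph; conformal invariance follows from conformal invariance of the limiting $\cT$ together with the conformal covariance of intrinsic winding under \cref{lem:winding_change_conformal} (a conformal change of coordinates shifts windings by boundary arg-derivative terms which integrate to zero against $\tilde f$ since $\int\tilde f=0$, so the pairing is genuinely conformally invariant). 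I expect the main obstacle to be the uniform-in-$\delta$ control of the winding of \emph{spines}: a spine is a bi-infinite concatenation of countably many lifted copies of a single loop-erased walk, so Wilson's algorithm does not directly generate the relevant copies independently, and one must combine \cref{lem:spine:boundary} (spines are simple chords/loops touching $\partial\D$ at one point, hence wind boundedly) with the full-spine estimates of \cref{sec:full_spine} and the superexponential bound on the number of noncontractible cycles (\cref{thm:CRSF_universal}) to get a usable a priori bound; everything else is a fairly routine adaptation of \cite{BLR16}.
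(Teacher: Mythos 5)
Your overall strategy matches the paper's: regularise the winding at a macroscopic scale $t$, use the a priori tail estimates on winding (\cref{L:winding_tail_partial}, \cref{L:winding_tail}, \cref{cor:full_winding_moment}, \cref{lem:tildeu}) together with the local coupling of \cref{lem:exp_tail} to control the error, pass to the limit at fixed $t$, and remove the regularisation. However, there are three genuine gaps. First, your symmetrisation is wrong as written: since $\int\tilde f\,\vol = 0$, the expression
$$
\int\!\!\int \tilde f(x)\tilde f(y)\,\tfrac12\bigl(\bar h^\d_\ext(x)-\bar h^\d_\ext(y)\bigr)\,\vol(x)\vol(y)
$$
factorises and is \emph{identically zero}, so it does not equal $\int\tilde f\,\bar h^\d_\ext\,\vol$. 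The paper avoids this by splitting $\tilde f = \tilde f^+ - \tilde f^-$ and using the kernel $\tilde f^+(z)\tilde f^-(w)/Z(\tilde f)$ (see \eqref{eq:int_double_int}), which does give the correct identity because the positive and negative parts integrate to the same constant.

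Second, you do not address the passage from $\Pwils$ to $\Ptemp$. All the coupling and winding estimates in \cref{sec:local_coupling_grand} and \cref{sec:apriori,sec:full_spine} are developed for the CRSF sampled by Wilson's algorithm ($\Pwils$), whereas the theorem concerns the Temperleyan law $\Ptemp$. The paper first proves everything under $\Pwils$ and then transfers to $\Ptemp$ using the Radon--Nikodym derivative $Z = 2^K/\Ewils(2^K)$ from \cref{lem:RN_dimer_CRSF,L:cycle_primal_dual}, combined with the superexponential tail of $K$ (\cref{thm:CRSF_universal}) to control moments of $Z$. Without this step the estimates you invoke do not directly apply. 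Third, your claim that the regularised winding functional is continuous and hence $(\int\!\!\int W^t_\ext, \cT^\d)$ converges in law for fixed $t$ is not quite justified: the cutoff radius $r_{\tilde v}e^{-t}$ depends continuously on the integration variables, so the integrand is not obviously a function of finitely many branches. The paper handles this by covering $\tilde K$ with finitely many balls and shifting every cutoff ball to the nearest centre $\mathfrak c(z)$ (\cref{cor:shift_center}), so that $X(\delta,t)$ depends on only finitely many branches and is genuinely a continuous functional of $\cT^\d$; one then checks the extra error introduced by the shift is small. Your $t\to\infty$ error control and the role of the spine/special-branch estimates are otherwise correctly identified.
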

To clarify, convergence in the sense of all moments means that for all $i$, denoting $X^\d := \int \tilde f(x) \bar h^\d_{\ext}(x) \vol(x)$,
$
\E(|X^\d|^i )
$
converges as $\delta \to 0$. Notice also that since $\int \tilde f\vol =0$, the fact that $\bar h^\d_{\ext}(x)$ is defined only up to a global additive constant is irrelevant.

{Let us explain briefly what is meant by conformal invariance in our setting. Suppose $|\chi| = {\sf k}$ and $(M, x_1,x_2,\ldots, x_{\sf k})$ and $(N, y_1,\ldots, y_{\sf k})$ are conformally equivalent in the sense that there is a map $\phi:M \to N$ so that $\phi(x_i) = y_i$ for all $1 \le i \le {\sf k}$ and $\phi$ is a conformal bijection between the two Riemann surfaces. Note that if $p_M$ and $p_N$ are their respective covering maps (satisfying the properties in \Cref{sec:setup}), then there exists a M\"obius map $\psi$ such that $p_N = p_M \circ \psi$.  Let $h_M$ (resp. $h_N$) denote the limit field obtained in the setup of \cref{thm:main_precise} with the punctures being $x_i$ and $y_i$ for $1 \le i \le {\sf k}$. Then $h_N = h_M \circ \psi$ in the sense that for any compactly supported test function $f$ on the universal cover as in \cref{thm:main_precise}, 
$\int h_M  f \vol$ has the same law as $\int h_N (f \circ \psi^{-1}) |(\psi^{-1})'|^2\vol$.}

\begin{remark}
Before we start giving the details, we remark that the upcoming proof of the theorem goes through if we can establish the convergence of the Temperleyan CRSF (in the Schramm sense) to a conformally invariant limit in which all curves are a.s. simple, together with a few additional ingredients. In addition to  the assumptions on the discretisation of the surface described in \cref{sec:setup} and in particular the crossing assumption, these are the following: for each fixed vertex $v \in (\Gamma')^\d$, let $\gamma_v$ the branch of the Temperleyan forest starting from $v$. Then we require that  $\gamma_v$ satisfies  \cref{lem:coming_close} (the branch does not come close to a given point) as well as the moments on winding of \cref{cor:full_winding_moment} and \cref{lem:tildeu}. 
Note that the convergence to Brownian motion in \cref{InvP} is not really needed here; instead, convergence of random walk to a process which is absolutely continuous with respect to Brownian motion would be sufficient. Stated this way, we note that \cref{thm:main_precise} is novel even in the simply connected case. For instance, this result is applied in \cite{BHS} and in the forthcoming \cite{BLiu}. 
\end{remark}

{For ease of reference, let us assemble in a single statement the convergence results from the second paper in the series that are needed in the following (see Theorem 3.1, Corollary 3.14 and Proposition 6.1 in \cite{BLR_Riemann2}).
\begin{thm}\label{thm:CRSF_universal}
Let $M$ be a Riemann surface satisfying the assumptions of \Cref{sec:surface_embedding}, which we equip with the distance $d_M$ described in \Cref{sec:surface_embedding}.
 Let $(\Gamma^\d)_{\delta >0}$ be a sequence of graphs with boundary $\partial \Gamma^\d$ faithfully embedded in $M$ satisfying the assumptions of \cref{sec:setup}.  Then the
limit in law as $\delta \to 0$ of the Temperleyan CRSF sampled using $\Pwils$ or $\Ptemp$ exists
{in the Schramm topology relative to $d_M$. Both limits are independent of the sequence $\Gamma^\d$ subject to the assumptions in \cref{sec:setup}. These limits are also conformally invariant.

Furthermore, let $K^\d$ (resp. $(K^\dagger)^\d$) denote the number of noncontractible loops in the CRSF (resp. its dual). Then for any $q>1$ there exists a constant $C_q>0$ independent of $\delta$ such that
$$\Ewils(q^{K^\d}) \le C_q,$$
 where $\Ewils$ denotes the expectation under $\Pwils$. The same bound holds also with $K^\d$ replaced by $(K^\dagger)^\d$.

Furthermore, let $(C_1^\d,\ldots, C_{K^\d}^\d)$ be the set of noncontractible cycles of the Temperleyan CRSF,
$$
 (C_1^\d,\ldots, C_{K^\d}^\d )
\xrightarrow[\delta \to 0]{(d)} (C_1,C_2, \ldots, C_{K})
$$
where $C_1, \ldots, C_{K^\d}$ are almost surely disjoint noncontractible simple loops in $M$. 
}
\end{thm}

}
\medskip


\subsection{Some a priori tail estimates on winding}\label{sec:apriori}

The goal of this section is to obtain tail estimates on the winding of a branch of a Temperleyan forest. This will be achieved in \cref{L:winding_tail_partial} which is the main result of this section. Conceptually the arguments are similar to Section 4 of \cite{BLR16}. However, as in \cref{sec:local_coupling_grand}, there are additional difficulties linked with the fact that Wilson's algorithm can stop because of the formation of a non-contractible loop. Furthermore we ultimately want to consider the winding of entire spines, yet only one copy of the lift of a loop is directly connected to a loop erased random walk path. We first treat the part directly obtained by loop-erasure in this section and defer estimates on the copies for the next section.


Throughout this section, we deal with a CRSF sampled from $\Pwils$ (cf. \cref{sec:Wilson}).
First we sample the skeleton (cf. \cref{L:top_condition}) which we denote by $\mathfrak s$.
Let $\tilde {\mathfrak s}$ be the lift of $\mathfrak s$ to $\tilde M'$. Recall that $\mathfrak s$ decomposes $M'$ into finite number of disjoint annuli and that, conditionally on $\mathfrak{s}$, in each of them we can sample the rest of the Temperleyan CRSF using Wilson's algorithm.

We now work conditionally on $\mathfrak s$. Let $\gamma_v$ be the branch of the CRSF started from $v$ (which can thus be sampled using Wilson's algorithm).
For any vertex $\tilde v \in p^{-1}(v)$, let $\tilde
\gamma_{\tilde v} $ be the lift of $\gamma_v$ starting from $\tilde v$ and up until the time when $\gamma_v$ closes a non-contractible loop or hits the boundary  $\partial \Gamma^\d \cup \mathfrak s$. Let $\tau_{nc}$ be the stopping time when the simple random walk generating $\tilde\gamma_{\tilde v}$ creates a  non-contractible loop or hits the  boundary.
Recalling the definition of spines from \cref{sec:winding_ht}, note that $\tilde \gamma_{\tilde v}$ will include a part of a spine whenever the random walk does not hit $\partial \Gamma^\d \cup \mathfrak s$ when it stops (see \cref{fig:gamma_tilde}).


 Let us orient $\tilde
\gamma_{\tilde v}$ starting from $\tilde v$ and going
away from it in some continuous manner in $[0,1]$. For $t>0$, if $\tilde
\gamma_{\tilde v}$ exits $B(\tilde v,e^{-t-1})$, let $t_1$ be the  first time $\tilde
\gamma_{\tilde v}$ exits $B(\tilde v,e^{-t-1})$ and let $t_2$ be the last time it exits $B(\tilde v,e^{-t})$. In this case, if $\tilde \gamma_{\tilde v}$ ends in $B(\tilde v,e^{-t})$ we set $t_2 = 1$.

We first state a simple deterministic lemma connecting the winding of curves avoiding each other.
\begin{lemma}\label{lem:wind_comp}
Consider the annulus $A= \{z\in \C: r < |z-x| < R\}$ where $r>0$ and $R \in (r,\infty]$ and let $\gamma_0$ be a simple curve in $A$ connecting the outer and inner boundaries of $A$, assumed to be parametrised in $[0,1]$. Then for any simple curve $\gamma$ in $A \setminus \gamma_0$, we have
$$
|W(\gamma, x)| \leq \sup_{0 \leq s_1 < s_2 \leq 1} |W(\gamma_0[s_1,s_2], x)| + 4 \pi.
$$
For $R = \infty$, the statement is interpreted as $\gamma_0$ connects the boundary of the $B(x,r)$  to $\infty$.
\end{lemma}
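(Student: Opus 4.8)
The plan is to reduce the problem to a statement about the universal cover of $A \setminus \gamma_0$, where winding becomes a linear functional, and then to track how much this functional can vary. First I would fix a branch cut: since $\gamma_0$ is a simple curve joining the two boundary components of the annulus $A$, the set $A \setminus \gamma_0$ is simply connected (in the case $R = \infty$, $A\setminus\gamma_0$ is the complement of a ball slit along $\gamma_0$, still simply connected). Hence on $A \setminus \gamma_0$ there is a continuous single-valued branch of the argument $\arg(z - x)$; call it $\theta_0$. On this simply connected piece the topological winding of $\gamma$ around $x$ is simply $\theta_0(\gamma(1)) - \theta_0(\gamma(0))$ — no accumulation of $2\pi$'s is possible because the curve never crosses the cut $\gamma_0$. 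So $|W(\gamma,x)| = |\theta_0(\gamma(1)) - \theta_0(\gamma(0))| \le \mathrm{osc}_{A\setminus\gamma_0}(\theta_0)$, the total oscillation of $\theta_0$ over $A\setminus\gamma_0$.

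The second step is to bound $\mathrm{osc}_{A\setminus\gamma_0}(\theta_0)$ by $\sup_{s_1<s_2}|W(\gamma_0[s_1,s_2],x)| + 4\pi$. Take two points $z, z' \in A \setminus \gamma_0$; I want to show $|\theta_0(z) - \theta_0(z')|$ is controlled by the right-hand side. Join $z$ to a point $z_1$ on the inner boundary circle $\{|w-x| = r\}$ staying inside $A\setminus\gamma_0$ (possible since that piece is connected and its closure meets the inner circle), and similarly join $z'$ to a point $z_1'$ on the inner circle. Along the inner circle one can go from $z_1$ to $z_1'$ either clockwise or counterclockwise; choosing the shorter arc, one side of $\gamma_0$ is avoided, and the argument changes by at most... here is the subtlety: the arc of the inner circle might still be cut by $\gamma_0$, since $\gamma_0$ hits the inner circle. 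The cleaner route is: on $A\setminus\gamma_0$, comparing $\theta_0$ at $z$ and at $z'$, move both points radially inward/outward to the two sides of the slit $\gamma_0$ and compare through a path running "alongside" $\gamma_0$. A path in $A\setminus\gamma_0$ that hugs one side of $\gamma_0$ has topological winding around $x$ equal to $W(\gamma_0[s_1,s_2],x)$ for the corresponding sub-arc, up to an error of at most $2\pi$ coming from the half-turns at the two ends of $\gamma_0$ where the slit is "rounded off". Adding the two radial excursions (each contributing at most $\pi$ of argument change, since a radial segment changes the argument by $0$, and going partway around near the endpoints of $\gamma_0$ contributes the half-turns, totalling another $2\pi$), one gets the additive constant $4\pi$.

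The main obstacle — and the place I would be most careful — is exactly this bookkeeping of the constant near the two endpoints of $\gamma_0$: one endpoint is on the inner circle $|w-x|=r$ and one on the outer circle $|w-x|=R$ (or at infinity if $R=\infty$), and when we route our comparison path around the tip of the slit it picks up a half-turn ($\pi$) at each tip, plus there is a possible full turn of ambiguity depending on which side of $\gamma_0$ the points $z, z'$ lie; together these are safely absorbed into $4\pi$. The precise way I would organize this is: parametrise $\gamma_0$, let $z$ project onto $\gamma_0(u)$ and $z'$ onto $\gamma_0(u')$ (nearest-point projections, well-defined up to the usual caveats, or just pick any points realising a path in $A\setminus\gamma_0$ from $z$ resp. $z'$ to a point next to $\gamma_0(u)$ resp. $\gamma_0(u')$ with bounded winding), concatenate: (path from $z$ to $\gamma_0(u)$-side) $\cup$ ($\gamma_0$-hugging path from $\gamma_0(u)$ to $\gamma_0(u')$) $\cup$ (path from $\gamma_0(u')$-side to $z'$), and estimate each of the three pieces: the middle piece has winding $W(\gamma_0[u,u'],x)$ up to $2\pi$, and each outer piece has winding at most $\pi$ in absolute value since it can be taken to stay within a fixed side of the slit and change the argument by less than a half-turn. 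Summing and taking the supremum over $u<u'$ gives the claim. The case $R=\infty$ requires only the trivial observation that the argument of a point near infinity is still a well-defined continuous function once the slit is removed, so the same argument goes through verbatim with the outer circle replaced by a large circle and a limiting argument, or directly.
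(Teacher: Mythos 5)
Your first step — replacing the winding of $\gamma$ by the oscillation of a single-valued branch $\theta_0$ of $\arg(\cdot-x)$ on the simply connected slit domain $A\setminus\gamma_0$ — is a valid reformulation, and it encodes the same topological input the paper uses, there phrased as: the self-intersection loops of a comparison closed curve lie in $A\setminus\gamma_0$ and hence wind zero around $x$. Both you and the paper then control matters by means of comparison paths running from the endpoints of $\gamma$ to (near) $\gamma_0$, together with a sub-arc of $\gamma_0$ itself, so the overall architecture matches.

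The gap is in the justification of the crucial $\pi$ bound on the winding of the ``outer pieces'' (the paths joining $z$ resp.\ $z'$ to a point adjacent to $\gamma_0(u)$ resp.\ $\gamma_0(u')$). Your stated reason — that such a path ``can be taken to stay within a fixed side of the slit and change the argument by less than a half-turn'' — does not establish this: staying on one side of the slit imposes no bound whatsoever on winding, since $\gamma$ itself stays on one side of the slit and controlling its winding is the whole content of the lemma. Nor does ``move radially'' work as a substitute: the radial ray through $z$ need not meet $\gamma_0$ at all, and when it does it may cross it several times, so one cannot just slide $z$ radially onto the slit. What is actually needed, and what the paper supplies, is the following: join $\gamma(0)$ (resp.\ $\gamma(1)$) to the nearest point of $\gamma_0$ by the shortest path in the \emph{intrinsic} metric of the annulus $A$. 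Then (i) by minimality this path meets $\gamma_0$ only at its endpoint, so it lies in $A\setminus\gamma_0$; and (ii) any intrinsic geodesic between two points of the annulus stays in the smaller half-annulus containing them and therefore winds at most $\pi$ around $x$. Point (ii) is the genuine lemma hiding behind the constant $4\pi$; your proposal uses it implicitly via ``nearest-point projection'' but supplies an incorrect reason for it. With that observation added your argument closes; without it, the additive constant is unsupported.
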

Note that in the above lemma we do not need any assumption on the regularity of the curves {beyond continuity}.

\begin{proof}
{Observe that any two points in an annulus can be joined by a smooth curve lying completely in a half annulus containing those two points. 
Join each endpoint of $\gamma$ to its nearest points (with its intrinsic metric inherited from the Euclidean plane) in $\gamma_0$ using smooth curves $\eta_1,\eta_2$ satisfying the above mentioned property. Assume that $\eta_1$ and $\eta_2$ intersect $\gamma_0$ at $\gamma_0[s_1]$ and $\gamma_0[s_2]$ respectively. Consider the loop $L$ obtained by $\gamma$, $\eta_1,\eta_2$ and $\gamma_0[s_1,s_2]$.  Note that $L$ might be non-simple, but any loop which is created by the union of $\gamma$ and the two geodesics lie in the annulus $A$ slitted by $\gamma_0$ which is simply connected and does not contain $x$. Hence all these loops contribute winding 0 around $x$. Furthermore, it is easy to see that erasing all these loops (say in a chronological manner started from one of the tips of $\gamma$) creates a simple loop. Since the winding of $L$ after erasing these loops is 0 or $\pm 2\pi$, the total winding of $L$ around $x$ is also either 0 or $\pm 2\pi$. Since by definition $\eta_1 $ and $\eta_2$ lie in the smaller half annulus containing the two points, its winding at most $\pi$. Using this observation, the proof of the lemma is complete.}
\end{proof}


Assume without loss of generality and to simplify notations that $t$ is an integer. Let $r_{\tilde v}$ be such that $B(\tilde v, 10r_{\tilde v})$ does not intersect $\tilde {\mathfrak s}$ and $p$ is injective in $B(\tilde v,r_{\tilde v})$ (this is a slight modification of the previous definition of $r_{\tilde v}$). Let $e^{-t_0} =r_{\tilde v}$.
We consider concentric circles $C_j$ of radius $\{e^{-t_0-j}\}_{0 \le j \le t+2}$ (with $B_j$ the ball inside it)
around $\tilde v$. Take a
random walk $X$ in $(\Gamma')^\d$ starting from $\tilde v$ stopped when it hits $(\partial \Gamma')^\d \cup \mathfrak s$. In case $(\partial \Gamma')^\d \cup \mathfrak s = \emptyset$ (i.e., in the case of the torus),
 the random walk continues forever. Let $\tilde X$ be the lift of this walk starting from $\tilde v$. Now let $\{\tau_k\}_{k \ge 0}$ be the
set of stopping times as described in the proof of \cref{lem:coming_close} for the random walk $\tilde X$.
That is, if we hit or cross the circle $C_{i(k)}$ at time $\tau_k$, we wait until
we hit or cross $C_{i(k)\pm 1}$ at time $\tau_{k+1}$. If $i(k) =
0$ (or $t+2$), we wait until we hit or cross $C_{1}$ (or $C_{t+1}$). Let $B_i$ denote the disc inside $C_i$.

Let $\tau_{i_1},\tau_{i_2},\ldots$
be the successive times in the sequence $(\tau_k)_{k\ge 1}$ when the walk hits $C_0$. Note that the interval $[\tau_{i_j},\tau_{i_j+1})$ is spent completely outside $B_{1}$.
We now claim that the random walk cannot wind too much outside $B_1$.


\begin{lemma}\label{lem:macro_winding}
There exist $c,c'>0$ such that for all $\delta<\delta_{ {p(\bar B_0)}}$, $n \ge 1$, $j \ge 1 $ and $u \in B_1$ such that $\P(X_{\tau_{i_j+1}} = u) >0$,
\begin{gather*}
\P \Big(\sup_{\cY \subset \tilde X[\tau_{i_j},\tau_{i_j+1}]}|W(\cY ,\tilde v)| >n\Big | X_{\tau_{i_j+1}} = u\Big) \le ce^{-c'n},\\
\P \Big(\sup_{\cY \subset \tilde X[\tau_{i_j},\tau_{i_j+1}]}|W(\cY ,\tilde v)| >n\Big | X_{\tau_{i_j+1}} \in \partial \Gamma^\d \cup \mathfrak s \Big) \le ce^{-c'n}.
\end{gather*}
Here the supremum is over all continuous paths obtained by erasing portions of $\tilde X[\tau_{i_j},\tau_{i_j+1}]$. 
\end{lemma}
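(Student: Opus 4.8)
The plan is to exploit the fact that the walk portion $\tilde X[\tau_{i_j},\tau_{i_j+1}]$, and hence any sub-path $\cY$ obtained by erasing pieces of it, is confined to the complement of the ball $B_1 = B(\tilde v, e^{-t_0-1})$. The basic mechanism is the same as the proof of \cref{lem:coming_close}: winding outside $B_1$ can only accumulate if the walk crosses many macroscopic annuli around $\tilde v$ without completing a \emph{full turn}, and each failure to turn costs a uniformly positive probability factor by Lemma 4.7 in \cite{BLR16} (applied in the universal cover, exactly as in \cref{lem:coming_close}, since $B(\tilde v, 10 r_{\tilde v})$ avoids $\tilde{\mathfrak B}$ and $p$ is injective there).

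First I would set up a sequence of concentric annuli around $\tilde v$ at dyadic scales between $e^{-t_0-1}$ (the radius of $B_1$) and, say, $e^{-t_0+n}$, i.e. at scales growing \emph{outward} from $B_1$; call these annuli $\mathcal A_1, \ldots, \mathcal A_n$. Using \cref{lem:wind_comp} with the role of $\gamma_0$ played by a radial segment (or by a piece of the boundary structure $\partial \Gamma^\d \cup \mathfrak B$ when relevant), one reduces $\sup_{\cY}|W(\cY,\tilde v)|$ to the total winding of $\tilde X[\tau_{i_j},\tau_{i_j+1}]$ around $\tilde v$ up to an additive $O(1)$. Next, I would observe that if this total winding exceeds $n$, then the walk portion must fail to make a full turn in at least a constant fraction (say $n/C$) of the annuli $\mathcal A_i$ it traverses: indeed each full turn "uses up" a bounded amount of the angular coordinate, so avoiding $n$-much winding would be automatic, while to \emph{achieve} more than $n$ winding the walk must pass through $\gtrsim n$ such annuli, and in each of those the conditional probability (given the skeleton $\cS = \{\tilde X(\tau_k)\}$, whose elementary pieces are independent as in \cref{lem:coming_close}) of \emph{not} doing a full turn is bounded by some $\rho<1$. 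This gives the bound $c\rho^{n/C} = c e^{-c'n}$ after a union bound over the $O(n)$ choices of which annuli contain the failures, absorbing the polynomial factor into a slightly smaller exponent.

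The conditioning is the point that needs a little care. In the first display we condition on $X_{\tau_{i_j+1}} = u$ with $u \in B_1$; in the second, on $X_{\tau_{i_j+1}} \in \partial\Gamma^\d \cup \mathfrak B$. In both cases, \emph{conditionally on the skeleton $\cS$}, the law of the elementary piece $\tilde X[\tau_k,\tau_{k+1}]$ is that of a random walk bridge between its prescribed endpoints in the corresponding annulus, and Lemma 4.7 of \cite{BLR16} gives a uniform lower bound on the full-turn probability for \emph{each such bridge} independently of the rest. So I would condition on $\cS$ first (and on the endpoint/boundary event, which is $\cS$-measurable once we record enough of the skeleton), apply the product bound over the independent elementary pieces lying outside $B_1$ during $[\tau_{i_j},\tau_{i_j+1}]$, and then integrate out $\cS$. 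The hypothesis $\delta < \delta_{p(\bar B_0)}$ and $n$ in the stated range guarantees that the crossing estimate is valid at all the relevant scales, exactly as in the range restriction of \cref{lem:coming_close}.

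\textbf{Main obstacle.} The genuinely delicate step is the reduction "large winding $\Rightarrow$ many failed full turns" when the walk is only required to \emph{cross} annuli rather than make excursions between fixed radii: one has to be careful that the angular increment picked up while the path meanders between $C_{i}$ and $C_{i\pm 1}$ without a full turn is indeed bounded by a universal constant, and that this is compatible with the (non-nested, back-and-forth) structure of the stopping times $\tau_k$. This is handled by the same bookkeeping as in \cref{lem:coming_close}: between two consecutive visits to a given circle the path stays in a two-annulus region, which is an annulus slit by a radial ray, hence simply connected, so by \cref{lem:wind_comp} its winding contribution is $O(1)$; only a genuine full turn around $\tilde v$ can contribute a $\pm 2\pi$ that is not cancelled, and the number of such turns is at most the number of elementary pieces that performed one. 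The rest is a routine union bound and I would not belabour it.
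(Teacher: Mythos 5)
There is a genuine gap. The mechanism you propose -- counting failed full turns in outward dyadic annuli -- does not bound the winding of a walk confined to $\tilde M' \setminus B_1$. The pivotal claim, ``to achieve more than $n$ winding the walk must pass through $\gtrsim n$ such annuli'', is simply false: the walk can wind arbitrarily many times around $\tilde v$ while staying entirely in the single annulus $A(\tilde v, e^{-t_0-1}, e^{-t_0})$, traversing none of your $\mathcal{A}_2, \ldots, \mathcal{A}_n$ at all. Note also that $[\tau_{i_j}, \tau_{i_j+1}]$ is a \emph{single} elementary piece of the walk (it is the interval between two consecutive $\tau_k$'s, from $C_0$ until first hitting $C_1$), so your ``product over independent elementary pieces lying outside $B_1$'' degenerates to a single factor. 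The winding of that one elementary piece is not controlled by the inward-annulus-crossing bookkeeping of \cref{lem:coming_close}, which estimates the probability of \emph{reaching} a small scale, not the angular behaviour of a walk that never enters it.

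The paper's proof relies on a different, topological input that is entirely absent from your proposal. One picks a noncontractible simple loop $\ell$ through $v$, takes a discrete approximation $\ell^\delta$, and considers the two semi-infinite lifts $\tilde\ell'_\pm$ emanating from near $B_1$ (going clockwise and anticlockwise around $\ell$ infinitely often). By \cref{lem:spine:boundary} these are genuine simple rays to the boundary of the cover, and they cut $\tilde M' \setminus B_1$ into two \emph{simply connected} regions. The winding accumulated while the walk stays in one such region is $O(1)$ by \cref{lem:wind_comp}, so the total winding is controlled by the number $I_+$ of alternations between $\tilde\ell'_+$ and $\tilde\ell'_-$. The exponential tail of $I_+$ then comes from the fact that whenever the lift is on $\tilde\ell'_+$, the walk on the surface has uniformly positive probability to close a noncontractible loop before the lift returns to $\tilde\ell'_-$ (as in \cref{lem:uniform_avoidance}); the part of this estimate near $\partial\D$, where the uniform crossing bound is not available, requires an explicit argument via the Fuchsian structure (this is the Riemannian-geometry input that the paper defers to \cref{app:spine} and that you would need to supply). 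None of this can be replaced by the outward-annulus full-turn counting you outline.
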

\begin{proof}{The proof of \cref{lem:macro_winding} is very similar to Lemma 4.8 in \cite{BLR16}. But it needs an input from Riemannian geometry to control the winding of the spines near the boundary of the universal cover. We postpone this proof to \cref{app:spine}.}
\end{proof}



\cref{lem:macro_winding} takes care of the winding of the excursions outside $B_1$. For  excursions inside, most of the technical work was done in \cite{BLR16}.
Let $ \tilde Y^j$ be the loop-erasure of $\tilde X[0,j]$. For any $j$, we parametrise $\tilde Y^j$ in some continuous way away from $\tilde v$. Fix $m \ge 1$ and let $t_1$ be the first time $\tilde Y^{\tau_{i_m}}$ exits $B(\tilde v,e^{-t-1})$ and $t_2$ be the last time $\tilde Y^{\tau_{i_m}}$ exits $B(\tilde v,e^{-t})$.

\begin{lemma}\label{lem:winding_inner_annulus}
There exist $C,c>0$ such that for all $\delta<\delta_{ {p(\bar B_0)}}, t \in (t_0, \log (C \delta_0/\delta))$,  and $n \ge 1,m \ge 1 $,
\begin{equation*}
\P\Big( \sup_{\cY \subseteq \tilde Y^{\tau_{i_m}}[t_1, t_2] \cap B(\tilde v, e^{-t-1})^c}  |W(\cY,\tilde v)| >n  | \tau_{i_m} < \infty\Big) < Ce^{-cn}.
\end{equation*}
\end{lemma}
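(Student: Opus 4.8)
\textbf{Plan of proof for \cref{lem:winding_inner_annulus}.}

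The plan is to reduce the statement to the corresponding estimate for a single loop-erased random walk excursion in a simply connected domain, which is exactly what was done in \cite{BLR16} (the relevant statement there being the inner-annulus winding bound, essentially Lemma 4.9 or its analogue). The key conceptual point is that even though Wilson's algorithm for the CRSF may terminate prematurely (because of a noncontractible loop being closed, or because $\partial\Gamma^\d \cup \mathfrak{B}$ is hit), this only \emph{stops} the walk; it never changes the law of the walk \emph{before} the stopping time. So, conditionally on $\tau_{i_m}<\infty$, the piece $\tilde X[0,\tau_{i_m}]$ is genuinely a random walk piece whose geometry inside $B_1 = B(\tilde v, r_{\tilde v})$ we may analyse using the injectivity of $p$ on $B(\tilde v, r_{\tilde v})$ and the uniform crossing assumption (\cref{crossingestimate}). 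In particular, inside $B_1$ the lift $\tilde X$ simply looks like a planar random walk in the lift $\tilde\Gamma^\d$ satisfying all the assumptions of \cref{sec:setup}.

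First I would set up the decomposition of the excursion $\tilde Y^{\tau_{i_m}}[t_1,t_2]$ into its crossings of the dyadic annuli $A_j = B_j \setminus B_{j+1}$ for $t_0 \le j \le t$, using the stopping times $\tau_k$ already defined. For each such crossing, the relevant portion of $\tilde X$ is (conditionally on the skeleton $\cS = \{\tilde X(\tau_k)\}$, as in the proof of \cref{lem:coming_close}) an independent random walk excursion across the annulus, which by Lemma 4.7 of \cite{BLR16} has uniformly positive probability to perform a full turn. A full turn in $A_j$ caps the winding contribution of everything that happens at scales $\le j$ by an absolute constant (this is precisely the content of the deterministic \cref{lem:wind_comp}, applied with $\gamma_0$ the portion of the loop-erasure that executed the full turn and $\gamma$ the later portion of $\cY$). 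Hence if $\sup_{\cY}|W(\cY,\tilde v)| > n$ then between scale $t$ and scale $t - Cn$ (for a suitable constant $C$) \emph{no} crossing of any intermediate annulus performed a full turn; since there are $\gtrsim n$ disjoint such crossings, each failing with probability bounded away from $1$ uniformly, and since conditioning on $\cS$ makes them independent, this has probability $\le e^{-cn}$. The conditioning on $\tau_{i_m}<\infty$ is harmless here because the event ``$\tau_{i_m}<\infty$'' is measurable with respect to $\cS$ and the behaviour of $\tilde X$ outside $B_1$, while the full-turn events live strictly inside $B_1$ and remain conditionally independent given $\cS$.

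The one genuinely new point compared to \cite{BLR16} — and what I expect to be the main obstacle — is making sure the argument does not secretly require the walk to reach scales below $t$, or to survive for a long time, whereas here the walk may be killed (by hitting $\partial\Gamma^\d\cup\mathfrak{B}$ or closing a noncontractible loop) at any moment. This is dealt with exactly as in \cref{lem:coming_close}: one works conditionally on the skeleton $\cS$, uses that the elementary pieces between consecutive $\tau_k$ are conditionally independent, and that each has uniformly positive full-turn probability \emph{inside} $B_1 = \bar B_\Euc(\tilde v, r_\Euc)$ because the uniform crossing estimate holds there (for $\delta < \delta_{p(\bar B_0)}$ and $t$ in the stated range). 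So even though we never control what happens beyond $B_1$, we have plenty of room at scales $t_0,\dots,t$, and an excursion reaching scale $t$ and then displaying winding $>n$ must traverse $\gtrsim n$ of the intermediate annuli without a full turn. One then concludes with a union bound over the $\le t$ possible choices of starting scale of the "winding streak", which is absorbed into the exponential since $t < \log(C\delta_0/\delta)$ and the whole estimate is only claimed up to that range of $t$. Combining this with \cref{lem:macro_winding} (which handles the complementary winding accrued in excursions \emph{outside} $B_1$) will, in the next section, give the full winding tail bound of \cref{L:winding_tail_partial}.
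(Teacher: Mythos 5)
Your proposal shares the correct high-level strategy with the paper's proof: condition on the skeleton $\cS = \{\tilde X(\tau_k)\}$, exploit the conditional independence of the walk pieces between successive $\tau_k$'s, and use the uniformly positive probability of a full turn from Lemma~4.7 of \cite{BLR16}. You also correctly observe that $\{\tau_{i_m}<\infty\}$ is $\cS$-measurable, which is exactly how the paper deals with the conditioning. However, there are two concrete gaps in the way you assemble these ingredients, and the paper's argument (following Lemma~4.13 of \cite{BLR16}) is structured differently.

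First, the claim that ``a full turn in $A_j$ caps the winding contribution \ldots\ by an absolute constant'' is not correct as stated, and \cref{lem:wind_comp} does not give it. What \cref{lem:wind_comp} yields is a bound on $|W(\gamma,x)|$ in terms of $\sup_{s_1<s_2}|W(\gamma_0[s_1,s_2],x)| + 4\pi$, where $\gamma_0$ is the full-turn crossing; but $\gamma_0$ is itself a loop-erased walk excursion across the annulus, whose winding is a random variable with only an \emph{exponential tail} (Lemma~4.7 of \cite{BLR16}), not a deterministic constant. Consequently the full-turn event does not directly cap the winding, and one cannot jump from ``winding $>n$'' to ``no full turn occurred for $\gtrsim n$ consecutive scales.'' The paper keeps these two things separate: (a) each interval between successive $\tau_k$'s contributes winding with uniform exponential tail, and (b) the number of intervals that can contribute to the final loop-erasure at scale $t$ has exponential tail (this second point, from Lemma~4.15 of \cite{BLR16}, is where the full-turn mechanism really lives).

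Second, and more importantly for the Riemann-surface setting, your plan only decomposes $\tilde Y^{\tau_{i_m}}[t_1,t_2]$ over the annuli $A_j$ with $t_0\le j\le t$ inside $B_0$, and you defer \cref{lem:macro_winding} entirely to the later proof of \cref{L:winding_tail_partial}. But the set $\tilde Y^{\tau_{i_m}}[t_1,t_2]\cap B(\tilde v,e^{-t-1})^c$ is only constrained to lie outside $B_{t+1}$: between the first exit of $B_{t+1}$ and the last exit of $B_t$ the loop-erasure can wander far outside $B_0$ (whenever $m\ge 2$ the walk has made excursions outside $B_0$ whose traces may survive erasure). The winding accrued on those outer excursions needs \cref{lem:macro_winding} \emph{within} this proof --- this is precisely the place where the surface geometry (the possibility of noncontractible loops and the spine structure controlled in \cref{app:spine}) enters and makes the argument genuinely different from the planar case. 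The paper's proof explicitly invokes \cref{lem:macro_winding} here for the ``outer annulus'' contribution. Omitting it leaves the winding from those portions of $\cY$ unbounded.
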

We emphasise that in the above lemma, $m$ is non-random as it is going to be important in what follows.
\begin{proof}
The proof of this lemma is identical to that of \cite{BLR16}, Lemma 4.13 and the only difference is in the setup, which we point out. In \cite{BLR16}, we were working with a simply connected domain and we waited until the random walk exited it. The argument proceeds by conditioning on the positions $\cS := \{\tilde X_{\tau_k}\}_{k \ge 1}$ and arguing that, in each interval of walk between successive points in $\cS$, the winding of any continuous subpath has exponential tail. Then we proved that we only need to look at a random number of intervals which itself has exponential tail. In the present setup, we can condition on $\cS$ so that $\tau_{i_m} < \infty $ is satisfied. The exponential tail of winding inside any inner annulus follows from Lemma 4.7 in \cite{BLR16} (this is exactly the same as the simply connected case) and for the outer annulus, we use \cref{lem:macro_winding}. The number of relevant intervals to consider also has exponential tail following verbatim the proof of Lemma 4.15 in \cite{BLR16}. \end{proof}

With these lemmas we can now state and prove the main result of this section, which controls the (topological) winding in an annulus.

\begin{lemma}\label{L:winding_tail_partial}
There exist constants $C,C', c>0$ so that for all $n \ge 1$, for all $\tilde v$, for all $\delta < \delta_{\bar B(\tilde v,r_{\tilde v})}$ and for all $0< t  < \log (C'r_{\tilde v}\delta_0 / \delta)$,
\begin{equation}\label{E:windingtailpartial1}
\P\Big(\max_{\cY \subseteq \tilde \gamma_{\tilde v}[t_1, t_2] \cap B(\tilde v, r_{\tilde v} e^{-t-1})^c}   |W(\cY,\tilde v)| >n
 \Big) <Ce^{-c n^{1/3}},
\end{equation}
where the maximum is taken over all continuous segments from $\tilde \gamma_{\tilde v}[t_1, t_2]$. Also
\begin{equation}\label{E:windingtailpartial2}
\P\Big(\max_{\cY \subseteq \tilde \gamma_{\tilde v}    \cap B(\tilde v, r_{\tilde v} e^{-1})^c}   |W(\cY,\tilde v)| >n
 \Big) <Ce^{-cn}.
\end{equation}
\end{lemma}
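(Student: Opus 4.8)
Let me think about what this lemma asserts and how its proof should go.

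The lemma controls the topological winding around $\tilde v$ of continuous sub-segments of a CRSF branch $\tilde\gamma_{\tilde v}$ (the lift of a Wilson-algorithm branch $\gamma_v$), where we look only at the portion in an annulus between radii $r_{\tilde v}e^{-t-1}$ and (roughly) $r_{\tilde v}e^{-t}$. There are two bounds: one with a stretched-exponential tail $e^{-cn^{1/3}}$ for winding in the annulus at depth $t$, and one with a genuine exponential tail $e^{-cn}$ for winding outside the ball of radius $r_{\tilde v}e^{-1}$ (i.e. the "macroscopic" part).

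**Strategy.** This should essentially follow the blueprint of Lemma 4.13 (and surrounding material) in \cite{BLR16}, combined with the new ingredients developed in this paper that handle the possibility that Wilson's algorithm stops because a noncontractible loop is formed (rather than hitting a boundary). The plan is:

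First, reduce to estimating the winding of the loop-erased walk generating $\tilde\gamma_{\tilde v}$. The key combinatorial point (as in \cite{BLR16}) is that if the loop-erasure winds a lot in the annulus $A(\tilde v, r_{\tilde v}e^{-t-1}, r_{\tilde v}e^{-t})$, then so does the random walk $\tilde X$ itself, between appropriate stopping times. So I would set up the system of stopping times $\{\tau_k\}$ as in the proof of \cref{lem:coming_close}, with concentric circles $C_j$ of radius $e^{-t_0-j}$, $t_0 = -\log r_{\tilde v}$. Condition on the positions $\cS = \{\tilde X_{\tau_k}\}$; given $\cS$, the elementary pieces of walk between successive $\tau_k$'s are independent, and by Lemma 4.7 of \cite{BLR16} each has uniformly positive probability of making a "full turn" in the relevant annular shell — this is the engine driving exponential decay of winding.

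**The two regimes.**

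For \eqref{E:windingtailpartial2} (the outer/macroscopic winding, outside $B(\tilde v, r_{\tilde v}e^{-1})$): here we are in the "single-shell" regime relative to $r_{\tilde v}$, and the winding is controlled by \cref{lem:macro_winding} (which in turn is proved in \cref{app:spine}, using Riemannian geometry input to bound spine winding near $\partial\tilde M$). One decomposes $\tilde\gamma_{\tilde v}$ into the excursions outside $B_1 = B(\tilde v, r_{\tilde v}e^{-1})$; the number $N$ of such excursions has geometric tail by \cref{lem:uniform_avoidance} (a full turn / noncontractible loop is formed with uniform probability each time one returns to $C_0$). Each excursion's winding has exponential tail by \cref{lem:macro_winding}. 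Summing a geometric number of exponential-tail contributions (and using \cref{lem:wind_comp} to bound the winding of arbitrary sub-segments in terms of the winding of the full excursion plus $O(1)$) gives the exponential bound $Ce^{-cn}$.

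For \eqref{E:windingtailpartial1} (the deep annulus at scale $t$): this is the $n^{1/3}$-tail regime. Here we combine \cref{lem:winding_inner_annulus} with \cref{lem:macro_winding}. The stretched exponent arises (as in \cite{BLR16}, Lemma 4.13) because one must control three sources simultaneously: (a) the winding inside each inner annular shell between $C_j$ and $C_{j+1}$ along each elementary piece, which has exponential tail given $\cS$; (b) the number of elementary pieces one must consider, which itself has exponential tail; and (c) the macroscopic winding of the excursions out of $B_1$, controlled by \cref{lem:macro_winding}. Splitting the event $\{\text{winding} > n\}$ into: "many shells are crossed without a full turn" ($\le e^{-cn^{1/3}}$ via a counting argument over which $\sim n^{2/3}$ of the $\sim t$ shells fail to full-turn, paired with the $e^{-cn^{1/3}}$ per-shell budget), versus "some single excursion winds $> n^{1/3}$" (exponential in $n^{1/3}$), yields the claimed $e^{-cn^{1/3}}$.

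**Main obstacle.** The delicate point — and the genuinely new difficulty compared to \cite{BLR16} — is that Wilson's algorithm here stops not only on hitting $\partial\Gamma^\d \cup \mathfrak{B}$ but also when a noncontractible loop closes, so the random walk need not exhaust all the intermediate scales, and one cannot straightforwardly condition on "the walk exits the domain" as in the simply connected case. One must condition carefully on $\cS$ in such a way that the stopping event ($\tau_{nc}$ or hitting the boundary) is compatible, and check that the independence of elementary pieces and the uniform full-turn estimate survive this conditioning — exactly the subtlety flagged in \cref{lem:winding_inner_annulus} and in the discussion preceding \cref{lem:macro_winding}. I also need to be careful that the supremum over \emph{all} continuous sub-segments $\cY$ (not just the whole branch) is handled: this is where \cref{lem:wind_comp} is essential, reducing an arbitrary sub-segment's winding to the per-excursion maximal winding plus $4\pi$, since a sub-segment lies in a slit annulus once the generating excursion is fixed. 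Once these points are in place, the estimates are a matter of carefully reproducing the bookkeeping of \cite{BLR16}, Lemmas 4.13–4.15, with \cref{lem:macro_winding} and \cref{lem:winding_inner_annulus} substituted in the appropriate places.
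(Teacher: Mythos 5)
Your proposal handles \eqref{E:windingtailpartial2} correctly and in the same way as the paper: decompose into excursions outside $B_1$, use \cref{lem:uniform_avoidance} for the geometric tail of the number $N$ of excursions, \cref{lem:macro_winding} per excursion, and conclude. You also correctly pinpoint the central new difficulty for \eqref{E:windingtailpartial1}: because Wilson's algorithm stops when a noncontractible loop closes, conditioning on the stopping event contaminates the law of the elementary random walk pieces, and the simply-connected machinery does not transfer directly.

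However, your proposed resolution of that difficulty does not work as stated, and it is not what the paper does. You suggest conditioning on $\cS$ ``in such a way that the stopping event is compatible'' and running a shell-counting argument (``$\sim n^{2/3}$ of the $\sim t$ shells fail to full-turn''). The paper explicitly rejects this route, pointing out that a walk conditioned \emph{not} to have yet closed a noncontractible loop is a walk conditioned to avoid certain pieces of its past, and such conditioning could a priori bias it toward large winding; one cannot simply assert that the uniform full-turn estimate ``survives.'' The paper instead compares $\tilde\gamma_{\tilde v}$ with $\tilde Y^{\tau_{i_N}}$ (the loop-erasure at the last visit to $C_0$ before $\tau_{nc}\wedge\tau_\partial$), for which \cref{lem:winding_inner_annulus} applies \emph{unconditionally} and gives a clean exponential bound, and then analyzes deterministically how $\tilde\gamma_{\tilde v}[t_1,t_2]$ differs from $\tilde Y^{\tau_{i_N}}[\lambda_1,\lambda_2]$ depending on where $\lambda_E$ (the last unerased time of $\tilde Y^{\tau_{i_N}}$) sits relative to $\lambda_2$. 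The key new ingredient you are missing is the topological observation that $\tilde X[\tau_{i_N+1},\tau_{nc}]$ cannot touch $\tilde Y^{\tau_{i_N}}[0,\beta]$ (the initial piece inside $B(\tilde v,e^{-t_0})$, where $p$ is injective), because doing so would erase everything outside and preclude a noncontractible loop, contradicting maximality of $N$. This forces the appended piece of walk to make no full turns, so $k_{nc}-i_N$ has exponential tail and the appended winding has stretched exponential tail with exponent $1/2$ (this is \eqref{eq:RW_portion_added}). Combining this with a decomposition of $\tilde Y^{\tau_{i_N}}[\lambda_2,\lambda_E]$ into at most $N$ excursions in and out of $B_0$ — each of which, by \cref{lem:wind_comp} against the appended path, inherits a $1/2$-stretched-exponential tail — produces the final exponent $1/3$. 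Your mechanism (a per-shell budget of $n^{1/3}$) is a different and unsubstantiated accounting, and in particular does not explain why the conditioning on having survived to the current shell is harmless.
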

A few remarks are in order. Firstly, the stretched exponential tail is an artefact of the proof and we believe that an exponential tail bound could be proved with more care if necessary. Secondly, note that the intersection in the argument of the $\max$ above is used so that we don't need to look into what happens very close to $\tilde v$. This is a technicality which simplifies the proof, but later it is not going to matter. In the end, we can decompose the whole path $\tilde \gamma_{\tilde v}$ into a disjoint union over $t$ of $[t_1',t_2']$ where $t_1'$ is last exit of $B(\tilde v,e^{-t-1})$ and $t_2'$ is the last exit of $B(\tilde v, e^{-t})$ which will accomplish the desired moment bound of truncated winding by exploiting the moment bound for each of these segments. We also emphasise that, although the non-contractible loop in the component containing $v$ could be a branch from the skeleton, $\tilde \gamma_{\tilde v}$ itself does not contain any portion of the skeleton. This is simply because we have sampled the skeleton first before defining $\gamma_v$.

\begin{proof}[Proof of \cref{L:winding_tail_partial}]
The main idea is to use a union bound for the winding of each excursion between annuli. One difficulty with working with $\tilde \gamma_{\tilde v}$ is that if we condition on where or when a non-contractible loop is formed then we break the independence between the pieces of random walk. Indeed, a walk conditioned on not forming a non-contractible loop will avoid certain portions of its previous trajectory, which \emph{a priori} might bias it to have a lot of winding. Recall that at the point when a non-contractible loop is formed, the walk could be very close to the starting point.  Thus the idea is to run the random walk  until the boundary $(\partial \Gamma')^\d$ is hit (which could be potentially longer than dictated by Wilson's algorithm) and control the winding of its loop erasure uniformly over all scales using \cref{lem:macro_winding,lem:winding_inner_annulus}. After this, we need to separately compare the winding made by the last excursion into the ball to the previous estimate.

{Recall that $t_0$ satisfies $e^{-t_0 } = \tilde r_{\tilde v}$. Note that \eqref{E:windingtailpartial2} follows easily from \cref{lem:macro_winding}.}
Indeed, let us call $\tau_{nc}$ the first time where a non-contractible loop is created and $\tau_\partial$ be the first hitting of the boundary. Fix $N = \max \{ j : \tau_{i_j} < \tau_{nc} \wedge \tau_\partial \}$. Recall that $N$ has exponential tail since every time we hit $C_0$,
conditioned on what happened before, we have a positive probability to create a
non-contractible loop or hit the boundary {before hitting $C_{1}$}, by \cref{lem:uniform_avoidance}. Thus we can work on the event $N \le n$ at a cost which is exponentially small in $n$. Now \cref{lem:macro_winding} entails that on each of the $n$ pieces, the winding has uniform exponential tail. This completes the proof of \eqref{E:windingtailpartial2}. We emphasise here that to bound portions of the loop erasure outside radius $e^{-1}$, we can run the random walk until it hits the boundary $(\partial \Gamma')^\d$ and not worry about the case when the walk stops deep inside the ball, as the loop erasure outside radius $e^{-1}$ is already taken care of. 


Now we turn to \eqref{E:windingtailpartial1}. As before, let $\cS := \{\tilde X_{\tau_k}\}_{k \ge 1}$. The idea is to compare the winding of $\tilde \gamma_{\tilde v}$ to the winding of $\tilde Y^{\tau_{i_m}}$ because there the conditioning on $\cS$
preserves the independence and \cref{lem:winding_inner_annulus} controls the winding. The main work will be to do a case-by-case analysis of what can be erased and created between $\tau_{i_m}$ and the creation of a non-contractible loop.

Let $N$ be as above and note that using the same idea of exponential tail of $N$ and exponential tail of winding up to a fixed number of hits of the outermost circle (\cref{lem:winding_inner_annulus}), we get
\begin{equation}
\P\Big( \sup_{\cY \subseteq \tilde Y^{\tau_{i_N}}[t_1, t_2] \cap B(\tilde v, e^{-t-1})^c}  |W(\cY,\tilde v)| >n \Big) < Ce^{-c'n}.  \label{eq:YiN}
\end{equation}
%

Let $\lambda_1$ and $\lambda_2$ be respectively the first time $\tilde Y^{\tau_{i_N}}$ exits $B(\tilde v, e^{-t-1})$ and the last time $\tilde Y^{\tau_{i_N}}$ exits $B(\tilde v, e^{-t})$. To ease notations, from now on, all maximums in this proof come with the additional condition that the paths stay outside $B(\tilde v, e^{-t-1})$ without writing it explicitly. We will also assume without loss of generality that the parametrisations of $\tilde \gamma_{\tilde v}$ and $\tilde Y^{\tau_{i_N}}$ are identical up to the first point where their traces differ.

If a non-contractible cycle is created or the boundary is hit in the interval $[\tau_{i_N}, \tau_{(i_N +1)}]$ then, since $\tilde X[\tau_{i_N}, \tau_{(i_N +1)}]$ does not intersect $B(\tilde v, e^{-t_0 -1})$, only pieces of $\tilde Y^{\tau_{i_N}}[0, \lambda_2]$ can be erased and nothing can be added  {in the time interval $[\tau_{i_N}, \tau_{(i_N +1)}]$} to get $\tilde \gamma_{\tilde v} [0, t_2]$. 
 In particular we see that in this case
$$
\max_{t_1<s_1<s_2<t_2}   |W(\tilde \gamma_{\tilde v}(s_1,s_2),\tilde v)| \leq \max_{\lambda_1<s_1<s_2<\lambda_2}   |W(\tilde Y^{\tau_{i_N}}(s_1,s_2),\tilde v)|,
$$
and we are done using \eqref{eq:YiN}.

\smallskip

The only other possibility is that a non-contractible cycle is created between $\tau_{({i_N} + 1)}$ and $\tau_{i_{(N + 1)}}$ (otherwise that would contradict the maximality of $N$). Since $p$ is injective in $B(\tilde v, e^{-t_0})$, this occurs if and only if the walk hits a copy inside $B( \tilde v , e^{-t_0})$ of a portion of the walk that is further away from $\tilde v$, as illustrated schematically in \cref{fig:lambda}. From now on we assume we are on this event.

First we make a topological observation. Let $\beta$ be the first exit time of $B( \tilde v, e^{-t_0})$ by  $\tilde Y^{\tau_{i_N}}$. We claim that $\tilde X[ \tau_{i_N+1}, \tau_{nc} ]$ cannot hit $\tilde Y^{\tau_{i_N}}[0, \beta]$. 
Indeed, suppose by contradiction that it does so at some time $T \in [ \tau_{i_N+1}, \tau_{nc} ]$. Then, after erasure, we are left with a path completely contained in $B( \tilde v, e^{-t_0})$ where $p$ is injective. No non-contractible loop can then be created before $\tau_{i_{N+1}}$, which contradicts the maximality of $N$ as explained above.



Let $k_{nc}$ be the index during which the non-contractible loop happens, i.e, the unique index $k$ such that $\tau_{nc} \in [\tau_{k}, \tau_{k+1}]$.  By the topological claim in the previous paragraph, no full turn can occur in any of the intervals $[\tau_k, \tau_{k + 1}]$ for $i_N \leq k < k_{nc}$. However, by Corollary 4.5 in \cite{BLR16}, a full turn can occur in any interval $[\tau_k, \tau_{k + 1}]$ independently with uniformly positive probability given $\cS$. Hence, there exist constants $c, C$ depending only on the crossing estimate such that
\begin{equation*}
\P(k_{nc} - i_N > n) \le \P(k_{nc} - i_N > n, N \le n) + \P(N >n) \le Ce^{-cn} .
\end{equation*}
Combining the above with Lemma 4.7 in \cite{BLR16} (which bounds the winding of the random walk during an interval of the type $[\tau_i, \tau_{i + 1}]$), we obtain the following stretched exponential tail bound:
\begin{equation}
\P\big (\max_{\cY \subset \tilde X[\tau_{i_N}, \tau_{nc}]}  |W (\cY ,\tilde v)| >n \big) \le Ce^{-c\sqrt{n}}\label{eq:RW_portion_added}
\end{equation}
for some $C,c>0$ depending only on the crossing estimate and where, as before, $\cY$ is any continuous portion obtained from $X[\tau_{i_N}, \tau_{nc}]$ which preserves the order of the random walk path. In particular, this gives a good control of the winding of the piece of $\tilde \gamma_{\tilde v}$ added to $Y^{\tau_{i_N}}$.


\begin{figure}[h]
\centering
\includegraphics[width = 0.75\textwidth]{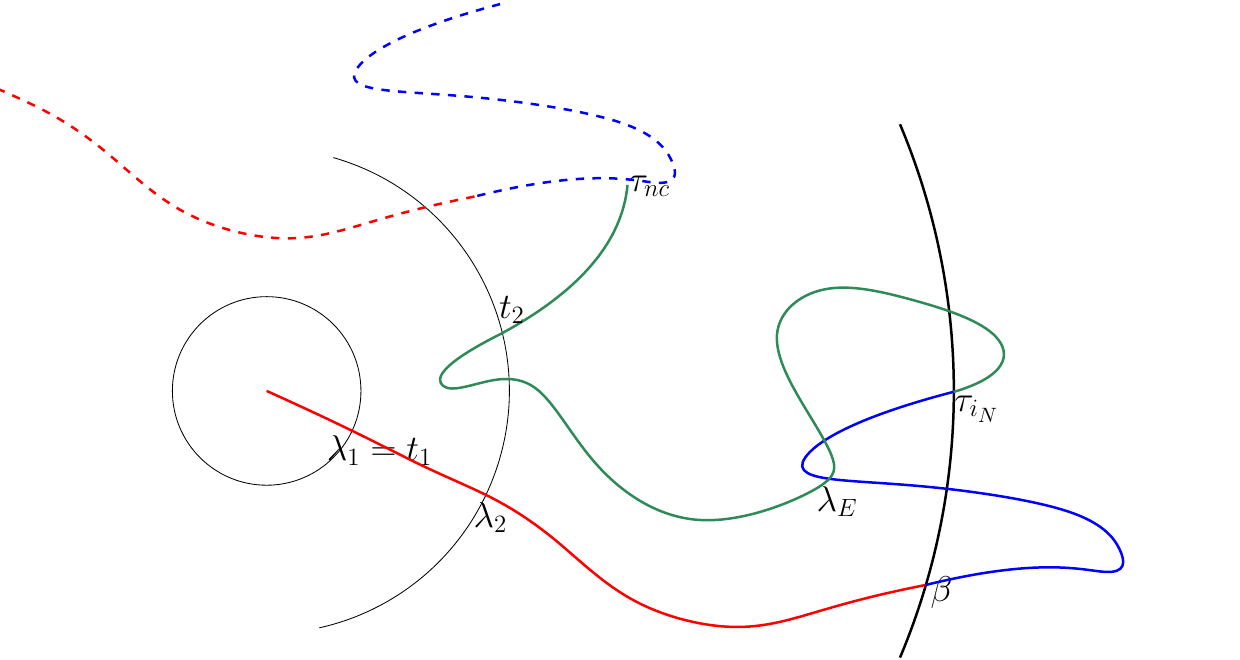}
\caption{The concatenation of the red and blue curves is $\tilde Y^{\tau_{i_N}}$ and the green part is $\tilde X[\tau_{i_N}, \tau_{nc}]$. The dotted path denotes a copy of $\tilde Y^{\tau_{i_N}}$ so overall the event represented is one where a non-contractible loop is formed inside $B(\tilde v, e^{-t_0})$.}\label{fig:lambda}
\end{figure}


However, we are still not done as illustrated in \cref{fig:lambda}: the times $t_1,t_2$ (which were first entry, last exit times for $\tilde \gamma_{\tilde v}$) may be different from  $\lambda_1,\lambda_2$ (which were the first entry, last exit of $\tilde Y^{\tau_{i_N}}$), so we need additional arguments.    Let $$\lambda_{E} := \inf \{ \lambda : \exists t \in [\tau_{i_N}, \tau_{nc}] , \tilde Y^{\tau_{i_N}}(\lambda) = \tilde X(t) \} $$ be the last time in $\tilde Y^{\tau_{i_N}}$ which is not erased. Note that with this notation we showed above that $ \lambda_E \geq \beta$. This implies immediately that $t_1 = \lambda_1$. Now we need to consider several cases depending on where $\lambda_E$ is in relation to $\lambda_2$.



\begin{itemize}
\item If $\lambda_E \in [\lambda_1, \lambda_2] $, then $\tilde \gamma_{\tilde v}[t_1, t_2]$ can be decomposed as the union of a piece of $ \tilde Y^{\tau_{i_N}}[\lambda_1, \lambda_2]$ and some erasure of $X[\tau_{i_N}, \tau_{nc}]$, so on that event
$$
\max_{t_1<s_1<s_2<t_2}   |W(\tilde \gamma_{\tilde v}(s_1,s_2),\tilde v)| \leq \max_{\lambda_1<s_1<s_2<\lambda_2}   |W(\tilde Y^{\tau_{i_N}}(s_1,s_2),\tilde v)| + \Delta,
$$
where $\Delta$ is a variable with stretched exponential tail using \eqref{eq:RW_portion_added}.


\item  If $\lambda_E \geq \lambda_2$ and $\tilde \gamma_{\tilde v}(\lambda_E,t_2]$ (the loop erasure after $\lambda_E$) does not enter $B(\tilde v, e^{-t})$ then $\tilde \gamma_{\tilde v}[t_1, t_2] = \tilde Y^{\tau_{i_N}}[\lambda_1, \lambda_2]$ and there is nothing more to prove.

\item If $\lambda_E \geq \lambda_2$ and $\tilde \gamma_{\tilde v}(\lambda_E,t_2]$ enters $B(\tilde v, e^{-t})$ (this is the case pictured in \cref{fig:lambda}), then we decompose $\tilde \gamma_{\tilde v}$ as the union of $\tilde Y^{\tau_{i_N}}[\lambda_1, \lambda_2] $, $\tilde Y^{\tau_{i_N}}[\lambda_2, \lambda_E]$ and $\tilde \gamma_{\tilde v}[\lambda_E, t_2]$. The winding of any continuous portion of the first part has exponential tail by \eqref{eq:YiN}. The winding of any continuous portion of the last part has stretched exponential tail since it is bounded by $\Delta$ as in the first case. So we only need to take care of the middle part.

To that end, we decompose $\tilde Y^{\tau_{i_N}}[\lambda_2, \lambda_E]$ into excursions inside and outside $B_0$ as follows:
\begin{equation}
\tilde Y^{\tau_{i_N}}[\lambda_2, \lambda_E] = \tilde Y^{\tau_{i_N}} [\lambda_2, \beta_0] \cup \tilde Y^{\tau_{i_N}} [\beta_0, \beta_1]   \cup \tilde Y^{\tau_{i_N}} [\beta_1, \beta_2] \cup \ldots \cup  \tilde Y^{\tau_{i_N}} [\beta_{k_0}, \lambda_E],\label{eq:decomposition}
\end{equation}
where $\beta_0 = \beta$ and for $k \ge 1$, $\beta_{2k}$ is the first exit of $B(\tilde v, e^{-t_0})$ after $\beta_{2k-1}$ and $\beta_{2k-1}$ is the first entry into $B(\tilde v, e^{-t_0 -1})$ after $\beta_{2k-2}$. Note that any portion of $\tilde Y^{\tau_{i_N}} [\beta_{2k}, \beta_{2k+1}]$ is outside $B(\tilde v, e^{-t_0-1})$ so its winding has exponential tail using \cref{lem:macro_winding}. Note also that $\tilde Y^{\tau_{i_N}} [\beta_{2k+1}, \beta_{2k+2}]$ lies inside the annulus bounded between $C_t$ and $C_0$ and never intersects the curve $\tilde X[\tau_{i_N}, \tau_{nc}]$ by maximality of $\lambda_E$. Let $Y'$ be the loop erasure of the portion of $\tilde X$ from $\tau_{i_N}$ until its first hit of $C_t$. Hence using \cref{lem:wind_comp}
\begin{equation*}
\max_{\beta_{2k}<s_1<s_2<\beta_{2k+1}}   |W(\tilde Y^{\tau_{i_N}}(s_1,s_2),\tilde v)| \leq \max_{s_1<s_2}   |W(Y'[s_1,s_2],\tilde v)| +4\pi.
\end{equation*}
Note that the right hand side has stretched exponential tail by \eqref{eq:RW_portion_added}. Thus each term in the decomposition \eqref{eq:decomposition} has stretched exponential tail and clearly the number of terms is bounded by $N$ which itself has exponential tail. Combining these two, we can conclude that the absolute value of winding of any continuous portion of $\tilde Y^{\tau_{i_N}}[\lambda_2, \lambda_E] $ has stretched exponential tail (with exponent $1/3$), thereby completing the proof of this case.
\end{itemize}
This concludes the proof of \cref{L:winding_tail_partial}.
\end{proof}
\subsection{From partial path to the full spine.}\label{sec:full_spine}

As shown in \cref{lem:number_crossing}, the path $\tilde \gamma_{\tilde v}$ defined above is only a part of what is needed to compute the height function. Recall the notion of spine from \cref{sec:height_winding}.
and that when we follow the outgoing edges in the Temperleyan forest from any given $\tilde v$, we always end up in a unique spine (since boundary loops have been added around every hole).

Recall from \cref{lem:number_crossing} that we are interested in the winding of the path starting from $\tilde v$, and then moving along the spine to infinity or the boundary of the disc. Observe that the initial portion of this path is $\tilde \gamma_{\tilde v}$ (\cref{fig:gamma_tilde}) and then it moves along copies of the non-contractible loop in the component of $\tilde v$ in the Temperleyan CRSF. In this section, we will call this path $ \mathsf p_{\tilde v}$ (i.e., $\tilde \gamma_{\tilde v}$ followed by a semi-infinite piece of the spine). Observe that this notation is in contrast to the notation used in \cref{lem:number_crossing}, where the path $\mathsf{p}_{\tilde v}$ was called
$\gamma_{f}$, whereas here we emphasise that $\tilde \gamma_{\tilde v}$ denotes what is sampled from Wilson's algorithm given the skeleton. When we apply Wilson's algorithm to sample $\tilde \gamma_{\tilde v}$, given the skeleton, we may form a new non-contractible loop, in which case we have discovered a portion of a spine (the unique spine attached to $\tilde v$). The winding of this portion is controlled by \cref{L:winding_tail_partial}. However, we also need to control the winding of the rest of $\mathsf p_{\tilde v}$. The purpose of this section is precisely to achieve this (done in \cref{L:winding_tail}). We also need to control the winding of the other semi-infinite piece of the spine attached to $\tilde v$, which is done in \cref{cor:full_winding_moment}. Finally, if we want to control the height gap between $\tilde u$ and $\tilde v$, we need to control the winding of $\mathsf p_{\tilde u}$ around $\tilde v$ as well, which is done in \cref{lem:tildeu}.

 \medskip Let us parametrise $\mathsf p_{\tilde v}$ in $[0,\infty)$ in any continuous manner away from $\tilde v$. Let $t_0$ be as in \cref{sec:apriori}, i.e., $e^{-t_0} = \tilde r_{\tilde v}$.
For $t \ge t_0$, let $t_1$ be the first exit time of  $\mathsf p_{\tilde v}$ from $B(\tilde v, e^{-t-1})$ and let $t_2$ be the last exit time of the same from $B(\tilde v, e^{-t})$. Note again that here, it is possible that the unique spine attached to $\tilde v$ could be the same as the spine corresponding to a skeleton branch. 

\begin{lemma}\label{L:winding_tail}
For all $k \ge 1$, there exists a constant $m>0$ so that for all $\delta < \delta_{\bar B(\tilde v, e^{-t_0})}$ and for all $ t_0 \le t < \log (C'\delta_0 / \delta)$,
$$
\E\left(\max_{\cY \subseteq   \mathsf p_{\tilde v} [t_1,t_2] \cap B(\tilde v, e^{-t-1})^c}   |W(\cY,\tilde v)|^k)
 \right) \le m,
$$
where the supremum is taken over all continuous segments. Also
$$
\E\left(\max_{\cY \subseteq   \mathsf p_{\tilde v} \cap B(\tilde v, e^{-t_0})^c}   |W(\cY,\tilde v)|^k)
 \right) \le m.
 $$
\end{lemma}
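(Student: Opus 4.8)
The plan is to reduce \cref{L:winding_tail} to estimates already established for the loop-erased part $\tilde\gamma_{\tilde v}$ in \cref{L:winding_tail_partial}, and for the winding of macroscopic portions of spines near $\partial\D$ (or near infinity in the torus case) provided by \cref{lem:macro_winding} and \cref{assumption_precise}. The decomposition is $\mathsf p_{\tilde v} = \tilde\gamma_{\tilde v} \cup \sigma_{\tilde v}$, where $\tilde\gamma_{\tilde v}$ is the piece sampled by Wilson's algorithm (given the special branches) and $\sigma_{\tilde v}$ is the remaining semi-infinite piece of the spine, obtained by following copies of the noncontractible loop of the component of $\tilde v$. The first observation is that $\tilde\gamma_{\tilde v}$ and $\sigma_{\tilde v}$ are ``almost disjoint'' and, crucially, lie on opposite sides of an annulus: once $\tilde\gamma_{\tilde v}$ has formed the noncontractible loop (say at Euclidean scale $\rho$ from $\tilde v$), everything closer to $\tilde v$ than $\rho$ belongs to $\tilde\gamma_{\tilde v}$, and the spine $\sigma_{\tilde v}$ enters $B(\tilde v,\rho)$ only after it has already gone out past scale $\rho$ (actually past $e^{-t_0}=r_{\tilde v}$ by the choice of $r_{\tilde v}$, since $B(\tilde v,10r_{\tilde v})$ avoids $\tilde{\mathfrak B}$ and $p$ is injective there). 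So at any scale $e^{-t}$ with $t\ge t_0$, the relevant portions of $\mathsf p_{\tilde v}$ in $B(\tilde v,e^{-t-1})^c$ that come from $\sigma_{\tilde v}$ are excursions of the spine which can be compared, via \cref{lem:wind_comp}, to the winding of a fixed simple curve crossing the annulus.

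First I would split $\mathsf p_{\tilde v}[t_1,t_2]\cap B(\tilde v,e^{-t-1})^c$ into its portions belonging to $\tilde\gamma_{\tilde v}$ and those belonging to $\sigma_{\tilde v}$, and bound the maximal winding of a continuous subpath by the sum of the maximal windings of these portions. The $\tilde\gamma_{\tilde v}$-part is controlled directly by \eqref{E:windingtailpartial1} of \cref{L:winding_tail_partial}, which gives a stretched-exponential tail $Ce^{-cn^{1/3}}$ — more than enough for all polynomial moments. For the $\sigma_{\tilde v}$-part: the spine $\sigma_{\tilde v}$ is built from countably many copies (lifts) of the noncontractible loop $\ell$ of the component of $\tilde v$. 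Each copy is a lift of a loop-erased random walk path (together with the loop-closing edge). Only \emph{one} of these copies is directly the loop-erasure that Wilson's algorithm produces, so I cannot apply Wilson's algorithm to the others; this is exactly the ``multiple copies'' difficulty flagged in the section preamble. The way around it is topological: all the copies are Möbius images of the base copy, and by \cref{lem:spine:boundary} the spine is a simple bi-infinite path (or a loop touching $\partial\D$ at one point), so distinct copies cannot cross, and within the annulus $A(\tilde v,e^{-t-1},e^{-t_0})$ the spine $\sigma_{\tilde v}$ is a union of excursions, each of which avoids the fixed curve $\tilde\gamma_{\tilde v}$ restricted to that annulus (which, after the loop is closed, crosses the annulus). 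Applying \cref{lem:wind_comp} to each excursion, its winding around $\tilde v$ is bounded by $\sup_{s_1<s_2}|W(\tilde\gamma_{\tilde v}[s_1,s_2],\tilde v)|+4\pi$, which again has stretched-exponential tail by \cref{L:winding_tail_partial}. Summing over the (a.s.\ finitely many, by \cref{thm:CRSF_universal}, with superexponential tail) excursions and over the finitely many spines separating things, and using a Hölder/convolution argument (stretched-exponential tails are stable under summing a number of terms with exponential tail), gives the claimed uniform moment bound $m$. The second displayed inequality (winding outside $B(\tilde v,e^{-t_0})$, i.e.\ the macroscopic part) follows the same way but is easier, being handled directly by \cref{lem:macro_winding} together with the corresponding moment bound in \cref{assumption_precise} for the portions of $\sigma_{\tilde v}$ that actually come from the special branches.

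The one genuinely new ingredient beyond \cref{L:winding_tail_partial} is the control of the winding contribution of the \emph{non-Wilson} copies making up the spine, i.e.\ the portions of $\sigma_{\tilde v}$ coming back inside after the first full loop is formed. I would handle these by the following scheme: write $\sigma_{\tilde v}$ as a concatenation of ``loop periods'', the $j$-th being a lift of the loop $\ell$; for the winding of the $j$-th period around $\tilde v$, note that it is a Möbius image of the $(j-1)$-th, so by \cref{lem:winding_change_conformal} the winding changes only by a bounded amount $\arg_{\Phi'}(\Phi'(\cdot))$ between consecutive periods (this is exactly the mechanism used in \cref{lem:instanton_global}); and since the spine converges to a boundary point, only finitely many periods can enter any fixed $B(\tilde v,e^{-t})$, with a geometric bound on how many. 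Combining, the total winding of $\sigma_{\tilde v}\cap B(\tilde v,e^{-t-1})^c$ around $\tilde v$ is at most a bounded multiple of the winding of one loop period plus a geometric number of $O(1)$ corrections, and one period's winding is bounded using \cref{lem:wind_comp} against $\tilde\gamma_{\tilde v}$ (or, for the macroscopic scales, by \cref{lem:macro_winding} and the relevant item of \cref{assumption_precise}). The main obstacle, then, is making this ``bounded change between consecutive copies'' argument uniform in $\delta$ and in $t$ — i.e.\ checking that the Möbius maps $\Phi$ involved, and their derivatives at the relevant boundary points, are bounded uniformly, which I expect to follow from compactness of $M$ and the fact that there are only finitely many generators of the Fuchsian group, but needs to be spelled out (and is where the appendix material on spines, \cref{app:spine}, is used). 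Once this uniformity is in hand, turning the tail bounds into the moment bound $m$ is a routine summation, and I would not belabour it.
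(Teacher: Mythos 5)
Your decomposition $\mathsf p_{\tilde v}=\tilde\gamma_{\tilde v}\cup\sigma_{\tilde v}$ is the right starting point, but the proposal has a genuine gap and your substitutes for the missing step do not work. First, the claim that once the noncontractible loop is formed at scale $\rho$ ``everything closer to $\tilde v$ than $\rho$ belongs to $\tilde\gamma_{\tilde v}$'' is false. The spine $\sigma_{\tilde v}$ consists of M\"obius (or translation) images of the portion of $\tilde\gamma_{\tilde v}$ tracing the loop, and these copies can re-enter arbitrarily small balls around $\tilde v$ without contradicting injectivity of $p$ on $B(\tilde v, e^{-t_0})$. Precisely because of this the paper devotes a separate step to showing $\P\bigl(t_2(\mathsf p)>t_2(\gamma);\ \mathsf p\cap\mathfrak B=\emptyset\bigr)\le Ce^{-c(t-t_0)}$, by re-ordering Wilson's algorithm, doing the cutset exploration around $\tilde v$ first (\cref{lem:cutset isolation}), and observing that the spine is then fully sampled. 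Your proposal has no analogue of this step; the ``geometric bound on how many periods enter $B(\tilde v,e^{-t})$'' is asserted but not justified, and in fact it is a probabilistic, not deterministic, statement.

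Second, your application of \cref{lem:wind_comp} with $\gamma_0=\tilde\gamma_{\tilde v}$ does not go through. The annulus relevant to the first display of \cref{L:winding_tail} is $A(\tilde v,e^{-t-1},\infty)$, and \cref{lem:wind_comp} requires $\gamma_0$ to connect the inner and outer boundaries; but $\tilde\gamma_{\tilde v}$ has a \emph{finite} endpoint (where the noncontractible loop is closed or $\partial\Gamma^\d\cup\mathfrak B$ is hit), so it does not reach $\infty$ and is not a valid crossing. This is exactly the hole the paper's extended curve $\gamma'$ fills: in the torus case one appends to $\gamma[0,\tau]$ a vertical ray chosen using the asymptotic direction of the spine; in the hyperbolic case one appends a half-line in the $\Phi$-coordinates (distinguishing the translation and scaling conjugacy classes of the deck transformation), and pulling back by $\Phi^{-1}$ costs only $2\pi$ because the image of a line under a M\"obius map is a circular arc. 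The entire construction is engineered so that $\gamma'$ does cross $A(\tilde v,e^{-t-1},\infty)$, avoids $\mathsf p(\tau,\infty)$, and has winding comparable to that of $\gamma$ itself. Your backup ``loop periods'' scheme via \cref{lem:winding_change_conformal} cannot replace this: that lemma controls \emph{intrinsic} winding of the image of a curve under a conformal map, not the topological winding around a fixed reference point $\tilde v$, and the topological winding of successive copies around $\tilde v$ is not related by a bounded correction (it genuinely grows as copies approach $\tilde v$). \cref{app:spine} is used in the paper to classify $\phi$ as translation-like or scaling-like so that the right extending ray can be drawn, not to transport winding bounds from one copy to the next.
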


\begin{proof}
In this proof, we write $\mathsf p_{\tilde v} = \mathsf p$ to lighten notation. Let us first consider the case when $\mathsf p$ does not contain a portion of the spine corresponding to the skeleton.





We parametrise $\tilde \gamma_{\tilde v}$ in $[0,1]$ as before and assume that the parametrisation of $\tilde \gamma_{\tilde v}$ and $\mathsf p$ is the same until they start to differ.
We drop $\tilde v$ and write $\tilde \gamma_{\tilde v} = \gamma$ throughout the rest of this proof for notational clarity. To differentiate the first and last exit times for these two parametrisations we write  $t_1 (\mathsf p)$ (resp. $t_2(\mathsf p)$) for the first exit of $B(\tilde v, e^{-t-1})$ (resp. last exit of $B(\tilde v, e^{-t})$) of $\mathsf p$. We also write $t_1 = t_1(\gamma)$ and $t_2 = t_2(\gamma)$ for clarity.

Note that it is always the case that $t_1(\gamma) = t_1(\mathsf p)$ 
and that $t_2( \gamma) \leq t_2(\mathsf p)$. On the event $t_2(\mathsf p ) = t_2( \gamma)$, the maximum in this lemma is over the same set as the one in \cref{L:winding_tail_partial} so there is nothing to prove. We focus now on the case where $t_2(\mathsf p ) > t_2( \gamma)$, meaning that a part of the spine comes back close to $ x$ on the cover $\tilde M'$ after the time that a non-contractible loop was created.

We start with the case of the torus.
Let us denote by $S$ the spine attached to $ \tilde v$ (meaning the full bi-infinite path). Since $S$ is a periodic path, it has a well defined direction $d$ (say represented by a unit vector in $\R^2$) and separates the plane in two sets right and left of $S$. Let us assume without loss of generality that the direction $d$ is horizontal and that $ \tilde v$ is below $S$. Let $\tau$ be a time at which the vertical coordinate of $\gamma$ reaches its maximum (for topological reasons, this has to occur on the spine $S$) and let us define $\gamma'$ by appending to $\gamma[0, \tau]$ a vertical segment going up to infinity.

Now it is easy to see that the winding of a straight segment around a point is bounded by $\pi$, so
\begin{equation}
 \max_{\cY \subseteq  \gamma' \cap B(  \tilde v, e^{-t-1})^c}   |W(\cY, \tilde v)| \leq \max_{\cY \subseteq  \gamma \cap B( \tilde v, e^{-t-1})^c}   |W(\cY, \tilde v)| + \pi.\label{eq:replacement}
\end{equation}
Using \cref{lem:wind_comp}, this implies that we can always bound \begin{equation}
\max_{\cY \subseteq   {\mathsf p_{\tilde v}}  \cap B(  \tilde v, e^{-t-1})^c}   |W(\cY, \tilde v)| \leq  \max_{\cY \subseteq  \gamma' \cap B(  \tilde v, e^{-t-1})^c}   |W(\cY, \tilde v)| +4\pi \le \max_{\cY \subseteq  \gamma \cap B( \tilde v, e^{-t-1})^c}   |W(\cY, \tilde v)| + 5 \pi.\label{eq:asymp_direction}
\end{equation}
By \cref{L:winding_tail_partial} (the $\tilde \gamma_{\tilde v}$ there is  $\gamma$ here) applied to all scales up to scale $t$, the right hand side is a sum of $O(t)$ variables with uniform stretched exponential tails.

Let us now bound $\P(t_2( {\mathsf p} ) > t_2(\gamma); \mathsf{p} \cap \mathfrak{s} = \emptyset)$. Note that on the event $t_2 (\mathsf p) > t_2(\gamma)$, $ \tilde v$ has to be at distance less that $e^{-t}$ of its spine. Using the fact that we can sample points in any order in Wilson's algorithm, we can bound that probability by first doing a cutset exploration around $\tilde v$ at scale $r_{\tilde v}$.  After this cutset exploration, the probability that any of the branches sampled intersects $B(\tilde v, e^{-t})$ is at most $C e^{-c(t - t_0)}$ for some constants $C, c$ (see \cref{lem:cutset isolation}). However, after the cutset exploration around $\tilde v$, the spine attached to $\tilde v$ is necessarily fully sampled (see e.g., \cref{lem:exp_tail}, final assertion). Hence
\begin{equation}
\P(t_2( {\mathsf p} ) > t_2(\gamma); \mathsf{p} \cap \mathfrak{s} = \emptyset) \le C e^{- c(t-t_0)}.\label{eq:spine_bound}
\end{equation}
Thus overall, either $\max_{\cY \subseteq   {\mathsf p  [t_1,t_2]} \cap B(  \tilde v, e^{-t-1})^c}   |W(\cY, \tilde v)|$ is the same as the variable in \cref{L:winding_tail_partial}, or with probability at most $Ce^{-c(t-t_0)}$, it is at most a sum of $O(t)$ variables with uniform stretched exponential tail. The moment bound now easily follows.

For the case where the universal cover is the unit disc, a similar argument can be done provided we can construct a path $\gamma'$ which is nice and avoids the spine. Let us also introduce $\tau_S$ as the first time $\gamma$ hits the spine and $\tau_{nc}$ as the ending time of $\gamma$.
As recalled in more details in \cref{app:spine},
it is a well known fact from Riemann surfaces that there exists a M\"obius transform $\phi = \phi_{\gamma,\tilde v}$ such that
$${\mathsf p} ( v) = \gamma[0,\tau_S] \cup \bigcup_{n \geq 0} \phi^{(n)}((\tau_S, \tau_{nc} ])
$$
where $\phi^{(n)} = \phi \circ \ldots \circ \phi$ and the union is disjoint. In other words, we keep applying the same M\"obius transform to obtain all the copies. Furthermore $\phi$ can be written as $\phi= \Phi^{-1} \circ \mu \circ \Phi$ where $\Phi$ is a M\"obius map from the unit disc to the upper half plane and $\mu$ is either the translation by $\pm 1$ or a multiplication by $\lambda > 1$.

If it is a translation we use the same argument as before. Otherwise, it is a scaling by $\lambda>1$ in which case $\phi(S)$ necessarily converges to infinity. Furthermore, let $\tau$ be the time at which $\Im(\Phi( \gamma ))$ reaches its minimum, we define $\gamma'$ by appending a straight vertical segment $\ell$ from $\Phi(\gamma(\tau))$ to $\R$ (note that this segment may not intersect $\Phi(\gamma)$ nor the subsequent scalings of the image of the portion of the spine $\Phi (\gamma([\tau_S, \tau_{nc}])$) and then mapping $\Phi(\gamma[0,\tau] \cup \ell)$ back to the disc by $\Phi^{-1}$. By construction, it is trivial to check that $\Phi(\gamma')$ does not intersects $\Phi(  {\mathsf p}  (\tau, \infty) )$ so $\gamma'$ does not intersect $ {\mathsf p} (\tau, \infty)$. On the other hand, $\Phi^{-1}$ is the image of a line segment by a M\"obius transform so it is a circular arc and therefore its winding around any point is bounded by $2 \pi$. We can then conclude using exactly the same reasoning as in the torus case, with only the constant $\pi$ replaced by $2 \pi$ on the right hand side of \eqref{eq:replacement} and hence obtain the analogue of \eqref{eq:asymp_direction}.

{Finally, let us consider the case when $\mathsf p$ contains a portion of the spine corresponding to the skeleton. Note that the whole argument was geometric and the only place where we used the probability was in \eqref{eq:spine_bound} which is provided by \cref{lem:special_coming_close}. }
\end{proof}



%

\begin{lemma}\label{cor:full_winding_moment}
For all $k \ge 1$, there exist a constant $C > 0$ such that the following holds. Fix a compact set $K \subset \tilde M'$ .
Suppose we are in the setup of \cref{L:winding_tail}. Let $\mathsf p = \mathsf p_{\tilde v}$ be as above and $ \bar{\mathsf p}$ be the curve which starts at $\tilde v$, hits the spine, and then goes to infinity in the direction opposite to the orientation of the spine. Orient and parametrise both $\mathsf p, \bar {\mathsf p}$ from $\tilde v$ to $\infty$ so that $t_2$ is the last time they exit $B(\tilde v, e^{-t})$. Then for all $ \delta < \delta_{K}, t \in[t_0,\infty], \tilde v \in K$,
\begin{align*}
\E\Big(\sup_{\cY \subset \mathsf p[t_2,\infty) }|W(\cY , \tilde v)|^k \Big) &\le C((1+t) \wedge \log (1/\delta))^{k}, \\  \E \Big(\sup_{ \cY \subset \mathsf {\bar p}[t_2,\infty)}|W(\cY , \tilde v)|^k \Big) &\le C((1+t) \wedge \log (1/\delta))^{k}.
\end{align*}
\end{lemma}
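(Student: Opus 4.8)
\textbf{Proof plan for \cref{cor:full_winding_moment}.}

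The plan is to deduce the statement from the already-established moment bounds in \cref{L:winding_tail} by decomposing the semi-infinite tail $\mathsf{p}[t_2,\infty)$ dyadically into scales and summing. Concretely, write $\mathsf{p}[t_2,\infty) = \bigcup_{s \geq t} \mathsf{p}[s_1',s_2']$ where, for each integer $s \geq t$, $s_1'$ is the last exit time of $B(\tilde v,e^{-s-1})$ and $s_2'$ is the last exit time of $B(\tilde v, e^{-s})$ (with the convention that if $\mathsf p$ never re-enters $B(\tilde v, e^{-s-1})$ we stop; since the path goes to infinity or the boundary of the cover, only finitely many scales are visited, but we bound things for all scales up to $\log(1/\delta)$). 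The point of using \emph{last} exit times rather than first entry times is that these pieces genuinely partition the relevant portion of the path, as already noted in the remark following \cref{L:winding_tail_partial}. By additivity of winding (the additive property of $W$ recalled in \cref{sec:winding}), any continuous subsegment $\cY$ of $\mathsf{p}[t_2,\infty)$ has winding around $\tilde v$ bounded by $\sum_{s\geq t} \max_{\cY'\subseteq \mathsf{p}[s_1',s_2'] \cap B(\tilde v, e^{-s-1})^c} |W(\cY',\tilde v)|$ plus a bounded error coming from the two boundary scales where $\cY$ begins and ends (each contributing an additive $O(1)$, since a subsegment that doesn't cross a full annulus has bounded winding — here one can invoke \cref{lem:wind_comp} with the innermost circle, or simply absorb it into the constant). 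Thus
$$
\sup_{\cY\subset \mathsf{p}[t_2,\infty)}|W(\cY,\tilde v)| \;\leq\; \sum_{s=t}^{\lceil \log(1/\delta)\rceil} Z_s + O(1),
$$
where $Z_s := \max_{\cY'\subseteq \mathsf{p}[s_1',s_2']\cap B(\tilde v,e^{-s-1})^c}|W(\cY',\tilde v)|$.

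The key input is that, by \cref{L:winding_tail}, each $Z_s$ has a $k$-th moment bounded by a constant $m$ uniformly in $s$ and $\delta$ (for $\delta$ small enough relative to the compact set $K$, using that $r_{\tilde v}$ is bounded below on $K$ and absorbing the factor $r_{\tilde v}$ into constants). The number of nonzero terms in the sum is at most $O((1+t)\wedge \log(1/\delta))$ when $t$ is not too large, but more importantly it is always at most $\log(1/\delta)$ terms, and deterministically the path can only re-enter finitely many scales below where it currently is. A clean way to get the stated bound is: by Minkowski's inequality,
$$
\Big\| \sum_{s=t}^{T} Z_s \Big\|_{L^k} \;\leq\; \sum_{s=t}^{T} \|Z_s\|_{L^k} \;\leq\; (T-t+1)\, m^{1/k},
$$
with $T = \lceil \log(1/\delta)\rceil$, giving $\E(\sup_{\cY}|W(\cY,\tilde v)|^k) \leq C(1+\log(1/\delta))^k$. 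To upgrade the $\log(1/\delta)$ to $(1+t)\wedge\log(1/\delta)$ one uses the additional fact (from the cutset/isolation estimates \cref{lem:cutset isolation}, \cref{lem:backtrack}) that with probability $1 - Ce^{-c(s-t)}$ the path does not re-enter scale $s$ at all after having reached scale $t$; hence the effective number of contributing scales has exponential tails centered at $O(1)$, so one splits the expectation over the event ``$\mathsf p[t_2,\infty)$ re-enters $B(\tilde v, e^{-t-j})$'' for $j \geq 1$, each of probability $\leq Ce^{-cj}$, on which at most $j + (\text{a geometric number})$ of the $Z_s$ are nonzero. Combining the exponential decay of these probabilities with the uniform $L^{2k}$ bounds on the $Z_s$ via Cauchy–Schwarz yields $\E(\sup_{\cY}|W(\cY,\tilde v)|^k) \leq C(1+t)^k$, and intersecting with the crude bound $C(1+\log(1/\delta))^k$ gives the claimed $C((1+t)\wedge\log(1/\delta))^k$.

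The argument for $\bar{\mathsf p}$ is identical: $\bar{\mathsf p}$ also starts at $\tilde v$, reaches the (same) spine $S$, and then runs to infinity along $S$ in the reverse direction; the only difference from $\mathsf p$ is which semi-infinite ray of $S$ is followed after the first hit, and the winding estimates of \cref{L:winding_tail} (which, via \cref{assumption_precise} or the explicit M\"obius-transform argument in its proof, control the winding of \emph{both} semi-infinite pieces of a spine) apply verbatim. The main obstacle — which is really where the content lives — is making sure the dyadic decomposition into last-exit excursions is legitimate and that the winding is genuinely additive across it without hidden cancellation-destroying terms; this is a topological bookkeeping point, handled exactly as in the proof of \cref{L:winding_tail_partial} and in \cite{BLR16}, and once granted the moment bound is just Minkowski plus the uniform per-scale estimate. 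I do not expect any genuinely new difficulty beyond what \cref{L:winding_tail} already overcomes.
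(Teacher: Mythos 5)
There is a genuine gap, and it starts with the decomposition itself. You write $\mathsf p[t_2,\infty) = \bigcup_{s \geq t} \mathsf p[s_1',s_2']$ with scales $s \geq t$ corresponding to balls $B(\tilde v, e^{-s})\subset B(\tilde v, e^{-t})$, and you later sum Minkowski contributions "from $s=t$ to $T=\lceil \log(1/\delta)\rceil$." This goes in the \emph{inward} direction, but by definition $t_2$ is the \emph{last} exit time of $B(\tilde v, e^{-t})$, so $\mathsf p[t_2,\infty)$ never re-enters $B(\tilde v, e^{-t})$ — in particular it never touches any $B(\tilde v, e^{-s})$ with $s>t$. With your indexing every $Z_s$ for $s>t$ is identically zero, the sum collapses to a single term, and you would conclude a spurious $O(1)$ bound rather than the stated $(1+t)\wedge\log(1/\delta)$. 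The correct decomposition runs in the opposite direction: from $s=t$ \emph{outward} to $s=t_0$, with $t_0$ the scale where $p$ stops being injective (as in the remark following \cref{L:winding_tail_partial}); that gives $O(t-t_0)=O(1+t)$ many pieces, each with a uniform $L^k$ bound by \cref{L:winding_tail}, and Minkowski gives $(1+t)^k$ directly — no cutset or backtracking estimate is needed for that half of the minimum. Your invocation of \cref{lem:cutset isolation} and \cref{lem:backtrack} to "upgrade" from $\log(1/\delta)$ to $1+t$ is therefore misdirected; the $1+t$ is free.

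The nontrivial remaining issue — which your proposal does not address — is the $\log(1/\delta)$ cap when $t>\log(C'\delta_0/\delta)$. At such scales $e^{-t}<C'\delta/\delta_0$ so \cref{L:winding_tail} is no longer applicable, and a probabilistic estimate will not help because the extra winding accrued there is a deterministic artefact of the mesh. The paper closes this off by observing that the portion of $\mathsf p$ between $\tau_t$ and $\tau_{\log(C'\delta_0/\delta)}$ lies inside $B(\tilde v, C'\delta/\delta_0)$, whose vertex count is $O(1)$ by the bounded density assumption \eqref{boundeddensity}; since topological winding around $\tilde v$ is bounded by the number of crossings of a fixed radial segment, this extra contribution is $O(1)$. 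Finally, for $\bar{\mathsf p}$ you assert that the estimates of \cref{L:winding_tail} "apply verbatim," but \cref{L:winding_tail} is stated and proved for $\mathsf p_{\tilde v}$ only; the paper instead uses \cref{lem:wind_comp} (the part of $\bar{\mathsf p}$ after the branch point is disjoint from $\mathsf p$) to transfer the $\mathsf p$ bound, which is both cleaner and avoids re-running the Möbius-iterate argument in reverse.
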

We emphasize here that the spine might be identical to a spine of a skeleton branch.
\begin{proof}
First we tackle the case of $\mathsf p$. If $t < \log (C'\delta_0 / \delta)$, we need to add $O(t)$ many variables given by \cref{L:winding_tail} and hence we are done by Minkowski's inequality. If $t > \log (C'\delta_0 / \delta)$, first we apply the previous bound up to $t =  \log (C'\delta_0 / \delta)$. For the remainder, we can simply bound the winding by the volume of the ball of radius $C'\delta / \delta_0$ around $\tilde v$ which is $O(1)$ by our assumptions on the graph (see assumption \eqref{boundeddensity} in Section \ref{sec:setup}). Indeed, this quantity is bounded by the number of times $\mathsf p$ crosses a straight line joining $\tilde v$ to the boundary of the ball, which is simply bounded by the volume of the ball. For $\mathsf {\bar p}$, we can use \cref{lem:wind_comp} and the bound for $\mathsf p$ and the proof is complete.
\end{proof}

%
%
\begin{lemma}\label{lem:tildeu}
For all $k$ there exist $C,c>0$ such that for all points $\tilde u, \tilde v$, for all compact sets $\tilde K$ containing $\tilde u , \tilde v \in \tilde M'$ , for all $\delta  < \delta_{\tilde K}$,
$$
\E\Big ( |W(\mathsf p_{\tilde u}  , \tilde v  )|^k\Big) \le C(|1+\log  |\tilde u - \tilde v|| \wedge \log (1/\delta))^c.
$$
\end{lemma}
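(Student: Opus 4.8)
\emph{Strategy.} Set $\rho := |\tilde u - \tilde v|$. The plan is to write $W(\mathsf p_{\tilde u},\tilde v)$ as a sum of windings accumulated in dyadic annuli centred at $\tilde v$, to bound each such term uniformly in the scale, and to reassemble by Minkowski's inequality; only $O(1+\log(1/\rho))$ of these annuli contribute with non‑negligible probability, which produces the stated estimate. Two easy reductions first. If $\tilde u=\tilde v$ then $\mathsf p_{\tilde u}=\mathsf p_{\tilde v}$, and summing \cref{L:winding_tail} over the $O(\log(1/\delta))$ scales $e^{-t}$, $t\ge t_0$, via Minkowski, then adding the tail bound of \cref{cor:full_winding_moment}, gives $\E(|W(\mathsf p_{\tilde v},\tilde v)|^k)\le C(\log(1/\delta))^c$, which is the claimed bound in that case. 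If $\rho\ge c\,r_{\tilde v}$ is macroscopic the right‑hand side is $O(1)$ and only macroscopic scales occur below, so that case is a simplification of the general one. Hence we may assume $c\delta\le\rho\le c\,r_{\tilde v}/2$ (distinct vertices of a bounded‑density graph being at Euclidean distance $\gtrsim\delta$).

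\emph{Decomposition and the inner scales.} Write $\mathsf p_{\tilde u}=\tilde\gamma_{\tilde u}\cup\mathfrak s$, where $\tilde\gamma_{\tilde u}$ is the branch sampled from $\tilde u$ by Wilson's algorithm (stopped at $\partial\Gamma^\d\cup\mathfrak B$ or at the first noncontractible loop) and $\mathfrak s$ is the ensuing terminal segment of a spine --- either of a CRSF loop, or, if $\tilde\gamma_{\tilde u}$ hits $\mathfrak B$, of a puncture spine. Introduce the dyadic annuli $A_m=\{\rho e^{-m-1}\le|z-\tilde v|<\rho e^{-m}\}$ for $m\ge1$ (inner scales, down to a scale comparable to $\delta$), $A_{-m}=\{\rho e^{m-1}\le|z-\tilde v|<\rho e^{m}\}$ for $m\ge1$ (outer scales, up to a scale comparable to $r_{\tilde v}$), and the ``far'' region $\{|z-\tilde v|\ge r_{\tilde v}\}$; additivity of winding lets us sum the contributions of these regions. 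For the inner scales, $\tilde\gamma_{\tilde u}$ meets $A_m$ only if it comes within $\rho e^{-m}$ of $\tilde v$, an event of probability $\le c\alpha^m$ by \cref{lem:coming_close} (applied with starting vertex $\tilde u$, target $\tilde v$ and inner scale $\rho$); the same holds for any special‑branch part of $\mathfrak s$ by the second item of \cref{assumption_precise}. Conditionally on meeting $A_m$, the winding of any sub‑curve in $A_m$ has $k$‑th moment $O(1)$ uniformly in $m$: for the $\tilde\gamma_{\tilde u}$‑part this comes from rerunning the proofs of \cref{L:winding_tail_partial} and \cref{lem:macro_winding} (and the underlying estimates of \cite{BLR16}) with the annuli recentred at $\tilde v$ rather than at the base vertex --- the excursion decomposition and the uniform ``full turn'' estimate are insensitive to this change, and the topological difficulty from noncontractible loops is handled exactly as in \cref{L:winding_tail_partial} --- and for any special‑branch part it is the content of the two displayed bounds in the third item of \cref{assumption_precise}. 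By H\"older, the $A_m$‑contribution to $\|W(\mathsf p_{\tilde u},\tilde v)\|_k$ is $\le(c\alpha^m)^{1/(2k)}\,O(1)$, so the inner scales together contribute $O(1)$.

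\emph{Outer scales, far scales, and assembly.} There are only $O(\log(1/\rho))$ outer annuli $A_{-1},\dots,A_{-O(\log(1/\rho))}$, since $r_{\tilde v}=O(1)$ on the compact $\tilde K$. In each of them the winding of $\mathsf p_{\tilde u}$ around $\tilde v$ has $k$‑th moment $O(1)$, uniformly: for the $\tilde\gamma_{\tilde u}$‑ and special‑branch parts by the recentred estimates above; and for the generic spine part $\mathfrak s$ by adapting \cref{sec:full_spine}. For the latter, perform the cutset exploration of \cref{lem:exp_tail} around $\tilde v$ before sampling $\mathsf p_{\tilde u}$: all noncontractible loops, hence all spines, are measurable with respect to the revealed branches $\cT_H^\d$ (final assertion of \cref{lem:exp_tail}), so $\mathfrak s\subseteq\cT_H^\d$, while if $\tilde u$ lies in the component of $\tilde v$ the initial piece of its branch is a wired‑UST branch in a simply connected domain, to which the planar winding estimates of \cite{BLR16} apply directly. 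By the exponential tail of the cutset isolation radius (\cref{lem:cutset isolation}), $\mathfrak s$ avoids $B(\tilde v,6^{-J}r_{\tilde v})$ with $J$ of exponential tail, and the geometry of spines away from their base (\cref{lem:spine:boundary} and the M\"obius‑copy description used in the proof of \cref{L:winding_tail}) shows that the winding of $\mathfrak s$ in each macroscopic annulus is $O(1)$ in moment, the rare deep excursions being controlled as in the inner‑scale step. In the far region: in the hyperbolic case $\mathfrak s$ exits at $\partial\D$ at bounded distance, adding only $O(1)$ further annuli; in the torus case $\mathfrak s$ escapes to infinity in a fixed asymptotic direction, so comparing with a straight ray via \cref{lem:wind_comp}, its winding around $\tilde v$ beyond scale $r_{\tilde v}$ has $k$‑th moment $O(1)$, as in \cref{cor:full_winding_moment}. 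Summing the $O(1)$ inner contribution, the $O(\log(1/\rho))$ outer terms of size $O(1)$, and the $O(1)$ far contribution, Minkowski's inequality gives $\|W(\mathsf p_{\tilde u},\tilde v)\|_k\le C(1+\log(1/\rho))$, i.e. $\E(|W(\mathsf p_{\tilde u},\tilde v)|^k)\le C(1+\log(1/\rho))^k$; since $\rho\ge c\delta$ this is at most $C(|1+\log|\tilde u-\tilde v||\wedge\log(1/\delta))^{c}$ after adjusting constants.

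\emph{The main difficulty.} The hard part is the uniform‑in‑scale control of the winding, around the \emph{external} point $\tilde v$, of the spine segment $\mathfrak s$: the earlier sections established such bounds only for the winding of a path around its own base vertex, and transferring them forces one to combine the cutset exploration (to present $\mathfrak s$ as part of a configuration that is conditionally a wired UST in a simply connected region), the Riemann‑surface facts about spines (\cref{lem:spine:boundary} and the M\"obius structure of \cref{sec:full_spine}) to deal with the infinitely many loop copies, and \cref{lem:coming_close} together with \cref{assumption_precise} to rule out deep approaches of $\mathsf p_{\tilde u}$ to $\tilde v$. Everything else is bookkeeping with Minkowski's and H\"older's inequalities.
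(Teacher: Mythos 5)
Your route is genuinely different from the paper's, and substantially more laborious; it is worth comparing.

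The paper's proof is a two-line reduction: since $\mathsf p_{\tilde u}$ and $\mathsf p_{\tilde v}$ are (lifted) CRSF branches --- simple, and either disjoint or merging --- the deterministic comparison Lemma~\ref{lem:wind_comp} applies with $\gamma_0=\mathsf p_{\tilde v}$. On the event that $\mathsf p_{\tilde u}$ comes no closer than $e^{-k}$ to $\tilde v$, Lemma~\ref{lem:wind_comp} converts $|W(\mathsf p_{\tilde u},\tilde v)|$ into $4\pi$ plus a supremum of windings of \emph{subsegments of $\mathsf p_{\tilde v}$} around $\tilde v$, and that quantity is exactly what Lemma~\ref{cor:full_winding_moment} bounds. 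Summing over the scale of closest approach, whose probability is controlled by Lemma~\ref{lem:coming_close}, and applying Cauchy--Schwarz finishes the proof. The point is that the ``wrong centre'' problem (winding of $\mathsf p_{\tilde u}$ around a point that is not its base) is transformed by the deterministic lemma into the ``right centre'' problem ($\mathsf p_{\tilde v}$ around its own base $\tilde v$), for which all the hard work --- noncontractible loops, spine copies, M\"obius geometry --- has already been done once and for all.

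Your approach instead attempts to bound the winding of $\mathsf p_{\tilde u}$ around $\tilde v$ directly, annulus by annulus, by re-running the proofs of Lemmas~\ref{L:winding_tail_partial}, \ref{lem:macro_winding} and the estimates of \cite{BLR16} ``with the annuli recentred at $\tilde v$,'' supplemented by a cutset exploration around $\tilde v$ and the M\"obius description of spines to handle the terminal spine segment $\mathfrak s$. You correctly identify this recentring as the main difficulty, and it is plausible that the full-turn and excursion-decomposition arguments are robust enough to survive it (Lemma~\ref{lem:coming_close} and Lemma~\ref{cor:shift_center} are indeed instances of shifted-centre arguments). But the claim that the per-annulus winding of $\mathsf p_{\tilde u}$ around the external point $\tilde v$ has $O(1)$ $k$-th moment, uniformly in the scale, is asserted and not actually carried out; that claim encapsulates a significant amount of technical work, especially for the spine piece $\mathfrak s$, where the full-turn mechanism that underlies Lemma~\ref{L:winding_tail_partial} no longer corresponds to loop-erasing the walk at its own base. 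The paper never needs this: Lemma~\ref{lem:wind_comp} transfers the estimate without ever controlling the winding of $\mathsf p_{\tilde u}$ around $\tilde v$ scale by scale. If you wanted to complete your version, the honest route back to solid ground would be precisely to invoke Lemma~\ref{lem:wind_comp} and compare with $\mathsf p_{\tilde v}$ --- at which point you have reproduced the paper's argument. So the proposal is not wrong as a strategy, but as written it leaves the hardest step open, and the alternative it replaces is strictly simpler.
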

\begin{proof}
Fix $T =  \log (\frac{C'\delta_0}{\delta})$ with $C'$ as in \cref{L:winding_tail}. We parametrise $\mathsf p_{\tilde v}, \mathsf p_{\tilde u}$ as in \cref{cor:full_winding_moment}.      If $|\tilde u - \tilde v| <e^{-T}$, we use  \cref{cor:full_winding_moment,lem:wind_comp} to conclude.



If $|\tilde u - \tilde v| \ge e^{-T}$, we take $t  = -\log |\tilde u - \tilde v| $. Then using \cref{lem:wind_comp}, we can write
$$
|W(\mathsf p_{\tilde u},  \tilde v)| \le 2\pi +  \sum_{k=t}^\infty \sup_{ \cY \subset \mathsf p_{\tilde v}[k_2, \infty)} |W(\cY, \tilde v)|1_{|\mathsf p_{\tilde u} -\tilde v| \in [e^{-k}, e^{-k-1} ]}
$$
where $k_2$ is the last exit from the $B(\tilde v, e^{-k})$ by $\mathsf p_{\tilde u}$.
Using \cref{lem:coming_close} or \cref{lem:special_coming_close} whichever is appropriate, we get an exponential bound on the expectation of the indicator event above (notice for scales $k > \log (C'\delta_0/\delta)$, the probability actually becomes 0 so this is a finite sum). Therefore, we can again use \cref{cor:full_winding_moment} and Cauchy--Schwarz to conclude.
\end{proof}


\subsection{Convergence of height function}\label{sec:winding_convergence}


In this section we prove \cref{thm:main_precise}.

Let us describe informally the general structure of the proof. Most of the work will be in the universal cover to obtain the convergence of expressions of the form $\E \prod_i (h(\tilde z_i) - h(\tilde w_i))$ for distinct points $\tilde z_i$ and $\tilde w_i$ (\cref{L:err_winding_moment}). By integrating this expression, we will then obtain the convergence of the height function, seen as a function on the universal cover. 

For the first part, the idea is to introduce a regularised height function $h^\d_t$ which is a continuous function of the CRSF so that the convergence of $h^\d_t$ as $\delta \rightarrow 0$ is immediate. This leaves us with two issues: the comparison between $h^\d$ and $h^\d_t$ for fixed $\delta$ and the convergence of $h_t$ as $t \rightarrow  \infty$ in the limit. Both questions are actually solved simultaneously by the estimate of \cref{L:err_winding_moment} which compares $h^\d$ and $h^\d_t$ with an error term that becomes small both in $\delta$ and $t$ independently.

\paragraph{Setup and notations.}  Recall that our goal is to prove that
\begin{equation}\label{E:height_test}
\int_{\tilde M'} \overline h_{\ext}^\d(z)\tilde f(z)d\mu(z)
\end{equation}
converges in law and also in the sense of moments as $\delta \to 0$. (Recall that $\bar X$ denotes the centered random variable $\bar X = X - \E(X)$ whenever this expectation is well defined). Implicitly, $h^\d$ is sampled according to the dimer law \eqref{Gibbs}. In other words, by \cref{prop:temp_bij}, we need to first sample a Temperleyan pair $(\cT, \cT^\dagger)$ and apply the bijection $\psi$ in that theorem.

However, it turns out to be more convenient in the proof to work with a pair $(\cT, \cT^\dagger)$ sampled from $\Pwils$.
To this pair $(\cT, \cT^\dagger)$ we can apply the bijection $\psi$ from \cref{prop:temp_bij} and obtain a dimer configuration $\mathbf{m}$ whose centered height function can be studied.
We will first prove \eqref{E:height_test} for $\Pwils$ and then later explain how this implies the same for $\Ptemp$.

Recall that since $\int_{\tilde M'}\tilde f(z)d\mu(z) = 0$, the expression in \eqref{E:height_test} is in fact well defined.
We wish to compute the moments of this integral but only in terms of height differences since the field is a priori defined only up to constant.
To do this, we use the following trick. Note that since $\int \tilde f d\mu = 0$, $\int \tilde f^+ d \mu  = \int \tilde f^- d \mu =: Z(\tilde f)$ where $f^+ = \max\{f,0\}$ and $f^- = - \min \{f,0\}$. Now we can write
\begin{equation}
\int_{\tilde K} \overline h_{\ext}^\d(z)\tilde f(z)d\mu(z) = \int_{\tilde K \times \tilde K} (\overline h_{\ext}^\d(z) - \overline h_{\ext}^\d(w) )\frac{\tilde f^+(z)\tilde f^-(w)}{Z(\tilde f)}d\mu(z)d\mu(w),\label{eq:int_double_int}
\end{equation}
which implies
\begin{equation}
(\int_{\tilde K} \overline h_{\ext}^\d(z)\tilde f(z)d\mu(z) )^k = \int_{\tilde K^{2k}} \prod_{i=1}^k(\overline h_{\ext}^\d(z_i) - \overline h_{\ext}^\d(w_i)   )\frac{\tilde f^+(z_i)\tilde f^-(w_i)}{Z(\tilde f)}d\mu(z_i)d\mu(w_i).\label{eq:kth_moment_int}
\end{equation}
Therefore we are interested in the $k$-point function, $\E [ \prod_{i=1}^k(\overline h_{\ext}^\d(z_i) - \overline h_{\ext}^\d(w_i)   )]$. Pick $k$ distinct pairs of points $( z_1,w_1), \ldots, ( z_k ,w_k)\in \tilde K$ and let $(f(z_1),f(w_1)),
\ldots, (f(z_k),f(w_k))$ be the faces containing them. Let $z_i^\d,w^\d_i$ be the midpoint of the diagonals of $f(z_i), f(w_i)$.

Now recall \cref{lem:number_crossing} which relates the dimer height difference between two faces $f$ and $f'$ to the winding of a specific path $\gamma_{f,f'}$ connecting $m(f)$ and $m(f')$ and additional terms of the form $\pm \pi$ associated with jumping over components of the CRSF.
Let $\gamma_i^\d$ be the path $\gamma_{f,f'}$ when $f = f(z_i)$ and $f'= f(w_i)$, and orient it
from $z_i^\d$ to $w_i^\d$.
Then with these notations, \cref{lem:number_crossing} says that
\begin{equation}
h(z_i^\d) - h(w_i^\d) = W(\gamma_i^\d, z_i^\d) + W(\gamma_i^\d, w_i^\d) + \Psi^\d_i\label{eq:path_height}
\end{equation}
where $\Psi^\d_i$ is the $\pi \sum_S (\ve_S+\delta_S)$ term in \cref{lem:number_crossing}.
We drop the superscript $\delta$ from now on for clarity.
Thus from now on we focus on proving convergence of
\begin{equation}
\Ewils\Big(\prod_{i=1}^k (\bar W(\gamma_i ,z_i) + \bar W( \gamma_i ,w_i) + \bar \Psi_i )\Big).\label{eq:height_to_winding2}
\end{equation}
Let $$\cX = \{z_i,w_i:1 \le i \le k\}, \text{ and assume } p(u) \neq p(v) \text{ for all }u \neq v \in \cX.$$

Clearly, \eqref{eq:height_to_winding2} can be expanded as a sum of $2^k$ many terms of the form:
\begin{equation}
\Ewils(\prod_{x \in S} (\bar W(\gamma_x,x) + \bar \Psi_x/2) )=: \Ewils(\prod_{x \in S} \bar F_x(\gamma_x))\label{eq:top_int_eq1}
\end{equation}
where $S$ is a subset of vertices of $\cX$ with distinct indices and $\gamma_x$ is $ \gamma_i$ for some $i$ such that $x = z_i$ or $w_i$ (of course, the products in the expansion of \eqref{eq:top_int_eq1} has further restrictions, but we ignore that for clarity).

We are interested in estimating and proving convergence of \eqref{eq:top_int_eq1}. To that end we employ an idea similar to that in \cite{BLR16}: we truncate the CRSF branch at a macroscopic scale and deal with the truncated macroscopic winding and the remaining microscopic winding part separately.

We now fill in the details.
Parametrise $\gamma = \gamma_x$. Define $(\gamma_x(t))_{t \ge 1}$ to be $\gamma_x$ from the first entry time into the ball $B(x,e^{-t})$ (note that at the moment, the balls $B(x,e^{-t})$ might overlap for different $x \in \cX$.) We emphasise here that we parametrise $\gamma$ in the opposite direction compared to \cref{sec:apriori,sec:full_spine} to be consistent with \cite{BLR16}. With an abuse of notation, we will denote by $\gamma_x (-\infty, t]$ the whole path from the opposite end of $\gamma$ up until $\gamma_x(t)$. Define the regularised term and the error term as
\begin{align}
\bar F_x(\gamma_x, t) & := \bar W(\gamma_x (-\infty, t]); x) + \bar \Psi_x/2\label{Fx}\\
\bar e_x(t) & := \bar W(\gamma_x,x) - \bar W(\gamma_x (-\infty, t]),x) = \bar F_x(\gamma_x) - \bar F_x(\gamma_x, t)\label{ex}.
\end{align}
When we want to emphasise the role of $z,w$, we write $\gamma_{zw}, \Psi_{zw},F_z(\gamma_{zw})$ in place of the above. We start with a general moment bound for the truncated winding.


\begin{lemma}\label{L:reg_winding_moment}
For all $m$, there exists constants $c = c( \tilde K,m),\alpha=\alpha(m)$ such that for all $z,w, \delta < \delta_{\tilde K}, t \ge 0$,
$$
\Ewils(|\bar F_{z}(\gamma_{zw}, t)|^m) \le c|(1+t  + |\log |z-w||)  \wedge \log (1/\delta)|^\alpha.
$$
\end{lemma}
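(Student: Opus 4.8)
The plan is to decompose $\bar F_z(\gamma_{zw},t)$ into the (centered) macroscopic winding term $\bar W(\gamma_{zw}(-\infty,t];z)$ and the (centered) topological term $\bar\Psi_{zw}/2$, and to bound the $L^m$-norm of each separately using Minkowski's inequality. For the topological term, recall from \cref{lem:number_crossing} that $\Psi_{zw} = \pi\sum_{\sigma\in\bar\Omega_{S,S'}}(\ve_\sigma+\delta_\sigma)$ is a sum of at most one unit per spine separating the components of $z$ and $w$, hence $|\Psi_{zw}| \le \pi(K+1)$ where $K$ is the number of noncontractible cycles of the CRSF; by \cref{thm:CRSF_universal} (applicable after first sampling the special branches, which decompose $M'$ into annuli, cf. \cref{remark:assumption} / \cref{assumption_precise}), $K$ has a super-exponential tail uniformly in $\delta$, so $\Ewils(|\bar\Psi_{zw}|^m)$ is bounded by an absolute constant depending only on $m$. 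This contributes only to the constant $c$.

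The real work is the macroscopic winding $W(\gamma_{zw}(-\infty,t];z)$. Here I would split $\gamma_{zw}$ at $z^\d$-side and $w^\d$-side: by \cref{lem:number_crossing} the path $\gamma_{zw}$ is (in the hyperbolic case) the concatenation of $\mathsf p_z$ (which is $\gamma_f$ in the notation there, i.e. $\tilde\gamma_{\tilde z}$ followed by a semi-infinite piece of its spine), the boundary arc $(\zeta_z,\zeta_w)$, and the reversed analogue of $\mathsf p_w$; in the torus case the boundary arc is absent and the two pieces go off to infinity in the same asymptotic direction, contributing zero winding in the limit. The winding of $(\gamma_{zw}(-\infty,t];z)$ around $z$ then splits as: the winding around $z$ of the truncated piece of $\mathsf p_z$ (the part lying outside $B(z,e^{-t})$), plus the winding around $z$ of $\mathsf p_w$ and of the boundary arc. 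For the first contribution, I would chop the annulus $B(z,r_{\tilde z})\setminus B(z,e^{-t})$ into the dyadic/integer scales $s = t_0,\dots,t$ and apply \cref{L:winding_tail} (the $L^k$ moment bound for the winding of $\mathsf p_{\tilde v}$ in a single annulus), together with Minkowski's inequality, to get $L^m$-norm $O((1+t)\wedge\log(1/\delta))$; the additional winding accumulated \emph{inside} $B(z,r_{\tilde z})$ but outside the last scale is controlled by the volume/density bound (assumption \eqref{boundeddensity}). For the winding of $\mathsf p_w$ around $z$ we invoke \cref{lem:tildeu}, giving $L^m$-norm $O((1+|\log|z-w||)\wedge\log(1/\delta))$ (this is where the $|\log|z-w||$ term enters). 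The boundary arc $(\zeta_z,\zeta_w)$ is a circular arc, so its topological winding around any interior point of $\tilde M'$ is bounded by $2\pi$ — a deterministic $O(1)$ contribution (needing only the elementary geometric observation, analogous to the one used in the proof of \cref{lem:wind_comp}, that $\zeta_z,\zeta_w$ stay away from the compact $\tilde K$ so winding of the arc around any point of $\tilde K$ is bounded).

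Finally, assembling these via the triangle inequality in $L^m$, the bound becomes $c\big((1+t+|\log|z-w||)\wedge\log(1/\delta)\big)^\alpha$ with $\alpha$ absorbing the fact that we added $O(t)$ many single-scale winding terms each of bounded $L^m$-norm (so really the sum is $O(t)$, i.e. $\alpha$ can be taken $=1$, but allowing a larger power gives slack and matches the statement); the constant $c$ absorbs the $K$-dependent topological term, the volume bound, the arc bound, and the dependence on $\tilde K$ through $\delta_{\tilde K}$ and the distances from $\tilde K$ to $\partial\tilde M'$. I expect the main obstacle to be bookkeeping the non-independence coming from the fact that $\mathsf p_z$ and $\mathsf p_w$ are sampled jointly by Wilson's algorithm and may even share a spine; this is handled exactly as in \cref{L:winding_tail} and \cref{lem:tildeu} by sampling in a convenient order (cutset exploration around each of $z,w$ first, then the branches), at which point the required single-branch moment bounds apply conditionally — so no genuinely new estimate is needed, only care that the uniform constants survive the conditioning, which is precisely what the cited lemmas were designed to provide.
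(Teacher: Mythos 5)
Your proposal is correct and takes essentially the same route as the paper: Minkowski, a superexponential-tail bound on the number of noncontractible cycles for $\bar\Psi_{zw}$, \cref{cor:full_winding_moment} (which you essentially re-derive from \cref{L:winding_tail} via the dyadic decomposition and Minkowski — that is exactly how \cref{cor:full_winding_moment} is proved) for the winding of $\mathsf p_z$, \cref{lem:tildeu} for the winding of $\mathsf p_w$ around $z$, and a trivial $O(1)$ for the boundary arc. The paper states all of this in one sentence, but the underlying decomposition and the lemmas invoked are the ones you identified.
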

\begin{proof}
The proof is immediate from \cref{cor:full_winding_moment,lem:tildeu} and the fact that $\Psi_{zw}$ is bounded by a constant $c(K)$ times the number of non-contractible loops in the CRSF. Indeed, this is clear from the fact that for a fixed compact set $\tilde K \subset \tilde M$ there exists a constant $c(\tilde K)$ such that any curve $P \subset \tilde K$ will cross at most $c(\tilde K)$ copies of a given spine (or lift of a loop). Since the number of non-contractible loops has superexponential tail by \cref{thm:CRSF_universal}, the moment bound of $\Psi_{zw}$ is immediate.
\end{proof}

By compactness, choose $r_{\tilde K}$ small enough so that $p$ is injective in $B(z,r_{\tilde K})$ for all $z \in \tilde K$. Now let $r_{\cX}$ be defined as in \eqref{def_r} but for the set of vertices $p(\cX)$ (which are all distinct by assumption on $\cX$). We observe that $r_{\cX} \ge  c(\tilde K) \min_{x\neq y \in \cX} |x-y|$ for some constant $c(\tilde K)$. We set
\begin{equation}
\Delta =\Delta (\cX) =  \frac1{10}(r_{\cX } \wedge r_{\tilde K}). \label{eq:Delta}
\end{equation}
\begin{lemma}\label{L:err_winding_moment}
There exist constants $c,c'>0$ such that for all $m,m' \ge 1$ there exists $\alpha>0$ such that the following holds.
Let $S\subset \cX, S' \subset \cX$ be disjoint.  Also assume that $|S| = m , |S'| = m'\ge0$.  Let $\Delta$  be as in \eqref{eq:Delta}. Then for all $\delta < \delta_{\tilde K}$ and $t \ge \log (r_{\tilde K} /\Delta)$
$$
|\Ewils(\prod_{x \in S}\bar e_x(t)\prod_{x \in S'}\bar F_x(t))| \le c |1+t |^\alpha(e^{-c(t- \log(r_{\tilde K}  / \Delta))} + \delta^{c'}).
$$
\end{lemma}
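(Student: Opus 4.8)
The plan is to adapt the corresponding argument of \cite{BLR16}, combining the local coupling of \cref{lem:exp_tail} with the winding moment bounds of \cref{L:winding_tail,L:winding_tail_partial,L:reg_winding_moment}. We argue under $\Pwils$; since by \cref{lem:RN_dimer_CRSF} the density $d\Ptemp/d\Pwils$ is proportional to $2^{k^\dagger}$ and $\Ewils(q^{k^\dagger})<\infty$ for all $q$ by \cref{thm:CRSF_universal}, any bound of the stated form under $\Pwils$ transfers to $\Ptemp$ after enlarging the constants, and \cref{assumption_precise} together with \cref{rmk:special_branch_coupling} allows us to treat the special branches as part of the boundary. We also assume $m=|S|\ge1$, which is the relevant case (the estimate is vacuous otherwise). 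First sample the special branches $\mathfrak B$, then run the cutset exploration and base coupling around the points $p(x)$, $x\in\cX$, at scale $r_\cX$ as in \cref{sec:local_coupling_grand}. This produces a $\sigma$-algebra $\cG$ (generated by $\mathfrak B$ and the revealed branches $\cT_H^\d$), radii $R_x=6^{-I_{v_x}}r_\cX$ with $\P(I_{v_x}\ge n)\le ce^{-c'n}\vee\delta^{c'}$, and independent wired USTs $(\cT_x^{\tilde D})_{x\in\cX}$, independent of $\cG$, with $\cT^\d\cap B(x,R_x)=\cT_x^{\tilde D}\cap B(x,R_x)$; moreover the noncontractible cycles of $\cT^\d$, hence the topological terms $\Psi_x$ entering $\bar F_x$, are $\cG$-measurable. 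Let $\cA$ be the event that $R_x\ge e^{-t}$ for all $x$ and that (by \cref{lem:backtrack}) each $\gamma_x$, after first entering $B(x,e^{-t})$, stays inside $B(x,e^{-t/2})$; since $r_\cX\ge 10\Delta$ and $\Delta\le r_{\tilde K}$, \cref{lem:exp_tail} and a union bound give $\P(\cA^c)\le c(e^{-c'(t-\log(r_{\tilde K}/\Delta))}+\delta^{c'})$.

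On $\cA$ the portion of $\gamma_x$ responsible for $\bar e_x(t)$ lies inside $B(x,R_x)$ and is therefore a measurable function of $\cT_x^{\tilde D}$ alone, whereas $\bar F_x(t)$ depends only on $\cG$ and on $\cT_x^{\tilde D}$ restricted to $B(x,e^{-t})^c$. Hence, conditionally on $\cG$ and on $\cA$, the pairs $(\bar e_x(t),\bar F_x(t))$, $x\in\cX$, are independent. Writing $\mu_x:=\E(e_x(t)\mid\cG)$ and $\bar e_x(t)=(e_x(t)-\mu_x)+\bar\mu_x$, expanding $\prod_{x\in S}\bar e_x(t)$ and using that $e_x(t)-\mu_x$ is conditionally centered and (conditionally) independent across $x\in S$ and of all $\bar F_y(t)$, $y\in S'$ (recall $S\cap S'=\emptyset$), every term containing at least one factor $e_x(t)-\mu_x$ contributes $0$ to $\E(\,\cdot\,\mathbf 1_\cA)$, leaving
\begin{equation*}
\E\Big(\textstyle\prod_{x\in S}\bar e_x(t)\prod_{x\in S'}\bar F_x(t)\,\mathbf 1_\cA\Big)=\E\Big(\mathbf 1_\cA\textstyle\prod_{x\in S}\bar\mu_x\;\E\big(\textstyle\prod_{x\in S'}\bar F_x(t)\mid\cG\big)\Big).
\end{equation*}
By \cref{L:reg_winding_moment} and $|\log|z-w||\le\log(1/\Delta)\le t$ we have $\|\bar F_x(t)\|_{L^q}\le c(1+t)^\alpha$ for all $q$, so by Hölder it suffices to show $\|\bar\mu_x\mathbf 1_\cA\|_{L^q}\le c(1+t)^\alpha(e^{-c(t-\log(r_{\tilde K}/\Delta))}+\delta^{c'})$ for each $x\in S$, together with a bound on the $\cA^c$ contribution; for the latter a crude Hölder estimate using $\|\bar e_x(t)\|_{L^q}\le c\log(1/\delta)$ (Minkowski and \cref{L:winding_tail}), $\|\bar F_x(t)\|_{L^q}\le c(1+t)^\alpha$ and $\P(\cA^c)$ suffices once $\cA^c$ is split according to whether $\max_x I_{v_x}$ exceeds $c\log(1/\delta)$ (where $\P\le\delta^{c''}$ absorbs the polylogarithmic factor) or not.

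The heart of the matter is the bound on $\bar\mu_x\mathbf 1_\cA$. On $\cA$, $\mu_x$ is the mean, under a wired UST in the random $\cG$-measurable domain $\Gamma_x^\d$, of the intrinsic winding inside $B(x,e^{-t})$ of the branch through $x$; let $\mu^{\mathrm{ref}}$ denote the analogous quantity in a fixed reference domain. Coupling the two USTs by \cref{lem:isolation_rad}, their branches agree inside a ball $B(x,R'')$, so $e_x(t)$ computed in the two domains differs only through the winding, in the annulus $B(x,e^{-t})\setminus B(x,R'')$, of each branch. Decomposing over the scale $e^{-s}$ at which this coupling fails, \cref{lem:isolation_rad} gives $\P(R''<e^{-s}\mid\cG)\le ce^{-c'(s-\log(1/\Delta))}$ on $\cA$, while by \cref{L:winding_tail,L:winding_tail_partial} the winding between scales $t$ and $s$ of a loop-erased branch has all moments bounded by $c(1+s-t)^\alpha$ — crucially an estimate that involves \emph{only} the scales in $[t,s]$ and is therefore uniform in $\delta$. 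Summing the geometric series $\sum_{s\ge t}(1+s-t)^\alpha e^{-c'(s-\log(1/\Delta))/2}\le c(1+t)^\alpha e^{-c'(t-\log(r_{\tilde K}/\Delta))/2}$ yields $\|(\mu_x-\mu^{\mathrm{ref}})\mathbf 1_\cA\|_{L^q}\le c(1+t)^\alpha e^{-c(t-\log(r_{\tilde K}/\Delta))}$, and the same scale decomposition applied to $\E\mu_x-\mu^{\mathrm{ref}}$ (splitting $\cA^c$ again by $\max_x I_{v_x}$) gives $|\E\mu_x-\mu^{\mathrm{ref}}|\le c(1+t)^\alpha(e^{-c(t-\log(r_{\tilde K}/\Delta))}+\delta^{c'})$. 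Combining these two estimates bounds $\|\bar\mu_x\mathbf 1_\cA\|_{L^q}$ as required, which together with the previous paragraph proves the lemma.

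I expect the decisive difficulty to be exactly this last estimate. The naive bound $\|\bar\mu_x\mathbf 1_\cA\|_{L^2}\le\|e_x(t)\|_{L^2}\,\P(\cA^c)^{1/2}$ loses a factor $\log(1/\delta)$ because $\var(e_x(t))\asymp\log(1/\delta)-t$, which is fatal when $t$ is only slightly above $\log(r_{\tilde K}/\Delta)$; removing it forces one to use the observation that a winding \emph{discrepancy} between two coupled configurations only sees the macroscopic annulus on which they differ, and then to deduce the per-scale winding estimates needed in that annulus from \cref{L:winding_tail,L:winding_tail_partial}, which are stated for a single loop-erased branch rather than conditionally on an isolation radius — this last bookkeeping is the part requiring the most care.
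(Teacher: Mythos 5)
Your approach is genuinely different from the paper's, and it has a gap at the crucial cancellation step.

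The paper's proof decomposes $\bar e_x(t) = \alpha_x + \xi_x$ where $\alpha_x := \overline W(\gamma^D_x(t,\infty),x)$ is the centred winding of the branch in the \emph{independent reference UST} $\cT^D_x$ and $\xi_x$ is the discrepancy with the true CRSF. The point of this decomposition is that $\alpha_x$ is, by construction and unconditionally, a function of $\cT^D_x$ alone, and $\cT^D_x$ is independent of the cutset $\sigma$-algebra $\mathfrak C$ and of $\cT^D_y$ for $y\neq x$; hence every term of $\prod_{x\in S}(\alpha_x+\xi_x)$ that contains at least one $\alpha_x$ has conditional expectation zero given $\mathfrak C$ outright, with no indicator event required. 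This leaves only $\E(\prod_x \xi_x)$, which is controlled by bounding $\E|\xi_x|^k$ directly (coupling failure plus backtracking give $\xi_x=0$ except on an exponentially rare event, and moments on that event are bounded by \cref{cor:full_winding_moment,lem:exp_tail,lem:backtrack}).

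The gap in your argument is in the claimed identity
\[
\E\Big(\textstyle\prod_{x\in S}\bar e_x(t)\prod_{x\in S'}\bar F_x(t)\,\mathbf 1_\cA\Big)=\E\Big(\mathbf 1_\cA\textstyle\prod_{x\in S}\bar\mu_x\;\E\big(\textstyle\prod_{x\in S'}\bar F_x(t)\mid\cG\big)\Big).
\]
You center by $\mu_x:=\E(e_x(t)\mid\cG)$, which makes $e_x(t)-\mu_x$ conditionally centred given $\cG$; but the indicator $\mathbf 1_\cA$ is \emph{not} $\cG$-measurable (it depends on the coupling radii $R_x$ and on whether $\gamma_x$ backtracks, both of which live in the UST randomness inside $\Gamma_x^\d$), and it is strongly correlated with $e_x(t)$ (the event $R_x<e^{-t}$ says precisely that the coupling fails at the scale where $e_x(t)$ lives). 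So conditional centring given $\cG$ does not give $\E[(e_x(t)-\mu_x)\,\mathbf 1_\cA\cdot(\text{rest})]=0$. Repairing this by conditioning on $(\cG,\cA)$ would require replacing $\mu_x$ by $\E(e_x(t)\mid\cG,\cA)$ and arguing conditional independence given $(\cG,\cA)$, which is more delicate because $\cA$ couples the different $x$'s; and even then you are left with your ``heart of the matter'' problem of bounding $\bar\mu_x$, a reference-domain comparison of conditional means across random domains which the paper never needs to perform. The observation in your last paragraph — that a winding \emph{discrepancy} between coupled configurations only sees the scales at which the coupling differs — is exactly the mechanism the paper uses, but it is deployed there on $\xi_x$ directly (a quantity which has small moments) rather than on $\bar\mu_x$ (a $\cG$-measurable fluctuation of a conditional mean), which is why the paper's bookkeeping is substantially shorter and does not require the event $\cA$ at all.
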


\begin{proof}
For simplicity, we first assume $S'$ is empty. Recall that we can sample a CRSF from $\Pwils$ by first sampling the branches $\mathfrak s$ and then the rest by Wilson's algorithm. Note that \cref{lem:special_coming_close} tells us that the isolation radius corresponding of each vertex in $S$ with respect to the skeleton has polynomial tail. Since after sampling the skeleton, the rest of the branches are sampled simply by Wilson's algorithm, we can conclude that the application of \cref{lem:exp_tail} is valid with $\Delta$ in place of $r_i$.

We perform the coupling in \cref{sec:fullcoupling} (in particular \cref{lem:exp_tail}) with points $p(\cX)$ and their lift $\cX$, and a compact domain $D \subset \tilde M'$ containing all points in $\cX$ so that the minimal Euclidean distance between any point in $\cX$ and $\partial D$ is at least $r_{\cX}$. Note that we can choose $r_\cX$ for the $r$ in \eqref{def_r} there. Call $(\cT^D_x)_{x \in \cX}$ the resulting independent UST in $D$. For $x \in \cX$ let $\gamma^D_x$ be the branch in the UST $\cT^D_x$
joining $x$ to $\partial D$. Let $\gamma_x^D(t)$ be parametrised so that $\gamma^D_x$ enters $B(x,e^{-t}r_{\tilde K})$ for the first time (going from the outside to $x$) at time $t$.

Let $R_x$ be the isolation radius of  $x$ in the application of \cref{lem:exp_tail} and write $R_x = e^{-I_x}r_{\tilde K}$. Let $I = \max_{x \in \cX} I_x$. (Note that for notational clarity, $I$ here is shifted by $\log (r_{\tilde K}/\Delta)$ from the one in \cref{sec:fullcoupling}.)
We now decompose
\begin{align}
e_x(t) & = \underbrace{\overline W(\gamma^D_x(t,\infty),x)}_{\alpha_x} +
\underbrace{(\overline W(\gamma_x(t,\infty),x) - \overline W(\gamma^D_x(t,\infty),x)) }_{\xi_x} .\end{align}
Therefore, we need to deal with expectation of products of $\alpha_x,\xi_x$ for different indices $x$. Let $\mathfrak C$ be the sigma algebra generated by the cutset exploration. We will first compute the conditional expectation and then take the overall expectation.
Note that $\cT_x^D$ are independent for different $x$ and also independent of $\mathfrak C$ and hence any term involving $\alpha_x$ is 0. Thus we only have to deal with terms involving $\xi_x, x \in S$.

Let $\Lambda $ be the \emph{last} time $\gamma_x$ enters $B(x,e^{-I}r_{\tilde K})$. In the coupling, $\gamma_x, \gamma_x^D$ agree inside $B(x,e^{-I}r_{\tilde K})$ so in particular $\gamma_x(\Lambda, \infty)  = \gamma_x^D(\Lambda, \infty)$, so
\begin{equation*}
\xi_x = \overline W(\gamma_x(t,\Lambda),x) - \overline W(\gamma^D_x(t,\Lambda),x).
\end{equation*}
Let $G$ be the event that $\xi_x$ without the bar (meaning without the expectation terms) is 0. Observe that if $G$ does not occur then one of the following events happen. Either $$I  - \log(r_{\tilde K}  / \Delta)> (t-\log(r_{\tilde K}  / \Delta))/2$$ which has probability at most $e^{-c(t - \log(r_{\tilde K}  / \Delta))} \vee \delta^c \le (e^{-c(t-\log(r_{\tilde K}  / \Delta))} + \delta^c)$  (\cref{lem:exp_tail}). Otherwise, $\gamma_x$ has to exit $B(x, e^{-I}r_{\tilde K})$ after hitting $e^{-t}r_{\tilde K}$. This also has probability at most $e^{-c(t -\log(r_{\tilde K}  / \Delta)) }$  (\cref{lem:backtrack}). Now we bound the moments of $\xi_x$. Notice that we can write
\begin{align*}
\Ewils(|\xi_x|^k)  & \le \sum_{j = t/2 + \log (r_{\tilde K} / \Delta )/2}^{\infty} \E(|\xi_x|^k 1_{j \le I \le j+1}) + c e^{-c(t -\log(r_{\tilde K}  / \Delta)) }\\
&\le \sum_{j = t/2 + \log (r_{\tilde K} / \Delta )/2}^{\infty} |1+j|^k (e^{-c(j - \log(r_{\tilde K}  / \Delta) )} +\delta^c)
\end{align*}
where we first used Cauchy--Schwarz and then used \cref{cor:full_winding_moment} to bound the moment and \cref{lem:exp_tail}. Thus overall
$$
\Ewils(|\xi_x|^k) \le |1+t|^k( e^{-c(t- \log(r_{\tilde K}  / \Delta) )} +\delta^c).
$$
We have a product of at most $m$ terms and so using H\"older's inequality, taking $\alpha =m$ works.

Finally, if $S'$ is non-empty, the proof is exactly the same as the vertices in $S'$ are distinct from $S$ and hence local independence from the coupling still holds. We then use \cref{L:reg_winding_moment} to conclude using Cauchy--Schwarz. Details are left to the reader.
\end{proof}

\begin{corollary}\label{L:winding_moment}
Let $S\subset \cX$ containing vertices with distinct indices such that $|S| = m$. There exists a constant $c=c(\tilde K,m),\alpha=\alpha(m)$ such that for all $\delta < \delta_{\tilde K}$,
$$
|\Ewils(\prod_{x \in S} \bar F_x(\gamma_x) )| \le c|1+ \log^\alpha(\Delta)|.
$$
\end{corollary}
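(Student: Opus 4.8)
Looking at this corollary, it bounds the $m$-th moment of a product of truncated winding/height terms $\bar F_x(\gamma_x)$ by a polylogarithmic factor in $\Delta$, which (recalling $\Delta \asymp \min_{x\neq y}|x-y|$) is the key input needed to control the $k$-point function integrals in \eqref{eq:kth_moment_int}.

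Here is the plan for the proof.

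\begin{proof}[Proof plan for \cref{L:winding_moment}]
The idea is to split each factor $\bar F_x(\gamma_x)$ at the macroscopic scale $t := \log(r_{\tilde K}/\Delta)$ into a regularised part $\bar F_x(\gamma_x, t)$ and an error part $\bar e_x(t)$, as in \eqref{Fx}--\eqref{ex}, so that $\bar F_x(\gamma_x) = \bar F_x(\gamma_x, t) + \bar e_x(t)$. Expanding the product $\prod_{x\in S} \bar F_x(\gamma_x)$ over these two pieces yields $2^m$ terms, each of the form $\Ewils\big(\prod_{x\in S_1}\bar F_x(t)\prod_{x\in S_2}\bar e_x(t)\big)$ with $S_1\sqcup S_2 = S$. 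For the single term with $S_2=\emptyset$, i.e. $\Ewils\big(\prod_{x\in S}\bar F_x(\gamma_x,t)\big)$, we apply \cref{L:reg_winding_moment} together with H\"older's inequality: since each $\bar F_x(\gamma_x,t)$ has $m$-th moment bounded by $c\,|(1+t+|\log|z-w||)\wedge\log(1/\delta)|^\alpha$, and both $t$ and $|\log|z-w||$ are $O(|\log\Delta|)$ (as $|z-w|\ge c(\tilde K)^{-1} r_{\cX}\ge c'\Delta$ and $t = \log(r_{\tilde K}/\Delta)$), the product of $m$ such terms is bounded by $c\,|1+\log^\alpha(\Delta)|$ after renaming constants. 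This handles the ``main'' term.

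For every other term, i.e. those with $S_2\neq\emptyset$, we invoke \cref{L:err_winding_moment} directly with this choice of $t = \log(r_{\tilde K}/\Delta)$: the hypothesis $t\ge\log(r_{\tilde K}/\Delta)$ is satisfied with equality, so the bound reads
\begin{equation*}
\Big|\Ewils\big(\prod_{x\in S_2}\bar e_x(t)\prod_{x\in S_1}\bar F_x(t)\big)\Big| \le c\,|1+t|^\alpha\big(e^{-c(t-\log(r_{\tilde K}/\Delta))} + \delta^{c'}\big) = c\,|1+t|^\alpha\big(1 + \delta^{c'}\big) \le c'\,|1+\log(r_{\tilde K}/\Delta)|^\alpha,
\end{equation*}
using $\delta < \delta_{\tilde K} \le 1$ so $\delta^{c'}\le 1$ and $e^{0}=1$. (Here one should be slightly careful: \cref{L:err_winding_moment} as stated requires $S'\neq\emptyset$ when $S\neq\emptyset$, i.e. one needs both sets nonempty; the case $S_1=\emptyset$, $S_2=S$ is the pure-error case which is covered by the first part of the proof of \cref{L:err_winding_moment} where $S'$ is taken empty, so all sub-cases are in fact covered.) In all cases the bound is $c\,|1+\log^\alpha(\Delta)|$ for suitable constants depending only on $\tilde K$ and $m$.

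Summing the $2^m$ terms (a number depending only on $m$) and absorbing the factor $2^m$ into the constant $c$ gives the claimed estimate $|\Ewils(\prod_{x\in S}\bar F_x(\gamma_x))| \le c\,|1+\log^\alpha(\Delta)|$. The only genuinely delicate point is making sure the scale $t=\log(r_{\tilde K}/\Delta)$ is an admissible choice in both \cref{L:reg_winding_moment} and \cref{L:err_winding_moment} and that $\Delta$, $r_{\cX}$, and $\min_{x\neq y}|x-y|$ are comparable up to constants depending only on $\tilde K$ (which was recorded just before \eqref{eq:Delta}); given those preliminaries the result is a bookkeeping exercise combining the two preceding lemmas. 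No new probabilistic input is needed beyond \cref{L:reg_winding_moment} and \cref{L:err_winding_moment}.
\end{proof}
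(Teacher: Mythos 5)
Your proof is correct and takes essentially the same approach as the paper: decompose each factor as $\bar F_x(\gamma_x) = \bar F_x(\gamma_x,t) + \bar e_x(t)$ at a macroscopic scale $t$ comparable to $\log(r_{\tilde K}/\Delta)$, expand the product, and apply \cref{L:reg_winding_moment} to the pure-regularised term and \cref{L:err_winding_moment} to the rest. The only cosmetic difference is that the paper chooses $t = 2\log(r_{\tilde K}/\Delta)$ (so the $e^{-c(t-\log(r_{\tilde K}/\Delta))}$ factor in \cref{L:err_winding_moment} is a power of $\Delta$ rather than equal to $1$), but your choice $t=\log(r_{\tilde K}/\Delta)$ works equally well since only a polylog bound is needed.
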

\begin{proof}
Decompose
\begin{equation}
\Ewils( \prod_{x \in S} \bar F_x(\gamma_x) ) = \Ewils( \prod_{x \in S}  (\bar F_x(\gamma_x,t) + \bar e_x(t)) )\label{eq:winding_moment1}
\end{equation}
with $t = 2\log(r_{\tilde K} / \Delta)$. Then this is a straightforward application of \cref{L:err_winding_moment,L:reg_winding_moment}.
\end{proof}

Now we prove a convergence of the height function integrated against $f$ in the sense of moments (still for the Wilson law $\Pwils$). Recall that $X_\delta$ converges in the sense of moments as $\delta \to 0$ if for all $k$, $\E(X_\delta^k)$ converges as $\delta \to 0$.
\begin{lemma}\label{lem:kth_moment_conv}
$\int_{\tilde K} \overline h_{\ext}^\d(z)\tilde f(z)d\mu(z)$ converges in the sense of moments under the law $\Pwils$. Furthermore, the limit does not depend on the sequence $(G')^\d$.
\end{lemma}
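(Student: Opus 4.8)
The plan is to establish moment convergence by analysing, for each fixed integer $k\ge 1$, the $k$-th moment of $\int_{\tilde K}\overline h^\d_{ext}(z)\tilde f(z)d\mu(z)$ through the expansion \eqref{eq:kth_moment_int}. Taking $\Ewils$ of both sides and applying Fubini's theorem — legitimate thanks to the integrable bound discussed below — this moment equals
$$
\int_{\tilde K^{2k}}\Ewils\!\Big[\prod_{i=1}^k\big(\overline h^\d_{ext}(z_i)-\overline h^\d_{ext}(w_i)\big)\Big]\,\prod_{i=1}^k\frac{\tilde f^+(z_i)\tilde f^-(w_i)}{Z(\tilde f)}\,d\mu(z_i)\,d\mu(w_i).
$$
It therefore suffices to show that the $k$-point function $\Ewils[\prod_i(\overline h^\d(z_i^\d)-\overline h^\d(w_i^\d))]$ converges as $\delta\to 0$ for every choice of $(z_i,w_i)_{i\le k}$ lying in the full-measure set $\cX$ on which all $2k$ points project to distinct points of $M'$, together with a $d\mu^{\otimes 2k}$-integrable dominating function; dominated convergence then yields convergence of the $k$-th moment.

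Fixing such distinct points, I would first reduce the $k$-point function, via \eqref{eq:path_height}, to a sum of $2^k$ terms of the form $\Ewils\big[\prod_{x\in S}\bar F_x(\gamma_x)\big]$ as in \eqref{eq:top_int_eq1}, and then treat each such term by the regularisation strategy of \cite{BLR16}: split $\bar F_x(\gamma_x)=\bar F_x(\gamma_x,t)+\bar e_x(t)$ at a macroscopic cut-off scale $t$, expand the product, and separate the single ``regularised'' term $\Ewils\big[\prod_{x\in S}\bar F_x(\gamma_x,t)\big]$ from all remaining terms, each of which carries at least one error factor $\bar e_x(t)$. By \cref{L:err_winding_moment}, every term with an error factor is bounded, uniformly in $\delta$, by $c\,(1+t)^\alpha\big(e^{-c(t-\log(r_{\tilde K}/\Delta))}+\delta^{c'}\big)$, so sending first $\delta\to 0$ and then $t\to\infty$ kills their contribution. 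For the regularised term, the key point is that $F_x(\cdot,t)$ — the winding of a branch from its far end up to the first entrance into $B(x,e^{-t})$, together with the $\Psi_x/2$ jump term — is, for Lebesgue-a.e.\ $t$, an almost surely continuous functional of the Temperleyan forest in the Schramm topology: the winding of a simple curve up to a fixed circle is continuous away from tangencies, and the integer $\Psi_x/\pi$ is locally constant since the number of spines separating the relevant components is a.s.\ finite and stable under small perturbations, by \cref{thm:CRSF_universal} and \cref{lem:spine:boundary}. Hence, by the convergence and universality of the CRSF scaling limit (\cref{thm:CRSF_universal}), $\Ewils\big[\prod_{x\in S}\bar F_x(\gamma_x,t)\big]$ converges as $\delta\to 0$ to the corresponding continuum quantity, which then converges as $t\to\infty$ by the $\delta\to 0$ (continuum) version of \cref{L:err_winding_moment}. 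Combining the two, the $k$-point function converges, to an explicit expression built only from the limiting continuum CRSF, hence independent of the approximating sequence $(G')^\d$.

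For the domination I would invoke \cref{L:winding_moment}: $\big|\Ewils[\prod_{x\in S}\bar F_x(\gamma_x)]\big|\le c\,(1+|\log\Delta(\cX)|)^\alpha$, and since $\Delta(\cX)\ge c(\tilde K)\min_{u\ne v\in\cX}|u-v|$, the full $k$-point function is dominated by $c\big(1+\sum_{u\ne v\in\cX}(1+|\log|u-v||)^\alpha\big)$. Powers of logarithms of pairwise distances are locally integrable in two dimensions, and $\tilde f$ is bounded with compact support, so this bound is $d\mu^{\otimes 2k}$-integrable over $\tilde K^{2k}$; this both justifies the Fubini step above and supplies the dominating function for the dominated-convergence argument, completing the proof that the $k$-th moment converges to a limit not depending on $(G')^\d$.

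The step I expect to be the main obstacle is the continuity of the regularised functional $F_x(\cdot,t)$ under Schramm-topology convergence — controlling the winding both near the truncation circle $\partial B(x,e^{-t})$ and near the point $x$ itself, where a branch can oscillate, and checking that the jump term $\Psi_x$ passes to the limit. This is precisely where one exploits the a.s.\ regularity of the limiting CRSF (simplicity of branches and spines via \cref{lem:spine:boundary}, superexponential control on the number of noncontractible cycles via \cref{thm:CRSF_universal}) together with the freedom to restrict to a.e.\ cut-off scale $t$; the interchange of the limits $\delta\to 0$ and $t\to\infty$ is then forced by the uniform-in-$\delta$ error bound of \cref{L:err_winding_moment}, exactly as in \cite{BLR16}.
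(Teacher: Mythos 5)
Your proposal reproduces the paper's argument: expand the $k$-th moment via \eqref{eq:kth_moment_int}, reduce to the terms $\Ewils[\prod_{x\in S}\bar F_x(\gamma_x)]$ via \eqref{eq:path_height}, split each $\bar F_x(\gamma_x)=\bar F_x(\gamma_x,t)+\bar e_x(t)$, use \cref{L:err_winding_moment} to kill all error terms uniformly in $\delta$, deduce convergence of the regularised term from a.s.\ continuity in the Schramm topology plus the uniform integrability coming from \cref{L:reg_winding_moment}, and obtain domination from \cref{L:winding_moment} together with local integrability of powers of $\log|u-v|$. This is the same decomposition and the same sequence of lemmas as the paper's proof, so the approach is essentially identical.
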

\begin{proof}
 Using \cref{eq:kth_moment_int,eq:height_to_winding2,eq:top_int_eq1}, we need to prove the convergence as $\delta \to 0$ of
\begin{equation}
\int_{\tilde K^{2k}} \Ewils( \prod_{x \in S} \bar F_x(\gamma^\d_x) )\frac{\tilde f^+(z_i)\tilde f^-(w_i)}{Z(\tilde f)}d\mu(z_i)d\mu(w_i).\label{Sproj}
\end{equation}
We use Fubini to bring the expectation inside the integral in the above display. We first observe that integrating \eqref{Sproj} over all sets of vertices $S$ so that $p(x) \neq p(y)$ for all $x \neq y \in S$ is enough (recall $p: \tilde M' \mapsto M'$ is the covering map). Indeed, using \cref{L:reg_winding_moment} and the fact that $\|f\|_\infty < \infty$, we see that the integrand can be bounded by $O(\log^\alpha (1/\delta))$. Since the volume of $\{S \subset \tilde K^{2k}: p(x) = p(y) \text{ for some }x,y \in S\}$ is $O(\delta)$ (indeed the number of preimages of any vertex in $\tilde K$ is bounded by a constant depending only on $\tilde K$), we see that the integral over this set is $O(\delta\log^\alpha (1/\delta) )$.

Thus we now concentrate on the integral \eqref{Sproj} over
\begin{equation}
\cA (\tilde K) = \{ \text{sets of vertices } S  \text{ so that } p(x) \neq p(y)  \text{ for all } x \neq y \in S  \}. \label{cA}
\end{equation}

 We now use \cref{L:winding_moment} and dominated convergence theorem. Since $\|f\|_\infty < \infty$ and $\log^m(\Delta)$ is integrable for any $m>0$,  we need to prove convergence of the expectation inside the integral above. Now we claim that the regularised part
\begin{equation}\label{E:fgammat}
\Ewils(\prod_{x \in S} \bar F_x(\gamma^\d_x,t) )
\end{equation}
converges as $\delta \to 0$. This follows from our assumption of a.s. convergence of Temperleyan CRSF and because the term inside the expectation is a.s. continuous function of the Temperleyan CRSF. The same can be said about the skeleton using the absolute continuity statement given by \Cref{prop:lemma_abs_cont_special}. Indeed, this follows from a.s. continuity properties of SLE$_2$ and the fact that the CRSF branches are made a.s. from finitely many chunks of SLE$_2$, in particular for a fixed $t$, the SLE curve is a.s. not a tangent to the boundary of circle of radius $e^{-t}$ (we refer to \cite[Section 3]{BLR_Riemann2}) for details. \cref{L:reg_winding_moment} tells us that the random variable in \eqref{E:fgammat} is uniformly integrable which completes the proof of convergence of the regularised term \eqref{E:fgammat}.

Now given a fixed $S$, choose $t \ge \log(r_{\tilde K}/\Delta)$. Now we claim that that the error term satisfies
\begin{equation}
|\Ewils( \prod_{x \in S} \bar F_x(\gamma_x) )- \Ewils( \prod_{x \in S} \bar F_x(\gamma_x,t) )| <c |1+t |^\alpha(e^{-c(t- \log(r_{\tilde K}  / \Delta))} + \delta^{c'}).\label{eq:error_30}
\end{equation}
Indeed, \eqref{eq:error_30} follows by writing $\bar F_x(\gamma_x) = \bar F_x(\gamma_x,t) + \bar e_x(t)$ and then expanding and using the bounds in \cref{L:reg_winding_moment,L:err_winding_moment}.
To finish the proof of the lemma, fix $\ve>0$. First choose $t $ large enough and then $\delta$ small enough so that the right hand side of \eqref{eq:error_30} is less than $\ve$. Next choose a smaller $\delta$ if needed so that for all $\delta'<\delta$,
$$ |\Ewils(\prod_{x}\bar F(\gamma^{\d'}_x,t)) - \Ewils(\prod_{x}\bar F_x(\gamma^{\d}_x,t))| <\ve$$
via the convergence of the regularised term. This completes the proof as $\ve$ is arbitrary.

Observe that \eqref{eq:error_30} completes the proof that the limit does not depend on the sequence $((G')^\d)_{\delta>0}$ since the main term \eqref{E:fgammat} is measurable with respect to the limiting continuum Temperleyan CRSF which is universal by \cref{thm:CRSF_universal}.
\end{proof}

It remains to prove convergence in law (still for $\Pwils$ for now).  
For this, we need to alter the definition of truncation slightly. Given $t>0$,  take a cover $\{B_\Euc(x,r_{\tilde K}e^{-10t}):x \in \tilde K\}$ of $\tilde K$ and take a finite subcover. Let $\mathfrak c(z), \mathfrak c(w)$ denote the center of one of the balls (chosen arbitrarily) in the finite subcover in which $z,w$ belong. Define $\mathsf F_x(t), \mathsf e_x(t)$ to be the same as $F_x(t)$ and $e_x(t)$ but we cut off the first time $\gamma_x$ enters $B(\mathfrak c(x) , e^{-t})$ (so compared to the above, the centre of the cutoff ball is shifted by an amount which is at most $e^{-10t}$). However, in this definition, we still measure winding around $x$ not $\mathfrak c(x)$.

\begin{lemma}\label{cor:shift_center}
The statements of \cref{L:reg_winding_moment,L:err_winding_moment} still hold if we replace $F,e$ by $\mathsf F, \mathsf e$ everywhere.
\end{lemma}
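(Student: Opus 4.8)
The plan is to reduce the whole statement to the observation that the shifted and unshifted regularisations differ only by the centred winding around $x$ of a very short sub-arc of $\gamma_x$, whose moments are already under control from the previous subsections.

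First, since the topological term $\bar\Psi_x/2$ entering both $F_x(\gamma_x,t)$ and $\mathsf F_x(\gamma_x,t)$ does not depend on the truncation scale, it cancels in their difference. Writing $\sigma_1$ (resp. $\sigma_2$) for the first time, as one travels along $\gamma_x$ towards $x$, at which $\gamma_x$ enters $B(x,e^{-t})$ (resp. $B(\mathfrak c(x),e^{-t})$), additivity of winding gives
$$
\mathsf F_x(\gamma_x,t)-F_x(\gamma_x,t)\;=\;-\bigl(\mathsf e_x(t)-e_x(t)\bigr)\;=\;\overline{W(\cW_x,x)},\qquad \cW_x:=\gamma_x\bigl[\sigma_1\wedge\sigma_2,\;\sigma_1\vee\sigma_2\bigr].
$$
Since $\mathfrak c(x)$ is the centre of a cover ball of radius $r_{\tilde K}e^{-10t}$ containing $x$, we have $|\mathfrak c(x)-x|<r_{\tilde K}e^{-10t}$, hence the sandwich $B\bigl(x,e^{-t}(1-r_{\tilde K}e^{-9t})\bigr)\subseteq B(\mathfrak c(x),e^{-t})\subseteq B\bigl(x,e^{-t}(1+r_{\tilde K}e^{-9t})\bigr)$. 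Together with the fact that $\cW_x$ is, by construction, the portion of $\gamma_x$ lying strictly between the two first-entry times, this forces $\cW_x$ to be a continuous sub-path of $\gamma_x$ that does not enter $B\bigl(x,e^{-t}(1-r_{\tilde K}e^{-9t})\bigr)$; shrinking $r_{\tilde K}$ once and for all so that $1-r_{\tilde K}\ge e^{-1}$ (harmless, as $r_{\tilde K}$ is only required to be small enough for $p$ to be injective on balls of that radius), we conclude $\cW_x\subseteq\gamma_x\cap B(x,e^{-t-1})^{c}$ for every $t\ge 0$. This topological observation is the only genuinely new ingredient in the lemma.

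Consequently $|W(\cW_x,x)|\le M_x(t):=\sup_{\cY\subseteq\gamma_x\cap B(x,e^{-t-1})^{c}}|W(\cY,x)|$, the supremum being over continuous sub-segments. But $M_x(t)$ is estimated exactly as in the proof of \cref{L:reg_winding_moment}: one decomposes a sub-segment over the $O\bigl((1+t+|\log|z-w||)\wedge\log(1/\delta)\bigr)$ dyadic scales around $x$, applies the per-scale winding bounds of \cref{L:winding_tail_partial} and \cref{L:winding_tail} at the microscopic scales, \cref{cor:full_winding_moment} for the scales below the injectivity radius $r_{\tilde v}$, and \cref{lem:tildeu} for the contribution of the far endpoint and (in the hyperbolic case) the boundary arc, then combines these via Minkowski's inequality and the superexponential tail of the number of noncontractible loops (\cref{thm:CRSF_universal}); this yields $\Ewils(M_x(t)^m)\le c\,|(1+t+|\log|z-w||)\wedge\log(1/\delta)|^{\alpha}$ with $c=c(\tilde K,m)$, $\alpha=\alpha(m)$. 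Feeding this into $\Ewils(|\bar{\mathsf F}_z(\gamma_{zw},t)|^m)^{1/m}\le\Ewils(|\bar F_z(\gamma_{zw},t)|^m)^{1/m}+\Ewils(|W(\cW_z,z)|^m)^{1/m}$ and invoking \cref{L:reg_winding_moment} gives the $\mathsf F$-version of \cref{L:reg_winding_moment}.

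Finally, for \cref{L:err_winding_moment} one re-runs its proof with $\mathsf F,\mathsf e$ in place of $F,e$. The coupling of \cref{lem:exp_tail} around $p(\cX)$ is unchanged; the only modification is that the piece $\alpha_x$ sampled inside the independent UST $\cT^D_x$ is now the centred winding around $x$ of the branch of $\cT^D_x$ after its first entry into $B(\mathfrak c(x),e^{-t})$ rather than into $B(x,e^{-t})$. Because $|\mathfrak c(x)-x|\ll e^{-t}$ and, on the good event of \cref{lem:exp_tail}, both balls sit well inside the coupling ball $B(x,R_x)$, this $\alpha_x$ remains a measurable function of $\cT^D_x$ alone, hence independent of the cutset sigma-algebra $\mathfrak C$ and of $(\cT^D_y)_{y\ne x}$, so every term of the expansion containing some $\alpha_x$ still vanishes. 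The complementary pieces $\xi_x$ are handled as before: the event that $\xi_x$ is nonzero before centring forces either $I_x-\log(r_{\tilde K}/\Delta)>\tfrac{1}{2}\bigl(t-\log(r_{\tilde K}/\Delta)\bigr)$ or $\gamma_x$ to re-enter $B(x,e^{-I_x}r_{\tilde K})$ after reaching $B(\mathfrak c(x),e^{-t})\subseteq B(x,2e^{-t})$, both carrying the probability bounds already used, by \cref{lem:exp_tail} and \cref{lem:backtrack}; and $\Ewils(|\xi_x|^k)$ is controlled via \cref{cor:full_winding_moment} exactly as there. The case of nonempty $S'$ reduces to $S'=\emptyset$ as in the original proof, now using the $\mathsf F$-version of \cref{L:reg_winding_moment} established above together with Cauchy--Schwarz. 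The only cost throughout is additional polynomial-in-$t$ factors, absorbed into the exponent, which do not affect the decay $e^{-c(t-\log(r_{\tilde K}/\Delta))}+\delta^{c'}$. This proves \cref{cor:shift_center}. The points requiring the most care are the topological claim $\cW_x\subseteq B(x,e^{-t-1})^{c}$ and the verification that the independence built into \cref{lem:exp_tail} is insensitive to the $O(e^{-10t})$ displacement of the cut-off ball; everything else is a mechanical reprise of estimates already established.
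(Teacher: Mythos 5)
Your proposal is correct, and the underlying idea is the same as the paper's: the displacement of the cut-off centre is so small relative to the cut-off radius (of order $e^{-10t}$ versus $e^{-t}$) that the moment bounds of the earlier lemmas are insensitive to it, with at most polynomial-in-$t$ losses. The paper gets this in one line for \cref{L:reg_winding_moment} by observing that the branch up to the first hit of the shifted ball is itself a continuous sub-path, hence directly controlled by the supremum appearing in \cref{cor:full_winding_moment}, whereas you decompose $\mathsf F_x - F_x$ explicitly as the (centred) winding of the arc $\cW_x$ between the two first-entry times, verify topologically that $\cW_x\subset B(x,e^{-t-1})^c$, and bound that arc separately; this is a slightly more explicit bookkeeping of the same observation. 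Your re-run of the coupling argument for the $\mathsf e$-version of \cref{L:err_winding_moment} likewise matches the paper's intent (the displaced cut-off stays well inside the coupling ball, so $\alpha_x$ remains $\cT^D_x$-measurable and independent of $\mathfrak C$, and the $\xi_x$-estimates carry over via \cref{lem:exp_tail} and \cref{lem:backtrack}).
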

\begin{proof}
For \cref{L:reg_winding_moment}, the proof identically follows from \cref{cor:full_winding_moment,lem:tildeu} by noticing that the supremum over all continuous subpaths contain the portion of the branch until it first hits the shifted ball. Notice also that we shift only by an additive term in the exponential scale which is $O(t)$.

For \cref{L:err_winding_moment}, the proof is also identical, in particular we still consider the coupling around the points in $S$. Because we still shift only by $O(t)$ in the exponential scale, the proof readily follows.
\end{proof}

We can now complete the proof of \cref{thm:main_precise}.
\begin{proof}[Proof of \cref{thm:main_precise}]
Again, we first prove this under $\Pwils$ before explaining how to extend this and concluding the proof of \cref{thm:main_precise} to $\Ptemp$. We write $r = r_{\tilde K}$ and pick a $t > 0$ to be taken large later.  Recall from \cref{eq:int_double_int,eq:path_height},
\begin{align*}
\int_{\tilde K} \overline h_{\ext}^\d(z)\tilde f(z)d\mu(z)=\int_{\tilde K \times \tilde K} (\bar W(\tilde \gamma^\d_{zw},z) + \bar W(\tilde \gamma_{zw}^\d ,w) +  \bar \Psi_{zw}) \frac{\tilde f^+(z)\tilde f^-(w)}{Z(\tilde f)}d\mu(z)d\mu(w).
\end{align*}
Here $Z(\tilde f)$ is a deterministic constant so we can assume it to be 1 from now on without loss of generality. Firstly we recall that we only need to integrate over the set $\cA(\tilde K)$ as in \eqref{cA} as the integral over the remaining part is $O(\delta \log^c(1/\delta))$.  Let us denote by $Y = Y(\delta)$ the above integral over $\cA(\tilde K) $. Introduce
\begin{equation*}
X(\delta,t) :=  \int_{\cA(\tilde K)} (\bar \sF_{\mathfrak{c}(z)\mathfrak{c}(w)}(t) + \bar \sF_{\mathfrak{c}(w) \mathfrak{c}(z)}(t)   )1_{|\mathfrak{c}(z)- \mathfrak{c}(w)| >re^{-t/10}} \tilde f^+(z)\tilde f^-(w)d\mu(z)d\mu(w).
\end{equation*}
Note that $X(\delta,t)$ is a sum of regularised winding of finitely many branches and hence converges in law (as $\delta \to 0$ and $t$ is fixed) by our assumption.

Now we show that for all $t>0$ and $\delta  < C'\delta_0e^{-t}$,
$$
\E(Y(\delta) - X(\delta,t))^2 \le c(1+t)^\alpha e^{-c't}.
$$
We expand the above square to get an integral over $(z,w,z',w') \in \cA(\tilde K)^2$. We can again without loss of generality restrict to the the set $\cA_2(\tilde K)$ such that the projection $p$ of all four points $(z, w, z',w')$ maps to pairwise distinct points on $M$. In that case let
$\Delta = \Delta (z,w,z',w')$ be as in \eqref{eq:Delta}. We now argue that this integral over the set $\Delta \le e^{-t/11}$ is  exponentially small in $t$. Indeed, we are integrating over a set which has exponentially small measure with respect to $\mu^4$ and furthermore the moments are integrable using \cref{L:winding_moment,cor:shift_center}.

For the rest of the integral, we write
\begin{align*}
 (\bar W(\tilde \gamma^\d_{zw},z) + \bar W(\tilde \gamma_{zw}^\d ,w) + \bar \Psi_{zw}) =  \bar \sF_{zw}(t) + \bar \sF_{wz}(t) +\bar \se_{zw}(t) + \bar \se_{wz}(t)
\end{align*}
and we expand again the product inside the integral to get products of terms of the form
$$
 \bar \se_{zw}(t) + \bar \se_{wz}(t) ;  \quad \bar \sF_{zw}(t) + \bar \sF_{wz}(t) - \bar \sF_{\mathfrak c(z)\mathfrak c(w)}(t) - \bar \sF_{\mathfrak c(w) \mathfrak c(z)}(t).
$$
Note here that the indicator over $|\mathfrak c(z) - \mathfrak c(w) | >e^{-t/10}$ is included in $\Delta > e^{-t/11}$ so we can get rid of the indicator. Products containing at least one $\se$ are small because of \cref{cor:shift_center}. Also note that on $\Delta  > e^{-t/11}$, $|z-w| >c(\tilde K) e^{-t/11}$. Thus we need to show
\begin{equation*}
\E(  |\bar \sF_{zw}(t) -  \bar \sF_{\mathfrak c(w) \mathfrak c(z)}(t) |^2 1_{|z-w| > c(\tilde K)e^{-t/11}} ) \le c(1+t)^\alpha e^{-c't}.
\end{equation*}
Observe that
\begin{multline*}
 (\bar \sF_{zw}(t) -  \bar \sF_{\mathfrak c(z) \mathfrak c(w)}(t))1_{|z-w| > c(\tilde K) e^{-t/11}}   = (\bar \sF_{zw}(t) -  \bar \sF_{\mathfrak c(z) w}(t))1_{|z-w| > c(\tilde K) e^{-t/11}} \\+ (\bar \sF_{\mathfrak c(z)w}(t) -  \bar \sF_{\mathfrak c(z) \mathfrak c(w)}(t))1_{|z-w| > c(\tilde K) e^{-t/11}}).
 \end{multline*}
If $\mathsf p(z)$ and $\mathsf p(\mathfrak c(z))$ merge before exiting $B(\mathfrak c(z), e^{-5t})$, then the paths we need to consider are identical outside $B(\mathfrak c(z), e^{-t})$, only the centers around which we measure the topological winding are different. This difference in winding is therefore deterministically $O(e^{-9t})$. On the other hand, the paths we need to consider are not identical outside $B(\mathfrak c(z), e^{-5t})$
if and only if a simple random walk from $z$ does not hit $\mathsf p(\mathfrak c(z))$ before exiting $B(\mathfrak c(z), e^{-5t})$ which has probability exponentially small in $t$ by a Beurling-type lemma (\cref{lem:Beurling}). Hence we can conclude using Cauchy--Schwarz and \cref{cor:shift_center}. (Note that here the shifted cutoff is particularly useful since the cutoff point is the same for both $z$ and $\mathfrak c(z)$).

The proof of the rest of the statements is a simple consequence of the fact that the main term $X(\delta, t)$ is an a.s.\ continuous function of $\cT^\d$ for a fixed $t$. So trivially $(X(\delta, t), \cT^\d)$ converges jointly in law, the limit does not depend on the sequence $(G')^\d$ and is conformally invariant. It is a simple exercise to show that the $L^2$ bound on the error term $Y(\delta) - X(\delta,t)$ as shown above is enough to conclude the proof of the theorem (i.e. the proof of the same claim as in the previous sentence for the pair $(\bar h_{\ext}^\d(z),\cT^\d)$ ).

We have proved convergence in law of $(\bar h_{\ext}^\d(z),\cT^\d)$ for the $\Pwils$ law. We now explain how to convert the result so that it holds under the Temperleyan law $\Ptemp$.
Recall that if $\cT$ is sampled by performing Wilson's algorithm (after sampling the skeleton) and $\cT^\dagger$ is sampled from the uniform distribution among all oriented duals of $\cT$, then the Radon--Nikodym derivative of $(\cT, \cT^\dagger)$ with respect to $\Ptemp$ is $Z = 2^{K^\dagger}/ \Ewils (2^{K^\dagger})$ where $K^\dagger $ is the number of nontrivial cycles of $\cT^\dagger$ by \cref{lem:RN_dimer_CRSF}. Now, observe that $Z$ is measurable with respect to $\cT^\d$ so $(Z, (\bar h_{\ext} , \tilde f))$ jointly converges in law under $\Pwils$ by the above. Furthermore all moments of $Z$ and of $(\bar h_{\ext} , \tilde f)$ are bounded under $\Pwils$ and therefore we conclude that
$$
\left( \int_{\tilde M} (h_{\ext} (z) - \Ewils( h_{\ext} (z) )) \tilde f(z) \mu (dz) ; \cT^\d\right)
$$
converges in law and in the sense of moments under $\Ptemp$. In particular, taking expectation (under $\Ptemp$) of the first quantity, we also deduce that $\int_{\tilde M} (\Etemp (h_{\ext} (z)) - \Ewils( h_{\ext} (z) )) \tilde f(z) \mu (dz)$ converges. Taking the difference, it follows that
$$
\left( \int_{\tilde M} (h_{\ext} (z) - \Etemp( h_{\ext} (z) )) \tilde f(z) \mu (dz) ; \cT^\d\right)
$$
converges in law and in the sense of moment under $\Ptemp$, as desired. Furthermore, since the limit of $\cT^\d$ is independent of the chosen graph sequence (subject to the assumptions in \cref{sec:setup}), so is the limit of $\int \tilde f(x) \bar h^\d_{\ext}(x)  \vol(x)$.
\end{proof}
%

\begin{remark}\label{rmk:instanton_jt_conv}
In fact, with some more work, the convergence in \cref{thm:main_precise} can be upgraded to specifically include information about the instanton component. More precisely, using \cref{lem:instanton_global} we obtain the following.

Take an ordered finite set of continuous simple loops which forms the basis of the first homology group of $M'$, all endowed with a fixed orientation. Let $\mathsf H^\d \in \R^{4g+2b-4}$ denote the vector of height differences along these loops (i.e., for each such loop, record the height accumulated by going along the loop once in the prescribed orientation). Consider the one-form
$$
d \bar h^\d_{\di}(e)  :=\bar h^\d_{\di}(\tilde e^+) - \bar h^\d_{\di}(\tilde e^-).
$$
It is well-known (see \cref{thm:hodge}) that the instanton component of the above one-form is completely determined by $\mathsf H^\d$. Then we have:
$$
\left(\int \tilde f(x) \bar h^\d_{\ext}(x)  \vol(x),  \mathsf H^\d , \cT^\d\right)
$$
converges jointly in law as $\delta \to 0$. The first two coordinates also jointly converge in the sense of all moments. Furthermore,
$$
\lim_{\delta \to 0}\left(\int \tilde f(x) \bar h^\d_{\ext}(x)  \vol(x) , \mathsf H^\d \right)
$$
is measurable with respect to the limit $\cT$ of $\cT^\d$. We do not go into the details of these claims for the sake of brevity.
\end{remark}

\appendix
\section{Geometry of spines}\label{app:spine}
In this section we prove \cref{lem:spine:boundary} and \cref{lem:macro_winding}. But before getting into the proofs, we remind the readers certain basic facts from the classical theory of Riemann surfaces.

 By the uniformisation theorem of Riemann surfaces (\cref{SS:universalcover}), recall that there exists a conformal map from the Riemann surface $\D / \mathfrak{g}$ to $M'$ where $\mathfrak{g}$ is a Fuchsian group which is a discrete subgroup of the group of M\"obius transforms. Furthermore, such a conformal map is unique up to conformal automorphisms (i.e. M\"obius transforms) of the unit disc. In other words, if $\mathfrak{g},\mathfrak{g}'$ are two Fuchsian groups such that $M'$ is conformally equivalent to both $\D / \mathfrak{g}$ and $\D/ \mathfrak{g}'$ then there exists a M\"obius map $\phi: \D \mapsto \D$ such that $\mathfrak{g}'  = \phi^{-1}\circ\mathfrak{g}\circ \phi$. Since we have fixed a canonical lift, we have defined $\Gamma$ uniquely.

It is further known that $\mathfrak{g}$ is isomorphic to the fundamental group $\pi_1(M',x)$ (topologically, this is also known as the group of deck transformations). This connection is described as follows: choose a base point in the manifold $x_0$ and a particular lift of $\tilde x_0$, then any simple loop $\ell$ based in $x_0$ can be lifted to a simple curve $\tilde \ell$ in $\D$ starting from $\tilde x_0$, with the endpoint $\tilde x_1$ depending only on the homotopy class of the loop. Then to $\ell$ we associate the map $\phi_{\ell,\tilde x_0} \in \mathfrak{g}$ that sends $\tilde x_0$ to $\tilde x_1$.
Note then that $\phi_{\ell,\tilde x_0}(\tilde \ell)$ is a curve which projects via $p$ to the same curve $\ell$ in $M'$ since $M'$ is conformally equivalent to $\D / \mathfrak{g}$ (and in particular is homeomorphic) and it does not intersect $\tilde \ell$ since $\ell$ is a simple loop. Furthermore, since the endpoint of $\phi_{\ell,\tilde x_0}(\tilde \ell)$ is $\phi_{\ell,\tilde x_0} \circ \phi_{\ell,\tilde x_0} (\tilde x_0)$, by the unique path lifting property, the curve $\tilde \ell \cup \phi_{\ell,\tilde x_0}(\tilde \ell)$ is the unique lift obtained by going around $\ell$ twice in the same direction. Iterating, we obtain that the infinite path obtained by going around $\ell$ in the same direction is given by $\cup_{n=0}^\infty \phi_{\ell,\tilde x_0}^{(n)}(\tilde \ell)$ where $\phi_{\ell,\tilde x_0}^{(n)}$ is the $n$-fold composition of $\phi_{\ell,\tilde x_0}$ and that the union is a disjoint union.

We can actually say more using the classification of M\"obius maps according to their trace.
Recall that M\"obius maps preserving the unit disc have the form
$$
\phi(z)=e^{i \theta} \frac{z-a}{ \bar a z-1} ; \quad |a| < 1 ; \quad \theta \in [0, 2 \pi)
$$
and can be classified (up to conjugation with M\"obius transforms) depending upon the behaviour of the trace:
\begin{itemize}
\item If $|e^{i\theta} -1|^2 = 4(|a|^2-1)^2$, then $\phi(z)$ is conjugate to either $z+1$ or $z-1$ seen as maps from $\H$ to $\H$.
\item If $|e^{i\theta} -1|^2 > 4(|a|^2-1)^2$, then $\phi(z)$ is conjugate to $\lambda z$ where $\lambda>1$ seen as maps from $\H$ to $\H$.
\item If $|e^{i\theta} -1|^2 < 4(|a|^2-1)^2$, then $\phi$ is conjugate to a rotation of the unit disc.
\end{itemize}
Thus there is a M\"obius map $\Phi$ from $\D$ to $\D$ or $\H$ such that $\phi = \Phi^{-1} \circ \mu_{\phi} \circ \Phi$ where $\mu_{\phi}$ is either a translation by $\pm 1$ or a scaling by $\lambda$ (in case $\Phi$ maps $\D$ to $\H$) or a rotation (in case $\Phi$ maps $\D$ to $\D$).

We argue for any loop $\ell$, $\mu_{\phi_{\ell,\tilde x_0}}$ cannot be a rotation.
Indeed, if a map $\phi_{\ell,\tilde x_0}$ was conjugate to a rotation, then its iterates would be either periodic, or a dense set (on the image of a circle). Being periodic is forbidden because of path lifting property and being dense is forbidden because $\mathfrak{g}$ is discrete.
From the two remaining cases, we can complete the proof of \cref{lem:spine:boundary}.
\begin{proof}[Proof of \cref{lem:spine:boundary}]

Let $S$ be a spine, we can choose a point $x_0$ on $p( S)$ and a lift $\tilde x_0$ on $S$. Then we see that the previous general theory applies so we can find an open path $\tilde \ell$ (a certain lift of the path going once around a non-contractible loop) and a map $\phi_{p(S),\tilde x_0}$ such that
$$
S = \bigcup_{n \in \Z} \phi_{p(S),\tilde x_0}^{(n)}( \tilde \ell ),
$$
and the map $\phi_{p(S),\tilde x_0}$ is conjugate to either a scaling or a translation. The case of a scaling clearly gives a simple path between two different points (the image of $0$ and $\infty$) while the case of a translation gives a simple loop where the image of $\infty$ is the unique point point of $\partial \D$ on the loop.

In the case of the torus, the map $\phi_{p(S)}$ is simply a translation in $\C$ so the result is trivially true.
\end{proof}

\medskip

We now prove  \cref{lem:macro_winding} which provides bounds on the macroscopic winding of the spines. We repeat the statement of the Lemma here for convenience.
\begin{lemma*}[Restating \cref{lem:macro_winding}]
Fix a compact set $\tilde K \subset \tilde M'$. There exist constants $c,c'>0$ such that for all $\tilde v \in \tilde K$, for all $\delta<\delta_{ {p(\bar B_0)}}$, $n \ge 1$, $j \ge 1 $ and $u \in B_1$ such that $\P(X_{\tau_{i_j+1}} = u) >0$,
\begin{gather*}
\P \Big(\sup_{\cY \subset \tilde X[\tau_{i_j},\tau_{i_j+1}]}|W(\cY ,\tilde v)| >n\Big | X_{\tau_{i_j+1}} = u\Big) \le ce^{-c'n},\\
\P \Big(\sup_{\cY \subset \tilde X[\tau_{i_j},\tau_{i_j+1}]}|W(\cY ,\tilde v)| >n\Big | X_{\tau_{i_j+1}} \in \partial (G')^\d \cup \mathfrak s \Big) \le ce^{-c'n}.
\end{gather*}
Here the supremum is over all continuous paths obtained by erasing portions of $\tilde X[\tau_{i_j},\tau_{i_j+1}]$. 
\end{lemma*}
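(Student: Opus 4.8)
The plan is to prove \cref{lem:macro_winding} by reducing it to a purely Euclidean winding estimate for random walk excursions, together with one topological input coming from the Fuchsian picture recalled in this appendix. The point is that the walk portion $\tilde X[\tau_{i_j},\tau_{i_j+1}]$ is, by construction of the stopping times, an excursion that starts on the circle $C_0$ (radius $r_{\tilde v}$), stays outside $C_1$ (radius $r_{\tilde v}/e$) until it re-enters $B_1$, and we have conditioned on its endpoint $u$ (or on its being a hit of $\partial(G')^\d\cup\mathfrak B$). The issue is that unlike in the simply connected Euclidean case of Lemma 4.8 in \cite{BLR16}, the walk lives near the boundary of $\tilde M'$, and what we want to bound is the winding around $\tilde v$ of any continuous sub-path $\cY$ obtained by erasing portions of this excursion; such an erasure could \emph{a priori} create a path with large winding even if the original excursion has small winding.

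First I would set up the excursion decomposition exactly as in the proof of \cref{lem:coming_close}: conditionally on the skeleton $\cS=\{\tilde X_{\tau_k}\}$, the pieces $\tilde X[\tau_k,\tau_{k+1}]$ between consecutive crossings of the dyadic-type circles (here the relevant circles are $C_0$ and a few coarser ones, say circles of radius $r_{\tilde v}e^{\ell}$ for $\ell\ge 0$ up to some macroscopic scale, together with the boundary structure) are conditionally independent, and each has a uniformly positive probability of doing a ``full turn'' in the corresponding annulus by Corollary 4.5 in \cite{BLR16} applied to the relevant lift inside $\tilde K$ where the uniform crossing assumption holds. The number of such pieces needed before the excursion returns to $B_1$ or hits the boundary has a geometric tail by \cref{lem:uniform_avoidance}, since each time the walk is near $C_0$ there is uniformly positive probability (given the past) to complete a noncontractible loop / hit the boundary before returning. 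Combining the uniform full-turn estimate with the geometric tail on the number of pieces gives, exactly as in \cite{BLR16}, an exponential tail on $\sup_{\cY}|W(\cY,\tilde v)|$ where $\cY$ ranges over sub-paths of the excursion --- the supremum over erasures being handled by the observation that erasing loops can only decrease the ``net rotation budget'' once one knows a full turn occurred, or more precisely by bounding the winding of \emph{every} continuous sub-path in terms of the total number of full turns performed, which is what Lemma 4.7/Corollary 4.5 of \cite{BLR16} control.

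The one genuinely new ingredient, and the main obstacle, is controlling the winding of those sub-paths that follow a spine near $\partial\tilde M'$: because the excursion lives outside $B_1$ but $\tilde M'$ is a disc in the hyperbolic case, the Euclidean annuli we would want to cross may degenerate against $\partial\D$, so the ``full turn'' argument needs a geometric replacement there. This is precisely the reason the proof is placed in \cref{app:spine}: I would use the description $S=\bigcup_{n\in\Z}\phi^{(n)}(\tilde\ell)$ of a spine, with $\phi$ conjugate (via a M\"obius map $\Phi$) either to a translation or to a scaling $\lambda z$ on $\H$, to show that the relevant portion of the excursion, transported by $\Phi$, lies in a genuine Euclidean half-plane / scaling picture where a straight-line or circular-arc comparison (as in \cref{lem:wind_comp} and the argument of \cref{L:winding_tail}) bounds the winding around $\Phi(\tilde v)$ by a constant plus the winding of a comparison curve; then \cref{lem:winding_change_conformal} converts this back to a bound on $W(\cY,\tilde v)$ up to a bounded additive error, since a M\"obius map only perturbs winding by an amount controlled by $\arg\Phi'$, which is bounded on the compact set $\tilde K$. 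Finally, the conditioning on $X_{\tau_{i_j+1}}=u$ with $u\in B_1$ (or on hitting $\partial(G')^\d\cup\mathfrak B$) does not destroy any of this: conditioning on $\cS$ and then on the endpoint keeps the intermediate pieces independent with the conditional laws being random walks in annuli forced to exit at prescribed points, and Lemma 4.7 of \cite{BLR16} gives the full-turn estimate uniformly over such conditionings (this is exactly the same subtlety resolved in \cref{lem:coming_close} and \cref{L:winding_tail_partial}). Assembling these pieces --- geometric tail on the number of excursion sub-pieces, uniform exponential tail on the winding contributed by each, and the M\"obius comparison near the boundary --- yields the stated $ce^{-c'n}$ bound, with constants depending only on $\tilde K$ and the crossing constants of \cref{sec:setup}.
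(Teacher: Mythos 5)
Your proposal takes a genuinely different route from the paper's, and as stated it has a gap. You want to run a full-turn argument in Euclidean annuli centred at $\tilde v$, piggy-backing on Corollary~4.5 and Lemma~4.7 of \cite{BLR16}, and then repair the region where those annuli degenerate against $\partial\D$ by conjugating with the M\"obius map $\Phi$ coming from the Fuchsian picture. The paper does something simpler and more robust: it picks a noncontractible loop $\ell$ through $v$, lifts its infinite clockwise and anticlockwise iterates to two curves $\tilde\ell'_\pm$ which, after the last exit from $B_1$, go off to $\partial\D$ (resp.\ to infinity on the torus) and slit $\tilde M'\setminus B_1$ into two simply connected pieces. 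Any continuous sub-path of the excursion obtained by erasure then has winding at most $(C+2\pi)(I_+ + 1)$ \emph{deterministically}, where $C$ bounds the winding of $\tilde\ell'_\pm$ around $\tilde v$ (uniform over $\tilde v\in\tilde K$ by compactness) and $I_+$ counts the alternating hits of $\tilde\ell'_+$ and $\tilde\ell'_-$. The probabilistic input is confined to showing $I_+$ has a geometric tail, which only needs the uniform crossing estimate on a fixed compact piece of the manifold (four copies of a fundamental domain), far from where the annuli around $\tilde v$ degenerate. This is the step where the Fuchsian theory actually enters in the paper --- to produce a compact $\tilde K'$ on which the crossing estimate applies uniformly --- rather than as a conformal winding comparison.

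The gap in your version is precisely where you say ``transported by $\Phi$, [the excursion] lies in a genuine Euclidean half-plane / scaling picture.'' The M\"obius map $\Phi$ is a single, fixed conformal map conjugating one deck transformation $\phi$ to a translation or scaling; by \cref{lem:winding_change_conformal} it perturbs winding around $\tilde v$ by only an $O(1)$ amount depending on $\arg\Phi'$. So $\Phi$ cannot \emph{reduce} an a priori unbounded winding of the excursion: if the excursion wanders over many fundamental domains and accumulates winding $W$ around $\tilde v$, then $\Phi$ of the excursion still has winding $W+O(1)$ around $\Phi(\tilde v)$, and you are back where you started. You have no trapping curve playing the role of $\tilde\ell'_\pm$ to make the comparison in \cref{lem:wind_comp} applicable to the walk excursion (as opposed to a single spine), and the full-turn estimate you invoke is only available at the bounded range of scales where the Euclidean annuli around $\tilde v$ stay inside a compact subset of $\D$; beyond that it gives you nothing. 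Also, your remark that ``erasing loops can only decrease the net rotation budget'' is not right --- erasure can \emph{increase} winding, which is exactly why the lemma is stated with a supremum over erased sub-paths; the paper's per-lap bound (each lap stays in a simply connected component of the slit domain) is what tames this, and your proposal does not supply a substitute.
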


\begin{proof}
We are going to borrow the notations from \cref{sec:apriori}.
Take a non-contractible simple loop $\ell$ in $M'$ through $v$ and find a continuous path $\ell^\d$ in $G^\d$ which approximates this loop in the sense that it stays within distance $c\delta/\delta_0$ from $\ell$ (this is guaranteed to exist by the uniform crossing estimate for small enough $\delta$ depending on $\ell$). Notice that the lift starting from $\tilde v$ of a curve which winds infinitely many times in the clockwise (resp. anti-clockwise) direction of $\ell^\d$ defines an infinite path $\tilde \ell_+$ (resp. $\tilde \ell_-$) which converge to the boundary in the hyperbolic case (\cref{lem:spine:boundary}) or goes to infinity in an asymptotic direction in case of the torus. Let $\tilde \ell'_{\pm}$ denote the portion of $\tilde \ell_{\pm}$ from the last exit of $B_1$ to infinity (given an arbitrary parametrisation starting from $\tilde v$). Notice that $(\tilde \ell'_- \cup \tilde \ell'_+)$ divides the annulus $\tilde M' \setminus B_1$ into two simply connected domains. By compactness we can find a constant $C$ such that the winding of $\tilde \ell'_+ $ around $\tilde v$ is bounded by $C$, uniformly over all points $\tilde v \in \tilde K$ for a suitable choice of loop $\ell$.
Let $\tau_{+,1}$ be the first hitting time of $\tilde \ell'_+$ in the interval $[\tau_{i_j},\tau_{i_j+1}]$ and by induction define $\tau_{-,i}$ to be the first hitting time of $\tilde \ell'_-$ after $\tau_{+,i}$ and define $\tau_{+, i}$ the hitting time of $\tilde \ell'_+$ after $\tau_{-, i-1}$. Let $I_+$ the number of $\tau_{+,i}$ before $\tau_{i_{j}+1}$, i.e $I_+ = \abs{\{ i | \tau_{+,i} \leq \tau_{i_{j}+1}\}}$. We now use the deterministic bound
$$
\sup_{\cY \subset \tilde X[\tau_{i_j},\tau_{i_j+1}]} |W(\cY ,\tilde v)| \le (C+2\pi) (I_+ +1).
$$
and observe that conditionally on either $\{X_{\tau_{i_j+1}} = u\}$ or $\{X_{\tau_{i_j+1}} \in \partial (G')^\d \cup \mathfrak s\}$, $I_+$ has an exponential tail. The proof of this fact is essentially the same as the second item of Lemma 4.8 in \cite{BLR16}. Indeed we need to show that once the random walk intersects $\tilde \ell'^+$ outside $B_1$, there is a positive probability (uniform in $\delta$) for the walk to create a non-contractible loop without intersecting $\tilde \ell'^-$ (as in the proof of \cref{lem:uniform_avoidance}). This is intuitively clear, but a complete proof of this needs an input from Riemannian geometry. The issue at hand is that too close to the boundary, the uniform crossing ceases to hold uniformly in $\delta$. We control the winding of this portion of this walk as follows.

Consider a compact set $A \subset  M'$ containing $\ell$ which forms an approximation of $\ell$, in the sense that topologically $K$ is an annulus with $\ell$ being non-contractible in $K$. Recall from the proof of \cref{lem:uniform_avoidance} that the simple random walk has a uniform positive probability to create a non-contractible loop inside $K$ by winding around exactly once and we stop the simple random walk if this happens. But in this process, we can assume without loss of generality on $\ell$ that the lift of the walk stays inside at most four consecutive copies of $K$ (where these four copies are mapped to each other by M\"obius transforms). So if the walk is on $\tilde \ell_+$ at a copy of $\tilde A$  which is more than four copies away from $\tilde v$ and $\tilde \ell^-$, the lift of that walk cannot intersect $\ell^-$ in this process.
On the other hand, applying four copies of the corresponding M\"obius map to an arbitrary $\tilde v \in \tilde K$ yields by compactness of $\tilde K$ a slightly bigger compact $\tilde K'$. Applying the uniform crossing property for the given choice of $\delta$ to $\tilde K' $, we now simply apply the argument of Lemma~4.8 in \cite{BLR16}.
\end{proof}

\bibliographystyle{abbrv}
\bibliography{torus}

\end{document}